\documentclass[reqno]{amsart}
\usepackage[utf8]{inputenc}
\usepackage{amssymb, amsthm}
\usepackage{mathrsfs}
\usepackage{bm,bbm}
\usepackage{microtype} 
\usepackage{comment} 

\usepackage{amsfonts}
\DeclareMathAlphabet{\mathpgoth}{OT1}{pgoth}{m}{n}
\DeclareMathAlphabet{\mathesstixfrak}{U}{esstixfrak}{m}{n}
\DeclareMathAlphabet{\mathboondoxfrak}{U}{BOONDOX-frak}{m}{n}
\usepackage{amsmath,amssymb}
\usepackage[bbgreekl]{mathbbol}
\usepackage{yfonts,mathtools}

\usepackage{tikz-cd}
\usepackage[all,cmtip]{xy}

\numberwithin{equation}{section}
\usepackage{soul}

\usepackage{hyperref}
\hypersetup{colorlinks}
\definecolor{darkred}{rgb}{0.5,0,0}
\definecolor{darkgreen}{rgb}{0,0.5,0}
\definecolor{darkblue}{rgb}{0,0,0.5}
\hypersetup{colorlinks, linkcolor=darkblue, filecolor=darkgreen, urlcolor=darkred, citecolor=darkblue}
\makeatletter 
\@addtoreset{equation}{section}
\makeatother  

\numberwithin{equation}{section}

\newtheorem{thma}{Theorem}

\newtheorem{thm}{Theorem}[section]
\newtheorem{cor}[thm]{Corollary}

\newtheorem{prop}[thm]{Proposition}
 
\newtheorem{lemma}[thm]{Lemma}
\theoremstyle{definition}
\newtheorem{defn}[thm]{Definition}
\theoremstyle{remark}
\newtheorem{rem}[thm]{Remark}

\usepackage{xcolor}
\newtheorem{example}[thm]{Example}

\newcommand{\beq}{\begin{equation}}
\newcommand{\eeq}{\end{equation}}
\newcommand{\beqn}{\begin{equation*}}
\newcommand{\eeqn}{\end{equation*}}
\newcommand{\ov}{\overline}
\newcommand{\mb}{\mathbb}
\newcommand{\mc}{\mathcal}
\newcommand{\mf}{\mathfrak}

\newcommand{\dep}{{\rm depth}}

\newcommand{\wt}{\widetilde}
\newcommand{\wh}{\widehat}

\newcommand{\uds}[1]{\underline{\smash{#1}}}
\renewcommand{\outer}{{}_{\lfloor}}
\newcommand{\pman}{\uds{\bf \Psi Man}}
\newcommand{\regpos}{\uds{\bf RegPoset}}

\newcommand{\ev}{{\rm ev}}

\newcommand{\om}{\omega}
\newcommand{\la}{\lambda}
\newcommand{\cl}{\mathcal}
\newcommand{\ol}{\overline}

\newcommand{\F}{{\mathbb F}}
\newcommand{\R}{{\mathbb R}}
\newcommand{\C}{{\mathbb C}}
\newcommand{\Z}{{\mathbb Z}}

\title{A proof of the Arnold--Givental Conjecture}

\author{Shaoyun Bai}
\address{MIT, 77 Massachusetts Avenue, Cambridge, MA 02139, USA}
\email{shaoyunb@mit.edu}

\author{Egor Shelukhin}
\address{D\'epartement de Math\'ematiques et de Statistique, Universit\'e de
Montr\'eal, C.P. 6128 Succ. Centre-Ville, Montreal (Québec), H3C 3J7, Canada} 
\email{egor.shelukhin@umontreal.ca}

\author{Yi Wang}
\address{Hetao Institute of Mathematics and Interdisciplinary Sciences (HIMIS), Shenzhen, China} \email{wangyi@himis-sz.cn}

\author{Guangbo Xu}
\address{Department of Mathematics, Rutgers University, Hill Center--Busch Campus, 110 Frelinghuysen Road, Piscataway, NJ 08854-8019, USA}
\email{guangbo.xu@rutgers.edu}

\date{\today}

\begin{document}

\begin{abstract}
We prove the Arnold--Givental conjecture in full generality: given a closed symplectic manifold $(X, \omega)$, an anti-symplectic involution $\tau_X: X \to X$ with fixed point set $L={\rm Fix}(\tau_X)$, and a Hamiltonian diffeomorphism $\phi: X \to X$ such that $\phi(L)$ intersects transversely with $L$, the following inequality holds: 
\beqn
\# \big( \phi(L) \cap L \big) \geq {\rm dim}_{{\mb F}_2} H_*(L; {\mb F}_2).
\eeqn

The proof combines the methods of integral Floer theory of the first and fourth authors, a reduction to Hamiltonian Floer cohomology due to Lu, and a new idea related to localization in a $\mathbb Z/2$-equivariant Floer theory tailored to the problem.

\end{abstract}

\maketitle

\setcounter{tocdepth}{1}
\tableofcontents

\section{Introduction}\label{sec:intro}

A conjecture of Arnold and Givental\footnote{A more phonetic transliteration from Russian would be Arnol'd and Givental'.} \cite{Givental-per} from 1989 states that if $(X,\om)$ is a closed symplectic manifold and $\tau_X: X \to X$ is an anti-symplectic involution, then its fixed point manifold $L = {\rm Fix}(\tau_X),$ which is easily seen to be a possibly disconnected Lagrangian submanifold if non-empty, is symplectically rigid. More precisely, if $\phi$ is a Hamiltonian diffeomorphism of $X$ such that $\phi(L)$ is transverse to $L,$ then \[ \#(\phi(L) \cap L) \geq \dim_{\F_2} H_*(L;\F_2)\] where $\F_2$ is the field with two elements. It is convenient to note that $\dim_{\F_2} H_*(L;\F_2) = \dim_{\F_2} H^*(L;\F_2),$ as it is the latter quantity that we will eventually work with. We may also freely assume that $L$ is non-empty.

Prior results about this question were obtained by Givental \cite{Givental-per} for $X = \C P^n,$ $L = \R P^n,$ where $\tau_X$ is the standard complex conjugation; by Oh \cite{Oh-AG} for certain real forms $L$ of compact Hermitian spaces $X$ (see also \cite{another-ag} for extensions along these lines); by Fukaya--Oh--Ohta--Ono in \cite[unpublished]{FOOO_Chap82009ref} for $X$ satisfying a special positivity assumption for holomorphic spheres and in \cite{FO3-inv} for $X$ symplectically Calabi-Yau, where in the latter case only $\phi(L)\cap L\neq \varnothing$ is proved; by Lazzarini \cite{lazzarini} for certain $L = {\rm Fix}(\tau_X)$ in negative monotone manifolds $X$; and by Frauenfelder in \cite{Frauenfelder-inv} for certain symplectic quotients.

We note that already Givental's result shows that $\F_2$ is the right coefficient field to impose stronger rigidity from the viewpoint of the number of intersection points; indeed, the cohomology of $\R P^n$ is much smaller over coefficients of characteristic away from $2.$ 

In this paper, we establish the Arnold--Givental conjecture in full generality.

\begin{thma}\label{thm:main}
    Let $(X, \omega)$ be a closed symplectic manifold and suppose that $\tau_X: X \to X$ is an anti-symplectic involution. Denote by $L$ the fixed point locus of $\tau_X$, which is a Lagrangian submanifold. Then if $\phi: X \to X$ is a Hamiltonian diffeomorphism such that $\phi(L)$ and $L$ intersect transversely, we have
    \beqn
    \# (\phi(L) \cap L) \geq \dim_{\mathbb{F}_2} H_*(L;\mathbb{F}_2).
    \eeqn
\end{thma}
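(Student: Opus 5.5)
The plan is to follow the strategy announced in the abstract: reduce the Lagrangian intersection problem to a Hamiltonian Floer problem on a doubled space, following Lu, and then use $\mathbb Z/2$-equivariant localization over $\F_2$ to detect the full homology of $L$.

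\textbf{Step 1: reduction to Hamiltonian Floer theory.} Write $\phi = \phi_H^1$. Since $\tau_X$ is anti-symplectic, $\tau_X \phi_H^t \tau_X = \phi_{\bar H}^{-t}$ with $\bar H = H\circ\tau_X$. Intersection points of $\phi(L)$ and $L$ correspond bijectively to fixed points $x$ of
\[\psi := \tau_X \circ \phi^{-1} \circ \tau_X \circ \phi\]
lying on a suitable symmetric locus. More precisely, one can factor $\phi = \phi_1\phi_2$ with $\phi_2$ conjugate to $\phi_1^{-1}$ under $\tau_X$, so that $L\cap\phi(L)$ is identified with the fixed points of a Hamiltonian diffeomorphism $\psi$ satisfying $\tau_X\psi\tau_X = \psi^{-1}$.

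The involution then acts on the free loop space by $\gamma(t)\mapsto \tau_X(\gamma(-t))$. Its fixed orbits are exactly the chords from $L$ to $L$, which gives a $\mathbb Z/2$-action on the Hamiltonian Floer complex $CF(\psi)$. Transversality of $\phi(L)$ with $L$ makes the relevant orbits nondegenerate.

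\textbf{Step 2: $\mathbb Z/2$-equivariant Floer cohomology.} I would construct a $\mathbb Z/2$-equivariant Hamiltonian Floer cohomology $HF_{\mathbb Z/2}(\psi;\Lambda_{\F_2})[[u]]$. This is where the integral Floer theory of the first and fourth authors is needed, using global Kuranishi charts with FOP/integral perturbations. Those perturbations must be chosen $\mathbb Z/2$-equivariantly, including at points of the moduli spaces fixed by the involution.

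The fixed part of the moduli spaces consists of Floer strips with boundary on $L$, i.e.\ Lagrangian Floer data for $(L,\phi(L))$. One then needs two statements:
\begin{enumerate}
\item[(a)] after inverting $u$, a Smith/Quillen-type localization theorem identifies the localized equivariant cohomology with a complex generated by the fixed generators, i.e.\ by $\phi(L)\cap L$, tensored with $\F_2((u))$;
\item[(b)] the localized invariant is isomorphic to $H^*(L;\F_2)\otimes\Lambda((u))$.
\end{enumerate}

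For (b) I would compute via a continuation argument to $H=0$. There the equivariant invariant becomes $H^*_{\mathbb Z/2}(X;\F_2)$ with involution $\tau_X$, possibly deformed by equivariant quantum corrections. Localization then gives $H^*(X^{\tau_X})\otimes\F_2((u)) = H^*(L)\otimes\F_2((u))$.

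\textbf{Step 3: counting.} From (a), the rank of the localized cohomology is at most the number of fixed generators, $\#(\phi(L)\cap L)$. From (b), that rank equals $\dim H^*(L;\F_2)$. Together these give the inequality in Theorem~\ref{thm:main}.

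\textbf{Main obstacle.} The hardest step is (a), localization at the chain level in the presence of sphere and disc bubbling. Two things must be shown. First, $\mathbb Z/2$-equivariant transversality must hold with $\F_2$ coefficients; this is where rational FOP methods fail and the integral/normal-complex approach is needed. Second, the free, non-fixed part of the moduli spaces must contribute only $u$-torsion, so that it vanishes after inverting $u$.

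My first attempt would use Borel-type chains $CF\otimes \F_2[[u]]$ with equivariant differential $d + u d_1 + \cdots$. I would filter by energy and show that the free orbits pair up into acyclic pieces after localization, using that the fixed-point moduli spaces are cut out transversally in the normal direction via the Kuranishi structure. The normal bundle to the fixed locus carries an orientation-reversing sign action, and over $\F_2$ this gives exactly the Euler class needed for a Smith-type inequality.
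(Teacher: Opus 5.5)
The main gap is your step (b), the lower bound. A continuation to $H=0$ (which is degenerate, so in practice to a $C^2$-small $\tau_X$-invariant Morse function) yields at best a localized complex whose differential is the equivariant Morse differential plus positive-energy corrections from holomorphic spheres. The classical localization theorem says nothing about such a deformed complex. Worse, by the energy spectral sequence, adding positive-valuation terms to a differential can only \emph{lower} the rank of cohomology, which is the wrong direction for you.

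The missing idea is the one the paper uses:
\begin{enumerate}
\item Take a $\tau_X$-invariant Morse--Smale function with positive definite normal Hessian along $L$. Its \emph{undeformed} Tate--Morse complex $\widehat{CM}(f_X)$ satisfies $\widehat{HM}(f_X)\cong H^*(L;\mathcal K)$ by finite-dimensional Morse localization.
\item Build involution-symmetric Tate PSS and SSP maps between $\widehat{CM}(f_X)$ and the Tate--Floer complex.
\item Use an involutive homotopy moduli space to show $\widehat\Phi^{\rm SSP}\circ\widehat\Phi^{\rm PSS}$ is chain homotopic to the pearly map. That map is $\mathrm{Id}$ plus terms of positive $T$- or $\theta$-valuation, hence invertible over $\Lambda_{\mathcal K}$.
\end{enumerate}
So PSS is injective and $\dim_{\Lambda_{\mathcal K}}\widehat{HF}(H)\geq \dim_{\mathbb F_2}H^*(L;\mathbb F_2)$, without ever computing $\widehat{HF}(H)$. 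Only this inequality is needed, not the isomorphism (b).

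By contrast, you make (a) and transversality harder than they are, and the route you sketch would not work in general. No localization of moduli spaces onto their fixed loci is needed, and none is available. The fixed loci are strips with boundary on $L$, which is not known to be unobstructed, and naive Floer-theoretic localization can fail.

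The upper bound is purely algebraic:
\begin{enumerate}
\item $\gamma$ and $\gamma^\dagger$ have equal action, so the differential splits as a local part plus terms raising action by some $\delta_H>0$.
\item The local Tate complex of a free pair $\{\gamma,\gamma^\dagger\}$ is the acyclic Tate--Morse complex of $\widehat S^\infty$. That of a fixed orbit is $\mathcal K$ with zero differential.
\item Homological perturbation then bounds the rank by $\#\mathcal O(H)^{\mathbb Z/2}$.
\end{enumerate}
This is just your closing remark that free orbits pair up into acyclic pieces.

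Likewise, equivariant transversality at fixed points never arises. After coupling with the antipodal action on $\widehat S^\infty$, the involution sends objects $(\gamma,\mu)\mapsto(\gamma^\dagger,-\mu)$, so no moduli space is mapped to itself. One chooses FOP perturbations on half of each orbit and transports them.

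What must be checked instead, and is absent from your plan, is that the involution is anti-complex: it conjugates domains and the target map is anti-symplectic. Hence it reverses the normal complex structures of the global Kuranishi charts. You need FOP transversality to be unchanged under conjugating NC structures. This holds because the canonical Whitney stratification does not depend on the complex structure.

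Finally, $\phi(L)\pitchfork L$ does not make the symmetric orbits nondegenerate. In the $\pm1$-eigenspace splittings of $d\tau_X$ at the two endpoints, transversality only makes the block $E_0^+\to E_1^-$ of $d\phi$ invertible. Nondegeneracy also requires the block $E_0^-\to E_1^+$ to be invertible, so a symmetric perturbation fixing the symmetric orbits is needed.
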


\subsection{Overall strategy}\label{section:overall strategy}
The overall theme of the proof of Theorem \ref{thm:main} is based on developing what can be considered to be a far-reaching Floer-theoretic generalization of Smith's estimates on the cohomology of real-algebraic varieties (see \cite{Smith-ra} for example) by means of the Smith inequality for $\Z/2$ actions in cohomology over $\F_2.$ 

To explain the strategy, we recall an observation due to Guangcun Lu \cite{Lu_2008} that allows one to reduce the Arnold--Givental conjecture to counting certain one-periodic orbits of a $\tau_X$-symmetric Hamiltonian $H \in C^\infty(\R/\Z \times X, \R)$. Here, the $\tau_X$-symmetry means that \[H(t,x) = H(-t, \tau_X(x))\] for all $(t,x) \in \R/\Z \times X$. 

Namely, given a Hamiltonian diffeomorphism $\phi:X\to X$ with $\phi(L)\pitchfork L$ and a Hamiltonian $K(t,x)$ with $\phi^1_K=\phi$, by a reparametrization we can assume $K(t,x) = 0$ for all $t$ near $0$ in $\R/\Z$. We define the ``double" of $K$ by $H(t,x) = 2K(2t,x)$ for $t \in [0,1/2]$ and $H(t,x) = 2K(-2t,\tau_X(x))$ for $t \in [-1/2,0]$. Then, first of all, $H$ is $\tau_X$-symmetric and $\phi_H^{1/2}=\phi$. Second, let $\cl LX$ denote the space of contractible loops in $X$, and consider the involution 
\begin{equation}\label{eqn:T-action}
\begin{aligned}
T:\cl LX &\to \cl LX, \\
z(t) &\mapsto \tau_X(z(-t)).
\end{aligned}
\end{equation}
Denoting by $\cl O(H) \subset \cl LX$ the set of contractible one-periodic orbits of $H$, note that $T$ preserves $\cl O(H)$ as $H$ is $\tau_X$-symmetric. Let  \[\cl O(H)^{\Z/2} = \{z \in \cl O(H)\;|\; Tz = z\}\] denote the fixed point set of the $\Z/2$ action on $\cl O(H)$ induced by $T.$ Then there is an injection \begin{equation}\label{eq:injection} \cl O(H)^{\Z/2} \to \phi(L) \cap L,\end{equation} given by $z \mapsto z(1/2).$\footnote{This map becomes an isomorphism in general if we consider instead of $\cl LM$ the space of {\em all} loops in $X$; however for our purposes only the component of contractible loops is relevant.}

Moreover, we may perturb $H$ slightly to $H_1$ in $C^2$-topology within the class of $\tau_X$-symmetric Hamiltonians to make all 1-periodic orbits of $H_1$ non-degenerate while keeping $\phi_{H_1}^{1/2}(L)\cap L=\phi(L)\cap L$. Henceforth we will thus simply assume that $H$ is non-degenerate. Therefore, given any Hamiltonian diffeomorphism $\phi: X \to X$ such that $\phi(L) \pitchfork L$, to find a lower bound for $\# (\phi(L) \cap L)$, it suffices to find a lower bound for $\# \cl O(H)^{\Z/2}$ for every nondegenerate $\tau_X$-symmetric Hamiltonian $H.$ 

\begin{rem}
Write $\tau_X = \tau$. It is easy to see that if $K$ generates $\phi = \phi^1_K$ then the double $H$ of $K$ as defined above generates $\psi = \phi^1_H = \tau \phi^{-1} \tau \phi.$ Then the $T$ action on $\cl O(H)$ corresponds to the $\tau$ action on $\mathrm{Fix}(\psi),$ namely $x \mapsto \tau x.$ Indeed, for $y \in X,$ $\psi y = y$ if and only if $\phi y = \tau \phi \tau y.$ Hence if $\psi x = x,$ that is $\phi x = \tau \phi \tau x,$ then $\phi(\tau x) = \tau (\tau \phi \tau x) = \tau \phi x = \tau \phi \tau (\tau x)$ so $\psi \tau x = \tau x.$
\end{rem}

Building on the above observation, we provide an alternative description from the variational perspective. Let now $H$ be a non-degenerate $\tau_X$-symmetric Hamiltonian on $X.$ We consider a suitable cover \[\tilde{\cl L} X \to \cl LX,\] for instance the one corresponding to \[\ker([\om]) \subset \pi_2(X).\] Here, $\tilde{\cl L} X$ consists of equivalence classes of pairs of the form $(u, \uds \gamma)$ where $\uds \gamma: {\mb R}/{\mb Z} \to X$ is a smooth loop and $u: {\mb D}^2 \to X$ such that $u(e^{2\pi t i}) = \uds \gamma(t),$ where $(u, \uds \gamma) \sim (u', \uds \gamma')$ if $\uds \gamma = \uds \gamma'$ and $u$ concatenated with $u'$ with reversed orientation pairs trivially with $\omega$. Crucially, the involution $T$ from \eqref{eqn:T-action} lifts to such a cover: for a capping disk $u: {\mb D}^2 \to X$ with $u(e^{2\pi t i}) = \uds \gamma(t),$ $t \in \R/\Z,$ we set \[(Tu)(w) = \tau(u(\overline{w})),\] where $\overline{w}$ denotes the complex conjugate of $w \in {\mb D}^2 \subset \C.$

Then, the action functional
\beqn
\cl A_{H}([(u, \uds \gamma)]) := - \int_{{\mb D}^2} u^* \omega + \int_0^1 H(t,\uds \gamma(t))dt
\eeqn
satisfies the symmetry 
\beqn
T^* \cl A_{H} = \cl A_{H}.
\eeqn 
The critical points of $\cl A_{H}$, after projecting down to $\cl LX$, correspond to $\cl O(H)$, and the ones invariant under $T$-action correspond to $\cl O(H)^{\Z/2}$. Finally note that if $J=\{J_t\}_{t\in \R/\Z}$ is an $\R/\Z$-dependent $\om$-compatible almost complex structure, then $T_* J = \{ -(\tau_X)_* J_{-t} \}_{t \in \R/\Z}$ is also an $\R/\Z$-dependent $\om$-compatible almost complex structure. 

This indicates that we may define a (Tate-)equivariant Floer cohomology  \[\widehat{HF}^*_{\Z/2}(H;\F_2)\] of $H$ with respect to the $\Z/2$ action provided by $T.$ This cohomology is a vector space over the universal Novikov field \[\Lambda_{\cl K} = \{ \sum a_i q^{\la_i}\;|\; a_i \in \cl K, \; \la_i \to \infty\},\] where we recall that $\cl K = \F_2[\theta^{-1},\theta]]$ is the field of Laurent power series over $\F_2.$ Assuming the putative existence of $\widehat{HF}^*_{\Z/2}(H;\F_2)$, general considerations based on equivariant Morse inequalities should show that 
\begin{equation}\label{eqn:lower-bound}
\# \cl O(H)^{\Z/2} \geq \dim_{\Lambda_{\cl K}} \widehat{HF}^*_{\Z/2}(H;\F_2).
\end{equation}
Indeed, we observe that the $\Z/2$-orbits in $\cl O(H)$ are of two types: a free pair $O=\{x,T(x)\}$ where $T(x)\neq x$, and a fixed point $O=\{y\}=\{T(y)\}.$ We can define the local Tate cohomology \[\widehat{HF}^*_{{\rm loc},\Z/2}(O;\F_2)\] of each $\Z/2$-orbit $O$ and observe that it vanishes if $O$ is free and \[\widehat{HF}^*_{{\rm loc},\Z/2}(O;\F_2) \cong \cl K\] if $O=\{y\} \subset \cl O(H)^{\Z/2}.$ This implies by a homological perturbation argument that the desired inequality \eqref{eqn:lower-bound} holds.

What is left to bridge with the Arnold--Givental conjecture is a computation of $\widehat{HF}^*_{\Z/2}(H;\F_2)$. To this end, we may prove by means of an equivariant PSS map \cite{PSS} that we have an isomorphism 
\begin{equation}\label{PSS-iso}
\widehat{H}^*_{\Z/2}(X;\Lambda_{\cl K}) \xrightarrow{\cong}  \widehat{HF}^*_{\Z/2}(H;\F_2).
\end{equation}
As for $\widehat{H}^*_{\Z/2}(X;\Lambda_{\cl K})$, we recall the following fact from classical Smith inequalities, as in \cite{Borel}. Consider the Tate $\Z/2$ equivariant cohomology \[\widehat{H}^*_{\Z/2}(X;\F_2) = \theta^{-1} {H}^*_{\Z/2}(X;\F_2)\] with respect to the $\Z/2$-action provided by $\tau_X$. Then, the localization theorem in equivariant cohomology implies that the inclusion $L = {\rm Fix}(\tau_X) \to X$ induces an isomorphism\footnote{ It is not hard to see by well-established techniques that this isomorphism implies the following Smith inequality, which we will not use explicitly: $\dim_{\F_2} H^*(L;\F_2) \leq \dim_{\F_2} H^*(X;\F_2)^{\Z/2}.$ }, \begin{equation}\label{eq:iso} \widehat{H}^*_{\Z/2}(X;\F_2) \xrightarrow{\cong} H^*(L;\F_2) \otimes_{\F_2} \cl K.\end{equation} 

Therefore, for instance, by choosing a $\Z/2$-invariant Morse function with positive definite Hessian in the normal direction to $L$, one can prove that 
\begin{align}\label{eq:formula}\displaystyle \dim_{{\Lambda_{\cl K}} }  \widehat{HF}^*_{\Z/2}(H;\F_2) = \dim_{\Lambda_{\cl K}}\widehat{H}^*_{\Z/2}(X;\Lambda_{\cl K})  = \dim_{\Lambda_{\cl K}}\widehat{H}^*_{\Z/2}(X;\cl K) \otimes_{\cl K} \Lambda_{\cl K} \\ \nonumber =  \dim_{\cl K}\widehat{H}^*_{\Z/2}(X;{\cl K})   = \dim_{\cl K} H^*(L;\cl K) = \dim_{\F_2} H^*(L;\F_2),\end{align} where we used \eqref{eq:iso} in the last line. Therefore, by combining \eqref{eqn:lower-bound}, \eqref{eq:formula}, we obtain a proof of Theorem \ref{thm:main}. Technically, it is easier to only prove the injectivity of the map \[\widehat{H}^*_{\Z/2}(X;\Lambda_{\cl K}) \xrightarrow{}  \widehat{HF}^*_{\Z/2}(H;\F_2),\] and hence obtain an inequality which is still sufficient for our argument.

\subsection{Outlook} We list the following two directions of research which we plan to investigate in future work:
\begin{enumerate}
\item First, it is interesting to see whether $L = {\rm Fix}(\tau_X)$ is symplectically rigid from other perspectives in the field of symplectic topology, in addition to the existence of Lagrangian intersections under Hamiltonian deformation as proved in Theorem \ref{thm:main}. Namely, it is interesting to see whether $L$ is heavy or superheavy in the terminology of \cite{Entov_Polterovich_3} with base field $\F_2$ or equivalently \cite{MSV-jtop} whether it is $SH$-heavy in the sense of \cite{Varolgunes-GT, VT, DGPZ}; finally, it would be interesting to see whether $L$ is unobstructed in a suitable sense over base field $\F_2$ (see \cite{FO3-book, FOOO_2013, Rabah, Rabah_thesis} for example) and has non-vanishing resulting Floer cohomology.

\item Second, it is interesting to continue studying the symmetric $1$-periodic orbits of iterations $H^{(k)}(t,x) = kH(kt,x),$ $k\geq 2,$ of a symmetric Hamiltonian $H(t,x).$ Note that such $H^{(k)}$ are also symmetric and hence their symmetric one-periodic orbits correspond to intersection points $\phi^{1/2}_{H^{(k)}}(L) \cap L.$ Note that $\phi^{1/2}_{H^{(k)}} = \phi^{1}_{H^{(\frac{k}{2})}}$ if $k$ is even and $\phi^{1/2}_{H^{(k)}} = \phi^{1/2}_{H} \phi^{1}_{H^{(\frac{k-1}{2})}}$ if $k$ is odd.

We believe that our methods, combined with arguments from Hamiltonian dynamics, such as \cite{shelukhin-22, BSWX, CGG-hypersurfaces} can provide interesting results in this direction, including new progress on the Seifert conjecture. We refer to \cite{ETDS-Seifert, AIHP-Seifert} for recent results and a review of the literature in this direction.
\end{enumerate}

\subsection{Comments on technical difficulties}
As the reader may tell, our strategy resembles the proof of the integral Arnold conjecture of the first and fourth authors \cite{Bai_Xu_Arnold}. Namely, we 1) define the corresponding Floer cohomology, 2) define the PSS and SSP maps between the Floer cohomology and the Morse cohomology, 3) use gluing to obtain a homotopy between the composition of PSS and SSP maps and an automorphism on the Morse cohomology, 4) we conclude that the rank of Morse cohomology is a lower bound on the number of generators of the Floer cohomology. In the case of the Arnold--Givental conjecture, the additional difficulties include the following:
\begin{enumerate}
    \item \emph{Regularizing the moduli spaces in the equivariant context}. The moduli spaces involved in the construction, which are spelled out in detail in Section \ref{sec:moduli}, are regularized using global Kuranishi charts \cite{AMS,AMS2} in the Floer-theory setting \cite{Bai_Xu_Arnold,Bai_Xu_foundation}. The additional feature in the current setting is the presence of an involution which conjugates the stable complex structures on the moduli spaces. This is distinctive from the equivariant setup in \cite[Part 4]{Bai_Xu_foundation}.
    \item \emph{FOP perturbations with additional ${\mb Z}/2$-symmetry.} To define counts over ${\mb F}_2$, we use the Fukaya--Ono--Parker (FOP) perturbation scheme developed by the first and fourth authors \cite{Bai_Xu_2022}. Besides performing an abstract Borel construction following \cite[Part 4]{Bai_Xu_foundation} at the level of global Kuranishi charts, we need to ensure that the FOP transversality condition is preserved under involutions on normally complex orbifolds relevant for the current problem. Luckily, this is a direct consequence of the diffeomorphism-invariance property of the canonical Whitney stratification that is built into the construction of FOP transverse perturbations.
    \item \emph{Tate cohomology via semi-infinite sphere.} We highlight our approach to Tate cohomology. Broadly, we follow the Morse-theoretic Borel construction of \cite{seidel-pants, Hendricks_Lipshitz_Sarkar, Seidel_Smith_localization}. More precisely, we ``couple'' the Floer equation with the (standard) Morse flow lines inside $S^\infty$ which has a free ${\mb Z}/2$-action. Using the construction of \cite{Bai_Xu_foundation}, we can also extend the infinite-dimensional sphere $S^\infty$ to the negative direction, i.e., defining
    \beqn
    \wh S^\infty = \Big\{ (h_0, h_{\pm 1}, h_{\pm 2},\ldots)\ |\ h_i \in {\mb R},\ \sum_i |h_i|^2 = 1,\ |i| \gg 0\Longrightarrow h_i = 0 \Big\}
    \eeqn
and extending the Morse theory to $\wh S^\infty$. This allows us to geometrically realize the Tate construction (i.e. geometrically inverting the equivariant variable).
\end{enumerate}

Finally, we remark that equivariant localization in Floer theory may not hold in general in the most naive sense (cf. \cite{large2019equivariantfloertheorydouble}). In our setting, we \emph{do not} prove a localization statement for the Tate equivariant Floer cohomology $\widehat{HF}^*_{\Z/2}(H;\F_2)$. Instead, we appeal to the equivariant PSS map and classical considerations in finite-dimensional Morse theory to deduce our result.

\subsection{Outline}
This manuscript is divided into two parts. 

Part I consists of Section \ref{sec:moduli} and Section \ref{sec:proof}, in which we respectively discuss the relevant moduli spaces for the Floer-theoretic study and provide a proof of Theorem \ref{thm:main} highlighting the formal aspects without providing details on global Kuranishi charts and FOP perturbations.

Part II is the technical core of the paper.
\begin{itemize}
    \item In Section \ref{sec:abstract}, we recall abstract notions on manifolds with corners and flow categories following \cite[Part 1]{Bai_Xu_foundation} and detail the parts where the involution matters.
    \item In Section \ref{sec:FOP}, we recall FOP perturbations for normally complex orbifolds developed in \cite{Bai_Xu_2022} and prove the necessary extension in the presence of an involution. We also state the properties satisfied by the global Kuranishi/derived orbifold charts, which will be used in Section \ref{subsec:official-proof} to implement the strategy described in Section \ref{sec:proof} rigorously.
    \item In Section \ref{section6}, we describe how to construct global Kuranishi charts for the moduli spaces discussed in Section \ref{sec:moduli}.
    \item Finally, in Section \ref{sec:nc-smoothing}, we provide details on constructing normal complex structures on the derived orbifold charts of the moduli spaces and smoothing constructions.
\end{itemize}

\begin{rem}
We learned that Amanda Hirschi has announced a proof of the Arnold--Givental conjecture in a joint work-in-progress with Noah Porcelli on August 26, 2026 during the event ``Symplectic Topology - A conference in honor of Kai Cieliebak" at the University of Augsburg.
\end{rem}

\subsection*{Acknowledgments}
S.B. was supported by the NSF standard grant DMS-2404843 and CAREER grant DMS-2540393. E.S. was supported by an NSERC Discovery Grant, an FRQNT Teams Grant, and by the Courtois chair in fundamental research; he thanks Mohammed Abouzaid, Urs Frauenfelder, and Leonid Polterovich for useful discussions on related topics. Y.W. would like to thank Kenji Fukaya for helpful discussion. G.X. was supported by NSF grants DMS-2345030 and DMS-2506403, and a Simons Foundation Travel Grant, and would like to thank Mohammad Tehrani, Penka Georgieva, and Mohamad Rabah for helpful discussions. 

\section*{Part I.}

\section{Moduli spaces}\label{sec:moduli}
In this section, we set up notation for the moduli spaces involved in our Morse-theoretic and Floer-theoretic constructions. We use notation different from that in the Introduction to align better with the discussions in Part II.

\subsection{Involution symmetric Morse complex}

Let $X$ be a compact smooth manifold and $\tau_X: X \to X$ be a smooth involution. 

\begin{prop}
There exists a $\tau_X$-equivariant Morse--Smale pair. 
\end{prop}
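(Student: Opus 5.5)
We need a pair $(f,g)$ with $f\circ\tau_X=f$, $\tau_X^*g=g$, $f$ Morse, and $(f,g)$ Morse--Smale. The plan is to build $f$ and $g$ by averaging and then make the generic choices in a $\mathbb{Z}/2$-compatible way.

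\textbf{Choice of $f$ and $g$.} Start from any Riemannian metric $g_0$ and set $g=\frac12(g_0+\tau_X^*g_0)$, which is $\tau_X$-invariant. The fixed locus $F={\rm Fix}(\tau_X)$ is a disjoint union of closed submanifolds; $\tau_X$ acts on the normal bundle $N_F$ as $-1$, since it is linear, isometric and of order two. Take a Morse function $f_F$ on $F$. Define $f$ near $F$ by $f_F(p)+|v|^2$ in $g$-exponential coordinates $(p,v)\in N_F$. Extend it to $X$ as a $\tau_X$-invariant function, either by averaging or by working on the quotient away from $F$.

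\textbf{Making $f$ Morse.} Away from $F$ the action is free, so $(X\setminus F)/\tau_X$ is a manifold. Perturb the induced function there, relative to a neighborhood of $F$ where $f$ is already Morse, to a Morse function, and pull back. Then $f$ is $\tau_X$-invariant and Morse. Its critical points on $F$ are those of $f_F$, and each has positive definite normal Hessian; this matches the remark in the Introduction.

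\textbf{Morse--Smale transversality.} We need $W^u(x)\pitchfork W^s(y)$ for all critical points $x,y$. Perturb $g$ among $\tau_X$-invariant metrics, supported away from critical points. A flow line $\gamma$ from $x$ to $y$ meets either points outside $F$ or lies entirely in $F$, since $F$ is flow-invariant for an invariant pair.
\begin{itemize}
\item If $\gamma$ leaves $F$, choose a point $\gamma(s)$ with $\tau_X\gamma(s)\ne\gamma(s)$. On a small neighborhood $U$ of that point, $U\cap\tau_XU=\varnothing$. Perturbing $g$ on $U$ and symmetrically on $\tau_XU$ is as free as an arbitrary perturbation on $U$. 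The standard Sard--Smale argument (Smale's or Schwarz's proof of genericity of Morse--Smale metrics, which only uses perturbations near one point of each flow line) then gives surjectivity of the linearized operator for such lines.
\item If $\gamma\subset F$, split the linearization along $\gamma$ into tangential and normal parts, using $T X|_F=TF\oplus N_F$, which is $\tau_X$-equivariant. The tangential part is transverse after choosing $(f_F,g|_F)$ Morse--Smale on $F$, by the classical result. The normal part is the linearization of the negative gradient flow of $|v|^2$ along $\gamma$. It is a flow $\dot\xi=-A(s)\xi$ with $A(s)$ positive definite, which ought to make the normal part of the operator surjective; note it is arranged so the endpoints have positive normal Hessian. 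Hence $W^u(x)$ lies in $F$ near $x$ and is transverse to $W^s(y)$, which contains the full normal directions. Concretely, at each critical point $x\in F$ the unstable manifold is tangent to $F$ and the stable manifold contains $N_F$. So intersections within $F$ are transverse in $X$ exactly when they are transverse in $F$.
\end{itemize}

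The main obstacle is the second case: controlling flow lines inside $F$, where invariant perturbations are constrained. The key point is the positivity of the normal Hessian, which I arranged by construction. I would verify it by checking that $W^u(x)\subset F$ for $x\in F$, which follows from the positive normal Hessian and invariance of $F$. Flow lines from points off $F$ into $F$, or between free critical points, are handled by the first case. A Baire intersection over countably many pairs $(x,y)$ then gives a residual set of invariant metrics.
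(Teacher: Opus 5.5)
Your proposal is correct, but it takes a different route from the paper. The paper's proof is two citations: Mayer's theorem gives a $\tau_X$-invariant \emph{stable} Morse function, and Bao--Lawson show that for stable invariant Morse functions the Morse--Smale condition can be reached with invariant metrics. That argument covers general compact group actions. The stability hypothesis controls flow lines inside fixed loci, where invariant perturbations preserve the splitting of the linearized operator into tangential and normal parts. Such perturbations therefore cannot repair a normal part of negative index.

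You instead use that an involution has only two orbit types, and you impose the needed condition by hand. The local model $f_F+|v|^2$ gives every fixed critical point a positive definite normal Hessian. Consequently:
\begin{itemize}
\item $W^u(x)\subset F$ for $x\in F$, so no trajectory runs from $F$ to a free critical point;
\item trajectories starting at free critical points stay off $F$ except possibly at their endpoint, and symmetric perturbations on $U\sqcup\tau_X U$ handle them;
\item trajectories between fixed critical points reduce to the Morse--Smale condition on $F$.
\end{itemize}
This is elementary and self-contained. It also produces directly the positive definite normal Hessian that the paper separately imposes on $f_X$ when computing $\widehat{HM}(f_X)$. The paper's route is shorter and more general.

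A few points deserve a line each when you write this up.

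\emph{Free case.} The symmetric copy of a bump at $\gamma(s_0)$ must avoid $\gamma$. This holds after shrinking $U$: $\tau_X\gamma$ is again a flow line and $f$ strictly decreases along flow lines, so $\tau_X\gamma(s_0)\in\gamma(\mathbb{R})$ would force $\gamma(s_0)\in F$.

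\emph{Fixed case.} Your description of the normal operator as $\nabla_s+A(s)$ with $A(s)>0$ along the whole line is valid only while $g$ is unperturbed near $F$. The covariant Hessian at non-critical points depends on the metric. The robust argument is the geometric one you end with, but applied at every $z\in W^s(y)\cap F$ rather than only at $y$. There $T_zW^s(y)$ is $d\tau_X$-invariant with $+1$-part $T_zW^s_F(y)$. Positivity of the normal Hessian at $y$ gives $\dim W^s(y)-\dim W^s_F(y)=\operatorname{codim}F$, so the $-1$-part is all of $N_z$.

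\emph{Order of choices.} Make the pair on $F$ Morse--Smale before extending to an invariant metric, and keep the later perturbations away from $F$.
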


\begin{proof}
By the main result of \cite{Mayer_1989}, there exists an invariant stable Morse function on $X$. By a result of Bao--Lawson \cite{Bao_Lawson_2024}, one can achieve the Morse--Smale condition for stable Morse functions.
\end{proof}

Then we choose a $\tau_X$-equivariant Morse--Smale pair. In notations, we only indicate the Morse function $f_X: X \to {\mb R}$ but suppress the Riemannian metric. We use $\kappa$ to denote a general critical point of $f_X$.

To consider equivariant theory, we ``couple'' the Morse theory of $f_X$ with the Morse theory of the classifying space of ${\mb Z}/2$. Consider the Borel model for ${\mb Z}/2$-equivariant theories. One has 
\beqn
S^\infty:= \Big\{ (h_0, h_1, \ldots)\ |\ h_i \in {\mb R},\ i\gg 0 \Longrightarrow h_i = 0,\ \sum_{i=0}^\infty h_i^2 = 1 \Big\}.
\eeqn
The group ${\mb Z}/2$ acts on $S^\infty$ by flipping all signs. Then the quotient is $\mb{RP}^\infty$ whose cohomology ring over ${\mb F}_2$ is 
\beqn
H_{{\mb Z}/2}^{\rm Borel} ({\rm pt}) = H^*(\mb{RP}^\infty; {\mb F}_2) = {\mb F}_2[[\theta]].
\eeqn
One can use a special Morse--Smale pair on $S^\infty$ to give a chain model. Define
\beqn
f_{\rm Borel}: S^\infty \to {\mb R},\ f_{\rm Borel} (h_0, h_1, \ldots) = - \sum_{i=0}^\infty i |h_i|^2.
\eeqn
Then the standard metric satisfies the Morse--Smale condition (on any finite truncations). 

One can geometrically obtain a Tate model as follows. Define
\beqn
\wh S^\infty:= \Big\{ h =  (h_i)_{i \in {\mb Z}}\ |\ h_i \in {\mb R},\ |i|\gg 0 \Longrightarrow h_i = 0,\ \sum_{i\in {\mb Z}} h_i^2 = 1 \Big\}
\eeqn
which contains $S^\infty$. The function $f_{\rm Borel}$ can be extended to 
\beqn
f_{\rm Tate}(h) = - \sum_{i \in {\mb Z}} i|h_i|^2.
\eeqn
The Riemannian metric also extends in the natural way. Then the critical points of $f_{\rm Tate}$ and critical values are
\begin{align*}
&\ \mu_\pm^{(k)} = ( \pm \delta_{ik})_{i \in {\mb Z}}, &\ f_{\rm Tate}(\mu_\pm^{(k)}) = -k,
\end{align*}
where $\delta_{ik}$ is the Kronecker delta of $i$ and $k$.

Now we define the coupled Morse function
\beqn
\wt f_X: X \times \wh S^\infty \to {\mb R},\ \wt f_X(x, h) = f_X(x) + f_{\rm Tate}(h).
\eeqn
Critical points are denoted by 
\beqn
v = (\kappa_v, \mu_v)
\eeqn
where $\kappa_v \in {\rm crit} f_X$ and $\mu_v \in {\rm crit} f_{\rm Tate}$. For each pair of critical points $v, w \in {\rm crit}(\wt f_X)$, one has the moduli space
\beqn
M_{vw}^{\mb M}
\eeqn
of smooth or broken negative (downward) gradient flow lines (modulo componentwise translation) from $v$ to $w$.\footnote{Here the superscript ${\mb M}$ stands for ``Morse.'' It will also denote the Tate-Morse flow category after the abstract notion of flow categories is introduced.} Transversality implies that the moduli space is a compact manifold with corners. 

Moreover, there are two types of symmetries among these moduli spaces. First, ${\mb Z}$ acts on $\wh S^\infty$ by the translation
\beqn
a h = a((h_i)_{i \in {\mb Z}}) = (h_{i-a})_{i\in {\mb Z}},\ a \in {\mb Z},\ h \in \wh S^\infty.
\eeqn
This action preserves the gradient flow of $f_{\rm Tate}$ with 
\beqn
f_{\rm Tate}(ah) = f_{\rm Tate}(h) - a.
\eeqn
This ${\mb Z}$-action is then extended to the product $X \times \wh S^\infty$ which acts trivially on the $X$-component. The maps induce homeomorphisms
\beqn
M_{av\ aw}^{\mb M} \cong M_{vw}^{\mb M}.
\eeqn
Another symmetry, which will be called an involution later, is the action
\beqn
v = (\kappa_v, \mu_v) \mapsto  v^\dagger = (\kappa_v^\dagger, \mu_v^\dagger) :=(\tau_X (\kappa_v), - \mu_v ).
\eeqn
It preserves the action value and moduli spaces:
\beq\label{Morse_involution}
M_{vw}^{\mb M} \cong M_{v^\dagger w^\dagger}^{\mb M}.
\eeq

\subsubsection{Tate-Morse complex}

One can then define a Tate-Morse complex as follows. Define
\beqn
\wt{CM}(f_X) = \Big\{ \sum_{i=1}^\infty a_i v_i\ |\ a_i \in {\mb F}_2,\ v_i \in {\rm crit} (\wt f_X),\ \lim_{i \to \infty} \wt f_X(v_i) = - \infty \Big\}.
\eeqn
We consider infinite series instead of finite sums as we would like to have a theory over the field
\beqn
{\mc K}:= {\mb F}_2 ((\theta)) = \F_2[\theta^{-1},\theta]]
\eeqn
of formal Laurent series in $\theta$. Then define the differential
\beqn
\wt d: \wt{CM}(f_X) \to \wt{CM}(f_X)
\eeqn
by linearly extending over ${\mc K}$
\beqn
\wt d(v) = \sum_{w \in {\rm crit}(\wt f_X)} (\# M_{vw}^{\mb M}) w \in \wt{CM}(f_X)
\eeqn
where $\# M_{vw}^{\mb M} \in {\mb F}_2$ is the mod 2 count of the moduli space (when the moduli space has nonzero dimension, the count is defined to be zero). 

Moreover, the involution induces a ${\mb Z}/2$-action on the complex defined by 
\beqn
\sum_{i=1}^\infty a_i v_i \mapsto \sum_{i=1}^\infty a_i v_i^\dagger.
\eeqn
The homeomorphism \eqref{Morse_involution} implies that $\# M_{vw}^{\mb M} = \# M_{v^\dagger w^\dagger}^{\mb M}$. Hence this action commutes with the differential. Therefore, the invariant part
\beqn
\wh{CM}(f_X):= \wt{CM}(f_X)_{{\mb Z}/2}
\eeqn
is a subcomplex. We call this the {\bf Tate-Morse complex} associated to $f_X$. Its cohomology, denoted by 
\beqn
\wh{HM}(f_X)
\eeqn
is a finite-dimensional vector space over ${\mc K}$. 

\subsubsection{Computation}

One can prove that the Tate-Morse cohomology $\wh{HM}(f_X)$ is independent of choices following standard arguments. For proving the Arnold--Givental conjecture, we do not need to do so. Instead, we compute the rank of homology for a specific choice of involution-invariant Morse function. Choose $f_X$ such that its normal Hessian along ${\rm Fix}(\tau_X)$ is positive definite. A consequence is that a downward Morse flow line of $f_X$ cannot flow from an invariant fixed point to a non-invariant fixed point. Let
\beqn
f_L:= f_X|_L: L = {\rm Fix}(\tau_X) \to {\mb R}
\eeqn
be the restriction of $f$ along the fixed point locus. We view the fixed locus ${\rm Fix}(\tau_X)$ as having the trivial ${\mb Z}/2$-action. Then the corresponding Tate-Morse complex
\beqn
\wh{CM}(f_L) \subset \wh{CM}(f_X)
\eeqn
becomes a subcomplex. Using the Morse-theoretic localization (see for example \cite{Seidel_Smith_localization}), or more precisely, the group cohomology discussion for chain complexes with a ${\mb Z}/2$-action as in \cite[Section 2]{large2019equivariantfloertheorydouble}, one obtains that 

\beq\label{Morse_localization}
\wh{HM}(f_X) \cong \wh{HM}(f_L) \cong HM(f_L) \otimes {\mc K} \cong H^*(L; {\mc K}).
\eeq

Its rank is the lower bound appearing in the Arnold--Givental conjecture.

\subsection{Involution symmetric Hamiltonian}

\subsubsection{Hamiltonian Floer theory}

Let $(X, \omega)$ be a compact symplectic manifold. Define
\beqn
\Pi:= \omega(\pi_2(X)) \subset {\mb R}.
\eeqn
Let $H$ be a nondegenerate 1-periodic Hamiltonian. Let ${\mc O}(H)$ be the set of contractible 1-periodic orbits. In this paper, a capped 1-periodic orbit is an equivalence class of pairs $(u, \uds \gamma)$ where $\uds \gamma\in {\mc O}(H)$ and $u$ is a smooth extension to the disk; two pairs $(u, \uds\gamma)$ and $(u', \uds \gamma')$ are considered equivalent if $\uds \gamma = \uds \gamma'$ and the spherical class determined by $u$ and the reversion of $u'$ has zero symplectic area. Let $\tilde {\mc O}(H)$ be the set of capped 1-periodic orbits of $H$. Then $\Pi$ acts freely on $\tilde {\mc O}(H)$ by recapping with quotient $\tilde{\mc O}(H)/\Pi \cong {\mc O}(H)$. We use $\gamma$ to denote a general element of $\tilde {\mc O}(H)$ where the underlying loop is $\uds\gamma$.

Let $(X, \omega)$ be a compact symplectic manifold and $\tau_X: X \to X$ be an anti-symplectic involution and $L \subset X$ be the fixed point set of $\tau_X$. Following Lu \cite{Lu_2008}, we introduce the following notion.

\begin{defn}
A 1-periodic Hamiltonian $H_t: X \to {\mb R}$, $t \in {\mb R}$, is called {\bf involution symmetric} if for all $t\in {\mb R}$ and $x \in X$, 
\beqn
H_{-t} (x) = H_t(\tau_X(x)).
\eeqn
\end{defn}
For the next two lemmas, let ${\mc P}(H)\supset{\mc O}(H)$ be the set of {\em all} 1-periodic orbits of $H$.
If $H_t$ is involution symmetric, then if $x(t)$ is a solution to $\dot x(t) = X_{H_t}(x(t))$, so is $y(t) = \tau_X( x(-t))$. Therefore, one has an involution $\uds\gamma \mapsto \uds \gamma^\dagger$ on ${\mc P}(H)$ given by
\beqn
\uds\gamma^\dagger(t) = \tau_X( \uds \gamma (-t)) = \tau_X( \uds\gamma (1-t)),
\eeqn
which restricts to ${\mc O}(H)$.

\begin{lemma}\label{lemma_Lu}\cite{Lu_2008} There is a bijection
\beqn
{\mc P}(H)^{{\mb Z}/2} \to \phi(L) \cap L,\quad \ \uds\gamma \mapsto \phi(\uds\gamma(0)) = \uds\gamma( \frac{1}{2})
\eeqn
where $\phi$ is the time-$\frac{1}{2}$ map of the Hamiltonian $H_t$.    
\end{lemma}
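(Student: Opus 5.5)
Write $\phi_t$ for the Hamiltonian flow of $H_t$, so that every 1-periodic orbit has the form $\uds\gamma(t) = \phi_t(\uds\gamma(0))$ with $\phi_1(\uds\gamma(0)) = \uds\gamma(0)$, and $\phi = \phi_{1/2}$. The plan is to reduce everything to a symmetry identity for the flow, and then check well-definedness, injectivity and surjectivity directly.

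\emph{Key identity.} We first show
\beqn
\tau_X \circ \phi_{-t} \circ \tau_X = \phi_t \quad \text{for all } t.
\eeqn
Since $\tau_X$ is anti-symplectic and $H_{-t}\circ\tau_X = H_t$, differentiating gives $(\tau_X)_* X_{H_{-t}} = -X_{H_t}$. Set $\psi_t = \tau_X \phi_{-t} \tau_X$. Then
\beqn
\tfrac{d}{dt}\psi_t = -(\tau_X)_* X_{H_{-t}} = X_{H_t}\circ \psi_t ,
\eeqn
and $\psi_0 = \mathrm{id}$. Uniqueness of solutions to the ODE then gives $\psi_t = \phi_t$. This is exactly the fact, already noted in the text, that $y(t) = \tau_X(x(-t))$ is again a solution. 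Combined with 1-periodicity of $H$, we also have $\phi_{t+1} = \phi_t\circ\phi_1$.

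\emph{Well-definedness.} Let $\uds\gamma$ be fixed by the involution, so $\uds\gamma(t) = \tau_X(\uds\gamma(-t))$. At $t=0$ this gives $\tau_X(\uds\gamma(0)) = \uds\gamma(0)$, so $\uds\gamma(0)\in L$. At $t = 1/2$, using periodicity $\uds\gamma(-1/2) = \uds\gamma(1/2)$, we get $\tau_X(\uds\gamma(1/2)) = \uds\gamma(1/2)$, so $\uds\gamma(1/2) \in L$. Since $\uds\gamma(1/2) = \phi(\uds\gamma(0))$ with $\uds\gamma(0)\in L$, the point $\uds\gamma(1/2)$ lies in $\phi(L)\cap L$.

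\emph{Injectivity.} The point $\phi(\uds\gamma(0))$ determines $\uds\gamma(0) = \phi^{-1}(\phi(\uds\gamma(0)))$, and an orbit is determined by its initial point.

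\emph{Surjectivity.} This is the main step. Given $p = \phi(q) \in \phi(L)\cap L$ with $q \in L$, define $\uds\gamma(t) = \phi_t(q)$. We must show that this is 1-periodic and involution-invariant.

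First, $\tau_X\uds\gamma(-t) = \tau_X\phi_{-t}\tau_X(q) = \phi_t(q) = \uds\gamma(t)$, using $\tau_X q = q$ and the key identity. So the curve is invariant under the involution, for all $t\in\mb R$.

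For periodicity, apply the key identity at $t=1/2$:
\beqn
\phi_{-1/2} = \tau_X\phi_{1/2}\tau_X .
\eeqn
Since $\phi_{1/2}(q) = p$ and $\tau_X$ fixes both $p$ and $q$, this gives $\phi_{-1/2}(q) = \tau_X(p) = p$. Next, $\phi_1 = \phi_{1/2}\circ\phi_{1/2}$ fails in general for time-dependent $H$, so instead we use $\phi_1 = \phi_{1/2}\circ\phi_{-1/2}^{-1}$ composed with a time shift. Explicitly, 1-periodicity gives $\phi_{1/2} = \phi_{-1/2+1} = \phi_{-1/2}\circ\phi_1$ in the convention $\phi_{t+1} = \phi_t\phi_1$. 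Hence
\beqn
\phi_1(q) = \phi_{-1/2}^{-1}\phi_{1/2}(q) = \phi_{-1/2}^{-1}(p) = q .
\eeqn
Thus $\uds\gamma$ is a 1-periodic orbit, invariant under the involution, and it maps to $p$.

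The only real obstacle is keeping the time-shift convention straight for a non-autonomous flow. I would verify $\phi_{t+1} = \phi_t\circ\phi_1$ directly: both sides solve the same ODE in $t$, because $H_{t+1} = H_t$, and they agree at $t = 0$.
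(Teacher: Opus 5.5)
Your proof is correct and is essentially the paper's argument. Both rest on the reflection symmetry $t\mapsto\tau_X(x(-t))$ of flow lines together with ODE uniqueness. The paper reflects the half-orbit $x|_{[0,1/2]}$ via $y(t)=\tau_X x(1-t)$ and glues at $t=1/2$, whereas you package the same symmetry as the flow identity $\tau_X\phi_{-t}\tau_X=\phi_t$ and get periodicity from $\phi_{-1/2}(q)=p$ and $\phi_{t+1}=\phi_t\circ\phi_1$. One small slip: your aside ``$\phi_1=\phi_{1/2}\circ\phi_{-1/2}^{-1}$ composed with a time shift'' is imprecise, since these two maps are only conjugate in general, but the formula $\phi_1=\phi_{-1/2}^{-1}\circ\phi_{1/2}$ that you actually use is right.
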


\begin{proof}
We first consider the $\Z/2$-fixed point set. Suppose $\uds\gamma^\dagger = \uds\gamma\in\mc{P}(H)$. Then one has 
\beqn
\uds \gamma (0) = \uds\gamma^\dagger(0) = \tau_X( \uds\gamma (0)) \Longrightarrow \uds\gamma (0)  \in L.
\eeqn
Similarly $\uds\gamma(\frac{1}{2}) \in L$, so $\uds\gamma(\frac{1}{2})\in\phi(L) \cap L$ because $\uds\gamma(\frac{1}{2}) = \phi(\uds\gamma (0))$. Conversely, given $x_1=\phi(x_0)\in\phi(L)\cap L$, there is a unique solution $x:[0,1/2]\to X$ to $\dot{x}(t)=X_{H_t}(x(t))$ with $x(0)=x_0$ and $x(1/2)=x_1$. Define $y:[1/2,1]\to X$ by $y(t)=\tau_Xx(1-t)$, then $y(t)$ also solves $\dot{y}(t)=X_{H_t}(y(t))$ and $y(1/2)=x(1/2)$, $y(1)=x(0)$. So $x*y$ is a 1-periodic orbit of $H$ which is clearly $\Z/2$-fixed.
\end{proof}

By Lemma \ref{lemma_Lu}, there is an injection ${\mc O}(H)^{\Z/2}\hookrightarrow \phi(L)\cap L$. The next Lemma implies that without loss of generality, to prove the Arnold--Givental conjecture, we can assume that the involution symmetric Hamiltonian is nondegenerate.

\begin{lemma}\label{lemma:symmetric regularization}
Let $\phi$ be a Hamiltonian diffeomorphism on $X$ such that $\phi(L)$ and $L$ intersect transversely. Then there exists a nondegenerate 1-periodic involution-symmetric Hamiltonian $H_t$ with time-$\frac{1}{2}$ map $\phi'$ such that 
\beqn
 \phi(L) \cap L =  \phi'(L) \cap L .
\eeqn
\end{lemma}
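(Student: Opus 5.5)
We take $K$ with $\phi^1_K=\phi$, reparametrize in time so that $K_t\equiv 0$ for $t$ near $0$ and near $1$, and form the double $H$ as in Section \ref{section:overall strategy}: $H_t=2K_{2t}$ for $t\in[0,\frac12]$ and $H_t=2K_{-2t}\circ\tau_X$ for $t\in[-\frac12,0]$, extended $1$-periodically. Because $K$ vanishes near the endpoints, $H$ is smooth. It is involution symmetric, since $H_{-t}=H_t\circ\tau_X$, and its time-$\frac12$ map is $\phi$. Its time-one map is $\psi=\tau_X\phi^{-1}\tau_X\phi$.

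Next we perturb $H$ to make it nondegenerate while keeping the symmetry and the time-$\frac12$ intersection set. Lemma \ref{lemma_Lu} puts the symmetric $1$-periodic orbits of $H$ in bijection with $\phi(L)\cap L$. Each such orbit is already nondegenerate: the linearized return map splits as a composition of $d\phi$ and its $\tau_X$-conjugate, and an eigenvector with eigenvalue $1$ would produce a vector in $T(\phi(L))\cap TL$, which transversality rules out. The usual argument is that a fixed point of $\psi$ on $L\cap\phi^{-1}(L)$ is nondegenerate iff $\phi(L)\pitchfork L$ there; this is the standard reversible-map fact $\psi=R_1R_2$ with $R_1=\tau_X$ and $R_2=\phi^{-1}\tau_X\phi$, where transversality of the fixed loci $L$ and $\phi^{-1}(L)$ gives nondegeneracy.

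This leaves the degenerate orbits, all of which are non-symmetric. They are isolated away from $L\cup\phi^{-1}(L)$ only in a compact sense, so we must perturb near them. The perturbation will be a symmetric function supported in small neighborhoods, chosen so that it never touches the symmetric orbits. We take $F_t=G_t+G_{-t}\circ\tau_X$ with $G$ supported in time near $t=\frac14$. Then $F$ is symmetric and supported in time near $\pm\frac14$, hence away from $t=0,\frac12$.

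We now check that $\phi'$, the time-$\frac12$ map of $H+F$, still satisfies $\phi'(L)\cap L=\phi(L)\cap L$. A cleaner route is to perturb $K$ rather than $H$. Write $K'=K+G$, with $G$ small and supported in time $(\frac14,\frac34)$ (in $K$-time) and in space away from the finite set of trajectories $\{\phi^t_K(p) : p\in L\cap\phi^{-1}(L)\}$. The double of $K'$ is automatically symmetric. Its time-$\frac12$ map $\phi'$ agrees with $\phi$ near $L\cap\phi^{-1}(L)$, and it is $C^1$-close to $\phi$. Since transverse intersections are stable, $\phi'(L)\cap L$ equals $\phi(L)\cap L$ for $G$ small, because no new intersection points can appear away from the old ones.

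The main obstacle is to show that a generic such $G$ makes every non-symmetric $1$-periodic orbit of the double nondegenerate. This is a parametric transversality argument. Consider the section $(G,x)\mapsto \psi_G(x)-x$ over the complement $U$ of a small neighborhood of $L\cap\phi^{-1}(L)$, where $\psi_G=\tau_X\phi_G^{-1}\tau_X\phi_G$. Near a non-symmetric fixed point $x$, the orbit passes through the time-$\frac14$ slab at two points, $\phi^{1/4}_H(x)$ and its $\tau$-image $\tau_X\phi_H^{-1/4}(x)$. These two points are distinct unless $x$ is symmetric. Localized variations of $G$ near the first point alone therefore move $\psi_G$ freely, and the universal section is transverse. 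Sard--Smale then gives residual $G$ for which all fixed points in $U$ are nondegenerate. Fixed points near the symmetric ones are already handled by openness, so a small perturbation creates none there. The delicate case is a non-symmetric orbit where the two slab points nearly coincide. This happens only near symmetric orbits, which are isolated and nondegenerate, so it is excluded for $U$ chosen suitably. This completes the plan.
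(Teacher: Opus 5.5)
There is a genuine gap at the step where you assert that the symmetric orbits are ``already nondegenerate''. Put $x_0\in L\cap\phi^{-1}(L)$ and $x_1=\phi(x_0)$. Let $E_i^\pm$ be the $\pm1$-eigenspaces of $d\tau_X$ at $x_i$, and write $d\phi|_{x_0}=\begin{pmatrix} a&b\\ c&d\end{pmatrix}$ as a map from $E_0^+\oplus E_0^-$ to $E_1^+\oplus E_1^-$. Differentiating $\psi=\tau_X\phi^{-1}\tau_X\phi$ gives $\ker(d\psi|_{x_0}-1)=\ker c\oplus\ker b$. In your reversible-map language, at a common fixed point one has $\ker(dR_1\,dR_2-1)=(E^+_{R_1}\cap E^+_{R_2})\oplus(E^-_{R_1}\cap E^-_{R_2})$. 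Transversality $\phi(L)\pitchfork L$ kills only the first summand, i.e.\ it gives $\ker c=0$. Nothing controls $\ker b=E_0^-\cap (d\phi)^{-1}E_1^-$.

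A local model shows this failure occurs. Take $\R^2$ with $\tau_X(x,y)=(x,-y)$ and $\phi(x,y)=(x,x+y)$. Then $\phi(L)$ meets $L$ transversely at the origin, yet $\psi(x,y)=(x,y+2x)$ fixes the whole $y$-axis. So the symmetric orbit is degenerate, and non-symmetric fixed points accumulate on it. Since $\psi$ near these points depends only on $\phi$, no choice of $K$ helps. Your perturbation is designed to vanish near the symmetric trajectories, so that $\phi'=\phi$ near $L\cap\phi^{-1}(L)$. It therefore cannot alter $d\psi$ there, nor remove the nearby degenerate non-symmetric orbits. Your last paragraph (``handled by openness'', ``isolated and nondegenerate'') relies on the same false claim.

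What is missing is a first perturbation \emph{along} the symmetric orbits that keeps them as orbits. For each symmetric orbit $\gamma_j$, add a $\tau_X$-symmetric $Q^j$ supported near its graph, with $Q^j_t(\gamma_j(t))=0$ and $dQ^j_t(\gamma_j(t))=0$. One can build $Q^j$ on a closed subinterval of $(0,\frac12)$ and extend it by symmetry. Then:
\begin{itemize}
\item the orbits $\gamma_j$ persist;
\item a generic choice makes $b$ invertible;
\item $C^2$-smallness keeps $c$ invertible;
\item a compactness argument shows that $\phi'(L)\cap L$ does not grow.
\end{itemize}
Only after this step are the symmetric orbits nondegenerate and isolated. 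Your second stage then goes through: a symmetric perturbation vanishing near the symmetric orbits, with genericity for free orbits coming from the distinct points $\gamma(t)$ and $\tau_X\gamma(-t)$ in the support slab. At that point, your observation that $\phi'=\phi$ near the old intersection points and is $C^0$-close elsewhere is a clean way to see that the intersection set is unchanged.
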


\begin{proof}
By \cite{Lu_2008} there exists a $\tau_X$-symmetric Hamiltonian $G_t$ with $\phi_{G}^{1/2}=\phi$ (see also the beginning of Section \ref{section:overall strategy}). However, such a $G_t$ may not be nondegenerate. We can perturb $G_t$ within $\tau_X$-symmetric Hamiltonians while keeping the set $\mc{P}(G)^{\Z/2}\cong\phi_G^{1/2}(L)\cap L$ unchanged. 
We begin by identifying the nondegeneracy condition for $\Z/2$-fixed 1-periodic orbits. Let $\uds\gamma\in{\mc P}(G)^{{\mb Z}/2}$ and denote $x_i=\uds\gamma(i/2)$ for $i=0,1$. Denote the $\pm1$-eigenspaces of $d\tau_X|_{x_i}$ by $E_i^{\pm}$. Clearly $E_i^+=T_{x_i}L$ and $\dim E_i^{\pm}=\frac{1}{2}\dim X$. Write the tangent map $d\phi|_{x_0}:T_{x_0}X\to T_{x_1}X$ as 
\[
d\phi|_{x_0}=\begin{pmatrix}
    a & b \\ c & d
\end{pmatrix}: E_0^+\oplus E_0^- \to E_1^+ \oplus E_1^-.
\]
Involution symmetry of $G_t$ implies 
$\phi_G^1=\tau_X\circ\phi^{-1}\circ\tau_X\circ\phi$.
Differentiation gives
\[
d\phi_G^1|_{x_0}=d\tau_X|_{x_0}\circ (d\phi|_{x_0})^{-1}\circ d\tau_X|_{x_1}\circ d\phi|_{x_0}.
\]
Now, writing any $v\in T_{x_0}X$ as $v=v_++v_-$ with $v_\pm\in E_0^{\pm}$, it is straightforward to see 
\[
d\phi_G^1|_{x_0}(v)=v \quad \iff \quad bv_-=0, \quad cv_+=0.
\]
Consequently, 
\[\ker(d\phi_G^1|_{x_0}-1)=\ker c\oplus \ker b\subset E_0^+\oplus E_0^-.\]
The transverse intersection condition $d\phi|_{x_0}(E_0^+)+E_1^+=T_{x_1}X$ translates to $c:E_0^+\to E_1^-$ being surjective, or equivalently $\ker c=0$. Hence we need $\ker b=0$ in addition for $\uds\gamma$ to be nondegenerate.
Now we write ${\mc P}(G)^{{\mb Z}/2}=\{\uds\gamma_1,\dots,\uds\gamma_k\}$ and perturb $G$ in two steps. 

\emph{Step 1.} We perturb $G$ to $G'$ near $\uds\gamma_1,\dots,\uds\gamma_k$. The graphs of $\uds\gamma_1,\dots,\uds\gamma_k$ in $\R/\Z\times X$ are disjoint. For each $\uds\gamma_j$ we consider a candidate Hamiltonian $Q^j_t$ supported near the graph of $\uds\gamma_j$, satisfying 
\[Q^j_t(\uds\gamma_j(t))=0, \quad dQ^j_t(\uds\gamma_j(t))=0,\]
and with disjoint supports. The Hamiltonian $Q^j_t$ can be chosen $\tau_X$-symmetric by first constructing it inside a closed sub-interval of $(0,1/2)$ and then extending it by involution symmetry.
Then $\uds\gamma_1,\dots,\uds\gamma_k$ are also 1-periodic orbits of 
\[G'=G+\sum_j Q^j,\]
so $\phi(L)\cap L\subset\phi_{G'}^{1/2}(L)\cap L$.
For generic $Q^1,\dots,Q^k$, the $b$-component of $d\phi_{G'}^{\frac{1}{2}}$ at $\uds\gamma_j(0)$ is invertible for each $j$. For sufficiently $C^2$-small $Q^1,\dots, Q^j$, $\phi_{G'}^{1/2}$ is $C^1$-close to $\phi_G^{1/2}$, so the $c$-component of $d\phi_{G'}^{\frac{1}{2}}$ at $\uds\gamma_j(0)$ is invertible for each $j$ since originally $\phi(L)\pitchfork L$. Thus there are disjoint small balls $p_j\in B_j\subset L$ such that $\phi_{G'}^{1/2}(x)\in L$ ($x\in B_j$) if and only if $x=p_j$. Moreover, for $C= L\setminus \cup_{j} B_j$, we can assume ${\rm dist}(\phi_{G'}^{1/2}(C),L)>0$ since ${\rm dist}(\phi(C),L)>0$. Thus, 
\[
\phi_{G'}^{1/2}(x)\in L \quad (x\in L)\quad \iff \quad x\in\{\uds\gamma_1(0),\dots,\uds\gamma_k(0)\}.
\]
In summary, for $Q^1,\dots,Q^k$ satisfying the above assumptions, we have 
\[
\mc{P}(G')^{\Z/2}=\mc{P}(G)^{\Z/2}=\{\uds \gamma_1,\dots,\uds\gamma_k\}
\]
and $\uds \gamma_1,\dots,\uds\gamma_k$ are non-degenerate for $G'$. 

\emph{Step 2.} We perturb $G'$ to $H$ away from $\uds\gamma_1,\dots,\uds\gamma_k$.
Since $\uds \gamma_1,\dots,\uds\gamma_k$ are non-degenerate for $G'$, there are disjoint neighborhoods $\uds\gamma_j(0)\in U_j\subset X$ such that $\uds\gamma_j(0)$ is the only fixed point of $\phi_{G'}^1$ lying in $U_j$. Choose disjoint $\tau_X$-invariant neighborhoods $W_j$ of ${\rm graph}(\uds\gamma_j)$ in $\R/\Z\times X$ satisfying $W_j\cap(\{0\}\times X)\subset U_j$. Consider a candidate $\tau_X$-symmetric Hamiltonian $K$ which vanishes on $W=\cup_j W_j$, and put 
\[
H=G'+K.
\]
Then $\uds \gamma_1,\dots,\uds\gamma_k\in\mc{P}(H)^{\Z/2}$. 
We claim that if $K$ is sufficiently $C^1$-small, then $H$ has no more $\Z/2$-fixed 1-periodic orbits. Otherwise, there is a sequence $K_\nu$ of $\tau_X$-symmetric Hamiltonians and a sequence $\uds\eta_\nu\in\mc{P}(G'+K_\nu)^{\Z/2}$ such that
\[
K_\nu|_W = 0, \quad \|K_\nu\|_{C^1}\to 0\ (\nu\to\infty), \quad \uds\eta_\nu\notin\{\uds\gamma_1,\dots,\uds\gamma_k\}.
\]
Then $X_{G'+K_\nu}\xrightarrow{C^0} X_{G'}$, so $\uds\eta_\nu$ converges to some $\uds\gamma_j\in\mc{P}(G')$. 
For sufficiently large $\nu_0$, ${\rm graph}({\uds\eta_{\nu_0}})\subset W_j$, and since $K_{\nu_0}|_W=0$, ${\uds\eta_{\nu_0}}\in\mc{P}(G')$. Then since $\uds\eta_{\nu_0}(0)\in U_j$ we have $\uds\eta_{\nu_0}=\uds\gamma_j$, a contradiction. Therefore, for generic $K$ satisfying the above assumptions and with sufficiently small $C^1$-norm, we conclude that all 1-periodic orbits of $H=G'+K$ are nondegenerate and $\mc{P}(H)^{\Z/2}=\mc{P}(G')^{\Z/2}$. By Lemma \ref{lemma_Lu}, the proof is complete.
\end{proof}
\begin{rem}
    There are two conceptual understandings of Lemma \ref{lemma:symmetric regularization}. First, it is an infinite dimensional analogue of a relative $\Z/2$-equivariant Morse lemma. The condition $\phi(L)\pitchfork L$ says the Hamiltonian action functional (or 1-form) is already Morse on the fixed locus of loop space. The lemma makes it Morse on the whole loop space without changing its fixed critical points. Second, it is about $\Z/2$-equivariant transversality: consider the $\Z/2$-equivariant section $x\mapsto \dot{x}-X_{H_t}(x(t))$ on loop space, which is transverse to the zero-section on the fixed locus. The lemma perturbs it equivariantly to achieve transversality everywhere while preserving its fixed zeros.
\end{rem}

One can further extend the involution on ${\mc O}(H)$ to $\tilde {\mc O}(H)$. Suppose $\gamma \in \tilde {\mc O}(H)$ is represented by $(u, \uds\gamma)$ where $u: {\mb D}^2 \to X$ is a continuous map. Let $\tau_{{\mb D}^2}: {\mb D}^2 \to {\mb D}^2$ be the complex conjugation (whose boundary restriction is just the map $t \mapsto -t$ on the circle). Then define 
\beqn
u^\dagger:= \tau_X \circ u \circ \tau_{{\mb D}^2}.
\eeqn
Then define $\gamma^\dagger$ to be the capped 1-periodic orbit represented by $(u^\dagger, \uds\gamma^\dagger)$. Notice that this action preserves the symplectic action:
\beqn
{\mc A}_H(\gamma) = {\mc A}_H(\gamma^\dagger).
\eeqn
and commutes with the $\Pi$-action:
\beqn
(\lambda \gamma)^\dagger = \lambda \gamma^\dagger,\ \lambda \in \Pi,\ \gamma \in \tilde {\mc O}(H).
\eeqn

\subsubsection{Involution symmetric almost complex structure}

To incorporate the involution into Floer theory, one needs to have involutions on moduli spaces of Floer cylinders. In other words, the Floer equation has to respect the combination of domain involution and target involution. This requires the almost complex structure to respect them in the sense that the differential of the symplectic involution is complex-conjugate linear with respect to the almost complex structure. Therefore, we use the following statement. 

\begin{lemma}\cite[Lemma 38.3]{FOOO_Chap82009ref}\cite[Corollary 1.18]{Reiser_2014}\footnote{This paper was published with a misspelling of the author's last name. The correct spelling is Rieser.}
There exists an $\omega$-compatible almost complex structure $J$ such that $d\tau_X \circ J = - J \circ d\tau_X$.
\end{lemma}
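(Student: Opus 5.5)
The plan is to start from an arbitrary $\tau_X$-invariant Riemannian metric and apply the usual polar-decomposition construction of a compatible almost complex structure. Since that construction is canonical, it is natural with respect to symplectomorphisms, and an anti-symplectic involution should then turn it into its negative.

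Step 1: average. Choose any Riemannian metric $g_0$ on $X$ and set $g = \frac12(g_0 + \tau_X^* g_0)$. Because $\tau_X$ is an involution, $g$ is $\tau_X$-invariant.

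Step 2: polar decomposition. Define the skew-adjoint endomorphism $A$ by $\omega(v,w) = g(Av,w)$. Put
\[
J = (-A^2)^{-1/2} A .
\]
This is the standard construction, and it gives an $\omega$-compatible $J$ with metric $g_J = g\bigl((-A^2)^{1/2}\cdot,\cdot\bigr)$.

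Step 3: equivariance. Write $\tau = \tau_X$. We have $\tau^*\omega = -\omega$ and $\tau^* g = g$. Then
\[
g\bigl(A\, d\tau\, v, d\tau\, w\bigr) = \omega(d\tau\, v, d\tau\, w) = -\omega(v,w) = -g(Av,w) = g\bigl(-d\tau\, A v, d\tau\, w\bigr),
\]
where the last equality uses that $d\tau$ is a $g$-isometry. Hence
\[
A \circ d\tau = - d\tau \circ A .
\]
Consequently $-A^2$ commutes with $d\tau$. By uniqueness of the positive square root, the positive self-adjoint operator $(-A^2)^{1/2}$ and its inverse also commute with $d\tau$, since conjugation by the isometry $d\tau$ preserves positivity. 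Combining these gives $d\tau \circ J = -J \circ d\tau$.

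The only step needing care is this last commutation of functional calculus with $d\tau$. It works because $d\tau$ is a $g$-isometry, so conjugating by it maps positive square roots to positive square roots. Everything is pointwise linear algebra between $T_xX$ and $T_{\tau(x)}X$ and varies smoothly, so $J$ is smooth.
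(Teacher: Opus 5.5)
Your proof is correct, and it takes a different route from the paper's own argument (the Remark after the lemma).

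\textbf{Checking your steps.} Averaging gives a $\tau_X$-invariant metric $g$. The identity $A\circ d\tau_X=-d\tau_X\circ A$ follows exactly as you compute, from $\tau_X^*\omega=-\omega$ and the fact that $d\tau_X$ is a $g$-isometry. Conjugation by that isometry carries the positive square root of $-A^2$ at $x$ to the positive square root at $\tau_X(x)$. Uniqueness of positive square roots then gives $d\tau_X\circ J=-J\circ d\tau_X$.

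\textbf{The paper's route.} The Remark starts from an arbitrary compatible $J_0$. It applies the involution $J\mapsto -d\tau_X\circ J\circ d\tau_X^{-1}$ of the space $\mathcal J$ of $\omega$-compatible structures. The Remark writes this map without the minus sign, but the sign is needed: conjugating by an anti-symplectic map produces an $\omega$-anti-compatible structure. It then takes the midpoint of the fibrewise geodesic, in the symmetric space $Sp(2n,\mathbb R)/U(n)$, between $J_0$ and its image. The involution acts by fibre-to-fibre isometries and swaps the two endpoints, so the midpoint is fixed. This needs no auxiliary metric. It does rely on the unique-geodesic structure of the fibres and on the isometric nature of the involution, which the paper takes as known.

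\textbf{What your route buys.} It is purely pointwise linear algebra, and smoothness is immediate from smoothness of the square root on positive operators. The sign comes out automatically: replacing $\omega$ by $-\omega$ forces $A\mapsto -A$, hence $J\mapsto -J$. You also get contractibility of the set of anti-invariant compatible $J$ essentially for free. For such $J$ the metric $\omega(\cdot,J\cdot)$ is $\tau_X$-invariant, and the polar construction sends it back to $J$. So this set is a retract of the convex set of $\tau_X$-invariant metrics. The midpoint construction likewise gives a retraction of $\mathcal J$ onto the fixed set, so both arguments yield the same contractibility.
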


\begin{rem}
We present a simple alternative argument proving this fact. Let $T_*: \cl J \to \cl J$ be the map taking $J \in \cl J$ to $d \tau_X \circ J \circ d\tau_X^{-1}.$ It is an involution on the space $\cl J$ of $\om$-compatible almost complex structures. It is easy to see that if $J_0, J_1$ are two $\om$-compatible almost complex structures, and $[J_0,J_1]$ is the path $[J_0,J_1]: [0,1] \to \cl J$ in $\cl J$ corresponding to fiberwise geodesics from $(J_0)_x$ to $(J_1)_x$ over each $x \in X$ (see \cite{Reznikov, S-action} for different uses of this structure) then $T_*([J_0,J_1]) = [T_*(J_0), T_*(J_1)].$ Now setting $J_1 = T_*J_0$ for an arbitrary $J_0 \in \cl J,$ our desired $J$ is defined as $J= [J_0,J_1](\frac{1}{2}).$ Indeed $T_*J = [J_1,J_0](\frac{1}{2}) = [J_0,J_1](\frac{1}{2}) = J.$
\end{rem}

Fix such a $J$ from now on. It allows us to define an involution on the moduli space of holomorphic spheres. Let $S^2 = \mb{CP}^1$ be the standard Riemann sphere, which has a standard anti-holomorphic involution
\beqn
\tau_{S^2}: \mb{CP}^1 \to \mb{CP}^1,\ [z_0: z_1] \mapsto [\bar z_0: \bar z_1].
\eeqn
Notice that $\tau_{S^2}$ induces an automorphism on $PSL(2; {\mb C})$ as
\beqn
PSL(2; {\mb C}) \ni g \mapsto g^\dagger:= \tau_{S^2} \circ g \circ \tau_{S^2}.
\eeqn
More explicitly, if $g([z_0: z_1]) = [az_0 + bz_1: cz_0 + dz_1]$, then 
\beqn
g^\dagger ([z_0: z_1]) = [\bar a z_0 + \bar b z_1: \bar c z_0 + \bar d z_1].
\eeqn
Then one has the basic fact, which follows from a direct computation.
\begin{lemma}
If $u: \mb{CP}^1 \to X$ is a $J$-holomorphic map, so is 
\beqn
u^\dagger: \mb{CP}^1 \to X,\ z\mapsto \tau_X(u(\tau_{S^2}(z))).
\eeqn
Moreover, for each $g \in PSL(2; {\mb C})$, one has 
\beqn
(u\circ g)^\dagger = u^\dagger \circ g^\dagger. \qed
\eeqn
\end{lemma}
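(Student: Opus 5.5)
The plan is a direct pointwise computation with the chain rule, using only two facts: $d\tau_X$ anticommutes with $J$ (the symmetric almost complex structure fixed above), and $\tau_{S^2}$ is anti-holomorphic on $\mb{CP}^1$.

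\emph{Step 1: $u^\dagger$ is $J$-holomorphic.} Write $j$ for the standard complex structure on $\mb{CP}^1$. Anti-holomorphicity of $\tau_{S^2}$ reads $d\tau_{S^2}\circ j = -j\circ d\tau_{S^2}$. Fix $z \in \mb{CP}^1$ and set $w = \tau_{S^2}(z)$. By the chain rule,
\beqn
du^\dagger_z \circ j = d\tau_X \circ du_w \circ d\tau_{S^2} \circ j = - d\tau_X \circ du_w \circ j \circ d\tau_{S^2}.
\eeqn
Since $u$ is $J$-holomorphic, $du_w \circ j = J\circ du_w$. Substituting, the right-hand side becomes $-d\tau_X\circ J\circ du_w\circ d\tau_{S^2}$. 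Now use $d\tau_X\circ J = -J\circ d\tau_X$ to rewrite this as $J\circ d\tau_X\circ du_w\circ d\tau_{S^2} = J\circ du^\dagger_z$. The two sign changes cancel, which gives $\bar\partial_J u^\dagger = 0$. If $u$ is only assumed smooth away from nodes or marked points, the same computation applies on the complement of those finitely many points.

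\emph{Step 2: compatibility with reparametrization.} Since $\tau_{S^2}$ is an involution, we may insert $\tau_{S^2}\circ\tau_{S^2} = \mathrm{id}$:
\beqn
(u\circ g)^\dagger = \tau_X\circ u\circ g\circ\tau_{S^2} = (\tau_X\circ u\circ\tau_{S^2})\circ(\tau_{S^2}\circ g\circ\tau_{S^2}) = u^\dagger\circ g^\dagger.
\eeqn
To confirm that $g^\dagger$ lies in $PSL(2;{\mb C})$ and has the stated formula, conjugate $[z_0:z_1]\mapsto[az_0+bz_1:cz_0+dz_1]$ by complex conjugation, which gives $[\bar a z_0+\bar b z_1:\bar c z_0+\bar d z_1]$. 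This matrix still has determinant $\overline{ad-bc}\neq 0$, and the formula is well defined on classes modulo scalars.

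There is no real obstacle here. The only point requiring care is the sign bookkeeping in Step 1: it works exactly because both involutions are anti-holomorphic, so the two minus signs cancel. One should also note that we never need $\tau_X$ to be anti-symplectic for this statement. That property is only needed elsewhere, for instance to show that $u^\dagger$ has the same energy as $u$.
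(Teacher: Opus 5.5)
Your proof is correct and is essentially the same argument as the paper's, which just says the lemma follows from a direct computation. Your chain-rule calculation, where the anti-holomorphicity of $\tau_{S^2}$ and the relation $d\tau_X\circ J=-J\circ d\tau_X$ contribute cancelling signs, together with inserting $\tau_{S^2}\circ\tau_{S^2}=\mathrm{id}$ for the reparametrization identity, is exactly that computation.
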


\subsubsection{Involutions of stable Floer cylinders}

We consider Floer moduli spaces ``coupled'' with the Morse theory of the function $f_{\rm Tate}: \wh S^\infty \to {\mb R}$, similar to the case of Tate--Morse complex. It is not that straightforward to define involutions on moduli spaces as domain reparametrizations are involved. We need certain notational preparations.

\begin{defn}\label{defn_cylinder}
A {\bf prestable cylinder} is a smooth or nodal rational curve $\Sigma$ with two marked points $z_-$, $z_+$ together with cylindrical coordinate $(s, t)$ on each irreducible component (called a cylindrical component of $\Sigma$) lying between $z_-$ and $z_+$. Given two prestable cylinders $\Sigma$, $\Sigma'$, an isomorphism $\varphi: \Sigma \to \Sigma'$ is an isomorphism of marked rational curves such that on each cylindrical component $\Sigma_i \subset \Sigma$, the isomorphism with the corresponding cylindrical component of $\Sigma'$, written in cylindrical coordinates, is a translation in $s$-direction.
\end{defn}

\begin{defn}\label{defn_cylinder_conjugate}
Let $\Sigma$ be a prestable cylinder. Its {\bf conjugate} $\Sigma^\dagger$ is the prestable cylinder with the same underlying nodal surface as $\Sigma$, where the complex structure on each component is flipped (i.e., if we denote by $j$ the complex structure on an irreducible component, flipping means taking $-j$) and the cylindrical coordinate on each cylindrical component is composed with the flipping anti-holomorphic involution $(s, t) \mapsto (s, -t)$ on the standard cylinder. 
\end{defn}

\begin{defn}
Given $\gamma_p, \gamma_q \in \tilde {\mc O}(H)$, a {\bf stable Floer trajectory} from $\gamma_p$ to $\gamma_q$ consists of 
\begin{enumerate}
\item A prestable cylinder $\Sigma$.

\item For each cylindrical component $\Sigma_i \subset \Sigma$, a solution $u_i: \Sigma_i \to X$ to the Floer equation
\beqn
\partial_s u_i + J(u_i)(\partial_t u_i - X_{H}(u_i)) = 0
\eeqn
with respect to the cylindrical coordinate $(s, t)$. 

\item For each spherical component $\Sigma_\alpha \subset \Sigma$, a $J$-holomorphic map $u_\alpha:\Sigma_\alpha \to X$.
\end{enumerate}
They need to satisfy the following conditions.
\begin{enumerate}
    \item The matching conditions at nodal points and asymptotic circles.

    \item The homotopy type of the map is specified by the difference of the capped orbits $\gamma_p$ and $\gamma_q$.

    \item Two stable Floer trajectories $(\Sigma, u_i, u_\alpha)$ and $(\Sigma', u_i', u_\alpha')$ are {\bf equivalent} if there exists an isomorphism $\varphi: \Sigma \to \Sigma'$ (which sends $\Sigma_i$ to $\Sigma_i'$ and $\Sigma_\alpha$ to $\Sigma_\alpha'$) such that 
    \begin{align*}
    &\ u_i = u_i'\circ \varphi_i,\ &\ u_\alpha = u_\alpha' \circ \varphi_\alpha.
    \end{align*}
    We require that the group of self-equivalences is finite, which is what we mean by stability.\footnote{When $p=q$, the constant trajectory is regarded as stable.}
\end{enumerate}
\end{defn}

\begin{defn}
Let $u: {\mb R}\times S^1 \to X$ be a smooth map. Its {\bf conjugate} $u^\dagger$ is the map
    \beqn
    u^\dagger(s, t) = \tau_X( u(s, -t)).
    \eeqn
\end{defn}

\begin{lemma}
If $u$ is a Floer cylinder, so is $u^\dagger$.
\end{lemma}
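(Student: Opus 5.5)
The plan is a direct pointwise computation, using three facts: the involution symmetry $H_{-t}=H_t\circ\tau_X$, the anti-symplecticity $\tau_X^*\omega=-\omega$, and the chosen almost complex structure satisfying $d\tau_X\circ J=-J\circ d\tau_X$. (If $J$ is allowed to depend on $t$, one uses the symmetric condition $d\tau_X\circ J_t = -J_{-t}\circ d\tau_X$ instead, and the argument is unchanged.) Set $v=u^\dagger$, so $v(s,t)=\tau_X(u(s,-t))$.

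\textbf{Step 1: the Hamiltonian vector field.} First I would check how $\tau_X$ transforms $X_H$. Differentiating $H_{-t}\circ\tau_X=H_t$ gives $dH_{-t}\circ d\tau_X=dH_t$. With the convention $\omega(X_{H_t},\cdot)=-dH_t$ (the sign convention does not matter), anti-symplecticity gives, for every $w$,
\begin{equation*}
\omega\bigl(d\tau_X X_{H_t}(x),\,d\tau_X w\bigr)=-\omega\bigl(X_{H_t}(x),w\bigr)=dH_t(w)=dH_{-t}\bigl(d\tau_X w\bigr).
\end{equation*}
Hence
\begin{equation*}
d\tau_X\bigl(X_{H_t}(x)\bigr)=-X_{H_{-t}}(\tau_X x).
\end{equation*}
This is the same identity that shows $\uds\gamma^\dagger$ is an orbit.

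\textbf{Step 2: differentiate $v$.} By the chain rule,
\begin{equation*}
\partial_s v=d\tau_X\,\partial_s u(s,-t),\qquad \partial_t v=-d\tau_X\,\partial_t u(s,-t).
\end{equation*}
Combining with Step 1,
\begin{equation*}
\partial_t v-X_{H_t}(v)=-d\tau_X\bigl(\partial_t u-X_{H_{-t}}(u)\bigr)(s,-t).
\end{equation*}

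\textbf{Step 3: apply $J$.} Using $J(v)\,d\tau_X=-d\tau_X\,J(u)$, we get
\begin{equation*}
\partial_s v+J(v)\bigl(\partial_t v-X_{H_t}(v)\bigr)=d\tau_X\Bigl[\partial_s u+J(u)\bigl(\partial_t u-X_{H_{-t}}(u)\bigr)\Bigr](s,-t).
\end{equation*}
The bracket is the Floer equation for $u$ evaluated at the point $(s,-t)$, so it vanishes, and therefore $v$ solves the Floer equation.

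\textbf{Asymptotics and finite energy.} If $u$ converges to $\uds\gamma_\pm$ as $s\to\pm\infty$, then $v$ converges to $\uds\gamma_\pm^\dagger$. The energy is preserved because the integrand $|\partial_s u|^2$ is invariant: $\tau_X$ is an isometry for $\omega(\cdot,J\cdot)$, since it reverses the sign of both $\omega$ and $J$.

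The only real obstacle is bookkeeping of signs. One has to confirm that the minus sign from $t\mapsto -t$, the one from anti-symplecticity, and the one from anti-$J$-linearity combine with the correct parity. In Step 3 they leave an overall factor $d\tau_X$ multiplying the original equation rather than producing a mismatched relative sign, and I would check this carefully term by term.
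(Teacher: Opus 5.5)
Your proposal is correct and is the same approach as the paper, whose proof just says the lemma follows directly from the symmetry $H_{-t}=H_t\circ\tau_X$ and the condition $d\tau_X\circ J=-J\circ d\tau_X$. Your Steps 1--3 carry out that check explicitly, including the identity $d\tau_X\bigl(X_{H_t}(x)\bigr)=-X_{H_{-t}}(\tau_X x)$, and the three sign changes do combine to give $d\tau_X$ applied to the Floer equation for $u$ at $(s,-t)$.
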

\begin{proof}
This is a straightforward consequence of the invariance assumption on $J$ and $H$.
\end{proof}

With the above preparations, we can extend the involution on the moduli space of smooth Floer trajectories to all stable Floer trajectories.

\begin{defn}
Let $(\Sigma, u_i, u_\alpha)$ be a stable Floer cylinder. Its {\bf conjugate} is the triple $(\Sigma^\dagger, u_i^\dagger, u_\alpha^\dagger)$. 
\end{defn}

\subsubsection{Coupling with the Tate flow}

Recall that one has a Morse function $f_{\rm Tate}$ on $\wh S^\infty$ whose critical points are generally denoted by $\mu$. 

\begin{defn}
Given 
\beqn
p = (\gamma_p, \mu_p),\ q = (\gamma_q, \mu_q) \in \tilde {\mc O}(H) \times {\rm crit} f_{\rm Tate}
\eeqn
a {\bf stable Tate-Floer trajectory} from $p$ to $q$ is a quadruple $(\Sigma, u_i, u_\alpha, y_i)$ where $\Sigma$ is a prestable cylinder, $u_i: \Sigma_i \to X$ is a Floer trajectory, $u_\alpha: \Sigma_\alpha \to X$ is a $J$-holomorphic map, and $y_i: {\mb R} \to \wh S^\infty$ is a Tate flow trajectory. They are required to satisfy the following conditions.
\begin{enumerate}
    \item The matching condition at nodal points and asymptotic circles so that $(\Sigma, u_i, u_\alpha)$ is a stable Floer cylinder with asymptotics $\gamma_p$ and $\gamma_q$.

    \item The matching condition between the end points of $y_i$ and $y_{i+1}$ so that $\{y_i\}$ forms a broken Tate flow trajectory connecting $\mu_p$ and $\mu_q$.
\end{enumerate}
Two stable Tate--Floer trajectories $(\Sigma, u_i, u_\alpha, y_i)$ and $(\Sigma', u_i', u_\alpha', y_i')$ are {\bf equivalent} if there is an isomorphism $\varphi: \Sigma \to \Sigma'$ of prestable cylinders (which determines a translation by $\uds \varphi_i: {\mb R} \to {\mb R}$) such that 
\beqn
u_i = u_i' \circ \varphi_i,\ u_\alpha = u_\alpha' \circ \varphi_\alpha,\ y_i = y_i'\circ \uds\varphi_i.
\eeqn
\end{defn}

Let $M_{pq}^{\mb F}$ be the set of all equivalence classes of stable Tate--Floer trajectories from $p$ to $q$.\footnote{Here the superscript ${\mb F}$ stands for ``Floer.'' Later it will also be the notation for a general flow category.} These moduli spaces admit the following kind of symmetry. First, one has the free action by the group
\beqn
\Pi =  \omega(\pi_2(X)) \subset {\mb R}
\eeqn
via ``recapping,'' which changes the symplectic action by 
\beqn
{\mc A}_H(\lambda \gamma_p) = {\mc A}_H(\gamma_p) - \lambda,\ \forall \lambda \in \Pi,\ p \in \tilde {\mc O}(H).
\eeqn
Second, one has the free action by ${\mb Z}$ on $\wh S^\infty$, which shifts $f_{\rm Tate}$ by corresponding integers. Lastly, one has a free involution on the product $\tilde {\mc O}(H) \times {\rm crit} (f_{\rm Tate})$ by
\beqn
p = (\gamma_p, \mu_p) \mapsto p^\dagger = (\gamma_p^\dagger, \mu_p^\dagger) = (\gamma_p^\dagger, -\mu_p ).
\eeqn
Notice that one has an induced canonical homeomorphism between moduli spaces
\beq\label{Tate-Floer_involution}
M_{p^\dagger q^\dagger}^{\mb F}  \cong M_{pq}^{\mb F}.
\eeq

At this moment, we do not have a Tate--Floer complex as in the Morse case because we do not have transversality: the moduli spaces $M_{pq}^{\mb F}$ are only topological spaces in general.

\subsection{Involutive Tate-PSS and Tate-SSP moduli spaces}\label{subsection_PSS_moduli}

Next, we describe the moduli spaces that permit us to compare the Tate--Morse theory and the Tate--Floer theory. They are packaged under the notion of flow bimodules, for which we refer the reader to Section \ref{sec:abstract} for a precise discussion. We only describe the case of Tate--PSS bimodule. The case of SSP bimodule is completely symmetric. 

Let $H$ be an involution symmetric Hamiltonian and let $J$ be an anti-symmetric almost complex structure as above. Define a Hamiltonian connection on the trivial bundle ${\mb R}\times S^1 \times X$
\beqn
\sigma^{\rm PSS} = d + \chi^{\rm PSS} H_t dt
\eeqn
where $\chi^{\rm PSS}(s)$ is a cut-off function on the cylinder which only depends on $s$, such that $\chi^{\rm PSS}(s) = 0$ when $s \ll -1$ and $\chi^{\rm PSS}(s) = 1$ when $s \gg 1$. Then for each smooth map $u: {\mb R} \times S^1 \to X$, one has the covariant derivative
\beqn
\nabla^{\sigma^{\rm PSS}} u  \in \Gamma( {\mb R}\times S^1, \Lambda^1 \otimes u^* TX).
\eeqn
Using $J$ on $X$, the domain almost complex structure, and the $\tau_X$-invariant Morse function $f_X$, one can consider the following equation for pairs $(x, u)$ where 
\beqn
x: (-\infty, 0] \to X,\ x'(s) + \nabla f_X(x(s)) = 0
\eeqn
and 
\beqn
u: {\mb R}\times S^1 \to X,\ \left( \nabla^{\sigma^{\rm PSS}} u \right)^{0,1} = 0.
\eeqn
They need to satisfy the matching condition
\beqn
x(0) = u(-\infty) \in X,
\eeqn
which is well-defined by removal of singularity of $u$ at $\infty$: any such finite energy solution converges to some $\kappa = x(-\infty) \in {\rm crit} f_X$ and some 1-periodic orbit of $H$ at $+\infty$ where the map $u$ automatically induces a capping. Hence one can use a pair $(\kappa; \gamma)$ to label such solutions. For such a solution $(x, u)$, we simply say that it converges to $\gamma$ at $+\infty$.

Similar to the previous situations, we couple the above parametrized Floer equations with parametrized flow lines in $\wh S^\infty$. Consider $v = (\kappa_v, \mu_v) \in {\rm crit} (\wt f_X)$ and $p = (\gamma_p, \mu_p) \in \tilde {\mc O}(H) \times {\rm crit}(\wt f_{\rm Tate})$. We consider the coupled equation for triples $(y, x, u)$ where $x$ and $u$ are as above and $y: {\mb R}\to \wh S^\infty$ is a negative Tate gradient flow line, such that $(y, x)$ converges to $v$ at $-\infty$ and $(y, u)$ converges to $p$ at $+\infty$.

We compactify the moduli space of such solutions by allowing bubbling and breakings. Notice that bubbling of the map $u$ at $-\infty$ is treated as spherical bubbling, a codimension-two phenomenon. Hence one obtains a compactified moduli space
\beqn
M_{v; p}^{\rm PSS}
\eeqn
which is stratified by the poset
\beqn
A_{v;p}^{\rm PSS} = \Big\{ (v w_1\cdots w_l; r_k r_{k-1} \cdots r_1 p)\ \Big\},
\eeqn
whose elements are labeled by the intermediate asymptotic critical points and the partial order is induced from expansion of words. Notice that there are subspaces of virtual codimension one identified as
\begin{align*}
&\ M_{vw; p}^{\rm PSS} \cong M_{vw}^{\mb M} \times M_{w;p}^{\rm PSS},\ &\ M_{v;qp}^{\rm PSS}\cong M_{ v; q}^{\rm PSS}\times M_{qp}^{\mb F}.
\end{align*}

These PSS moduli spaces also admit similar symmetries. For each element $\lambda \in {\mb Z}$, one has a canonical homeomorphism
\beqn
M_{\lambda v; \lambda p}^{\rm PSS} \cong M_{ v;  p}^{\rm PSS}
\eeqn
Moreover, regarding involution, one has a homeomorphism
\beqn
M_{v^\dagger; p^\dagger}^{\rm PSS} \cong M_{v; p}^{\rm PSS}.
\eeqn

To define chain level objects over the Novikov field, we apply some naive extension of the above notations. We consider the disjoint union of $\Pi$-worth copies of the Morse theory on $X \times \wh S^\infty$ and denote critical points by 
\beqn
v = (\kappa_v,\mu_v, a_v),\ \kappa_v \in {\rm crit} (f_X), \mu_v \in {\rm crit}(f_{\rm Tate}), a_v \in \Pi.
\eeqn
Then the group $\Pi \times {\mb Z}$ acts on those triples. Then the previously defined Tate moduli spaces admit the naive extensions such that for all $\lambda \in \Pi \times {\mb Z}$,  
\beqn
M_{\lambda v\ \lambda w}^{\mb M} = M_{vw}^{\mb M}.
\eeqn
Then the PSS moduli spaces can be extended to pairs $(v,p)$ so that for all $\lambda \in \Pi$, one has 
\beqn
M_{\lambda v; \lambda p}^{\rm PSS} \cong M_{v ; p}^{\rm PSS}.
\eeqn

One can then describe the Tate--SSP moduli spaces in a similar fashion and we use the notation 
\beqn
M_{p; w}^{\rm SSP}
\eeqn
to denote the resulting moduli space.

\subsection{The involutive concatenation and homotopy}

One uses a 1-parameter family of Floer data on the cylinder as follows. Consider a 1-parameter family of Hamiltonian connections
\beqn
\sigma_\zeta = d + H_{\zeta, s, t} dt,\ \zeta \in [0, +\infty)
\eeqn
satisfying the following conditions.
\begin{enumerate}
    \item When $\zeta = 0$, $H_{\zeta, s, t} \equiv 0$.
    
    \item When $\zeta \to +\infty$, $H_{\zeta, s + \zeta, t}$ converges to $\chi^{\rm PSS}(s) H_t$ over compact subsets, $H_{\zeta, s - \zeta, t}$ converges to $\chi^{\rm SSP}(s) H_t$ over compact subsets, where $\chi^{\rm SSP}(s)$ is a cut-off function on the cylinder which only depends on $s$, such that $\chi^{\rm SSP}(s) = 1$ when $s \ll -1$ and $\chi^{\rm SSP}(s) = 0$ when $s \gg 1$, and 
    \beqn
    s \in [-\zeta, \zeta]\Longrightarrow H_{\zeta, s, t} = H_t.
    \eeqn
    
    \item $H_{\zeta, s, -t}(x) = H_{\zeta, s, t}(\tau_X(x))$. 
\end{enumerate}
One can then consider the equation for quadruples $(\zeta, x_-, u, x_+)$ where $\zeta \in [0, +\infty)$, $x_-: (-\infty, 0] \to X$, $x_+: [0, +\infty) \to X$, and $u: {\mb R}\times S^1 \to X$ are smooth maps solving
\beqn
x_\pm'(s) + \nabla f_X ( x_\pm (s)) = 0,
\eeqn
\beqn
\Big( \nabla^{\sigma_\zeta} u\Big)^{0,1} = 0,
\eeqn
subject to the matching condition
\beqn
x_-(0) = u(-\infty),\ x_+(0) = u(+\infty).
\eeqn
For each finite energy solution, the map $u$ represents a class in $\Pi$ and $x_\pm$ are asymptotic to critical points of $f$. Moreover, one can couple the equation with parametrized negative gradient flow equation in $\wh S^\infty$. Then solutions can be labelled by asymptotic limits $v,w \in {\rm crit} (\wt f_X)$ and a class $a \in \Pi$. Moreover, one can compactify the moduli space of such solutions by allowing 1) sphere bubbling, 2) breakings at $\pm\infty$, and 3) $\zeta \to +\infty$. In the last case, the component $u$ will necessarily break along 1-periodic orbits of $H$. 

Then for $v, w \in {\rm crit}(\wt f_X) \times \Pi$, denote such a moduli space by 
\beqn
M_{v; w}^{\rm hmtp}.
\eeqn
We summarize the key features. First, there are the following codimension one strata. When $\zeta = 0$, we denote the resulting moduli space by
\beqn
M_{v;w}^{\rm pearly} \subset M_{v;w}^{\rm hmtp}
\eeqn
which consists of equivalence classes of solutions at $\zeta = 0$. It contains both parametrized Morse trajectories as well as trajectories with spheres inserted. When $\zeta = +\infty$, the boundary is the (likely non-disjoint) union
\beqn
\bigcup_{p} M_{v; p}^{\rm PSS}\times M_{p; w}^{\rm SSP}.
\eeqn
There are also two other virtual codimension-one boundaries corresponding to Morse breakings
\begin{align*}
&\ \bigcup_{z} M_{vz}^{\mb M}\times M_{z; w}^{\rm hmtp},\ &\ \bigcup_{z} M_{v; z}^{\rm hmtp} \times M_{zw}^{\mb M}.
\end{align*}

The $\Pi \times {\mb Z}$ symmetry and involution are similar to previous cases. As before, we will use ${ }^\dagger$ to denote the involution.

\begin{rem}
The name ``pearly'' used here has the same feature as that in \cite{Bai_Xu_Arnold}. However, it is different from the ``pearly'' used in \cite{Bai_Xu_foundation}. In \cite{Bai_Xu_foundation}, one takes a different compactification, which contains configurations with possibly several Morse gradient segments separated by cylindrical bubbles. 
\end{rem}

\section{Proof of Arnold--Givental Conjecture Assuming Transversality}\label{sec:proof}

For the reader's convenience, using the moduli spaces defined in Section \ref{sec:moduli}, we provide an argument for the case when all moduli spaces are cut out transversely emphasizing the algebraic aspects.

We note that grading does not matter in our discussion, so we will omit it from now on.

\subsection{Novikov rings and fields}
In Section \ref{sec:intro}, we provide a sketch of proof by working over the universal Novikov field $\Lambda_{\cl K}$ where $\cl K = \F_2[\theta^{-1},\theta]]$. For the actual proof, however, we shall define all chain level objects over a smaller Novikov field. Define

\begin{defn}[The appropriate Novikov field]
Let $\Lambda$ be the set consisting of formal series 
\beqn
\sum_{i=1}^\infty a_i \theta^{\mu_i} T^{\lambda_i}
\eeqn
where $a_i \in {\mb F}_2$, $\mu_i \in {\mb Z}$, and $\lambda_i \in \Pi$, such that
\beqn
\liminf_{i \to \infty} \mu_i > -\infty,\ \liminf_{i \to \infty} \lambda_i > -\infty
\eeqn
and 
\beqn
\lim_{i \to \infty} \mu_i + \lambda_i = +\infty.
\eeqn
\end{defn}

In other words, we perform the completion combining the valuation in both $\theta$ and $T$ variables.

\subsection{The involutive Floer complex}\label{subsec:chain}

We consider the set of formal sums
\beqn
\wt{CF}(H):= \left\{ \sum_{i=1}^\infty a_i p_i\ \left|\ \begin{array}{l} a_i \in {\mb F}_2,\ p_i = (\gamma_{p_i}, \mu_{p_i}) \in \tilde {\mc O}(H)\times {\rm crit} (f_{\rm Tate}), \\ \displaystyle
\limsup_{i \to \infty} {\mc A}_H(\gamma_{p_i} ) < +\infty,\ \limsup_{i \to \infty} f_{\rm Tate}(\mu_{p_i}) < +\infty,\\
\displaystyle \liminf_{i \to \infty} {\mc A}_H (\gamma_{p_i}) + f_{\rm Tate} (\mu_{p_i}) = -\infty 
\end{array} \right. \right\}
\eeqn
Then $\wt{CF}(H)$ is a free module over $\Lambda$. 

Now we define a differential as follows. For each pair of generators $p$, $q$, the moduli space $M_{p q}^{\mb F}$ is a compact manifold with corners of varying dimensions. Define $n_{pq}^{\mb F} \in {\mb F}_2$ to be the mod 2 count of its zero-dimensional components. Then define
\beqn
\wt{d}_H ( p) = \sum_{q} n_{pq}^{\mb F} q.
\eeqn

\begin{lemma}
The above formal sum is in $\wt{CF}(H)$. 
\end{lemma}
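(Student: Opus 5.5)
We need to show that for a fixed generator $p=(\gamma_p,\mu_p)$, the formal sum $\wt d_H(p)=\sum_q n^{\mb F}_{pq}\, q$ satisfies the three conditions in the definition of $\wt{CF}(H)$. (Linear extension to general elements of $\wt{CF}(H)$ is handled later.) The key input is that a nonempty moduli space $M^{\mb F}_{pq}$ forces both the symplectic action and the Tate Morse value to drop. We then combine this with a Gromov compactness finiteness statement and the $\Pi\times\mb Z$ symmetry.

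\emph{Step 1: monotonicity.} Suppose $M^{\mb F}_{pq}\neq\varnothing$. Take a stable Tate--Floer trajectory $(\Sigma,u_i,u_\alpha,y_i)$ in it. The energy identity for Floer cylinders, with the positive symplectic area of the $J$-holomorphic sphere components added, gives
\[
{\mc A}_H(\gamma_p)-{\mc A}_H(\gamma_q)=E(u)\geq 0.
\]
Similarly, the broken negative gradient flow $\{y_i\}$ of $f_{\rm Tate}$ gives $f_{\rm Tate}(\mu_p)\geq f_{\rm Tate}(\mu_q)$. So every $q$ with $n^{\mb F}_{pq}\neq 0$ satisfies ${\mc A}_H(\gamma_q)\le{\mc A}_H(\gamma_p)$ and $f_{\rm Tate}(\mu_q)\le f_{\rm Tate}(\mu_p)$. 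This gives the two $\limsup$ conditions immediately.

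\emph{Step 2: finiteness below each level.} It remains to show ${\mc A}_H(\gamma_{q})+f_{\rm Tate}(\mu_{q})\to-\infty$ along any enumeration of the $q$ with $n^{\mb F}_{pq}\neq0$. Equivalently, for each $C$ there are only finitely many such $q$ with ${\mc A}_H(\gamma_q)+f_{\rm Tate}(\mu_q)\geq C$.

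Given the bounds of Step 1, such a $q$ has $f_{\rm Tate}(\mu_q)=-k$ with
\[
0\le -f_{\rm Tate}(\mu_p)-k\le {\mc A}_H(\gamma_p)-C,
\]
and the energy is bounded by ${\mc A}_H(\gamma_p)-C-(f_{\rm Tate}(\mu_p)+k)$. Hence only finitely many $k$ occur, and each $\mu_q$ is one of the two points $\mu^{(k)}_\pm$.

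For the Floer part, the underlying orbit $\uds\gamma_q$ ranges over the finite set ${\mc O}(H)$ (finite by nondegeneracy). The capping is determined by the energy together with the homotopy class. Gromov compactness, i.e.\ the fact that for a fixed energy bound only finitely many classes in $\Pi$ are realized by stable trajectories from $\gamma_p$, shows that only finitely many $\gamma_q$ with bounded energy can have nonempty moduli space.

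Actually the cleaner observation is that a class in $\tilde{\mc O}(H)$ with fixed underlying orbit is determined by its action. Since $\Pi$ could be dense in $\mb R$, the real input is Gromov compactness: the set of energies of stable Floer trajectories from $\uds\gamma_p$ to $\uds\gamma_q$ below a given bound is finite. I would invoke this from the standard compactness of $M^{\mb F}_{pq}$, via a uniform energy quantization argument.

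\emph{Main obstacle.} Step 2 is the hard part, specifically the Gromov-compactness finiteness of energies when $\Pi$ is not discrete. I would first try to deduce it from compactness of the union over $q$, with bounded energy, of the moduli spaces $M^{\mb F}_{pq}$. Energy is continuous on this union and takes values in a translate of $\Pi$ on each component, so it is locally constant. A compact space has finitely many components' worth of values, which gives the finiteness.
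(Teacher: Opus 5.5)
Your proposal is correct and follows the paper's own argument. Monotonicity of ${\mc A}_H$ and $f_{\rm Tate}$ along nonempty $M^{\mb F}_{pq}$ gives the two $\limsup$ bounds, and Gromov compactness gives that only finitely many $q$ with $n^{\mb F}_{pq}\neq 0$ lie above any given level of ${\mc A}_H+f_{\rm Tate}$.

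There are two small slips, neither of which affects the conclusion. First, your displayed bound has the wrong sign and should read $0\le f_{\rm Tate}(\mu_p)+k\le {\mc A}_H(\gamma_p)+f_{\rm Tate}(\mu_p)-C$. Second, energy is locally constant because Gromov limits preserve the homotopy class, so each $M^{\mb F}_{pq}$ is clopen in the union. Continuity plus values in a translate of $\Pi$ is not enough: when $\Pi$ is dense it only gives constancy on connected components, and a compact space can have infinitely many of those.
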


\begin{proof}
We know that $M_{p q}^{\mb F} \neq \emptyset$ only when ${\mc A}_H (\gamma_p) \geq {\mc A}_H(\gamma_q)$ and $f_{\rm Tate}(\mu_p) \geq f_{\rm Tate}(\mu_q)$. Hence one has 
\begin{align*}
&\ \sup_{n_{pq}^{\mb F}\neq 0} {\mc A}_H (\gamma_q) < +\infty,\ &\ \sup_{n_{pq}^{\mb F} \neq 0 } f_{\rm Tate}(\mu_q) < +\infty.
\end{align*}
On the other hand, Gromov compactness ensures that ${\mc A}_H ( \gamma_q ) + f_{\rm Tate}(\mu_q)$ cannot stay bounded. 
\end{proof}

One can also see that for $\lambda \in \Pi$ and $\lambda p$ defined as $(\la\gamma_p, \mu_p),$ for $\Pi$ acts on $\tilde{\cl O}(H) \subset \tilde{\cl L} X$ by deck transformations,
\beqn
\wt d_H( \lambda p) = \lambda \wt d_H(p).
\eeqn

Hence we may define $T^\la p = (-\la)p$ and extend $\wt d_H$ linearly over $\Lambda$. By looking at 1-dimensional moduli spaces, one can see that $\wt d_H^2 = 0$. Hence one obtains a complex $(\wt{CF}(H), \wt d_H)$. 

Now we consider the involution. The involution on generators and moduli spaces induces a ${\mb Z}/2$-action via chain maps on $\wt{CF}(H)$. Hence the ${\mb Z}/2$-fixed part is a subcomplex, which we denote by
$$
\big(\wt{CF}(H)\big)_{{\mb Z}/2} .
$$

For the ease of comparison with Morse theory, we would like to switch back to the larger Novikov field $\Lambda_{\mc K}$. Define the Tate complex by 
\beqn
\wh{CF}(H):= \big(\wt{CF}(H)\big)_{{\mb Z}/2} \otimes \Lambda_{\mc K}.
\eeqn
The resulting cohomology is 
\beqn
\wh{HF}(H)
\eeqn
which is a finite-dimensional vector space over $\Lambda_{\mc K}$.

\subsection{The involutive PSS and SSP maps}\label{subsec:pss}

For each generator $v = (\kappa_v, \mu_v)$ of the Tate-Morse complex, define
\beqn
\wt\Phi^{\rm PSS}(v) = \sum_{p} n_{v; p}^{\rm PSS} p.
\eeqn
Here $n_{v; p}^{\rm PSS}$ is the mod 2 count of the moduli space $M_{v; p}^{\rm PSS}$ (defined in Subsection \ref{subsection_PSS_moduli}). For the same reason as before, this formal sum converges and hence is an element of $\wt{CF}(H)$. Then we have a chain map equivariant under the involution actions
\beqn
\wt\Phi^{\rm PSS}: \wt{CM}(f_X) \to \wt{CF}(H)
\eeqn
which extends to a $\Lambda_{\cl K}$-linear map, hence restricts to a $\Lambda_{\mc K}$-linear chain map
\beqn
\wh\Phi^{\rm PSS}: (\wh{CM}(f_X), \wh d_{f_X}) \to (\wh{CF}(H), \wh d_H).
\eeqn
where we apply base change to $\Lambda_{\cl K}$ on the left hand side. The resulting map on homology is also denoted by
\beqn
\wh\Phi^{\rm PSS}: \wh{HM}(f_X) \to \wh{HF}(H).
\eeqn

Similarly, one has the Tate-version of the SSP map, denoted by 
\beqn
\wh\Phi^{\rm SSP}: \wh{CF}(H) \to \wh{CM}(f_X)
\eeqn
with the induced map on cohomology
\beqn
\wh\Phi^{\rm SSP}: \wh{HF}(H) \to \wh{HM}(f_X).
\eeqn

\subsection{The pearly chain map}

The pearly moduli spaces described above also induce a $\Lambda_{\mc K}$-linear chain map
\beqn
\wh\Phi^{\rm pearly}: \wh{CM}(f_X) \to \wh{CM}(f_X).
\eeqn
In reality, if transversality is already assumed, then one has an additional symmetry on holomorphic spheres that has not been used; one can prove that $\wh\Phi^{\rm pearly}$ has only the classical part which is the identity. However, our transversality assumption should be taken formally; later one will see that after FOP perturbation, the chain map can have nontrivial deformation. However, in general, it is an invertible chain map.

\begin{lemma}\label{lemma:pearl}
    The chain map $\wh\Phi^{\rm pearly}$ satisfies
    $$
    \wh\Phi^{\rm pearly} = \mathrm{Id} + \text{ Terms with positive $T$- or $\theta$-valuation}.
    $$
\end{lemma}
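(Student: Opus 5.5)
We prove Lemma \ref{lemma:pearl} by sorting the configurations in the pearly moduli spaces $M_{v;w}^{\rm pearly}$ by their energy and their Tate index shift. Write a generator as $v=(\kappa_v,\mu_v,a_v)$. The coefficient of $w$ in $\wh\Phi^{\rm pearly}(v)$ is the mod $2$ count of the zero-dimensional part of $M_{v;w}^{\rm pearly}$. This is a moduli space of pairs consisting of a coupled gradient flow line in $X\times\wh S^\infty$ and a $\zeta=0$ cylinder. Since $H_{0,s,t}\equiv 0$, the cylinder is a $J$-holomorphic sphere through which the Morse trajectory passes; it may carry bubble trees. We treat two cases according to the symplectic area $\omega(u)=a_w-a_v\in\Pi$ and the Tate shift $f_{\rm Tate}(\mu_v)-f_{\rm Tate}(\mu_w)\ge 0$.

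\emph{Nonzero energy.} If the area is nonzero, the sphere component is nonconstant. Monotonicity and Gromov compactness then give a uniform lower bound $\hbar>0$, so the area is positive and the term carries $T^{\lambda}$ with $\lambda\geq\hbar>0$.

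\emph{Zero energy.} If the area is zero, $u$ is constant. The configuration then reduces to a coupled Morse trajectory on $X\times\wh S^\infty$ from $v$ to $w$, together with a ghost cylinder. The ghost cylinder has an unstabilized $\mathbb R$-translation symmetry, and it also carries the $\zeta=0$ point. Two subcases remain.
\begin{itemize}
\item If $v=w$ in the Morse data, the only configuration is the constant one, which is counted once (by the convention on constant trajectories) and gives the identity term.
\item If the Morse trajectory is nonconstant, the zero-dimensional count must vanish. The reason is that the Morse trajectory with the sphere collapsed is a Morse trajectory with one unfixed $\mathbb{R}$-parameter, namely the point where the constant sphere is inserted. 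This makes the expected dimension one larger than that of $M^{\mb M}_{vw}$ modulo translation, so the zero-dimensional part is empty unless $v=w$. If the Morse trajectory is constant in $X$ but moves in $\wh S^\infty$, then $f_{\rm Tate}$ strictly decreases along it, and the coefficient carries a positive $\theta$-power, which counts as positive $\theta$-valuation.
\end{itemize}

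\emph{Invertibility.} Combining the two cases,
\[
\wh\Phi^{\rm pearly}=\mathrm{Id}+N,
\]
where every term of $N$ raises the combined $(T,\theta)$-valuation by a positive amount. The Gromov compactness argument used for convergence of $\wt d_H$ shows that these amounts are bounded below by a uniform positive constant. Hence $\sum_k N^k$ converges in the completed module, and the map is invertible. The invertibility statement is not part of the lemma, but it follows at once.

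\emph{Main obstacle.} The hard step is the zero-energy analysis after perturbation. Once FOP perturbations are used (Part II), the constant configurations do not literally reduce to Morse trajectories, so the argument must be made at the level of global Kuranishi charts. The plan is to choose the perturbation on the energy-zero, $\theta$-shift-zero strata compatibly with the unperturbed Morse data. These strata are already transversely cut out, since the zero-energy part of the obstruction bundle can be taken to be trivial. With that choice, the strata with $a_w=a_v$ and $\mu_w=\mu_v$ are exactly the constant configurations, counted once each, while all perturbations only affect coefficients with positive energy or positive $\theta$-shift. Verifying that such an energy-filtered choice of FOP perturbation is consistent with the involution and with the gluing compatibility is where I expect the real work to lie.
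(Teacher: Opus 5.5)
Your proposal is correct and follows the paper's argument. The zero-energy, zero-Tate-shift pearly configurations are constant maps whose count is the identity, and every other contribution carries positive $T$- or $\theta$-valuation; after perturbation these lowest strata are already transversely cut out with trivial isotropy, so by the classical-transversality property of FOP perturbations (Theorem \ref{thm:FOP}(5), as in \cite[Theorem 25.12]{Bai_Xu_foundation}) their count is the classical one. One small correction: at $\zeta=0$ the cylinder is a slice of the parametrized homotopy family and is not quotiented by translations, so the constant configuration is simply an isolated point of the moduli space, not an unstable ghost that needs a counting convention.
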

\begin{proof}
    This is a straightforward consequence of the observation that the moduli spaces of the lowest energy are given by constant maps, whose induced count on the nonequivariant component of the Tate--Morse complex produces the identity map on cohomology.
\end{proof}

\subsection{The involutive chain homotopy}

Using counts of the homotopy moduli spaces $M_{v;w}^{\rm hmtp}$, one can define a $\Lambda_{\mc K}$-linear map
\beqn
\Theta: \wh{CM}(f_X) \to \wh{CM}(f_X).
\eeqn
By looking at the boundaries of 1-dimensional homotopy moduli spaces, one obtains that
\beq
\wh d_{f_X} \circ \Theta + \Theta \circ \wh d_{f_X} = \wh \Phi^{\rm pearly} + \wh\Phi^{\rm SSP} \circ \wh\Phi^{\rm PSS}.
\eeq
Passing to homology, one obtains
\beqn
\wh \Phi^{\rm SSP} \circ \wh \Phi^{\rm PSS} = \wh\Phi^{\rm pearly}: \wh{HM}(f_X) \to \wh{HM}(f_X).
\eeqn
As $\wh\Phi^{\rm pearly}$ is invertible, the following proposition follows. 

\begin{prop}\label{prop:lower_bound}
One has 
\beqn
{\rm dim}_{\Lambda_{\mc K}} \Big( \wh{HF}(H) \Big)  \geq {\rm dim}_{\Lambda_{\mc K}} \Big( \wh{HM}(f_X) \Big).
\eeqn
\end{prop}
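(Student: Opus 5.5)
The argument is linear algebra over $\Lambda_{\mc K}$, starting from the identity on homology just established:
\beqn
\wh\Phi^{\rm SSP}\circ\wh\Phi^{\rm PSS}=\wh\Phi^{\rm pearly}:\wh{HM}(f_X)\to\wh{HM}(f_X).
\eeqn
The plan is to show that $\wh\Phi^{\rm pearly}$ is an isomorphism on $\wh{HM}(f_X)$. Then $\wh\Phi^{\rm PSS}:\wh{HM}(f_X)\to\wh{HF}(H)$ is injective, because its composite with $\wh\Phi^{\rm SSP}$ is injective. An injective $\Lambda_{\mc K}$-linear map gives ${\rm dim}_{\Lambda_{\mc K}}\wh{HF}(H)\geq{\rm dim}_{\Lambda_{\mc K}}\wh{HM}(f_X)$. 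The right-hand side is finite by \eqref{Morse_localization}, and the inequality holds trivially if $\wh{HF}(H)$ is infinite-dimensional.

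The key step is invertibility of $\wh\Phi^{\rm pearly}$, which I would get from Lemma \ref{lemma:pearl}. Write $\wh\Phi^{\rm pearly}=\mathrm{Id}+N$ at chain level, where $N$ is a $\Lambda_{\mc K}$-linear chain map whose matrix entries all have strictly positive valuation in the combined $(\theta,T)$-valuation.

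Positivity needs some care. The $\theta$-shift is measured by $f_{\rm Tate}$, and positive $T$-valuation comes from the symplectic area of sphere bubbles. Since $\Pi$ and ${\rm crit}(f_X)$ are discrete and the spheres have energy bounded below by Gromov compactness, I expect a uniform $\epsilon>0$ bounding the valuation of every entry of $N$ from below. I would then define the inverse by the Neumann series $\sum_{k\ge0}N^k$. This series converges in the completed complex because of the combined completion built into the Novikov ring, which requires $\mu_i+\lambda_i\to\infty$. Each $N^k$ commutes with the differential, so the sum is a chain-level inverse, and hence $\wh\Phi^{\rm pearly}$ is invertible on homology.

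The main obstacle is making the convergence legitimate. The complex $\wh{CM}(f_X)$ is built from infinitely many generators, indexed by $\mathbb Z$-shifts in $\wh S^\infty$ and by $\Pi$. I would handle this by passing to a finite $\Lambda_{\mc K}$-basis. The invariant complex is freely generated over $\Lambda_{\mc K}$ by finitely many orbit representatives, namely the critical points of $f_X$ times a fixed Tate level modulo the $\Pi\times\mathbb Z$ action, together with symmetrized pairs. In that basis, $N$ is a finite matrix with entries in the maximal ideal of the valuation ring of $\Lambda_{\mc K}$. Then $\mathrm{Id}+N$ has determinant equal to $1$ plus a term of positive valuation. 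That determinant is a unit in the field $\Lambda_{\mc K}$, so $\mathrm{Id}+N$ is invertible at chain level, and a fortiori on cohomology.
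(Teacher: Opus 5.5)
Your proposal is correct and follows the paper's route. Lemma \ref{lemma:pearl} makes $\wh\Phi^{\rm pearly}$ invertible, so $\wh\Phi^{\rm PSS}$ is injective on cohomology, and the inequality follows by linear algebra over the field $\Lambda_{\mc K}$; your Neumann-series/determinant argument just fills in the invertibility step the paper states without detail. One small precision: no combined $(\theta,T)$-valuation is defined on $\Lambda_{\mc K}$ itself, whose natural valuation is $T$-adic and gives $\theta$ valuation zero. So you should invert $\det(\mathrm{Id}+N)=1+z$ in the completed subring of series with nonnegative $\theta$- and $T$-exponents, where the entries of $N$ actually lie, and only then map it into $\Lambda_{\mc K}$.
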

\begin{proof}
    Using Lemma \ref{lemma:pearl}, we see that $\wh \Phi^{\rm PSS}$ defines an injection on cohomology. The inequality follows as we are working over a field $\Lambda_{\cl K}$.
\end{proof}

\subsection{Equivariant local Floer cohomology}\label{sec:loc-equiv}

We borrow the notation $T$ from Section \ref{sec:intro} to discuss involutions in this subsection. We follow the construction in \cite{BSWX}. Let $\ol O \subset \tilde{\mathcal{O}}(H)$ be a $\Z/2$-orbit. Define \[\widehat{CF}_{\rm loc}(H,\ol O) = \big(\F_2\langle \ol O\rangle \otimes CM(f_{\rm Tate})\big)_{\Z/2},\] where $\Z/2$ acts on the vector space $\F_2\langle \ol O\rangle$ by the linear representation induced from the $\Z/2$ action on $\ol O.$ Consequently, $\widehat{HF}_{\rm loc}(H,\ol O) = H(\widehat{CF}_{\rm loc}(H,\ol O), \hat{d}_{\rm loc})$. It is calculated as follows.

\begin{enumerate}

\item If $\ol{O} = \{\overline{y}\}$ with $T(\overline{y})=\overline{y},$ we calculate \[\widehat{CF}_{\rm loc}(H,\ol O) = \F_2\langle\overline{y}\rangle \otimes CM(f_{\rm Tate})_{\Z/2} = \cl{K}\langle\overline{y}\rangle\] and the local differential $\hat{d}_{\rm loc} = 0.$ Therefore in this case \begin{equation}\label{eq:loc_fixed}\widehat{HF}_{\rm loc}(H,\ol O) \cong \cl{K}.\end{equation}

\item If $\ol O = \{\ol{y}, T(\ol{y})\},$ $\ol{y} \neq T(\ol{y})$ is a free $\Z/2$-orbit, define \[\widehat{CF}_{\rm loc}(H,\ol O) \cong CM(f_{\rm Tate}),\] with the local differential $\hat{d}_{\rm loc} = d_{f_{\rm Tate}}.$ Therefore in this case \begin{equation}\label{eq:loc_free}\widehat{HF}_{\rm loc}(H,\ol O) = 0.\end{equation}
\end{enumerate}

We may observe that once grading is ignored all $\widehat{HF}_{\rm loc}(H,\ol O)$ for capped $\Z/2$-orbits $\ol O$ covering a given $\Z/2$-orbit $O \subset {\mc O}(H)$ are canonically isomorphic and define $\widehat{HF}_{\rm loc}(H,O)$ to be the common isomorphism class. This is the local Tate Floer cohomology appearing in the introduction. However, technically, we find a different approach more convenient.

Indeed, to consider the contribution of all capped orbits $\ol O \in \tilde{\mc O}(H)$ of a given $\Z/2$-orbit $O \subset {\mc O}(H)$ we set \[\widehat{CF}_{\rm loc}(H,O) = \widehat{CF}_{\rm loc}(H,\ol O) \otimes_{\cl K} \Lambda_{\cl K},\] for a choice $\ol O$ of a capped orbit covering $O,$ endowed with the differential $\hat{d}_{\rm loc} \otimes id_{\Lambda_{\cl K}}.$ More invariantly, this can be described by defining $\wt{CF}_{\rm loc}(H,O)$ in the same way as $\wt{CF}(H)$ but requiring that $\gamma_{p_i} \in \pi^{-1}(O)$ where $\pi: \tilde{\mc O}(H) \to {\mc O}(H)$ is the natural surjection. Then as before \[\wh{CF}_{\rm loc}(H,O) = \big(\wt{CF}_{\rm loc}(H,O)\big)_{\Z/2} \otimes \Lambda_{\cl K}.\]

Evidently, this is a free $\Lambda_{\cl K}$-module and \begin{equation}\label{eq:complex_split}\wh{CF}(H) = \bigoplus_{O \in \cl O(H) /(\Z/2)} \wh{CF}_{\rm loc}(H,O).\end{equation} 
Write $\hat{d}_{\rm loc}: \wh{CF}(H) \to \wh{CF}(H)$ for the direct sum of the local differentials on the local complexes $\wh{CF}_{\rm loc}(H,O).$

In the same way as \cite[Lemma 27.6]{Bai_Xu_foundation} (see also \cite[Appendix B]{BSWX}) one sees that the following holds true:

\begin{lemma}\label{lma:d-eq-split}
The equivariant differential $\wh d_H$ on $\wh{CF}(H)$ splits as \[\wh d_H = \wh d_{\rm loc} + D_{\Z/2}\] where $D_{\Z/2}$ is a $\Lambda_{\cl K}$-linear map such that there exists $\delta_H > 0$ such that $D_{\Z/2}$ increases the filtration by at least $\delta_H.$ 
\end{lemma}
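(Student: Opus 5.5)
We split the differential $\wh d_H$ according to the orbit decomposition \eqref{eq:complex_split}. For generators $p=(\gamma_p,\mu_p)$, $q=(\gamma_q,\mu_q)$, the coefficient $n^{\mb F}_{pq}$ is a count of $M^{\mb F}_{pq}$. There are two kinds of contributions. In the first, the underlying uncapped orbits $\uds\gamma_p,\uds\gamma_q$ lie in the same $\Z/2$-orbit $O\subset\cl O(H)$ and the Floer component of the trajectory is constant. In the second, the Floer component is non-constant, which includes all cases where $O_p\neq O_q$. We set $\wh d_{\rm loc}$ to be the first type and $D_{\Z/2}$ to be the second, so $\wh d_H=\wh d_{\rm loc}+D_{\Z/2}$ holds by definition. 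Both pieces are $\Lambda_{\cl K}$-linear, since the decomposition is compatible with the $\Pi$- and $\Z$-actions and with the involution \eqref{Tate-Floer_involution}.

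\textbf{Step 1: the constant part is the local differential.} A stable Tate--Floer trajectory whose Floer part is constant has $\gamma_q=\gamma_p$, and consists only of a broken Tate flow $y$ in $\wh S^\infty$. The zero-dimensional such moduli spaces are exactly those counted by $d_{f_{\rm Tate}}$. Tensoring with $\F_2\langle\ol O\rangle$ and taking $\Z/2$-invariants therefore recovers $\hat d_{\rm loc}$ from Section \ref{sec:loc-equiv}.

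\begin{itemize}
\item For a fixed orbit, the invariant part is $\cl K\langle\ol y\rangle$. Flows between $\mu_\pm^{(k)}$ and $\mu_\pm^{(k+1)}$ come in pairs $\{y,-y\}$, so the differential vanishes mod $2$, matching \eqref{eq:loc_fixed}.
\item For a free orbit $\{\ol y,T\ol y\}$, the involution pairs $(\ol y,\mu)$ with $(T\ol y,-\mu)$, so the invariant complex is identified with $CM(f_{\rm Tate})$.
\end{itemize}

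\textbf{Step 2: the non-constant part has an action gap.} Use the filtration by $\cl A_H(\gamma)+f_{\rm Tate}(\mu)$, equivalently the combined $T$/$\theta$-valuation. Any trajectory with non-constant Floer part has energy
\beqn
E(u)=\cl A_H(\gamma_p)-\cl A_H(\gamma_q)>0,
\eeqn
and the Tate part contributes $f_{\rm Tate}(\mu_p)-f_{\rm Tate}(\mu_q)\ge 0$. It remains to show a uniform positive lower bound $\delta_H$ on $E(u)$ over all non-constant stable Floer trajectories, including those with sphere bubbles. Spheres carry energy at least $\hbar>0$ by the monotonicity lemma. For Floer cylinders, nondegeneracy of $H$ makes $\cl O(H)$ finite, so the set of differences
\beqn
\{\cl A_H(\gamma)-\cl A_H(\gamma')\ :\ \gamma,\gamma'\in\tilde{\cl O}(H)\}
\eeqn
is a finite union of translates of $\Pi$ by finitely many numbers. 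This alone does not give a gap when $\Pi$ is dense. Instead, argue by contradiction using Gromov compactness: a sequence of non-constant trajectories with energy tending to $0$ would converge to a constant trajectory at some orbit $\uds\gamma$. Nondegeneracy gives a neighborhood of $\uds\gamma$ in the loop space containing no other orbit and no non-constant finite-energy Floer cylinder of small energy (the standard $\hbar$-lemma for Floer cylinders, as in \cite[Lemma 27.6]{Bai_Xu_foundation}). This is a contradiction, so we may take $\delta_H=\min(\hbar,\hbar_{\rm cyl})>0$. Since $D_{\Z/2}$ only involves trajectories with non-constant Floer component, it raises the filtration by at least $\delta_H$.

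\textbf{Main obstacle.} In the actual proof the counts come from FOP-perturbed global Kuranishi charts, not honest moduli spaces, so the splitting must hold at the level of perturbed counts. I would first choose the perturbations for moduli spaces of energy below $\delta_H$ compatibly with the involution, so that the constant-Floer-part components are cut out transversely and unperturbed. These components are products of Tate–Morse spaces with a point, so they are already transverse. The remaining perturbed components, having energy $\ge\delta_H$, then land in $D_{\Z/2}$.
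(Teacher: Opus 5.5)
Your proposal is correct and follows essentially the same route as the paper. The zero-energy contributions (constant Floer part, which forces $\gamma_q=\gamma_p$) are exactly $\wh d_{\rm loc}$, and Gromov compactness together with nondegeneracy of $H$ gives a uniform energy gap $\delta_H>0$ for every other nonempty $M^{\mb F}_{pq}$; that is all the paper's short proof asserts.

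Your explicit small-energy argument (needed because $\Pi$ may be dense) and your remark that the zero-energy charts are already transverse, so the perturbed counts split in the same way, fill in details the paper leaves implicit. The only slip is cosmetic: in the fixed-orbit bullet, the local differential vanishes mod $2$ because there are two antipodal pairs of flow lines between the consecutive levels $\mu^{(k)}_\pm$ and $\mu^{(k+1)}_\pm$, not merely because flow lines come in pairs $\{y,-y\}$.
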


\begin{proof}
The differential $\wh d_H$ comes from contributions from nonempty moduli spaces $M_{pq}^{\mb F}$. Notice that if $\gamma_p = \gamma_q^\dagger$, there are no nonconstant Floer trajectories between them as they have equal symplectic action. These contributions are included in the local differential $\wh d_{\rm loc}$. On the other hand, by Gromov compactness, other moduli spaces all have symplectic energy no less than a constant $\delta_H>0$. Hence $D_{{\mb Z}/2}:= \wh d_H - \wh d_{\rm loc}$ increases the energy filtration by at least $\delta_H$. 
\end{proof}

\subsection{Homological perturbation}\label{sec:homological-perturb}

As in the proofs of \cite[Lemmas 27.2 and 27.7]{Bai_Xu_foundation}, Lemma \ref{lma:d-eq-split} together with the homological perturbation procedure implies the following fact.

\begin{prop}\label{prop:perturbed}
There is a $\Lambda_{\cl K}$-linear differential $\wh d^{\rm red}$ on the free $\Lambda_{\cl K}$-module 
\beqn
\wh{CF}{}^{\rm red}(H) =: \bigoplus_{O \in \cl{O}(H)/(\Z/2)} \wh{HF}_{\rm loc}(H,O) 
\eeqn
with 
\beqn
H(\wh{CF}^{\rm red}(H),\wh d^{\rm red}) \cong \wh{HF} (H).
\eeqn
\end{prop}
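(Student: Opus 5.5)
The plan is to run the standard homological perturbation lemma for filtered complexes, with the decomposition \eqref{eq:complex_split} and Lemma \ref{lma:d-eq-split} as input. First, on each local complex $\wh{CF}_{\rm loc}(H,O)$ I will choose explicit deformation-retraction data $(i_O, \pi_O, h_O)$ onto its cohomology. Here $i_O:\wh{HF}_{\rm loc}(H,O)\otimes\Lambda_{\cl K}\to \wh{CF}_{\rm loc}(H,O)$ and $\pi_O$ are chain maps, and $h_O$ is a homotopy satisfying
\beqn
\pi_O i_O = {\rm id},\qquad i_O\pi_O - {\rm id} = \wh d_{\rm loc} h_O + h_O \wh d_{\rm loc}.
\eeqn
The data come directly from the two cases of Section \ref{sec:loc-equiv}. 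For a fixed orbit the local differential vanishes, so I take $h_O=0$ and $i_O,\pi_O$ the identity. For a free orbit the local complex is the $\Lambda_{\cl K}$-extension of $CM(f_{\rm Tate})$, which is acyclic. Its differential pairs $\mu^{(k)}_+ + \mu^{(k)}_-$ with the neighbouring generators $\mu^{(k\pm1)}_{\pm}$, up to the correct indexing, so $h_O$ is the explicit inverse on these pairs. This inverse is $\Lambda_{\cl K}$-linear and shifts the $\theta$-filtration by a bounded amount, in particular by zero in action. Summing over the finitely many orbits $O\in \cl O(H)/(\Z/2)$ gives global data $(i,\pi,h)$ for $\hat d_{\rm loc}$ on $\wh{CF}(H)$.

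Next I treat $D_{\Z/2}=\wh d_H-\wh d_{\rm loc}$ as a perturbation and define
\beqn
\wh d^{\rm red} = \pi D_{\Z/2} i + \pi D_{\Z/2} h D_{\Z/2} i + \pi D_{\Z/2} h D_{\Z/2} h D_{\Z/2} i + \cdots,
\eeqn
together with the analogous perturbed $i', \pi', h'$. The perturbation lemma then shows formally that $(\wh d^{\rm red})^2=0$, that $i',\pi'$ are chain maps, and that they are homotopy inverse with respect to $\wh d_H$. This gives $H(\wh{CF}^{\rm red}(H),\wh d^{\rm red})\cong \wh{HF}(H)$.

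The main obstacle is convergence of these series. The key input is Lemma \ref{lma:d-eq-split}: $D_{\Z/2}$ raises the $T$-energy filtration by at least $\delta_H>0$, while $h$, $i$ and $\pi$ preserve the energy filtration, being built from $\Pi$-equivariant local data with no energy change. The $n$-th term therefore raises energy by at least $n\delta_H$. The remaining work is to check that the $\theta$-degree stays bounded below uniformly in $n$. This follows because $D_{\Z/2}$, $h$, $i$ and $\pi$ each shift $f_{\rm Tate}$ by a bounded amount, while the combined valuation defining $\Lambda$ and $\Lambda_{\cl K}$ still tends to infinity. Energy growth alone could in principle be offset by negative $\theta$-powers, so I will verify this by the same filtration argument used in \cite[Lemmas 27.2, 27.7]{Bai_Xu_foundation}. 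Given that, the series converge in the completed module, the formal identities persist, and the proposition follows.
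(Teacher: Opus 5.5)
Your overall strategy matches the paper's proof: contraction data $(i,\pi,h)$ for $\hat d_{\rm loc}$ on each $\wh{CF}_{\rm loc}(H,O)$, the perturbation series in $D_{\Z/2}$, and convergence from Lemma \ref{lma:d-eq-split}. The step that fails is your convergence analysis. You claim that the $\theta$-degree of $\pi D_{\Z/2}(hD_{\Z/2})^{n-1}i$ stays bounded below uniformly in $n$. This is false, and per-step bounds cannot give a uniform bound anyway.

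Consider a free orbit, with invariant $\cl K$-basis $e_\pm=\ol y\otimes\mu^{(0)}_\pm+\ol y^\dagger\otimes\mu^{(0)}_\mp$. There $\hat d_{\rm loc}e_\pm=\theta(e_++e_-)$, so every contraction involves $\theta^{-1}$; for example $h(e_++e_-)=\theta^{-1}e_+$ and $h(e_+)=0$. Meanwhile $D_{\Z/2}$ has $\theta^0$ components, namely the Floer-type contributions between distinct orbits with constant Tate flow line. So each factor $hD_{\Z/2}$ can lower the $\theta$-degree by one, and the $n$-th term can contain $\theta^{-(n-1)}$.

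A toy complex shows this. Take $y,z$ fixed and the maps $y\mapsto T^a(x_1+x_1^\dagger)$, $x_1,x_1^\dagger\mapsto T^b(x_2+x_2^\dagger)$, $x_2,x_2^\dagger\mapsto T^c z$. Then one computes $\wh d^{\rm red}y=\theta^{-2}T^{a+b+c}z$. In a combined $(\theta,T)$-valuation as in $\Lambda$, the $n$-th term is therefore only bounded below by roughly $n\delta_H-(n-1)$, which need not tend to $+\infty$. Relatedly, $D_{\Z/2}$ does not shift $f_{\rm Tate}$ by a bounded amount; only the one-sided bound $f_{\rm Tate}(\mu_q)\le f_{\rm Tate}(\mu_p)$ holds.

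The repair is that no $\theta$-control is needed, and this is exactly how the paper argues. The proposition lives over $\Lambda_{\cl K}$, the universal Novikov field over the \emph{field} $\cl K=\F_2((\theta))$. There $\theta^{\pm1}$ are scalars of valuation zero, and $\Lambda_{\cl K}$ is complete for the energy (Novikov) valuation alone. It is not defined by a combined valuation, contrary to your last paragraph.

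The argument then runs as follows:
\begin{itemize}
\item $\wh{CF}(H)$ is free of finite rank over $\Lambda_{\cl K}$.
\item Your $i,\pi,h$ are $\cl K$-linear and preserve the action filtration.
\item $D_{\Z/2}$ raises the action filtration by at least $\delta_H$.
\item Hence the $n$-th terms of $\wh d^{\rm red}$ and of the perturbed $i',\pi',h'$ raise it by at least $(n-1)\delta_H$, and all the series converge.
\end{itemize}
This is precisely the paper's justification: Lemma \ref{lma:d-eq-split} together with ``the non-Archimedean completeness of $\Lambda_{\cl K}$.'' Replace your final step with this and drop the appeal to a uniform $\theta$-bound, and the proof goes through.
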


\begin{proof}
The main ingredient of the proof is the homological perturbation lemma \cite{Markl_perturbation, Huebschman_Kadeishvili}. We first observe that there exist natural $\Lambda_{\cl K}$-linear inclusion and projection operators: \[\pi: \wh{CF}(H) \to \wh{CF}^{\rm red}(H),\] 
\[\sigma: \wh{CF}^{\rm red}(H) \to \wh{CF}(H),\] which do not decrease the filtration and such that \[ \pi \circ \sigma = id_{\wh{CF}^{\rm red}(H)}\] \[ \sigma \circ \pi - id_{\wh{CF}(H)} = \hat{d} \Theta + \Theta \hat{d}\] for a filtration non-decreasing chain homotopy \[\Theta: \wh{CF}(H) \to \wh{CF}(H).\]

The explicit formula from \cite{Markl_perturbation, Huebschman_Kadeishvili}, which converges due to Lemma \ref{lma:d-eq-split} and the non-Archimedean completeness of $\Lambda_{\cl K},$ provides one with a differential \[\hat{d}^{\rm red}: \wh{CF}^{\rm red}(H) \to \wh{CF}^{\rm red}(H)\] and filtration non-decreasing chain homotopy equivalences of complexes \[\pi': \wh{CF}(H) \to \wh{CF}^{\rm red}(H),\] 
\[\sigma': \wh{CF}^{\rm red}(H) \to \wh{CF}(H),\] where $\wh{CF}(H)$ is endowed with the differential $\hat{d}$ and $\wh{CF}^{\rm red}(H)$ is endowed with the differential $\hat{d}^{\rm red}.$ In particular,
\[H(\wh{CF}^{\rm red}(H),\wh d^{\rm red}) \cong \wh{HF} (H)\] as desired.

\end{proof}
\subsection{Concluding the proof}\label{subsec:conclude}

Note that, by \eqref{eq:loc_fixed}, \eqref{eq:loc_free}, Proposition \ref{prop:perturbed}, and Lemma \ref{lemma_Lu}, \[\dim_{\Lambda_{\cl K}} \wh{HF}(H) \leq \dim_{\Lambda_{\cl K}} \wh{CF}^{\rm red}(H) = \#(\cl O(H)^{\Z/2}) \leq \#(\phi(L) \cap L).\] On the other hand Proposition \ref{prop:lower_bound} and Equation \eqref{Morse_localization} imply 
\begin{equation}\label{eqn:inequality}
\dim_{\F_2} H^*(L;\F_2) = \dim_{\cl K} H^*(L;\cl K) = \dim_{\cl K} \wh{HM}(f_X) = \dim_{\Lambda_{\cl K}} \wh{HM}(f_X)\otimes_{\cl K} \Lambda_{\cl K} \leq \dim_{\Lambda_{\cl K}} \wh{HF}(H).
\end{equation}
Combining these chains of inequalities concludes the proof of Theorem \ref{thm:main}.

Of course, the proof presented here does not deal with the subtleties of rigorously defining the relevant counts. In Section \ref{subsec:official-proof}, after introducing some more notions and stating several technical results, we will make the proof rigorous.

\section*{Part II.}

\section{Abstract Notions}
The goal of this section is to introduce abstract notions associated to flow categories following the exposition in \cite{Bai_Xu_foundation} so that we can make the discussions on moduli spaces in Section \ref{sec:moduli} more structured.

\subsection{Regular stratification category with involutions}\label{sec:abstract}

In \cite[Part I]{Bai_Xu_foundation} many formal notions were introduced to package the procedure of global chart construction. We would like to briefly recall the necessary ones which are needed in the current slightly simpler situation. 

\subsubsection{Regular posets}

We use certain kinds of posets (partially ordered sets) to describe the combinatorial data. In \cite{Bai_Xu_foundation}, these are called {\bf regular posets} and {\bf normal posets}, where the latter are more restrictive. 

We do not want to recall the detailed definition here but would like to summarize important properties of them. 

\begin{enumerate}
    \item Regular posets are always countable. 

    \item Given a regular poset $A$, any element $a\in A$ has a well-defined depth, denoted by $\dep(a) \in {\mb Z}_{\geq 0}$, which is monotone decreasing under the partial order, with maximal elements having depth zero.

    \item Given a regular poset $A$,  Alexandrov open subsets are still regular posets. Moreover, for each $\gamma \in A$, the closed set
    \beqn
    \partial^\gamma A:= \{ \beta \leq \gamma \}
    \eeqn
    is regular.

    \item Finite product and finite disjoint union of regular posets are still regular. 
\end{enumerate}
Moreover, regular posets form a distributive monoidal category whose units are singletons (\cite[Proposition 2.9]{Bai_Xu_foundation}), denoted by $\regpos$. A morphism between regular posets is an injective monotone map which preserves the depth function up to a global shift (called the {\bf degree} or {\bf codimension} of the morphism). 

{\bf Normal posets} are regular posets satisfying an additional property: a regular poset $A$ is called normal if for each element $\gamma \in A$ of depth $1$, there exist either one or two maximal elements above it under the partial order. 

Typical examples of normal posets are those indexing the strata of moduli spaces in Floer theory. Let $({\mc P}, \leq)$ be a general countable poset (such as the set of capped orbits of a nondegenerate Hamiltonian, where $p \leq q$ if there is a stable Floer trajectory connecting $p$ to $q$). Then for each pair of elements $p\leq q$, there is an induced poset
\beq\label{typical_poset}
A_{pq}^{\mc P}:= \left\{ \begin{array}{ll} \big\{ pr_1 \cdots r_l q\ |\ p<r_1 < \cdots < r_l < q \big\},\ &\ p< q,\\
\big\{ pp \big\},\ &\ p = q. 
\end{array} \right.
\eeq
It has a unique maximal element $pq$. On the other hand, when discussing bimodules and homotopies, we will consider posets which are not of the form of $A_{pq}^{\mc P}$.

\subsubsection{Regular stratification category}

The main purpose of introducing the formal concept of regular stratification category is to package various steps in the global chart construction in a compact fashion. 

\begin{defn}\cite[Definition 2.10]{Bai_Xu_foundation} \label{defn_regular_stratification_category}
A {\bf regular stratification category} is a distributive monoidal category $\uds{\bf C}$ together with a distributive monoidal functor $\uds{\bf C} \to \regpos$ satisfying the following properties. In this notation, if an object $K$ of $\uds{\bf C}$ is sent to a regular poset $A$, then we say that $K$ is $A$-stratified.

\begin{enumerate}
    \item There is an initial object of $\uds{\bf C}$ which is $\emptyset$-stratified.

    \item For any morphism $f: A \to B$ in $\regpos$ and any $B$-stratified object $K$, there is a canonical ``pullback'' which is an $A$-stratified object $f^* K$, together with a canonical morphism $f^* K \to K$. The pullback needs to satisfy the functoriality condition.
\end{enumerate}
\end{defn}
In particular, if $K$ is $A$-stratified and $\gamma \in A$, then we have an object $\partial^\gamma K$ which is the pullback via the inclusion $\partial^\gamma A \to A$.

Now we introduce two regular stratification categories. The first example of regular stratification category is the category of stratified topological spaces, denoted by $\uds{\bf Top}$. Its objects are locally compact, Hausdorff, and second countable topological spaces $M$ together with a  partition 
\beqn
M = \bigsqcup_{\alpha \in A} M_\alpha
\eeqn
into locally closed subsets satisfying the following conditions.
\begin{enumerate}
    \item The partition is locally finite.

    \item $A$ is a regular poset. 

    \item The canonical map $M \to A$ is continuous with respect to the Alexandrov topology on $A$ and its range is finite.
\end{enumerate}
We can write an object as $(M \to A)$ and call it an ${\mb A}$-stratified space. A morphism from $(M \to A)$ to $(N \to B)$ consists of a continuous map $f: M \to N$ together with a homogeneous (in the sense that it shifts the depth function up to a uniform constant) poset map $A \to B$ such that the diagram 
\beqn
\xymatrix{ M \ar[r]^f \ar[d]  & N \ar[d]\\
 A \ar[r] &  B }
\eeqn
commutes.

\subsubsection{Stratified pseudomanifolds}

\begin{defn}
An $n$-dimensional {\bf pseudomanifold} is a topological space $M$ with a partition $M = {\rm Int} M \sqcup \delta M$ satisfying
\begin{enumerate}
    \item ${\rm Int} M$ is open, dense, and itself an $n$-dimensional topological manifold.

    \item $\delta M$ admits a partition into topological manifolds of dimensions at most $n-2$.
\end{enumerate}
\end{defn}

\begin{defn}
Let $A$ be a countable regular poset. An {\bf $A$-stratified $n$-dimensional pseudomanifold} is an $A$-stratified space $X$ together with partitions $X_\alpha = {\rm Int} X_\alpha \sqcup \delta X_\alpha$ such that
\begin{enumerate}
    \item For each $\alpha \in A$, $X_\alpha = {\rm Int} X_\alpha \sqcup \delta X_\alpha$ is a pseudomanifold of dimension $n - \dep (\alpha)$. 

    \item For $x \in X_\alpha$ and each maximal $\gamma \in A^{\rm max}$ with $\alpha \leq \gamma$, there exists a corner chart
    \beqn
    U_{x, \alpha} \times [0, \epsilon)^{\mb{Face}_\alpha^\gamma } \to \partial^\gamma X
    \eeqn
    which sends ${\rm Int} U_{x, \alpha} \times [0,\epsilon)^{\mb{Face}_\alpha^\gamma}$ resp. $\delta U_{x, \alpha} \times [0,\epsilon)^{\mb{Face}_\alpha^\gamma}$ into ${\rm Int} \partial^\gamma X$ resp. $\delta( \partial^\gamma X)$. Here $\mb{Face}_\alpha^\gamma$ is the set of depth-one elements between $\alpha$ and $\gamma$. 
\end{enumerate}
\end{defn}

Notice that a stratified zero-dimensional pseudomanifold (with boundary) is a discrete collection of points and a stratified 1-dimensional pseudomanifold is a disjoint union of 1-manifolds (with boundary).

\begin{defn}
An embedding of stratified pseudomanifolds from $X_1$ to $X_2$ is an embedding of stratified spaces $f: X_1 \to X_2$ such that $f({\rm Int} X_1) \subset {\rm Int}(X_2)$ and $f(\delta X_1) \subset \delta X_2$. The category of stratified pseudomanifolds, denoted by $\pman$, has objects being stratified pseudomanifolds and morphisms being embeddings. 
\end{defn}

\begin{example}[Standard corner]\label{example_corner}
Let $I$ be a finite set and let 
\beqn
A^I:= \{ I' \subset I\}
\eeqn
which is a normal poset with partial order $I'' \leq I' \Longleftrightarrow I' \subset I''$. Then for $\epsilon>0$, the {\bf standard corner} is the space
\beqn
[0, \epsilon)^I:= \Big\{ (t_i)_{i \in I}\ |\ 0 \leq t_i < \epsilon\Big\}
\eeqn
It is stratified by $A^I$ with each stratum being
\beqn
([0, \epsilon)^I)_{I'} = \Big\{ (t_i)_{i \in I} \ |\ i \in I' \Longleftrightarrow t_i = 0 \Big\}
\eeqn
This is an object of both $\uds{\bf Top}$ and $\pman$.
\end{example}

\subsubsection{Involutions}

To treat the situation related to the Arnold--Givental conjecture, we introduce the abstract notion of involutions on regular stratification categories. 

\begin{defn}
Let $\uds{\bf C}$ be a regular stratification category. An {\bf involution} is a covariant functor 
\beqn
\dagger: \uds{\bf C} \to \uds{\bf C}
\eeqn
satisfying $\dagger^2 = {\rm Id}$. The pair $(\uds{\bf C}, \dagger )$ is called an {\bf involutive regular stratification category}. An {\bf involutive functor} from $(\uds{\bf C}, \dagger)$ to $(\uds{\bf D}, \dagger )$ is a functor which commutes with the involutions.

For a morphism $T: K_1 \to K_2$ of $\uds{\bf C}$, denote by  
\beqn
T^\dagger: K_1^\dagger \to K_2^\dagger
\eeqn
the induced morphism by $\dagger$.
\end{defn}

In the case of $\uds{\bf Top}$ and $\pman$, we will choose the involution to be the identity functor. Indeed, the involution is not the identity only when complex structures are involved. In those cases, involutions will be the operation flipping the complex structure.

\subsection{Flow categories etc. and involutions}

We upgrade the abstract notions of flow categories, bimodules, and homotopies to the involutive case. 

\subsubsection{Involutive flow categories}

\begin{defn}[Flow categories]\label{defn_flow_category}
Let $(\uds{\bf C}, \dagger)$ be an involutive regular stratification category and $\Gamma \subset {\mb R}^k$ be a subgroup. A {\bf filtered $\Gamma$-equivariant involutive Novikov flow category}  consists of the following data.

\begin{enumerate}

\item {\bf (Filtration)} The set of objects ${\rm Ob}{\mb F}$, which is a nonempty countable poset and a monotone decreasing function
\beqn
{\mc A}: {\rm Ob}{\mb F} \to {\mb R}^k.
\eeqn

\item {\bf (Moduli spaces)} For any pair $p \leq q$ of objects, an $A_{pq}^{\mb F}$-stratified object $M_{pq}^{\mb F}$ of $\uds{\bf C}$ where $A_{pq}^{\mb F}$ is an abbreviation of $A_{pq}^{{\rm Ob}{\mb F}}$ (see \eqref{typical_poset}). 

\item {\bf (Structural maps)}  For any triple $p \leq r \leq q$ of objects, a {\bf composition map}
\beqn
\iota_{prq}^{\mb F}: M_{pr}^{\mb F} \times M_{rq}^{\mb F} \to M_{pq}^{\mb F}|_{A_{prq}^{\mb F}}
\eeqn
which is a morphism of $\uds{\bf C}$.

\item {\bf (Involution)} An order-preserving involution $p\mapsto p^\dagger$ on ${\rm Ob}{\mb F}$ and for all pairs $p, q$ of objects isomorphisms
\beqn
\tau_{pq}^{\mb F}: (M_{pq}^{\mb F})^\dagger \cong M_{p^\dagger q^\dagger}^{\mb F}
\eeqn
such that $\tau_{p^\dagger q^\dagger}^{\mb F} \circ (\tau_{pq}^{\mb F})^{\dagger}$ is the identity of $M_{pq}^{\mb F}$.

\item {\bf (Novikov symmetry)} A free $\Gamma$-action on ${\rm Ob}{\mb F}$ and for all $\lambda \in \Gamma$, $p, q \in {\rm Ob}{\mb F}$ isomorphisms
\beqn
T_\lambda^{\mb F}: M_{pq}^{\mb F} \cong M_{\lambda p\ \lambda q}^{\mb F}
\eeqn
such that $T_{\lambda + \lambda'}^{\mb F} = T_\lambda^{\mb F} \circ T_{\lambda'}^{\mb F}$.
\end{enumerate}
These data must satisfy the following conditions.

\begin{enumerate}

\item {\bf (Monoidal property)}  When $p=q$, $M_{pp}^{\mb F}$ is a monoidal unit of $\uds{\bf C}$ and the composition maps $\iota_{ppq}^{\mb F}$ and $\iota_{pqq}^{\mb F}$ are the canonical isomorphisms. 

\item {\bf (Symmetry of filtration)} The filtration ${\mc A}$ is invariant under the involution and $\Gamma$-equivariant, i.e.
\begin{align*}
&\ {\mc A}(p^\dagger) = {\mc A}(p),\ &\ {\mc A}(\lambda p) = {\mc A}(p) - \lambda.
\end{align*}

\item {\bf (Involution is compatible with the $\Gamma$-action)} $(\lambda p)^\dagger = \lambda p^\dagger$ for all $\lambda \in \Gamma$ and $p \in {\rm Ob}{\mb F}$. Moreover, the following diagram commutes.
\beqn
\xymatrix{   (M_{pq})^\dagger \ar[r]^{\tau_{pq}} \ar[d]_{T_\lambda^\dagger} &   M_{p^\dagger q^\dagger}\ar[d]^{T_\lambda} \\
             ( M_{\lambda p\ \lambda q})^\dagger \ar[r]_{\tau_{\lambda p\ \lambda q}} &    M_{\lambda p^\dagger \ \lambda q^\dagger}   }
\eeqn

\item {\bf (Involution is compatible with the structural maps)} For $p\leq r \leq q$, the diagram commutes
\beqn
\xymatrix{   (M_{pr})^\dagger \times (M_{rq})^\dagger \ar[r] \ar[d]^{\tau_{pr} \times \tau_{rq}}    &    (M_{pq}|_{A_{prq}^{\mb F}})^\dagger \ar[d]^{\tau_{pq}|_{{A_{p^\dagger r^\dagger q^\dagger}^{\mb F}}}}\\
     M_{p^\dagger r^\dagger}\times M_{r^\dagger q^\dagger} \ar[r]  &    M_{p^\dagger q^\dagger}|_{A_{p^\dagger r^\dagger q^\dagger}^{\mb F}}}
     \eeqn

     \item {\bf ($\Gamma$-action is compatible with structural maps)} For $p\leq r \leq q$ and $\lambda \in \Gamma$, the diagram commutes
\beqn
\xymatrix{   M_{pr} \times M_{rq} \ar[r] \ar[d]    &    M_{pq}|_{A_{prq}^{\mb F}} \ar[d]\\
     M_{\lambda p\ \lambda r} \times M_{\lambda r\ \lambda q} \ar[r]  &    M_{\lambda p\ \lambda\ q}|_{A_{\lambda p\ \lambda r\ \lambda q}^{\mb F}}}
     \eeqn     

\item {\bf (Associativity)}: for any quadruple $p \leq r \leq s \leq q$ of objects, the following diagram commutes, where the arrows are defined using composition maps:
\beqn
\xymatrix{   &    M_{pr}^{\mb F} \times M_{rs}^{\mb F} \times M_{sq}^{\mb F}  \ar[ld] \ar[rd] & \\
 M_{ps}^{\mb F} \times M_{sq}^{\mb F}  \ar[rd]  &  &  M_{pr}^{\mb F} \times M_{rq}^{\mb F} \ar[ld] \\
& M_{pq}^{\mb F}  & }
\eeqn
\end{enumerate}
\end{defn}

Using the above notions, the discussion in Section \ref{sec:moduli} can be summarized as follows.

\begin{prop}
\begin{enumerate}

\item The Tate-Morse moduli spaces $M_{vw}^{\mb M}$ form a ${\mb Z}$-equivariant involutive Novikov flow category ${\mb M}$ enriched in $(\pman, \dagger)$ whose object set is 
\beqn
{\rm Ob}{\mb M} = {\rm crit} (f_X) \times {\rm crit}(f_{\rm Tate})
\eeqn
with filtration being the function $f_{\rm Tate}$.

\item The Tate-Floer moduli spaces $M_{pq}^{\mb F}$ form a $\Pi\times {\mb Z}$-equivariant involutive Novikov flow category ${\mb F}$ enriched in $(\uds{\bf Top}, \dagger)$ whose object set is 
\beqn
{\rm Ob}{\mb F} = \tilde {\mc O}(H) \times {\rm crit}(f_{\rm Tate})
\eeqn
with filtration being the ${\mb R}^2$-valued function ${\mc A} = ({\mc A}_H, f_{\rm Tate})$.
\end{enumerate}
\end{prop}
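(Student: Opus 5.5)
The plan is to verify, item by item, the data and axioms of Definition \ref{defn_flow_category} for the two families of moduli spaces constructed in Section \ref{sec:moduli}. The essential content is already in the constructions there; what remains is to match each structural map to the corresponding geometric operation and check the compatibility diagrams.

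For (1), the objects are the pairs $v=(\kappa_v,\mu_v)$, partially ordered by $v\leq w$ if $v=w$ or there is a broken negative gradient flow line of $\wt f_X$ from $v$ to $w$. The filtration is $f_{\rm Tate}(\mu_v)$, which is monotone along flow lines because the flow of $\wt f_X$ projects to the flow of $f_{\rm Tate}$.

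\begin{itemize}
\item \textbf{Manifold structure.} Only finitely many $w$ lie between $v$ and $w'$, since a flow line is confined to a finite truncation $X\times S^N$ of $X\times\wh S^\infty$. The Morse--Smale condition on these truncations then makes $M_{vw}^{\mb M}$ a compact smooth manifold with corners, stratified by $A^{\mb M}_{vw}$ according to its breaking points. By the standard gluing theorem for Morse trajectories, this gives an object of $\pman$ with the corner charts required in the definition.
\item \textbf{Composition.} The maps $\iota_{vrw}$ are concatenation of broken trajectories, and associativity is immediate.
\item \textbf{Involution.} On $\pman$ the involution is the identity functor. The maps $\tau_{vw}$ are the homeomorphisms \eqref{Morse_involution} induced by the isometry $(x,h)\mapsto(\tau_X x,-h)$. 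This isometry preserves $\wt f_X$ because $f_X$ is $\tau_X$-invariant, the metric is $\tau_X$-invariant, and $f_{\rm Tate}$ is even. Applying it twice gives the identity, it commutes with concatenation, and it preserves $f_{\rm Tate}$.
\item \textbf{Novikov symmetry.} The ${\mb Z}$-shift $h\mapsto ah$ is an isometry satisfying $f_{\rm Tate}(ah)=f_{\rm Tate}(h)-a$. It is free on objects and commutes with $h\mapsto -h$. Hence every compatibility diagram commutes at the level of maps of trajectories.
\end{itemize}

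For (2), the objects are $p=(\gamma_p,\mu_p)$ and the filtration is $({\mc A}_H,f_{\rm Tate})$, which is nonincreasing along stable Tate--Floer trajectories.

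\begin{itemize}
\item \textbf{Topological structure.} By Gromov--Floer compactness, each $M^{\mb F}_{pq}$ is compact Hausdorff. It is stratified by breaking type, i.e.\ by $A^{\mb F}_{pq}$, with only finitely many nonempty strata, because energy and Tate index are bounded. This gives the local finiteness and the continuity of the stratification map.
\item \textbf{Composition.} Concatenation of broken configurations gives continuous maps $\iota_{prq}$, and associativity is formal.
\item \textbf{Novikov symmetry.} The $\Pi$-action is recapping and the ${\mb Z}$-action is the Tate shift. Both are free on objects and shift $\mc A$ by $-\lambda$.
\item \textbf{Involution.} On a stable Tate--Floer trajectory it is $(\Sigma,u_i,u_\alpha,y_i)\mapsto(\Sigma^\dagger,u_i^\dagger,u_\alpha^\dagger,-y_i)$. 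This produces a trajectory from $p^\dagger$ to $q^\dagger$ because $H$ is involution symmetric, $d\tau_X J=-J\,d\tau_X$, and $f_{\rm Tate}$ is even. It respects the capping homotopy class, since conjugation of cappings $u\mapsto\tau_X\circ u\circ\tau_{{\mb D}^2}$ is compatible with gluing cylinders to discs.
\end{itemize}

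The step I expect to be the main obstacle is showing that this involution is well defined on equivalence classes and continuous in the Gromov topology. For well-definedness, an isomorphism $\varphi:\Sigma\to\Sigma'$ of prestable cylinders induces the conjugated isomorphism $\varphi^\dagger=\varphi:\Sigma^\dagger\to\Sigma'^\dagger$. An $s$-translation stays an $s$-translation under $(s,t)\mapsto(s,-t)$, and on sphere components $(u\circ g)^\dagger=u^\dagger\circ g^\dagger$. Hence equivalent trajectories have equivalent conjugates, and self-equivalence groups correspond, which preserves stability.

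For continuity, I would check directly that Gromov convergence is preserved. Conjugating the domains and composing with $\tau_X$ carries the convergence data for a sequence (reparametrizations, neck regions, bubble trees) to convergence data for the conjugated sequence.

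With $\dagger$ on $\uds{\bf Top}$ taken to be the identity, $\tau_{p^\dagger q^\dagger}\circ\tau_{pq}={\rm id}$ because conjugating twice returns the original data. Compatibility with concatenation holds because conjugation is performed componentwise. Compatibility with recapping and Tate shifts follows from $(\lambda\gamma)^\dagger=\lambda\gamma^\dagger$ and the fact that $-(ah)=a(-h)$.
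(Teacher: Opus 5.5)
Your proposal is correct and follows the paper's approach. The paper gives no separate argument: it presents this proposition as a summary of the constructions in Section~\ref{sec:moduli}, namely the shift actions and the homeomorphisms \eqref{Morse_involution} and \eqref{Tate-Floer_involution}. Your axiom-by-axiom check, including that conjugation is well defined on equivalence classes and continuous in the Gromov topology, is a more explicit unpacking of that summary.
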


\subsubsection{Involutive flow bimodules}

Flow bimodules are abstract packages of moduli spaces considered in situations such as continuation maps, PSS and SSP maps, etc. 

\begin{defn}[Involutive bimodule] \label{defn_bimodule}
Let $(\uds{\bf C}, \dagger)$ be an involutive regular stratification category and $\Gamma \subset {\mb R}^k$ be a subgroup. Let ${\mb F}$ and ${\mb F}'$ be filtered $\Gamma$-equivariant involutive flow categories enriched in $\uds{\bf C}$. A {\bf $\Gamma$-equivariant involutive bimodule} over $({\mb F}; {\mb F}')$, denoted by ${\mb B}$, consists of the following data.
\begin{enumerate}

\item {\bf (Moduli spaces)} For objects $p \in {\rm Ob}{\mb F}$ and $p' \in {\rm Ob} {\mb F}'$, an object $M_{p; p'}^{\mb B}$ of $\uds{\bf C}$.

\item {\bf (Structural maps)} For objects $p\leq q$ in ${\mb F}$, an injective homogeneous poset map of degree $1$ (in the sense that the depth function gets shifted by $1$)
\beqn
A^{{\mb F}}_{pq} \times A_{q; p'}^{\mb B} \to A_{p; p'}^{\mb B}
\eeqn
with image
\beqn
A_{pq;p'}^{\mb B}  \subset A_{p; p'}^{\mb B},
\eeqn
together with a morphism in $\uds{\bf C}$
\beq\label{module_structural_map}
\iota_{pq; p'}^{\mb B}: M_{pq}^{{\mb F}} \times M_{q; p'}^{\mb B} \to M_{p; p'}^{\mb B}|_{A_{pq; p'}^{\mb B}}.
\eeq
Similarly, for a pair of objects $q' \leq  p'$ in ${\mb F}'$, an injective poset map homogeneous of degree $1$
\beqn
A_{p; q'}^{\mb B}\times A_{q'p'}^{{\mb F}'} \to A_{p; p'}^{\mb B}
\eeqn
with image
\beqn
A_{p; q'p'}^{\mb B} \subset A_{p; p'}^{\mb B},
\eeqn
together with a morphism in $\uds{\bf C}$
\beq\label{module_structural_map_2}
\iota_{p; q'p'}^{\mb B}: M_{p; q'}^{\mb B}\times M_{q'p'}^{{\mb F}'} \to M_{p; p'}^{\mb B} |_{A_{p; q'p'}^{\mb B}}.
\eeq

\item {\bf (Novikov symmetry)} For all $\lambda \in \Gamma$ and objects $p, p'$, an isomorphism
\beqn
T_\lambda^{\mb B}: M_{p; p'}^{\mb B} \cong M_{\lambda p; \lambda p'}^{\mb B}
\eeqn
such that $T_{\lambda + \lambda'}^{\mb B} = T_\lambda^{\mb B} \circ T_{\lambda'}^{\mb B}$.

\item {\bf (Involution)} For objects $p, p'$, an isomorphism
\beqn
\tau_{p p'}^{\mb B}: (M_{p; p'}^{\mb B})^\dagger \cong M_{p^\dagger; (p')^\dagger}^{\mb B}.
\eeqn

\end{enumerate}
These objects need to satisfy the following conditions.
\begin{enumerate}

\item {\bf (Monoidal property)} When $p = q$ (recall $M_{pp}^{\mb F}$ is a monoidal unit), the morphism $\iota_{pp; p'}^{\mb B}$ is the natural isomorphism. Similarly, when $p' = q'$, the morphism $\iota_{p; p'p'}^{\mb B}$ is the natural isomorphism.

\item {\bf (Involution is compatible with $\Gamma$-action)} Given $p, p'$ and $\lambda$, the following diagram commutes.
\beqn
\xymatrix{ (M_{p; p'}^{\mb B} )^\dagger  \ar[r] \ar[d] &   M_{p^\dagger; (p')^\dagger }^{\mb B} \ar[d]  \\
        (M_{ \lambda p; \lambda p'}^{\mb B} )^\dagger \ar[r]   &   M_{\lambda p^\dagger; \lambda (p')^\dagger}^{\mb B}    }
\eeqn

\item {\bf (Involution is compatible with structural maps}) The following diagram is commutative.
\beqn
\xymatrix{   (M_{pq}^{\mb F})^\dagger \times ( M_{q; p'}^{\mb B})^\dagger \ar[r] \ar[d]   &     (M_{p; p'}^{\mb B})^\dagger     \ar[d]  &   (M_{p; q'}^{\mb B})^\dagger \times (M_{q'p'}^{{\mb F}'})^\dagger \ar[l] \ar[d]   \\
          M_{p^\dagger q^\dagger}^{\mb F} \times M_{q^\dagger; (p')^\dagger}^{\mb B}  \ar[r]  &   M_{p^\dagger; (p')^\dagger}^{\mb B} &   M_{p^\dagger; (q')^\dagger}^{\mb B}\times M_{(q')^\dagger (p')^\dagger}^{{\mb F}'}  \ar[l]   } 
\eeqn

\item {\bf ($\Gamma$-action is compatible with structural maps)} The following diagram is commutative.
\beqn
\xymatrix{   M_{pq}^{\mb F} \times M_{q; p'}^{\mb B} \ar[r] \ar[d]   &     M_{p; p'}^{\mb B}    \ar[d]  &   M_{p; q'}^{\mb B} \times M_{q'p'}^{{\mb F}'} \ar[l] \ar[d]   \\
          M_{\lambda p\ \lambda q }^{\mb F} \times M_{\lambda q ; \lambda p' }^{\mb B}  \ar[r]  &   M_{\lambda p; \lambda p'}^{\mb B} &   M_{\lambda p; \lambda  q'}^{\mb B}\times M_{\lambda q' \ \lambda p'}^{{\mb F}'}  \ar[l]   } 
\eeqn

\item {\bf (Associativity)} The following associativity properties hold.
\begin{enumerate}
\item For $p \leq r \leq q$ in ${\mb F}$, the following diagram commutes.
\beq\label{module_associativity_1}
\vcenter{\xymatrix{  M_{pr}^{{\mb F}} \times M_{rq}^{{\mb F}} \times M_{q; p'}^{\mb B} \ar[rr] \ar[d]  & & 
M_{p q}^{{\mb F}} \times M_{q; p'}^{\mb B} \ar[d]\\
 M_{p r}^{{\mb F}} \times M_{r; p'}^{\mb B} \ar[rr]  & & M_{p;p'}^{\mb B} }}
\eeq

\item For $q' \leq r' \leq p'$ in ${\mb F}'$, the following diagram commutes.
\beq\label{module_associativity_3}
\vcenter{ \xymatrix{  M_{p ; q'}^{{\mb B}}\times M_{q'r'}^{{\mb F}'} \times M_{r'p'}^{{\mb F}'}  \ar[rr] \ar[d] & & M_{p ; r'}^{{\mb B}} \times M_{r'p'}^{{\mb F}'} \ar[d]\\
M_{p ; q'}^{{\mb B}}\times M_{q'p'}^{{\mb F}'} \ar[rr]  & & M_{p ; p'}^{{\mb B}} }}
\eeq

\item For $p \leq  q $ in ${\mb F}$ and $q' \leq p'$ in ${\mb F}'$, the following diagram commutes.
\beq
\vcenter{ \xymatrix{  M_{p q }^{{\mb F} } \times M_{q; q'}^{{\mb B}} \times M_{q'p'}^{{\mb F}'}  \ar[rr] \ar[d]  & &    M_{p ; q'}^{{\mb B}} \times M_{q'p'}^{{\mb F}'} \ar[d] \\
 M_{p q }^{{\mb F} } \times M_{q; p'}^{{\mb B}}  \ar[rr] & &  M_{p ; p'}^{{\mb B}}   }}
\eeq
\end{enumerate}
\end{enumerate}
\end{defn}

The following proposition is also straightforward using the observations in Section \ref{sec:moduli}.

\begin{prop}
\begin{enumerate}

\item 
The Tate--PSS moduli spaces $M_{p; v}^{\rm PSS}$ resp. Tate--SSP moduli spaces $M_{w; p}^{\rm SSP}$ form a $\Pi \times {\mb Z}$-equivariant involutive bimodule ${\mb B}^{\rm PSS}$ resp. ${\mb B}^{\rm SSP}$ enriched in $(\uds{\bf Top}, \dagger)$ over $({\mb M}; {\mb F})$ resp. over $({\mb F}; {\mb M})$. 

\item The $\zeta = 0$ slice of the homotopy moduli spaces $M_{v;w}^{\rm hmtp}$ forms a $\Pi\times {\mb Z}$-equivariant involutive bimodule ${\mb B}^{\rm pearly}$ over $({\mb M}; {\mb M})$ enriched in $(\uds{\bf Top}, \dagger)$.
\end{enumerate}
\end{prop}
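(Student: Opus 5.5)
The statement is a packaging result: the Tate--PSS, Tate--SSP and $\zeta=0$ pearly moduli spaces of Section \ref{sec:moduli} satisfy the axioms of Definition \ref{defn_bimodule}. I would verify those axioms directly, with the three cases handled in parallel. For definiteness I carry out ${\mb B}^{\rm PSS}$ and then indicate the changes for the other two.

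\textbf{Step 1 (data).} First fix the data. The moduli spaces are $M^{\rm PSS}_{v;p}$, stratified by the poset $A^{\rm PSS}_{v;p}$ of words $(vw_1\cdots w_l;\,r_k\cdots r_1p)$, with partial order given by refinement. The depth of a word is its number of breakings. This poset is regular, since locally it is a product of two posets of the form \eqref{typical_poset}. The two poset maps $A^{\mb M}_{vw}\times A^{\rm PSS}_{w;p}\to A^{\rm PSS}_{v;p}$ and $A^{\rm PSS}_{v;q}\times A^{\mb F}_{qp}\to A^{\rm PSS}_{v;p}$ are concatenation of words. Concatenation is injective and raises depth by exactly one.

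\textbf{Step 2 (structural maps).} The structural maps $\iota$ are the gluing-free inclusions of broken configurations. Concretely:
\begin{itemize}
\item a Tate--Morse flow line from $v$ to $w$, followed by a PSS configuration from $w$, is itself a point of the stratum of $M^{\rm PSS}_{v;p}$ indexed by the concatenated word;
\item a PSS configuration followed by a stable Tate--Floer trajectory is a point of $M^{\rm PSS}_{v;p}$ in the same way.
\end{itemize}
Continuity of $\iota$ in the Gromov topology is immediate. The images land in the correct closed strata by construction. The stratification satisfies the conditions required in $\uds{\bf Top}$: local finiteness and finite range both follow from Gromov compactness together with the energy bound $\mathcal A_H(\gamma_v)-\mathcal A_H(\gamma_p)$.

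\textbf{Step 3 ($\Gamma$-action and involution).} The $\Gamma=\Pi\times{\mb Z}$ action acts in two ways:
\begin{itemize}
\item $\Pi$ shifts the capping and the formal label $a_v$;
\item ${\mb Z}$ translates the $\wh S^\infty$ component via $h\mapsto ah$, which preserves the gradient flow of $f_{\rm Tate}$.
\end{itemize}
Both commute with concatenation, which gives the Novikov symmetry and its compatibility with the $\iota$'s.

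The involution $\tau^{\rm PSS}_{vp}$ sends a configuration $(y,x,u,\text{bubbles})$ to $(-y,\tau_X\circ x,u^\dagger,\text{bubbles}^\dagger)$, where $u^\dagger(s,t)=\tau_X u(s,-t)$. Three facts make this well defined:
\begin{itemize}
\item $u^\dagger$ solves the PSS equation, because $\chi^{\rm PSS}$ depends only on $s$, $H$ is involution symmetric, and $J$ is anti-invariant;
\item $\tau_X\circ x$ is again a negative gradient line, because $f_X$ and the metric are $\tau_X$-invariant;
\item the matching $x(0)=u(-\infty)$ is preserved, because $u^\dagger(-\infty)=\tau_X(u(-\infty))$.
\end{itemize}
Sphere bubbles and the prestable domain are conjugated as in Definition \ref{defn_cylinder_conjugate}. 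Conjugation reverses the complex structure of the domain, but $\dagger$ is the identity functor on $\uds{\bf Top}$, so this is a homeomorphism $M^{\rm PSS}_{v;p}\to M^{\rm PSS}_{v^\dagger;p^\dagger}$. The compatibility diagrams hold for the following reasons:
\begin{itemize}
\item $\tau$ squares to the identity;
\item $\tau$ commutes with $\Pi$, because $(\lambda\gamma)^\dagger=\lambda\gamma^\dagger$;
\item $\tau$ commutes with ${\mb Z}$-translation, since negation commutes with index shift;
\item $\tau$ commutes with concatenation, since conjugation acts componentwise on broken configurations, matching \eqref{Morse_involution} and \eqref{Tate-Floer_involution}.
\end{itemize}
The monoidal property and the three associativity squares are tautological, because every $\iota$ is concatenation of components.

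\textbf{Step 4 (other cases).} The SSP case is symmetric, using $\chi^{\rm SSP}$. For the pearly slice at $\zeta=0$ the Floer data vanish, and the maps $u$ are holomorphic cylinders with a removable singularity at both ends. The involution there uses the same conjugation.

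\textbf{Main obstacle.} The only nontrivial point is that the involution is well defined on equivalence classes. A reparametrization $\varphi$ of the domain must go to a reparametrization $\varphi^\dagger$ of the conjugate domain, so that $(u\circ\varphi)^\dagger=u^\dagger\circ\varphi^\dagger$. On cylindrical components $\varphi$ is an $s$-translation, and it commutes with $t\mapsto -t$. On sphere bubbles this is exactly the identity $(u\circ g)^\dagger=u^\dagger\circ g^\dagger$. One must also check that conjugating a node position on a cylinder, $(s,t)\mapsto(s,-t)$, is compatible with the conjugated sphere attached there. This is where I would spend the care; the rest is bookkeeping.
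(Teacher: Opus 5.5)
Your proposal is correct and follows the same route as the paper. The paper simply asserts that the statement is straightforward from the observations of Section~\ref{sec:moduli}: the codimension-one identifications $M^{\rm PSS}_{vw;p}\cong M^{\mb M}_{vw}\times M^{\rm PSS}_{w;p}$ and $M^{\rm PSS}_{v;qp}\cong M^{\rm PSS}_{v;q}\times M^{\mb F}_{qp}$, the $\Pi\times\mb Z$ symmetries, and the conjugation homeomorphisms $M^{\rm PSS}_{v^\dagger;p^\dagger}\cong M^{\rm PSS}_{v;p}$. Your axiom-by-axiom check, including that $u^\dagger(s,t)=\tau_X u(s,-t)$ solves the $s$-dependent PSS equation and is compatible with reparametrizations and nodes, is a more explicit version of that.

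The only blemish is notational. Since $v$ is a Morse generator, the energy bound in Step~2 should be phrased through its $\Pi$-label $a_v$ together with $\mathcal A_H(\gamma_p)$ and $\|H\|$, not through ``$\mathcal A_H(\gamma_v)$''.
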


\subsubsection{Bimodule concatenations}

In \cite{Bai_Xu_foundation}, we introduced the abstract notion of concatenations of bimodules. The level of generality of \cite{Bai_Xu_foundation} is needed because the pearly flow category considered there does not have the typical posets \eqref{typical_poset}. In the current situation (as well as in \cite{Bai_Xu_Arnold}), we do not need the abstract formulation. 

Let ${\mb F}$, ${\mb F}'$, ${\mb F}''$ be flow categories enriched in $\uds{\bf C}$ and let ${\mb B}$ resp. ${\mb B}'$ be a bimodule over $({\mb F}; {\mb F}')$ resp. over $({\mb F}'; {\mb F}'')$. We assume that the posets underlying the flow-category moduli spaces, which are
\beqn
A_{pq}^{\mb F},\ A_{p'q'}^{{\mb F}'},\ A_{p''q''}^{\mb{F}''},
\eeqn
are the typical ones \eqref{typical_poset}. We also assume that the posets underlying the bimodule moduli spaces are 
\beqn
\begin{split}
A_{p; p'}^{\mb B} = &\ \Big\{ (pr_1 \cdots r_l; s_k' \cdots s_1' p')\ |\ p<r_1<\cdots <r_l,\ s_k' < \cdots < s_1' < p' \Big\},\\
A_{p'; p''}^{{\mb B}'} = &\ \Big\{ (p'r_1' \cdots r_l'; s_k'' \cdots s_1'' p'')\ |\ p' < r_1' <\cdots < r_l',\ s_k'' < \cdots < s_1'' < p'' \Big\}.
\end{split}
\eeqn
These posets are all normal. Then for each pair of objects $p\in {\rm Ob}{\mb F}$ and $p''\in {\rm Ob}{\mb F}''$, define the set
\beqn
A_{p;p''}^{{\mb B} \circ {\mb B}'}:= \left( \bigsqcup_{p' \in {\rm Ob} \mb{F}'}  A_{p;p'}^{\mb B} \times A_{p';p''}^{{\mb B}'} \right)/ \sim
\eeqn
where the equivalence relation $\sim$ is defined as follows. For any $p', q' \in {\rm Ob}{\mb F}'$, the images of the two maps
\beqn
\xymatrix{ & A_{p;p'}^{\mb B} \times A_{p'q'}^{{\mb F}'} \times A_{q' ;p''}^{{\mb B}'}  \ar[ldd]_{{\rm Id}\times \iota_{p'q'; p''}^{{\mb B}'} } \ar[rdd]^{\iota_{p; p'q'}^{\mb B}\times {\rm Id} } \\ 
& & \\
   A_{p; p'}^{{\mb B}} \times A_{p'; p''}^{{\mb B}'}    & & A_{p; q'}^{{\mb B}} \times A_{q';p''}^{{\mb B}'} }
\eeqn
are identified. The equivalence relation $\sim$ is generated by this identification. The following property of this set was proved in \cite[Section 6.5]{Bai_Xu_foundation}.

\begin{prop} Suppose $p\in {\rm Ob}{\mb F}$ and $p'' \in {\rm Ob}{\mb F}''$.
\begin{enumerate}
    \item There is a natural partial order on $A_{p; p''}^{{\mb B} \circ {\mb B}'}$ making it a normal poset. 

    \item For any $p' \in {\rm Ob}{\mb F}'$, the natural map $A_{p; p'}^{\mb B} \times A_{p'; p''}^{{\mb B}'} \to A_{p; p''}^{{\mb B} \circ {\mb B}'}$ is a codimension zero morphism of $\regpos$. Denote its image by $A_{p; p'; p''}^{{\mb B}\circ {\mb B}'}$. 
\end{enumerate}
\end{prop}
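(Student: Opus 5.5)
The plan is to describe the poset $A_{p;p''}^{{\mb B}\circ{\mb B}'}$ explicitly as a set of ``broken words'' and read both claims off that description. An element of $A_{p;p'}^{\mb B}\times A_{p';p''}^{{\mb B}'}$ is a pair $(pr_1\cdots r_l; s_k'\cdots s_1'p')\,(p'r_1'\cdots r_m'; s_j''\cdots s_1''p'')$. The relation $\sim$ identifies a $q'$-breaking recorded on the ${\mb B}'$-side (a factor $p'q'$ followed by $q'\ldots$) with the same breaking recorded on the ${\mb B}$-side. So I would send each pair to the reduced word
\beqn
\big(pr_1\cdots r_l;\ t_1'\cdots t_n';\ s_j''\cdots s_1''p''\big),
\eeqn
where $t_1'<\cdots<t_n'$ is the union of the ${\mb F}'$-objects $s_k',\ldots,s_1',p',r_1',\ldots,r_m'$. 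This is a nonempty chain in ${\rm Ob}{\mb F}'$, read in the ordering conventions of \eqref{typical_poset}. First I check that this map is constant on $\sim$-classes: both maps in the defining diagram produce the same chain, $p'<q'$ together with the other letters. Next I show that two pairs with the same reduced word are related by a finite zig-zag of the generating identifications, by moving the distinguished letter $p'$ one step at a time along the chain. This identifies $A_{p;p''}^{{\mb B}\circ{\mb B}'}$ with the set of such triples with $n\ge 1$.

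For (1), I define the partial order on triples by word expansion: $\alpha\le\beta$ if $\alpha$ is obtained from $\beta$ by inserting letters into the ${\mb F}$-chain and the ${\mb F}''$-chain, and by inserting letters into or refining the middle chain. I set the depth to $l+j+(n-1)$. Then I verify regularity: countability is clear; depth is monotone; and each lower set $\partial^\gamma A$ is isomorphic to a product of posets of the form $A^{\mc P}_{pq}$ together with one ``middle'' factor. The middle factor is the poset of nonempty chains, which is regular, since it is modelled on the face poset of a simplex/corner, as in Example \ref{example_corner}. Finite products are regular by the listed properties. For normality, a depth-one element is of one of two kinds. Either it has one extra outer letter, in which case the only maximal element above it deletes that letter. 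Or it has $n=2$ with middle chain $t_1'<t_2'$, in which case the maximal elements above it are those with $n=1$, namely $t_1'$ or $t_2'$. Either way there are one or two maximal elements, as required.

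For (2), I check that the composite $A_{p;p'}^{\mb B}\times A_{p';p''}^{{\mb B}'}\to A_{p;p''}^{{\mb B}\circ{\mb B}'}$ has three properties. It is injective: given a reduced word containing $p'$, the splitting at $p'$ recovers the pair. It is monotone, since expansions on each side become expansions of the triple. It preserves depth with shift zero, because $(l+k)+(m+j)=l+j+(n-1)$ when $n=k+m+1$. Hence it is a codimension-zero morphism of $\regpos$, and its image is exactly the set of triples whose middle chain contains $p'$.

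I expect the main obstacle to be the zig-zag step, namely showing that $\sim$ identifies nothing beyond what the reduced word records. This matters because the equivalence relation is only described as generated by the identifications. I would handle it by induction on the distance, along the chain, between the two positions of the distinguished letter, using the associativity of the poset maps for ${\mb B}$ and ${\mb B}'$ to commute the generating moves.
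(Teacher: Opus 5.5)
Your argument is correct, but it takes a different route from the paper. The paper gives no argument of its own for this statement: it cites \cite[Section 6.5]{Bai_Xu_foundation}, where concatenation posets are treated for general normal bimodule posets. That generality is needed there because pearly flow categories do not have the typical posets \eqref{typical_poset}.

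You instead exploit the product-of-chains form assumed here. You identify the quotient with triples $(pr_1\cdots r_l;\, t_1'\cdots t_n';\, s_j''\cdots s_1''p'')$ with $n\geq 1$, and read off the order, the depth $l+j+n-1$ and normality directly. In particular, the two maximal elements above $(p;\,t_1't_2';\,p'')$ visibly correspond to absorbing the ${\mb F}'$-piece into ${\mb B}$ or into ${\mb B}'$. Your order is also visibly the natural one: any relation $\alpha\le\beta$ in your sense already holds inside the piece indexed by any letter of $\beta$'s middle chain. What your route buys is a self-contained, transparent proof. What it costs is generality, since it relies on the structural poset maps being word concatenations.

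There are two places where you can simplify.

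First, the step you flag as the main obstacle needs no induction and no associativity. Suppose a reduced word has representatives with distinguished letters $t_a'<t_b'$. Then the element
\[
\big((\cdots;t_1'\cdots t_a'),\ t_a't_{a+1}'\cdots t_b',\ (t_b'\cdots t_n';\cdots)\big)\in A^{\mb B}_{p;t_a'}\times A^{{\mb F}'}_{t_a't_b'}\times A^{{\mb B}'}_{t_b';p''}
\]
maps to both representatives, so a single generating move identifies them.

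Second, the image in (2) is exactly the closed set $\partial^{\gamma}A$ with $\gamma=(p;\,p';\,p'')$. These sets cover $A^{{\mb B}\circ{\mb B}'}_{p;p''}$. Each is isomorphic to the regular poset $A^{\mb B}_{p;p'}\times A^{{\mb B}'}_{p';p''}$, and the induced depths agree on overlaps. This justifies regularity more cleanly than the appeal to being ``modelled on a simplex''. Concretely, let $t'$ lie in the middle chain $T_\alpha$ of $\alpha$. Then the interval $[\alpha,(p;\,t';\,p'')]$ is the Boolean lattice on the outer letters of $\alpha$ together with $T_\alpha\setminus\{t'\}$, whose size equals the depth of $\alpha$.
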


Now if ${\mb F}$, ${\mb F}'$, ${\mb F}''$ and ${\mb B}$, ${\mb B}'$ are all involutive, we can see that poset bimodule is also involutive in a natural way. For example, there is the commutative diagram 
\beqn
\xymatrix{ A_{p; p'}^{\mb B} \times A_{p'; p''}^{{\mb B}'}  \ar[rr]  \ar[d]   &  &    A_{p^\dagger; (p')^{\dagger}}^{{\mb B}} \times A_{(p')^\dagger; (p'')^\dagger}^{{\mb B}'}  \ar[d] \\
           A_{p; p''}^{{\mb B}\circ {\mb B}'} \ar[rr]   &         & A_{p^\dagger; (p'')^\dagger}^{{\mb B} \circ {\mb B}' } }
\eeqn

Now we recall the abstract notion of bimodule concatenations given in \cite{Bai_Xu_foundation}.

\begin{defn}[Bimodule concatenation]\cite[Definition 6.13]{Bai_Xu_foundation} \label{defn_general_concatenation}
Let ${\mb F}, {\mb F}', {\mb F}''$ be flow categories enriched in $\uds {\bf C}$. Let ${\mb B}$ resp. ${\mb B}'$ be a bimodule over $({\mb F}; {\mb F}')$ resp. over $({\mb F}'; {\mb F}'')$. Then a {\bf concatenation} of ${\mb B}$ and ${\mb B}'$, denoted by ${\mb B} \circ {\mb B}'$, consists of the following objects.
\begin{enumerate}
    \item A bimodule over $({\mb F}; {\mb F}'')$.
    
    \item For each pair of composable object pairs $(p; p')$ and $(p'; p'')$ of objects, a codimension zero morphism
    \beq\label{codimension_zero_morphism}
    M_{p; p'}^{\mb B} \times M_{p'; p''}^{{\mb B}'} \to M_{p; p''}^{{\mb B} \circ {\mb B}'}.
    \eeq
    \end{enumerate}
    They are required to satisfy the following conditions.
\begin{enumerate}
    \item For each pair $(p; p'')$, the object $M_{p; p''}^{{\mb B} \circ {\mb B}'}$ is stratified by $A_{p; p''}^{{\mb B} \circ {\mb B}'}$.

    \item The underlying poset maps of the structural maps of ${\mb B} \circ {\mb B}'$ are the same as those in the previously discussed special cases.

    \item The following diagram commutes.
    \beqn
    \xymatrix{ &    M_{p; p'}^{{\mb B}} \times M_{p'q'}^{{\mb F}'} \times M_{q'; p''}^{{\mb B}'} \ar[rd] \ar[ld]   & \\
     M_{p; q'}^{{\mb B}} \times M_{q';p''}^{{\mb B}'}  \ar[rd] & &  M_{p; p'}^{\mb B} \times M_{p'; p''}^{{\mb B}'} \ar[ld] \\
     &  M_{p; p''}^{{\mb B} \circ {\mb B}'}  &   }
    \eeqn

    \item The following diagram commutes.
    \beqn
    \xymatrix{
    M_{pq}^{{\mb F}} \times M_{q;p'}^{\mb B} \times M_{p'; p''}^{{\mb B}'} \ar[r] \ar[d]   &    M_{pq}^{{\mb F}} \times M_{q; p''}^{{\mb B} \circ  {\mb B}'} \ar[d] \\
    M_{p;p'}^{{\mb B}} \times M_{p'; p''}^{{\mb B}'} \ar[r] &   M_{p; p''}^{{\mb B} \circ {\mb B}'} }
    \eeqn

    \item The following diagram commutes.
    \beqn
\xymatrix{
    M_{p; p'}^{\mb B} \times M_{p'; q''}^{{\mb B}'} \times M_{q''p''}^{{\mb F}''} \ar[r] \ar[d]   &  M_{p; q''}^{{\mb B} \circ {\mb B}'} \times M_{q''p''}^{{\mb F}''} \ar[d]\\
    M_{p; p'}^{{\mb B}}\times M_{p'; p''}^{{\mb B}'}  \ar[r] &   M_{p; p''}^{{\mb B} \circ {\mb B}'}}
    \eeqn
    
\end{enumerate}
If $\Gamma \subset {\mb R}^k$ is a subgroup, ${\mb F}, {\mb F}', {\mb F}''$, and ${\mb B}$, ${\mb B}'$ are $\Gamma$-equivariant involutive, then the concatenation is called {\bf $\Gamma$-equivariant and involutive} if ${\mb B} \circ {\mb B}'$ is $\Gamma$-equivariant and involutive and the codimension zero morphisms \eqref{codimension_zero_morphism} respect $\Gamma$-action and involutions.
\end{defn}

Coming back to the discussion in Section \ref{sec:moduli}, the following statement follows from the discussions in \cite[Section 11.3]{Bai_Xu_foundation}.
\begin{prop}
The $\zeta = \infty$ slices of the homotopy moduli spaces $M_{x;y}^{\rm hmtp}$ form a $\Pi \times {\mb Z}$-equivariant involutive concatenation ${\mb B}^{\rm PSS} \circ {\mb B}^{\rm SSP}$ enriched in $\uds{\bf Top}$. 
\end{prop}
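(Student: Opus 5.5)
The plan is to verify the axioms of Definition~\ref{defn_general_concatenation} directly on the $\zeta=\infty$ slice of $M_{v;w}^{\rm hmtp}$, following \cite[Section 11.3]{Bai_Xu_foundation}, and then to check separately that the $\Pi\times{\mb Z}$-symmetry and the involution are compatible with all the structure. First, I would describe the $\zeta=\infty$ slice set-theoretically. By Gromov compactness for the family $\sigma_\zeta$, every limit configuration as $\zeta\to+\infty$ splits as a PSS-type configuration, followed by a (possibly empty) chain of broken Tate--Floer trajectories, followed by an SSP-type configuration. Here the splitting point $p$ is only determined up to the ambiguity of where the broken Floer chain is cut. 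So I would define $M^{{\rm PSS}\circ{\rm SSP}}_{v;w}$ as the quotient of $\bigsqcup_p M^{\rm PSS}_{v;p}\times M^{\rm SSP}_{p;w}$ by the identifications
\beqn
\iota^{\rm PSS}_{v;pq}\times{\rm Id} \sim {\rm Id}\times\iota^{\rm SSP}_{pq;w}
\eeqn
on $M^{\rm PSS}_{v;p}\times M^{\mb F}_{pq}\times M^{\rm SSP}_{q;w}$. I would then show that this quotient maps homeomorphically onto the $\zeta=\infty$ stratum. The stratification by $A^{{\mb B}\circ{\mb B}'}_{v;w}$ comes from the same quotient at the poset level. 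The codimension-zero morphisms \eqref{codimension_zero_morphism} are the quotient maps, and commutativity of the three diagrams is immediate, since each diagram is built from the gluing and inclusion maps that already define the quotient.

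Second, I would supply the bimodule structure over $({\mb M};{\mb M})$. Morse breakings at $-\infty$ on the $x_-$ side and at $+\infty$ on the $x_+$ side act on the PSS and SSP factors respectively. These actions descend to the quotient because they commute with the Floer-breaking identifications: they touch disjoint ends of the configuration. Associativity is then inherited from associativity for ${\mb B}^{\rm PSS}$ and ${\mb B}^{\rm SSP}$.

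Third, I would address the equivariance, which is the only genuinely new point compared to \cite{Bai_Xu_foundation}. The $\Pi\times{\mb Z}$-action acts factorwise on $M^{\rm PSS}_{v;p}\times M^{\rm SSP}_{p;w}$ and commutes with $\iota^{\mb F}$, so it descends. For the involution, I would apply the isomorphisms $\tau^{\rm PSS}\times\tau^{\rm SSP}$ factorwise, mapping the $p$-piece to the $p^\dagger$-piece. By the compatibility axiom (Involution is compatible with structural maps) for the involutive bimodules and the flow category ${\mb F}$, these isomorphisms respect the identifications, so they descend to $\tau_{v;w}$ on the quotient. Concretely, conjugating both the domain cylinder and the Morse and Tate flow lines commutes with gluing along $p$ versus $p^\dagger$. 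The identity $\tau_{v^\dagger w^\dagger}\circ\tau_{vw}^\dagger={\rm Id}$ follows from the corresponding identity on each factor.

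The step I expect to be the main obstacle is the identification of the quotient with the actual $\zeta=\infty$ limit set, including its topology. Two things must be checked: that the quotient is Hausdorff, and that the natural map is a homeomorphism rather than merely a continuous bijection. I would handle this using compactness of $M_{v;w}^{\rm hmtp}$ and the fact that a continuous bijection from a compact space to a Hausdorff space is a homeomorphism. This requires restricting to the finitely many $p$ with nonempty moduli spaces below a fixed energy and $f_{\rm Tate}$ level, a finiteness that Gromov compactness provides.
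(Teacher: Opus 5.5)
Your proposal is correct and takes the same route as the paper. The paper's proof consists only of a citation to \cite[Section 11.3]{Bai_Xu_foundation}, which identifies the $\zeta=\infty$ slice with the glued union of the pieces $M^{\rm PSS}_{v;p}\times M^{\rm SSP}_{p;w}$; the paper then treats the $\Pi\times{\mb Z}$-symmetry and the involution as routine factorwise compatibilities, which is exactly what you spell out. One small simplification: you do not need a separate Hausdorffness check, because once the natural map from the compact quotient to the Hausdorff slice is a continuous bijection, the quotient is automatically Hausdorff.
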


\subsubsection{Involutive bimodule homotopies}

\begin{defn}\label{homotopy_modules}
Let $(\uds{\bf C}, \dagger)$ be an involutive regular stratification category and $\Gamma \subset {\mb R}^k$ be a subgroup. Let ${\mb F}$, ${\mb F}'$ be filtered $\Gamma$-equivariant involutive Novikov flow categories enriched in $\uds{\bf C}$ and let ${\mb B}_0$, ${\mb B}_1$ be $\Gamma$-equivariant involutive bimodules over $({\mb F}; {\mb F}')$. A {\bf $\Gamma$-equivariant involutive homotopy} from ${\mb B}_0$ to ${\mb B}_1$, denoted by ${\mb H}$, consists of the following data.
\begin{enumerate}

\item {\bf (Moduli spaces)} For each pair $(p ; p')$ of objects of involved flow categories, an object $M_{p; p'}^{{\mb H}}$ in $\uds{\bf C}$ stratified by a normal poset $A_{p; p'}^{\mb H}$.

\item {\bf (Structural maps)} 
\begin{enumerate}

\item For objects $p\in {\rm Ob}{\mb F}$, $p' \in {\rm Ob}{\mb F}'$, a degree-1 morphism in $\uds{\bf C}$
\beqn
 \iota_{p; p'}^{{\mb B}_0 \to {\mb H}} \sqcup \iota_{p; p'}^{{\mb B}_1\to {\mb H}}: M_{p; p'}^{{\mb B}_0} \sqcup M_{ p; p'}^{{\mb B}_1}  \to M_{p; p'}^{\mb H}.
\eeqn

\item For objects $p\leq q$ in ${\mb F}$ and objects $q' \leq p'$ in ${\rm Ob}{\mb F}'$, morphisms in $\uds{\bf C}$ 
\beq\label{homotopy_composition_1}
\iota_{pq; p'}^{\mb H}: M_{p q}^{{\mb F}} \times M_{q; p'}^{\mb H}  \to M_{p; p'}^{{\mb H}}
\eeq
and 
\beq\label{homotopy_composition_2}
\iota_{p; q'p'}^{\mb H}: M_{p; q'}^{\mb H} \times M_{q'p'}^{{\mb F}'} \to M_{p; p'}^{\mb H}.
\eeq
\end{enumerate}

\item {\bf (Involution)} For each pair of objects $p, p'$, an isomorphism
\beqn
\tau_{p; p'}^{\mb H}: ( M_{p; p'}^{\mb H})^\dagger \cong M_{p^\dagger; (p')^\dagger}^{\mb H}
\eeqn
such that $\tau_{p^\dagger; (p')^\dagger}^{\mb H} \circ (\tau_{p; p'}^{\mb H})^\dagger$ is the identity of $M_{p; p'}^{\mb H}$.

\item {\bf (Novikov symmetry)} For each pair of objects $p, p'$ and $\lambda \in \Gamma$, an isomorphism
\beqn
T_\lambda^{\mb H}: M_{p; p'}^{\mb H} \cong M_{\lambda p; \lambda p'}^{\mb H}
\eeqn
such that $T_{\lambda +\lambda'}^{\mb H} = T_{\lambda}^{\mb H} \circ T_{\lambda'}^{\mb H}$.
\end{enumerate}
These objects need to satisfy the following conditions.
\begin{enumerate}

\item {\bf (Monoidal property)} When $p = q$ resp. $q' = p'
$, the morphism \eqref{homotopy_composition_1} resp. \eqref{homotopy_composition_2} is the natural isomorphism from the monoidal structure of $\uds{\bf C}$.

\item {\bf (Involution is compatible with structural maps)} The following diagrams commute.
\beqn
\xymatrix{  (M_{p; p'}^{{\mb B}_0})^\dagger \sqcup (M_{p; p'}^{{\mb B}_1})^\dagger \ar[r] \ar[d] &    (M_{p; p'}^{\mb H})^\dagger \ar[d] \\
            M_{p^\dagger; (p')^\dagger}^{{\mb B}_0} \sqcup M_{p^\dagger; (p')^\dagger}^{{\mb B}_1} \ar[r]   &    M_{p^\dagger; (p')^\dagger}^{{\mb H}}  }  
\eeqn
\beqn
\xymatrix{  (M_{pq}^{\mb F})^\dagger \times (M_{q; p'}^{\mb H})^\dagger \ar[r] \ar[d] &   (M_{p; p'}^{\mb H})^\dagger \ar[d]  &     (M_{p; q'}^{\mb H})^\dagger \times (M_{q'p'}^{{\mb F}'})^\dagger \ar[l] \ar[d]\\
           M_{p^\dagger q^\dagger}^{\mb F}\times M_{q^\dagger; (p')^\dagger}^{\mb H} \ar[r]  &    M_{p^\dagger; (p')^\dagger}^{\mb H}  &     M_{p^\dagger; (q')^\dagger}^{\mb H}\times M_{(q')^\dagger (p')^\dagger}^{{\mb F}'} \ar[l] }
\eeqn

\item {\bf ($\Gamma$-action is compatible with structural maps)} The following diagrams commute.
\beqn
\xymatrix{  M_{p; p'}^{{\mb B}_0} \sqcup M_{p; p'}^{{\mb B}_1} \ar[r] \ar[d] &    M_{p; p'}^{\mb H} \ar[d] \\
            M_{\lambda p; \lambda p'}^{{\mb B}_0} \sqcup M_{ \lambda p; \lambda  p'}^{{\mb B}_1} \ar[r]   &    M_{\lambda p; \lambda p'}^{{\mb H}}  }  
\eeqn
\beqn
\xymatrix{  M_{pq}^{\mb F}  \times M_{q; p'}^{\mb H} \ar[r] \ar[d] &  M_{p; p'}^{\mb H} \ar[d]  &   M_{p; q'}^{\mb H}  \times M_{q'p'}^{{\mb F}'}  \ar[l] \ar[d]\\
           M_{\lambda p \ \lambda q}^{\mb F}\times M_{\lambda q; \lambda p'}^{\mb H} \ar[r]  &    M_{\lambda p; \lambda p' }^{\mb H}  &     M_{\lambda p; \lambda q'}^{\mb H}\times M_{\lambda q'\ \lambda p'}^{{\mb F}'} \ar[l] }
\eeqn

\item {\bf (Associativity H1)} The following diagram commutes.
\beqn
\xymatrix{  M_{pq}^{{\mb F}} \times M_{q; q'}^{\mb H} \times M_{q'p'}^{{\mb F}'} \ar[r] \ar[d] &   M_{p; q'}^{\mb H} \times M_{q'p'}^{{\mb F}'} \ar[d]
\\
M_{pq}^{{\mb F}} \times M_{q; p'}^{\mb H} \ar[r] & M_{p; p'}^{\mb H} }
\eeqn
Hence we have a well-defined morphism 
\beqn
M_{pq}^{{\mb F}} \times M_{q; q'}^{\mb H} \times M_{q'p'}^{{\mb F}'} \to M_{p; p'}^{\mb H}.
\eeqn

\item {\bf (Associativity H2)} For $\alpha = 0, 1$, the following diagram commutes.
\beqn
\xymatrix{  M_{pq}^{{\mb F}} \times M_{q; q'}^{{\mb B}_\alpha} \times M_{q'p'}^{{\mb F}'}  \ar[r] \ar[d] & M_{p; p'}^{{\mb B}_\alpha} \ar[d]  \\
  M_{p q}^{{\mb F}} \times M_{q; q'}^{\mb H} \times M_{q'p'}^{{\mb F}'} \ar[r] &  M_{p; p'}^{{\mb H}} }
  \eeqn

\item {\bf (Associativity H3)}  For $p\leq r \leq q$ in ${\mb F}$ and $q'\leq r'\leq p'$ in ${\mb F}'$, the following diagram commutes.
\beq\label{homotopy_associativity_1}
\xymatrix{  M_{p r}^{{\mb F}} \times M_{r q}^{{\mb F}} \times M_{q; q'}^{\mb H} \times M_{q'r'}^{{\mb F}'} \times M_{r'p'}^{{\mb F}'} \ar[r] \ar[d] &   M_{p r}^{{\mb F}} \times M_{r; r'}^{\mb H} \times M_{r'p'}^{{\mb F}'} \ar[d] \\
M_{p q}^{{\mb F}} \times M_{q; q'}^{{\mb H}} \times M_{q'p'}^{{\mb F}'} \ar[r] & M_{p; p'}^{\mb H} }
\eeq
\end{enumerate}
\end{defn}

The homotopy discussed in Section \ref{sec:moduli} implies the following statement.
\begin{prop}
The homotopy moduli spaces $M_{x;y}^{\rm hmtp}$ form a $\Pi\times {\mb Z}$-equivariant involutive homotopy from $\mb{B}^{\rm PSS}\circ \mb{B}^{\rm SSP}$ to the pearly bimodule $\mb{B}^{\rm pearly}$.
    \end{prop}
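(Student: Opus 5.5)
The plan is to verify the axioms of Definition \ref{homotopy_modules} directly for the family of moduli spaces $M_{v;w}^{\rm hmtp}$ from Section \ref{sec:moduli}, taking ${\mb F}={\mb F}'={\mb M}$, ${\mb B}_0={\mb B}^{\rm PSS}\circ{\mb B}^{\rm SSP}$ and ${\mb B}_1={\mb B}^{\rm pearly}$.

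\emph{Stratification.} First I would write down the stratifying poset $A_{v;w}^{\rm hmtp}$. It is obtained from the poset of the parameter interval $[0,+\infty]$, with two boundary faces $\{0\}$ and $\{+\infty\}$. Over the interior I attach the words recording Morse breakings at both ends. Over $\{0\}$ I attach $A^{\rm pearly}_{v;w}$, and over $\{+\infty\}$ I attach the concatenated poset $A_{v;w}^{{\mb B}^{\rm PSS}\circ{\mb B}^{\rm SSP}}$ from the proposition quoted from \cite[Section 6.5]{Bai_Xu_foundation}. Normality then follows from that proposition together with the fact that each face of $[0,+\infty]$ is adjacent to exactly one interior stratum. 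The point to check is that every depth-one element has one or two maximal elements above it.

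\emph{Structural maps.} The maps $\iota^{{\mb B}_\alpha\to{\mb H}}$ are the inclusions of the $\zeta=0$ and $\zeta=+\infty$ slices. The second is justified by the preceding proposition identifying the $\zeta=\infty$ slice with ${\mb B}^{\rm PSS}\circ{\mb B}^{\rm SSP}$. The maps $\iota^{\mb H}_{vz;w}$ and $\iota^{\mb H}_{v;zw}$ are the gluing-free inclusions of Morse-broken configurations, matching the codimension-one strata listed in Section \ref{sec:moduli}. They are morphisms in $\uds{\bf Top}$ because the compactification topology is defined via Gromov--Floer convergence, which is compatible with these decompositions. The monoidal property is immediate since $M_{vv}^{\mb M}$ is a point. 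The associativity axioms H1--H3 hold because both composites in each diagram describe the same subset of broken configurations; this is the same verification as in \cite{Bai_Xu_Arnold}.

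\emph{Symmetries.} The $\Pi\times{\mb Z}$ action is given by recapping and by shifting $\wh S^\infty$; since neither touches the equations, compatibility with all structural maps is tautological. For the involution, I define
\[
(\zeta,y,x_-,u,x_+)\mapsto(\zeta,-y,\tau_X\circ x_-,\tau_X\circ u\circ(s,-t),\tau_X\circ x_+),
\]
extended to bubble trees through the conjugation of prestable cylinders (Definition \ref{defn_cylinder_conjugate}) and $u_\alpha\mapsto u_\alpha^\dagger$. That this preserves solutions uses three facts:
\begin{itemize}
\item condition (3) on $H_{\zeta,s,t}$;
\item the anti-invariance of $J$;
\item the $\tau_X$-equivariance of the Morse--Smale pair, together with $-\mu$ symmetry of $f_{\rm Tate}$.
\end{itemize}
It squares to the identity, and it commutes with the slice inclusions and the breaking maps because those involutions were defined by the same formulas. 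The main obstacle is the well-definedness of the involution on nodal configurations modulo reparametrization. One must check that $\varphi\mapsto\varphi^\dagger$ sends isomorphisms of prestable cylinders to isomorphisms, including translations in $s$ on each cylindrical component and $g\mapsto g^\dagger$ on sphere components. One must also check that conjugation is continuous for the Gromov topology. I would handle this componentwise using the lemma $(u\circ g)^\dagger=u^\dagger\circ g^\dagger$ and the observation that $(s,t)\mapsto(s,-t)$ commutes with $s$-translations. Continuity would then follow because conjugation preserves energy and commutes with the neck-stretching coordinates used in the convergence.
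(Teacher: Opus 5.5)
Your proposal is correct and takes the same route as the paper, which simply asserts that the statement follows from the description of $M^{\rm hmtp}_{v;w}$ in Section~\ref{sec:moduli}: the $\zeta=0$ and $\zeta=+\infty$ slices, the two Morse-breaking faces, and the conjugation and $\Pi\times{\mb Z}$ symmetries. You spell out the axiom-by-axiom check of Definition~\ref{homotopy_modules} that the paper leaves implicit; one small correction is that normality of $A^{\rm hmtp}_{v;w}$ is automatic, since it has a unique maximal element (unbroken solutions with $0<\zeta<+\infty$), so the real stratification content is regularity near the $\zeta=+\infty$ face.
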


\subsubsection{Lifts of flow categories, bimodules, and homotopies}

The key issue in constructing a Floer chain complex (and various chain maps, homotopies) is to regularize certain moduli spaces. Using the terminology we just introduced, the associated flow categories (and bimodules, homotopies), whose morphism spaces are certain moduli spaces, are enriched in $\uds{\bf Top}$. The regularization requires a lift to a better category which allows manipulations in a differential-topological or algebraic-topological sense. 

Let $(\uds{\bf C}_+, \dagger)$ and $(\uds{\bf C}, \dagger)$ be two involutive regular stratification categories. Let ${\bf F}: \uds{\bf C}_+ \to \uds{\bf C}$ be a covariant functor satisfying the natural compatibility conditions.
\begin{enumerate}
    \item ${\bf F}$ respects the distributive monoidal structure. \vspace{0.2cm}

    \item The diagram commutes.
    \beqn
    \xymatrix{ \uds{\bf C}_+ \ar[r]_{\bf F} \ar@<5pt>@/^1pc/[rr] & \underline{\bf C} \ar[r]               & {\regpos}}
    \eeqn

    \item ${\bf F}$ respects the pullback. Namely, if $M_+$ is an $A$-stratified object of $\uds{\bf C}_+$ and $B \subset A$, then one has the natural isomorphism
    \beqn
    {\bf F}(M_+)|_B \cong {\bf F}(M_+|_B).
    \eeqn

    \item ${\bf F}$ intertwines with the involutions.
\end{enumerate}

\begin{defn}(cf. \cite[Definition 6.6]{Bai_Xu_foundation}) \label{defn:lifting}
Let $(\uds{\bf C}_+, \dagger)$, $(\uds{\bf C}, \dagger)$ and ${\bf F}$ as above. Let $\Gamma \subset {\mb R}^k$ be a subgroup. Suppose ${\mb F}$ is a filtered $\Gamma$-equivariant involutive Novikov flow category enriched in $(\uds{\bf C}, \dagger)$. A {\bf lift} of ${\mb F}$ to $\uds{\bf C}_+$ (with respect to the functor ${\bf F}$) is a $\uds{\bf C}_+$-enriched flow category ${\mb F}_+$ satisfying the following conditions.
\begin{enumerate}
\item The sets of objects are identical: ${\rm Ob} {\mb F}_+ = {\rm Ob} {\mb F}$.

\item ${\bf F}(M_{pq}^{{\mb F}_+}) = M_{pq}^{\mb F}$ and ${\bf F}_* \iota_{prq}^{{\mb F}_+} = \iota_{prq}^{\mb F}$.
\end{enumerate}
A {\bf $\Gamma$-equivariant involutive lift} is a lift ${\mb F}_+$ which is also $\Gamma$-equivariant and involutive with the involutions $\tau_{pq}^{{\mb F}_+}$ and $\Gamma$-action $T_\lambda^{{\mb F}_+}$, such that 
\begin{align*}
&\ {\bf F}_* \tau_{pq}^{{\mb F}_+} = \tau_{pq}^{{\mb F}},\ &\ {\bf F}_* T_\lambda^{{\mb F}_+} = T_\lambda^{{\mb F}}.
\end{align*}
\end{defn}

The case of lifts of bimodules and homotopies can be defined in the same fashion. The reader can find the details in \cite[Section 6.3]{Bai_Xu_foundation}. Moreover, one can extend the above definition to the equivariant and involutive case, which we also omit here. 

\subsubsection{Outercollaring}

There is a very convenient operation called {\bf outercollaring}, originally introduced in \cite{FOOO_Kuranishi}, which is used for inductive constructions in Floer-theoretic situations. For a regularly stratified topological space $Y$, fixing a positive $\epsilon>0$, there is a corresponding stratified space
\beqn
\outer Y
\eeqn
stratified by the same poset but has standard corners near each lower stratum. In \cite[Section 5.2]{Bai_Xu_foundation} this was discussed in more detail and in more abstract situations. 

In this paper we fix the outercollaring width $\epsilon$ throughout the paper and omit it from the notation. Notice that if ${\mb F}$ is an (equivariant involutive Novikov) flow category enriched in $\uds{\bf Top}$, then there is an outercollared (equivariant involutive Novikov) flow category $\outer {\mb F}$, still enriched in $\uds{\bf Top}$. The same applies for bimodules and homotopies. See details in \cite[Section 6.4]{Bai_Xu_foundation}.

\section{Kuranishi Spaces, Derived Orbifolds, and FOP Perturbations}\label{sec:FOP}

In this section, we describe various lifts of the flow categorical notions discussed in Section \ref{sec:abstract}. We recall normally complex derived orbifolds and FOP perturbations, and provide proofs of the necessary ingredients for extending the perturbation scheme in the presence of an involution.

We use $\uds{\bf Kur}$, $\uds{\bf SKur}$, and $\uds{\bf SKur}^{\rm NC}$ to denote the categories of Kuranishi charts in the topological, smooth, and stably complex settings. When the discussion can be unified in the three cases, we simply use $\uds{\bf Kur}$. See below for the detailed definitions.

\subsection{Kuranishi spaces and involutions}
We freely use the notions introduced in \cite[Section 2, Section 3]{Bai_Xu_foundation}, and we refer the reader to \emph{loc.cit.} for a more detailed discussion.

\begin{defn}\label{defn_K_chart}
Let $A$ be a countable regular poset.

\begin{enumerate}
\item An $A$-stratified {\bf (topological) Kuranishi space} (K-space for short) is a quadruple $K = (G, V, E, S)$ where $G$ is a compact Lie group, $V$ is an  $A$-manifold with a continuous $G$-action, $E \to V$ is a $G$-equivariant vector bundle, and $S: V \to E$ is a $G$-equivariant section, such that the $G$-action on $V$ has only finite stabilizers.%

\item A Kuranishi space $K = (G, V, E, S)$ is said to be {\bf smooth} if $V$ is a smooth  $A$-manifold, the $G$-action is smooth, and $E \to V$ is a smooth equivariant vector bundle (we do not impose any smoothness condition on $S$).

\end{enumerate}
\end{defn}

\begin{defn}
A {\bf strict map} of smooth/topological Kuranishi spaces from $K_1 = (G_1, V_1, E_1, S_1)$ to $K_2 = (G_2, V_2, E_2, S_2)$, denoted by $\iota_{21}: K_1 \to K_2$, consists of a Lie group homomorphism $\iota_{21}^G: G_1 \to G_2$ and a commutative diagram
\beqn
\xymatrix{   E_1 \ar[r]^{\iota_{21}^E} \ar[d]  &  E_2 \ar[d] \\
    V_1 \ar@/^1pc/[u]^{S_1} \ar[r]_{\iota_{21}^V} & V_2 \ar@/_1pc/[u]_{S_2} }
\eeqn
where $\iota_{21}^V$ is an equivariant smooth/continuous map and $\iota_{21}^E$ is an equivariant smooth/continuous bundle map.
\end{defn}

Notice that strict maps can be composed. We list a few special kinds of strict maps.

\begin{defn}[Stabilization]
Let $K = (G, V, E, S)$ be an $A$-stratified K-chart. Let $\pi_N: N \to V$ be a $G$-equivariant continuous resp. smooth vector bundle and let $D \subset N$ be a $G$-invariant open (disk) subbundle.\footnote{When we talk about disk bundles, we always assume that they are subsets of specified vector bundles, although this may not be apparent in the notation.} The {\bf stabilization} of $K$ by $D$, denoted by ${\rm Stab}_D (K)$, is the $A$-stratified K-chart 
\beqn
{\rm Stab}_D (K) = (G, D, \pi_N^* E \oplus \pi_N^* N, \pi_N^* S \oplus \tau_N)
\eeqn
where $\tau_N: N \to \pi_N^* N$ is the tautological section restricted along $D$. By abuse of language, we also call the morphism
\beqn
\iota_D: K \to {\rm Stab}_D (K)
\eeqn
induced by the zero section $V \to D$ and the obvious bundle embedding $E \to \pi_N^* E \oplus \pi_N^* N$ a {\bf stabilization morphism}.
\end{defn}

\begin{defn}[Shrinking and open embedding]
\begin{enumerate}

\item Let $K = (G, V, E, S)$ be a K-chart. A {\bf shrinking} of $K$ is a K-chart $K' = (G, V', E', S')$ where $V' \subset V$ is a $G$-invariant open neighborhood of $S^{-1}(0)$, $E' = E|_{V'}$, and $S' = S|_{V'}$. 

\item A morphism $\iota_{21}: K_1 \to K_2$ is called an {\bf open embedding} if the underlying poset map is an isomorphism onto an Alexandrov open subset, the map $V_1 \to V_2$ is an isomorphism onto an open subset, and the bundle map is an isomorphism of vector bundles when restricted to the open subset identified with $V_1$. 
\end{enumerate}
\end{defn}

\begin{defn}[Group enlargement]
Let $K = (G, V, E, S)$ be a   K-chart. A {\bf group enlargement} of $K$ by a Lie group embedding $G \hookrightarrow G'$ is a K-chart
\beqn
G'\times_G K = (G', G'\times_G V, G'\times_G E, S')
\eeqn
where $S': G'\times_G V \to G'\times_G E$ is the section naturally induced from $S$. By abuse of language, the K-chart morphism
\beqn
K \to G'\times_G K
\eeqn
induced from the natural maps $V \to G'\times_G V$ and $E \to G'\times_G E$ is also called a group enlargement. 
\end{defn}

Finally, the morphism which will be used most frequently in this paper is a combination of the above more elementary kinds.

\begin{defn}[Strict morphisms of Kuranishi spaces]\label{chart_embedding}
A {\bf strict morphism} or {\bf strict embedding} of Kuranishi spaces from $K_1$ to $K_2$ is a strict map $\iota_{21} : K_1 \to K_2$ such that there exists a commutative diagram of strict maps
\beq\label{embedding_diagram}
\vcenter{ \xymatrix{  K_3 \ar[r]^{\iota_b}    &   K_4  \ar[d]^{\iota_c}   \\    K_1  \ar[u]^{\iota_a} \ar[r]_{\iota_{21}}    &  K_2      }
}
\eeq
where $\iota_a$ is a stabilization by a disk bundle, $\iota_b$ is a group enlargement, and $\iota_c$ is an open embedding. It implies, in particular, that the group morphism $\iota_{21}^G$, the base map $\iota_{21}^V: V_1 \to V_2$, and the bundle map $\iota_{21}^E: E_1 \to E_2$ are all embeddings. 
\end{defn}

One can check that compositions of embeddings are still embeddings, which ultimately follows from the fact that every $G'$-equivariant vector bundle over $G' / G$ for a group embedding $G \hookrightarrow G'$ is induced from a $G$-representation. Also notice that stabilization maps, open embeddings, and group enlargements are all special cases of embeddings.

\begin{defn}[Category of Kuranishi spaces]
The categories of regularly stratified topological and smooth Kuranishi spaces, denoted by $\uds{\bf Kur}$ and $\uds{\bf S Kur}$, are the categories whose objects are stratified Kuranishi charts in the topological and smooth categories and whose morphisms are strict embeddings. 
\end{defn}

\begin{rem}
In \cite{Bai_Xu_foundation} we had to use different kinds of morphisms. This was because when considering moduli spaces of pairs of pants, there is no easy way to obtain lifts to $\uds{\bf Kur}$ with morphisms being strict morphisms. However, the current situation is similar to that of \cite{Bai_Xu_Arnold}. Hence the above definition suffices. 
\end{rem}

\subsubsection{Rigidified embeddings}

In our geometric construction we can obtain objects and morphisms living in a more rigid category of Kuranishi charts. In short, we only allow the stabilization to be a product bundle coming from a $G$-representation.

\begin{defn}\label{defn_rigid_embedding}
Let $\iota_{21}: K_1 \to K_2$ be a strict embedding. 
\begin{enumerate}
    
\item A {\bf rigidification} (which may not exist) of $\iota_{21}$ consists of a $G_1$-representation $W_{21}$ (viewed as a $G_1$-vector bundle $\pi_{21}: \uds W_{21} \to V_1$), a $G_1$-invariant open neighborhood $D_{21} \subset W_{21}$ of the origin (viewed as a disk bundle over $V_1$), and an open embedding $\theta_{21}: G_2\times_{G_1} {\rm Stab}_{D_{21}}(K_1) \to K_2$ such that the diagram \eqref{embedding_diagram} is realized by 
\beqn
\vcenter{ \xymatrix{  {\rm Stab}_{D_{21}} (K_1) \ar[r]^-{\iota_b}  &  G_2 \times_{G_1} {\rm Stab}_{D_{21}}(K_1) \ar[d]^{\theta_{21}} \\
             K_1 \ar[u]^{ \iota_a} \ar[r]_{\iota_{21}}  &  K_2} }.
\eeqn
We denote the rigidification by $(D_{21} \subset W_{21}, \theta_{21})$.

\item Two rigidifications $(D_{21} \subset W_{21}, \theta_{21})$ and $(D_{21}' \subset W_{21}', \theta_{21}')$ are called {\bf equivalent} if there exists an isomorphism $\tau: W_{21} \cong W_{21}'$ of representations, which induces an isomorphism of K-charts
\beqn
\tau: {\rm Stab}_{D_{21}} K_1 \cong {\rm Stab}_{D_{21}'} K_1
\eeqn
such that after shrinking $D_{21}$ and $D_{21}'$ properly, as K-chart morphisms one has
\beqn
\theta_{21} = \theta_{21}' \circ \tau.
\eeqn

\item A {\bf rigidified embedding} from $K_1$ to $K_2$ is an embedding together with an equivalence class of rigidifications.
\end{enumerate}
\end{defn}

By \cite[Lemma 3.13]{Bai_Xu_foundation}, strict rigidified embeddings can be composed. Hence one obtains a category $\uds{\bf Kur}_{\rm rig}$ or $\uds{\bf SKur}_{\rm rig}$ whose morphisms are strict rigidified embeddings.

\subsubsection{Outer-collaring}

The standard corners explained in Example \ref{example_corner} are also objects in $\uds{\bf Kur}$ where the symmetry group is trivial and the obstruction bundle is trivial. Moreover, one can define the collared version of the categories $\uds{\bf Kur}_{\rm rig}$ and $\uds{\bf SKur}_{\rm rig}$ whose objects have collar neighborhoods near each lower stratum and whose morphisms are constant in collar coordinates. These categories are denoted by 
\beqn
\outer \uds{\bf Kur}_{\rm rig},\ \outer\uds{\bf SKur}_{\rm rig}.
\eeqn

\subsubsection{Stable complex structures and normal complex structures}

Next, we discuss stable and normal complex structures on Kuranishi spaces. Geometrically, such structures arise from index-theoretic considerations of (virtual) tangent bundles of moduli spaces of pseudoholomorphic curves. Such structures are also instrumental in incorporating the FOP perturbation scheme into the definition of integral invariants.

We start with a few basic definitions. 

\begin{defn}\label{defn_stable_isomorphism}
Let $G$ be a Lie group, $V$ be a $G$-space, and $E \to V$ be an equivariant vector bundle. 

\begin{enumerate}

\item Let $F \to V$ be another $G$-equivariant vector bundle. A $G$-equivariant {\bf stable (iso)morphism}  from $E$ to $F$ is an equivalence class of triples $(R^-, R^+, \tau)$ where $R^\pm$ are a pair of real vector spaces (viewed as trivial representations of $G$) and 
\beqn
\tau: \uds R^- \oplus E \to \uds R^+ \oplus F
\eeqn
is a $G$-equivariant (iso)morphism of vector bundles. The equivalence relation is induced by $(R^-, R^+, \tau) \sim (R^- \oplus W, R^+ \oplus W, \tau \oplus {\rm Id}_{\uds W})$ where $W$ is another real vector space. Denote a stable (iso)morphism by 
\beqn
\tau: E \overset{s}{\to} F.
\eeqn

\item If $E, F$ are complex vector bundles, then a stable (iso)morphism $\tau: E \overset{s}{\to} F$ is called {\bf complex} if it is represented by a triple $(R^-, R^+, \tau)$ where $R^\pm$ are complex vector spaces and $\tau$ is a complex vector bundle map. 

\end{enumerate}
\end{defn}

One can verify that stable morphisms can be composed and stable isomorphisms admit inverses as stable isomorphisms.

\begin{defn}\label{defn_stable_complex_structure}
Let $G$ be a Lie group, $V$ be a $G$-space, and $E \to V$ be a $G$-equivariant vector bundle.
\begin{enumerate}

\item A {\bf $G$-invariant stable complex structure} on $E$ consists of an equivalence class of pairs $(\tau, F)$ where $F \to V$ is a  $G$-equivariant complex vector bundle and $\tau: E \overset{s}{\to} F$ is a $G$-equivariant stable isomorphism. The equivalence relation is generated by the following relation: $(\tau_1, F_1)$ and $(\tau_2, F_2)$ are equivalent if there is a commutative diagram
    \beqn
    \xymatrix{ E \ar[r]^{\tau_1} \ar[d]_{{\rm Id}_E}  &  F_1 \ar[d] \\
     E \ar[r]_{\tau_2} & F_2}
    \eeqn
    of stable isomorphisms where the right vertical arrow is a complex stable isomorphism.

\item Suppose $E_1$ and $E_2$ are $G$-equivariant stably complex vector bundles over a $G$-space $V$. A $G$-equivariant stable (iso)morphism $f: E_1 \overset{s}{\to} E_2$ is said to be {\bf stably complex} if there exist representatives $(\tau_1, F_1)$ and $(\tau_2, F_2)$ of their stable complex structures and a commutative diagram of stable (iso)morphisms
\beqn
\xymatrix{  E_1 \ar[r]^{\tau_1} \ar[d]_{f}  & F_1 \ar[d] \\
 E_2 \ar[r]_{\tau_2} & F_2}
\eeqn
where the right vertical arrow is a $G$-equivariant complex stable (iso)morphism.

\item Let $V$ be a smooth $G$-manifold (with boundary and corners). A $G$-equivariant stable complex structure on $V$ is a $G$-equivariant stable complex structure on $TV$. Notice that for each stratum $\partial^\alpha V \subset V$, the stable complex structure on $TV$ induces a stable complex structure on $T(\partial^\alpha V)$ because the normal direction is trivial. 
\end{enumerate}
\end{defn}

\subsubsection{Normally complex Kuranishi spaces}

We briefly recall the definition of normal complex structures on smooth Kuranishi spaces, the category $\uds{\bf SKur}^{\rm NC}$, and its collared and rigidified versions. Details can be found in \cite[Section 3.3.3]{Bai_Xu_foundation}.   

Let $V$ be a $G$-space. Notice that a stable morphism of $G$-equivariant vector bundles $E, F \to V$ (Definition \ref{defn_stable_isomorphism}) induces the following well-defined linear maps. For each $x \in V$, let $G_x \subset G$ be the stabilizer. Then the fibers $E_x$ and $F_x$ split as 
\begin{align*}
&\ E_x = \mathring E_x \oplus \check E_x,\ &\ F_x = \mathring F_x \oplus \check F_x
\end{align*}
where $\mathring E_x$, $\mathring F_x$ are $G_x$-trivial summands and $\check E_x$, $\check F_x$ are $G_x$-nontrivial pieces. Then an equivalence class of stable isomorphisms induces well-defined $G_x$-equivariant maps
\beqn
\check \tau_x: \check E_x \to \check F_x.
\eeqn
(In particular, if $G$ is trivial, the maps $\check \tau_x$ are all trivial.)

\begin{defn}\label{defn_equivariant_NC_structure}
Let $G$ be a Lie group, $V$ be a $G$-space, and $E \to V$ be a $G$-equivariant vector bundle.
\begin{enumerate}
    \item Two stable complex structures on $E$, represented by $(\tau_1, F_1)$ and $(\tau_2, F_2)$, are {\bf normally equivalent}, if there exists a diagram of stable isomorphisms
    \beqn
    \xymatrix{ E \ar[r]^{\tau_1} \ar[d]_{{\rm Id}_E} &  F_1 \ar[d]\\
              E \ar[r]_{\tau_2}  & F_2  }
    \eeqn
    where the right vertical arrow is complex, such that for all $x \in V$, the following induced diagram of linear maps commutes.
    \beqn
    \xymatrix{ \check E_x \ar[r]^{\check \tau_{1, x}} \ar[d] & \check F_{1, x} \ar[d] \\
               \check E_x \ar[r]_{\check \tau_{2, x}} & \check F_{2, x} }
    \eeqn 

    \item A $G$-invariant {\bf normal complex structure} on $E$ is a normal equivalence class of stable complex structures. Notice that a normal complex structure on $E$ equips each $\check E_x$ with a $G_x$-invariant complex structure.

    \item Suppose $E_1, E_2 \to V$ are equipped with $G$-equivariant normal complex structures. A $G$-equivariant stable morphism $\tau: E_1 \overset{s}{\to} E_2$ is called {\bf normally complex} if for each $x \in V$, the induced linear map $\check E_{1, x} \to \check E_{2, x}$ is complex linear.
\end{enumerate}
\end{defn}

Now we can recall the notion of normal complex Kuranishi spaces. 

\begin{defn}\label{defn_NC_Kuranishi}
A {\bf normally complex Kuranishi space} (NC Kuranishi space for short) is a smooth Kuranishi space $K = (G, V, E, S)$ equipped with a $G$-invariant complex structure on $E$ and a $G$-invariant normal complex structure on $TV/{\mf g}$. An {\bf open embedding} of NC Kuranishi spaces from $K_1 = (G_1, V_1, E_1, S_1)$ to $K_2 = (G_2, V_2, E_2, S_2)$ is a smooth open embedding $\iota_{21}: K_1 \to K_2$ such that 1) the vector bundle isomorphism 
\beqn
d\iota_{21}: TV_1/{\mf g}_1 \to TV_2/{\mf g}_2
\eeqn
is normally complex and 2) the vector bundle isomorphism $\iota_{21}^E: E_1 \to E_2$ is complex-linear.
\end{defn}

One can again discuss stabilizations (by complex vector bundles), strict embeddings, conjugations, and embeddings of normally complex Kuranishi spaces. Then one obtains a regular stratification category called the category of normally complex Kuranishi spaces, denoted by 
\beqn
\uds{\bf SKur}^{\rm NC}
\eeqn
and the corresponding collared and rigidified version $ \outer \uds{\bf SKur}^{\rm NC}_{\rm rig}$. 

The next definition is the new notion relevant for this paper. Here, when we say we reverse a bundle complex structure $I: V \to V$, we mean taking the bundle complex structure $-I$.

\begin{defn}[Involution on NC Kuranishi spaces]
Let $K = (G, V, E, S)$ be a smooth NC Kuranishi space. Its {\bf conjugate}, denoted by 
\beqn
K^\dagger = (G^\dagger, V^\dagger, E^\dagger, S^\dagger)
\eeqn
has the same underlying smooth Kuranishi space as $K$ with the NC structure on $V$ and the complex structure on $E$ both reversed. 
\end{defn}

One can easily see that $(\uds{\bf Kur}^{\rm NC}, \dagger)$ forms an involutive regular stratification category. 

\subsection{Derived orbifolds}
Now we recall the counterpart of Kuranishi spaces in the orbifold setting, which should be thought of as being obtained from taking the orbifold quotient of Lie group actions with finite stabilizers. A more detailed discussion can be found in \cite[Section 4]{Bai_Xu_foundation}.

\subsubsection{Stratified orbifolds}

All orbifolds, unless otherwise stated, are assumed to be smooth and effective. They are spaces which are locally modelled by $U / \Gamma$ where $\Gamma$ is a finite group and $U$ is an effective smooth $\Gamma$-manifold. Orbifolds with boundary and corners are locally modelled on $U/ \Gamma$ where $U$ is a $\Gamma$-manifold with boundary and corners. If $A$ is a regular poset, then one can consider the notion of $A$-orbifold. 

\begin{defn}[Derived orbifolds]
Fix a regular poset $A$. An $A$-stratified {\bf derived orbifold} is a triple ${\mc D} = ({\mc U}, {\mc E}, {\mc S})$ where ${\mc U}$ is an effective $A$-orbifold, ${\mc E} \to {\mc U}$ is a smooth orbifold vector bundle, and ${\mc S}: {\mc U} \to {\mc E}$ is a continuous section. A derived orbifold ${\mc D}$ is called {\bf compact} if the coarse space of ${\mc S}^{-1}(0)$ is a compact topological space.
\end{defn}

Similar to the case of Kuranishi spaces, we define embeddings of derived orbifolds as combinations of several more basic morphisms.

\begin{defn} Let ${\mc D}$ be an $A$-stratified derived orbifold. 

\begin{enumerate}

\item A {\bf morphism of derived orbifolds} from an $A_1$-stratified derived orbifold ${\mc D}_1 = ({\mc U}_1, {\mc E}_1, {\mc S}_1)$ to an $A_2$-stratified derived orbifold ${\mc D}_2 = ({\mc U}_2, {\mc E}_2, {\mc S}_2)$ consists of a morphism $i: A_1 \to A_2$ of $\regpos$, a smooth stratified orbifold map $\phi_{21}: {\mc U}_1 \to {\mc U}_2$, and a smooth orbibundle map $\wh\phi_{21}: {\mc E}_1 \to {\mc E}_2$ covering $\phi_{21}$ such that the following diagram commutes.
\beqn
\xymatrix{ {\mc E}_1 \ar[r]^{\wh\phi_{21}} \ar[d]  & {\mc E}_2 \ar[d] \\
           {\mc U}_1 \ar[r]_{\phi_{21}}  \ar@/^1pc/[u]^{{\mc S}_1} & {\mc U}_2 \ar@/_1pc/[u]_{{\mc S}_2} }
\eeqn
Such a morphism induces an orbispace map ${\mc S}_1^{-1}(0) \to {\mc S}_2^{-1}(0)$. 

\item An {\bf open embedding of derived orbifolds} from ${\mc D}_1$ to ${\mc D}_2$ is a morphism such that the poset map $i: A_1 \to A_2$ is an isomorphism onto an Alexandrov open subset, the orbifold map $\phi_{21}$ is an isomorphism onto an open subset, and the bundle map $\wh\phi_{21}$ is an isomorphism over the image of $\phi_{21}$.

\item Given a derived orbifold ${\mc D} = ({\mc U}, {\mc E}, {\mc S})$, for any open subset ${\mc U}' \subset {\mc U}$ with ${\mc S}^{-1}(0) \subset {\mc U}'$, the {\bf shrinking} of ${\mc D}$ to ${\mc U}'$, denoted by ${\mc D}|_{{\mc U}'}$, is the derived orbifold $({\mc U}', {\mc E}|_{{\mc U}'}, {\mc S}|_{{\mc U}'})$. It admits a natural open embedding into ${\mc D}$.

\item A disk bundle over an orbifold is an open neighborhood of the zero section of an orbifold vector bundle. A {\bf stabilization} of a derived orbifold ${\mc D} = ({\mc U}, {\mc E}, {\mc S})$ by a disk bundle ${\mc N} \to {\mc U}$ contained in a vector bundle $\pi_{\mc F}: {\mc F} \to {\mc U}$ is the derived orbifold 
\beqn
{\rm Stab}_{\mc N} {\mc D} = ({\mc N}, \pi_{\mc N}^* {\mc E} \oplus \pi_{\mc N}^* {\mc F}, \pi_{\mc N}^* {\mc S} \oplus \tau_{\mc F})
\eeqn
where $\tau_{\mc F}$ is the tautological section. We call the natural morphism
\beqn
{\mc D} \to {\rm Stab}_{\mc N}( {\mc D} )
\eeqn
the {\bf stabilization morphism} by ${\mc N}$.

\item An {\bf embedding of derived orbifolds} from ${\mc D}_1 = ({\mc U}_1, {\mc E}_1, {\mc S}_1)$ to ${\mc D}_2 = ({\mc U}_2, {\mc E}_2, {\mc S}_2)$ is a morphism ${\bm \phi}_{21}: {\mc D}_1 \to {\mc D}_2$ such that  there exists a disk bundle ${\mc N}_{21} \to {\mc U}_1$ and an open embedding $\theta_{21}: {\rm Stab}_{{\mc N}_{21}}( {\mc D}_1) \to {\mc D}_2$ extending $\phi_{21}$. 
\end{enumerate}
\end{defn}

\begin{defn}
The category of stratified derived orbifolds, denoted by $\uds{\bf dOrb}$, has objects being stratified derived orbifolds and morphisms being embeddings of derived orbifolds. 
\end{defn}

Then one can easily verify that the category $\uds{\bf dOrb}$ is a regular stratification category. One can see that there are functors
\beqn
\uds{\bf S Kur}  \to \uds{\bf dOrb} \to \uds{\bf Top}.
\eeqn
The first arrow sends $K = (G, V, E, S)$ to the quotient ${\mc D} = (V/G, E/G, S/G)$. The second arrow is induced by passing to the zero locus ${\mc S}^{-1}(0)$ of a derived orbifold ${\mc D} = ({\mc U}, {\mc E}, {\mc S})$.

\subsubsection{Rigidified embeddings}

We would like to introduce an analogue of rigidified embeddings for orbifolds. We need to relax the triviality requirement. A flat orbifold vector bundle is an orbifold vector bundle together with a flat connection. A key property of flat vector bundles is that if ${\mc E} \to {\mc U}$ is flat and ${\mc F} \to {\mc E}$ is flat, then ${\mc F}$ is canonically a flat bundle over ${\mc U}$. %

\begin{defn}\label{defn_rigid_embedding_D}
Let $\phi_{21}: {\mc D}_1 \to {\mc D}_2   $ be an embedding of derived orbifolds. 
\begin{enumerate}

\item A {\bf rigidification} of $\phi_{21}$ consists of a disk bundle ${\mc N}_{21}$ contained in a flat vector bundle ${\mc F}_{21} \to {\mc U}_1$ and an open embedding
\beqn
\psi_{21}: {\rm Stab}_{{\mc N}_{21}}({\mc D}_1) \to {\mc D}_2
\eeqn
which extends $\phi_{21}$. 

\item Two rigidifications $({\mc N}_{21} \subset {\mc F}_{21}, \psi_{21})$, $({\mc N}_{21}'\subset {\mc F}_{21}', \psi_{21}')$ are equivalent if there is a flat bundle isomorphism $\tau: {\mc F}_{21} \cong {\mc F}_{21}'$ which identifies ${\mc N}_{21}$ with ${\mc N}_{21}'$ after an appropriate shrinking such that 
\beqn
\psi_{21}' = \psi_{21} \circ \tau.
\eeqn

\item A {\bf rigidified embedding} of derived orbifolds is an embedding together with an equivalence class of rigidifications. 
\end{enumerate}
\end{defn}

One can see that rigid derived orbifold embeddings can be composed. Therefore we can define a category of derived orbifolds with rigid embeddings as morphisms. This category is denoted by 
\beqn
\uds{\bf dOrb}_{\rm rig}.
\eeqn
Note that there is a functor by taking the orbifold quotient:
\beqn
\uds{\bf SKur}_{\rm rig} \to \uds{\bf dOrb}_{\rm rig}.
\eeqn

\subsubsection{Normal complex structures on derived orbifolds}

Normal complex (NC for short) structures are essential for constructing FOP transverse perturbations. We recall the definitions here. 

We start with a few basic notions. 
\begin{enumerate}

\item Suppose $G$ acts on a set $A$. A subgroup $H \subset G$ is called an $A$-essential subgroup, denoted by 
\beqn
H \subset_A G
\eeqn
if there exists $a \in A$ whose stabilizer is $H$. 

\item Given a real representation $V$ of a finite group $G$, the {\bf basic decomposition} of $V$ is the splitting
\beqn
V = V_G \oplus \check V_G
\eeqn
where $V_G$ is the direct sum of trivial subrepresentations and $\check V_G$ is the direct sum of nontrivial irreducible subrepresentations. An {\bf NC structure} on $V$ is a $G$-invariant complex structure on $\check V_G$. A real representation $V$ of $G$ with an NC structure is called an {\bf NC representation} of $G$.

\item Let $V$ be an NC representation of $G$, its {\bf conjugate} $V^\dagger$ is the same real representation with the reversed NC structure. 

\item An {\bf NC triple} is a triple $(G, V, W)$ where $G$ is a finite group and $V$, $W$ are two NC representations of $G$. The {\bf conjugate} of an NC triple $(G, V, W)$ is the triple $(G, V^\dagger, W^\dagger)$.
\end{enumerate}

\begin{defn}
Let $U$ be a $G$-manifold and $E \to U$ be a $G$-equivariant vector bundle. An {\bf NC structure} on $E$ consists of, for each $U$-essential subgroup $H \subset_U G$, a $H$-invariant complex structure $I^{\check E_H}$ on the subbundle
\beqn
\check E_H \subset E|_{U_H}
\eeqn
satisfying the following conditions.
\begin{enumerate}
    \item For each $g\in G$ which conjugates $H$ to $H':= g H g^{-1}$, the bundle isomorphism $\check E_H \to \check E_{g H g^{-1}}$ sends $I^{\check E_H}$ to $I^{\check E_{H'}}$.

    \item For each pair of $U$-essential subgroups $K \subset H \subset G$, one has the $K$-equivariant decomposition  
    \beqn
    \check E_H \cong \check E_K \oplus (\check E_H \cap E_K|_{U_H}).
    \eeqn
    We require that the restriction of $I^{\check E_H}$ to $\check E_K$ coincides with $I^{\check E_K}$.
\end{enumerate}

\end{defn}

\begin{defn}
Let ${\mc U}$ be an effective orbifold and ${\mc E} \to {\mc U}$ be an orbifold vector bundle. An {\bf NC structure} on ${\mc E}$ consists of, for each bundle chart $\hat C = (G, U, E)$ of ${\mc E}$, an NC structure ${\bm I}^E$ on the vector bundle $E$, such that chart embeddings respect those complex structures. An {\bf NC structure} on ${\mc U}$ is by definition an NC structure on $T{\mc U}$.

A {\bf normally complex derived orbifold} is a derived orbifold ${\mc D} = ({\mc U}, {\mc E}, {\mc S})$ together with NC structures on ${\mc U}$ and ${\mc E}$.
\end{defn}

\begin{defn}[Embeddings of NC derived orbifolds] \hfill
\begin{enumerate}

\item An {\bf open embedding of normally complex derived orbifolds} from ${\mc D}_1$ to ${\mc D}_2$ is an open embedding of derived orbifolds which respects normal complex structures. 

\item A stabilization of an NC derived orbifold is a stabilization by a disk bundle which is contained in a complex orbifold vector bundle.

\item An embedding of NC derived orbifolds $\iota_{21}: {\mc D}_1 \to {\mc D}_2$ is an embedding such that there exists a stabilization of ${\mc D}_1$ by a disk bundle ${\mc N}_{21} \to {\mc U}_1$ and an open embedding from ${\rm Stab}_{{\mc N}_{21}} ({\mc D}_1) \to {\mc D}_2$ which extends $\iota_{21}$. 

\item A {\bf rigidified embedding} of NC derived orbifolds from ${\mc D}_1$ to ${\mc D}_2$ is a rigidified embedding whose rigidification can be represented by a disk bundle in a rigid complex vector bundle. Two rigidifications are equivalent if they (after appropriate shrinking to smaller disk bundles) differ by an isomorphism of rigid complex vector bundles. 
\end{enumerate}
\end{defn}

Moreover, one can define an involution, still denoted by $\dagger$, on $\uds{\bf dOrb}^{\rm NC}$ by reversing the NC structures on both the base and the obstruction bundle. Therefore, $(\uds{\bf dOrb}^{\rm NC}, \dagger)$ is an involutive regular stratification category. If we forget the NC structure, then the corresponding involution on $\uds{\bf dOrb}$ is simply the identity. 

The following statement is straightforward to check. 

\begin{lemma}
There are natural involutive functors 
\begin{align*}
&\ (\uds{\bf SKur}^{\rm NC}, \dagger) \to (\uds{\bf dOrb}^{{\rm NC}}, \dagger),\ &\ (\uds{\bf SKur}_{\rm rig }^{\rm NC}, \dagger) \to (\uds{\bf dOrb}^{{\rm NC}}_{\rm rig}, \dagger)
\end{align*}
\end{lemma}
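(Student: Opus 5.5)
The plan is to write down the quotient functor explicitly and check it intertwines the involutions. On objects, send an NC Kuranishi space $K=(G,V,E,S)$ to the derived orbifold ${\mc D}=(V/G, E/G, S/G)$; this is already the underlying functor $\uds{\bf SKur}\to\uds{\bf dOrb}$. First I will recall why the quotient is an effective orbifold. Since $G$ acts with finite stabilizers, the slice theorem gives, at each $x\in V$, charts $U_x/G_x$ with $U_x$ a $G_x$-invariant slice transverse to the orbit. Ineffectiveness is handled as in \cite[Section 4]{Bai_Xu_foundation}.

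Next I will transport the NC structures to these charts. For the base, $T U_x \cong (TV/{\mf g})|_{U_x}$ as $G_x$-bundles. For a $U_x$-essential subgroup $H\subset G_x$, the non-trivial isotypic piece $\check T U_{x,H}$ is identified with $\check{(TV/{\mf g})}_y$ for $y$ with stabilizer $H$. The normal complex structure from Definition \ref{defn_equivariant_NC_structure} equips these pieces with $H$-invariant complex structures. These are well defined on the normal equivalence class, and they satisfy the conjugation and restriction compatibilities because they come from a single stable complex representative. The obstruction bundle $E$ is genuinely complex, so its restriction induces NC structures on $E|_{U_x}$ directly. Chart changes between different slices are induced by the $G$-action and by the slice change, and both are normally complex. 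Hence the result is an NC derived orbifold.

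On morphisms, take a strict embedding $K_1\to K_2$. Its factorization is a complex stabilization, followed by a group enlargement, followed by an NC open embedding. The quotient of a complex stabilization is a stabilization by a complex orbibundle. A group enlargement $G'\times_G K$ has canonically the same quotient. The quotient of an NC open embedding is an NC open embedding. So the image is an embedding in $\uds{\bf dOrb}^{\rm NC}$. In the rigidified case, a disk in a complex $G_1$-representation $W_{21}$ descends to a disk bundle in the flat complex orbibundle $(V_1\times W_{21})/G_1$. Equivalent rigidifications go to equivalent ones, and compatibility with composition follows from \cite[Lemma 3.13]{Bai_Xu_foundation}. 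Functoriality, monoidality, and compatibility with pullback to strata are inherited from the non-NC functor.

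Finally, I will check that the functor intertwines the involutions, meaning $(V/G, E/G, S/G)$ built from $K^\dagger$ equals the conjugate of the one built from $K$. Both sides have the same underlying derived orbifold. On each chart, $K^\dagger$ carries $-I$ on $\check{(TV/{\mf g})}_H$ and on $E$. The construction above is natural under replacing all complex structures $I$ by $-I$, since this replacement commutes with restriction to isotypic pieces and with chart maps. The same holds for morphisms: a stabilization by a complex bundle $N$ for $K^\dagger$ uses $\ov N$, and this descends to the conjugate orbibundle. I expect the only real obstacle to be the first step: checking that a normal equivalence class, which is only a stable datum on $TV/{\mf g}$, descends to a well-defined NC structure on the orbifold charts, independent of the slice chosen. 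The approach I will try is to use that the maps $\check\tau_x$ are invariants of the normal equivalence class, and that slice changes are $G_x$-equivariant diffeomorphisms acting on isotypic pieces.
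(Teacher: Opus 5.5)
Your proposal is correct and follows essentially the same approach as the paper. The paper gives no written proof: it calls the lemma straightforward, relying on the quotient functor $K\mapsto (V/G,E/G,S/G)$ introduced just before it, with the observation that both involutions only reverse complex structures on the same underlying data. Your details (transporting the pointwise complex structures on the nontrivial isotypic pieces of $TV/\mathfrak{g}$ to slice charts, using the flat complex orbibundle $(V_1\times W_{21})/G_1$ for rigidifications, and checking that $Q(K^\dagger)=Q(K)^\dagger$ on objects and morphisms) are exactly the routine verifications the paper suppresses.
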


\subsection{Main theorems about regularization}

Now we can state the main technical result of this paper concerning constructing regularizations of the moduli spaces.

\begin{thm}\label{thm_flow_category_lift}
The outercollaring of the Tate--Floer flow category ${\mb F}$ admits a $\Pi \times {\mb Z}$-equivariant involutive lift, denoted by $\tilde {\mb F}$, to $(\outer \uds{\bf dOrb}_{\rm rig}^{\rm NC}, \dagger)$. We call it an involutive {\bf AMS lift}. 
\end{thm}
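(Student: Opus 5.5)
We will construct the lift by running the inductive global-chart construction of \cite{Bai_Xu_Arnold, Bai_Xu_foundation} for the Floer flow category, carrying the involution through every step. The induction is over the energy--Tate filtration. Thanks to the $\Pi\times{\mb Z}$-symmetry it suffices to construct charts for one representative pair $(p,q)$ in each orbit of pairs, transport them by $T_\lambda$, and then check compatibility. For the involution we use the same idea. Since $p\mapsto p^\dagger$ is free on ${\rm Ob}{\mb F}$ (the Tate component $\mu\mapsto-\mu$ has no fixed points), the pairs $(p,q)$ and $(p^\dagger,q^\dagger)$ are always distinct. We may therefore pick one representative of each $\langle\Pi\times{\mb Z},\dagger\rangle$-orbit of pairs, build a chart $K_{pq}$ there, and \emph{define} the chart for $(p^\dagger,q^\dagger)$ to be the conjugate $K_{pq}^\dagger$ composed with the geometric homeomorphism \eqref{Tate-Floer_involution}. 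The conditions $\tau^{\mb F}_{p^\dagger q^\dagger}\circ(\tau^{\mb F}_{pq})^\dagger={\rm Id}$ and $(\lambda p)^\dagger=\lambda p^\dagger$ then hold by construction.

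For a single chart, the plan is as follows. Stabilize the domain with Donaldson-type divisors or framings as in Abouzaid--McLean--Smith \cite{AMS, AMS2}. Take $G=U(d)$ acting on framings, $V$ the thickened space of stable framed Tate--Floer trajectories with obstruction space $E$ built from $\mf u(d)$ and a finite-dimensional approximation scheme, and product with the Tate flow factor. Conjugation must be compatible with this setup. Concretely, conjugating a prestable cylinder (Definition \ref{defn_cylinder_conjugate}) and composing the map with $\tau_X$ sends framings to complex-conjugate framings, which gives the automorphism $g\mapsto\bar g$ of $U(d)$. We take the obstruction spaces to be conjugation-invariant by averaging the choice with its $\dagger$-image. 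With these choices the canonical $\bar\partial$-index stable complex structure is sent to its reverse: the linearized operator of $u^\dagger$ is conjugate-linearly identified with that of $u$. This is precisely the statement that $\tau_{pq}$ is an isomorphism $(K_{pq})^\dagger\cong K_{p^\dagger q^\dagger}$ in $\uds{\bf SKur}^{\rm NC}$, after twisting the group by complex conjugation, which is harmless once we pass to $\uds{\bf dOrb}^{\rm NC}$.

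The boundary charts are produced by the standard gluing and outercollaring argument. Composition maps $\iota_{prq}$ are realized as rigidified embeddings: the stabilization comes from the $U(d_1)\times U(d_2)\subset U(d)$ inclusion together with the complement representation, which is a complex representation. Conjugation maps this structure to the conjugated one, so the rigidifications on the $\dagger$-side may be defined as the conjugates. Associativity and the $\dagger$- and $\Gamma$-compatibility squares are inherited from the corresponding properties of the geometric gluing maps.

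I expect the main obstacle to be the NC structure on $TV/\mf g$ together with smoothing: the thickened spaces must be made smooth, with a normally complex tangent bundle, compatibly with corners and with the conjugation. The approach is to take the smoothing and NC structure from the index bundle, following the argument of Section \ref{sec:nc-smoothing} for a single chart, and to note that every choice is made on a representative and transported by $\dagger$. The only genuinely new check is that the canonical NC splitting $\check E_H$ is sent to $\check E_{\bar H}$ with the reversed complex structure. This holds because conjugation acts $\mb C$-antilinearly on the nontrivial isotypic pieces of the deformation complex, and the decomposition into trivial and nontrivial summands is canonical, hence preserved.
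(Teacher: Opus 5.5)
Your proposal follows essentially the paper's route: AMS-type framed charts with $G_{pq}=U(d_{pq})$ are built inductively over the filtration, and every auxiliary choice (group reductions, thickening data, index-bundle interpolator, smoothing stabilizations) is made on one representative of each free $\Pi\times{\mb Z}\times{\mb Z}/2$-orbit of pairs and transported by $\dagger$, with the group identification given by complex conjugation. The one point to correct is the appeal to a ``canonical $\bar\partial$-index stable complex structure'', since Floer cylinders with nondegenerate Hamiltonian ends have no canonical complex index, and ``averaging'' is moot because no pair is $\dagger$-fixed; instead the NC structure has to come from the interpolation against the capping operators $D_p$ in Section \ref{sec:nc-smoothing}, chosen coherently with the composition maps $\iota_{prq}$ rather than chart by chart, which your transport-by-$\dagger$ scheme also accommodates.
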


\begin{thm}\label{thm_PSS_lift}
Given an involutive AMS lift $\tilde {\mb F}$ of the outercollaring of the Tate--Floer flow category provided by Theorem \ref{thm_flow_category_lift}, there exists a $\Pi \times {\mb Z}$-equivariant and involutive lift of the outercollaring of the Tate--PSS bimodule ${\mb B}^{\rm PSS}$ resp. the Tate--SSP bimodule, denoted by $\wt{\mb B}^{\rm PSS}$ resp. $\wt{\mb B}^{\rm SSP}$ to $\outer \uds{\bf dOrb}_{\rm rig}^{\rm NC}$, which is an equivariant involutive bimodule over $(\outer {\mb M}; \tilde {\mb F})$ resp. over $(\tilde {\mb F}; \outer {\mb M})$.
\end{thm}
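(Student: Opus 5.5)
We construct the lifts of the PSS and SSP bimodules by induction over the energy/Tate filtration, mirroring the construction behind Theorem \ref{thm_flow_category_lift}. We treat PSS; SSP is symmetric. The first step is to fix a global Kuranishi chart model for each $M^{\rm PSS}_{v;p}$. We take the Abouzaid--McLean--Smith framing construction as adapted in \cite{Bai_Xu_Arnold,Bai_Xu_foundation}:
\begin{itemize}
\item[(i)] choose a $\tau_X$-compatible polarization, i.e.\ a Hermitian line bundle with connection whose curvature approximates $\omega$, together with a real structure lifting $\tau_X$;
\item[(ii)] frame the domain sections, which yields unitary groups $G=\prod U(d_i)$;
\item[(iii)] stabilize the Cauchy--Riemann operator by finite-dimensional spaces of perturbations built from eigenspaces of Laplacians.
\end{itemize}
The conjugation $\dagger$ acts on these data by complex conjugation on frames and on the perturbation spaces. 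Accordingly, we choose all finite-dimensional obstruction spaces to be invariant under the antilinear map induced by $(\tau_X,\tau_{\mb D^2})$. This requires symmetric Fubini--Study/eigenspace cutoffs, which are available because $J$, $H$, $f_X$ and the cutoff $\chi^{\rm PSS}$ are all symmetric. With these choices, conjugation gives an isomorphism of K-charts $K_{v;p}^\dagger\cong K_{v^\dagger;p^\dagger}$ that reverses the complex structures on $E$ and the NC structure on $TV/\mf g$. After passing to quotients, this is exactly the involution on $\uds{\bf dOrb}^{\rm NC}$. The Novikov $\Pi\times\Z$ symmetry is built in by choosing charts only for representatives of orbits and transporting them. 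These orbits are distinguished by energy and Tate level, and their representatives can be chosen compatibly with $\dagger$, since $\dagger$ commutes with the actions.

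Next, the induction. Order the pairs $(v;p)$ by $\mc A_H(\gamma_p)+f_{\rm Tate}(\mu_p)-\wt f_X(v)$, which controls both energy and depth. Suppose lifts have been built for all strictly smaller pairs, and are compatible with $\tilde{\mb F}$ and $\outer{\mb M}$. The boundary strata $M^{\mb M}_{vw}\times M^{\rm PSS}_{w;p}$ and $M^{\rm PSS}_{v;q}\times M^{\mb F}_{qp}$, together with their iterated corners, are then already equipped with derived orbifold charts. We build the chart on the interior as in \cite[Section 11]{Bai_Xu_foundation}: take a chart for the whole space with obstruction space large enough to dominate the pulled-back obstruction spaces of all boundary charts, then use rigidified stabilization and group enlargement to produce strict rigidified embeddings from the boundary products. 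Outercollaring makes these embeddings constant in collar coordinates. The associativity diagrams of Definition \ref{defn_bimodule} commute because each embedding is canonically induced by gluing-free inclusions of framings. In the outercollared model this is simply the identification of product charts along collars.

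Involutivity is arranged orbit-by-orbit for the free involution on pairs $(v;p)$. It is free because $\mu\mapsto-\mu$ has no fixed points. For each $\dagger$-orbit $\{(v;p),(v^\dagger;p^\dagger)\}$ we construct the chart for one representative and define the other chart as its conjugate. The compatibility diagram then holds by construction, provided the boundary data of the two members are conjugate. That proviso holds inductively, because $\tilde{\mb F}$ and $\outer{\mb M}$ are already involutive. Notably, no equivariant transversality on a single chart is needed at this stage.

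The main obstacle is the NC structure: we must produce $G$-invariant normal complex structures on $TV/\mf g$ that restrict on the boundary to the product NC structures, and that are carried to the reversed structures under conjugation. The plan here is to use the index-theoretic stable complex structure of the linearized operator, via a homotopy to the complex-linear part. On the PSS domain this part is complex linear except for the Morse segment, which contributes a real trivial summand and is therefore harmless for NC structures. We then smooth the charts by the method of Section \ref{sec:nc-smoothing}, inducting over depth and extending from collars. Conjugation reverses the complex-linear operator's complex structure, so the induced NC structure is reversed as required. The delicate points are the smoothing of the $G$-action near isotropy strata and the compatibility with rigidifications; the corresponding results of \cite{Bai_Xu_Arnold} should carry over directly.
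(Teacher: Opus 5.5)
Your proposal follows essentially the same route as the paper: AMS-type global charts are built inductively over the filtration, and the free involution on pairs $(v;p)$ is handled one $\mathbb{Z}/2$-orbit at a time, by constructing one chart and defining its partner as the conjugate, using that $\tilde{\mb F}$ and the Morse category are already involutive. NC structures then come from the index-theoretic interpolation to a complex-linear Cauchy--Riemann operator, with the Morse and asymptotic pieces only $G$-trivial, and the smoothing uses conjugate choices on paired charts.

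The one adjustment is step (i). You should not ask for a real structure lifting $\tau_X$ on a polarization of $X$; that is an extra existence question you do not need. Instead, as the paper does, use the line bundle on the genus-zero domain whose curvature is determined by $u^*\omega$ after integralization. Its conjugate-linear identification with the conjugate data exists automatically up to a scalar, which is absorbed because the groups are ${\mc G}_d\subset PGL(d+1)$. This is also the construction the given lift $\tilde{\mb F}$ already uses on the Floer-breaking boundary strata, so the PSS charts must match it there anyway.
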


\begin{thm}\label{thm_pearly_lift}
There is a $\Pi \times {\mb Z}$-equivariant involutive lift of the outercollaring of the pearly bimodule $\outer \mb{B}^{\rm pearly}$ to $\outer \uds{\bf dOrb}_{\rm rig}^{\rm NC}$ as a bimodule over $(\outer {\mb M}; \outer {\mb M})$.
\end{thm}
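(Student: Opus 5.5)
The lift of the pearly bimodule will be built by the same global Kuranishi chart machinery as the lifts in Theorems \ref{thm_flow_category_lift} and \ref{thm_PSS_lift}. The only new features are that the source and target are both the outercollared Tate--Morse category $\outer{\mb M}$, and that the moduli spaces $M_{v;w}^{\rm pearly}$ at $\zeta=0$ contain only Morse segments (coupled with Tate flow lines in $\wh S^\infty$) with possibly a tree of $J$-holomorphic spheres attached at one point. Since $H_{0,s,t}\equiv 0$, the cylinder component is a holomorphic sphere through the matching point. So I will construct, for each pair $(v;w)$ with $v,w\in{\rm crit}(\wt f_X)\times\Pi$, a global chart $K_{v;w}=(G,V,E,S)$ in the manner of Abouzaid--McLean--Smith.

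\emph{Chart construction.} I will take $G=U(d+1)$, the framing by a $\tau_X$-invariant, positive-on-curves Hermitian line bundle (obtained by averaging over $\tau_X$), and a $\tau_X$-invariant finite-dimensional obstruction space. The thickening $V$ consists of framed stable maps to $\mb{CP}^d$ together with the Morse/Tate data and the perturbed $\bar\partial$-equation. Products with Morse strata are handled by fibering over $M^{\mb M}$, which is a genuine manifold with corners. The involution is defined by $u\mapsto u^\dagger$ on the sphere components, complex conjugation on the framing (so it acts on $G$ by $g\mapsto\bar g$), and $(x,y)\mapsto(\tau_X x,-y)$ on the Morse--Tate part. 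The conjugate NC structures match because $d\tau_X$ is $J$-antilinear; this is what makes $\tau^{\rm pearly}$ an isomorphism $(K_{v;w})^\dagger\cong K_{v^\dagger;w^\dagger}$ in $\uds{\bf SKur}^{\rm NC}$. The $\Pi\times{\mb Z}$-symmetry acts by relabeling only, so it commutes with everything.

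\emph{Structural maps and compatibility.} The boundary strata are $M^{\mb M}_{vz}\times M^{\rm pearly}_{z;w}$ and $M^{\rm pearly}_{v;z}\times M^{\mb M}_{zw}$. Because the Morse factor carries a trivial chart, the structural maps are strict rigidified embeddings, in fact products with the identity, as in \cite{Bai_Xu_Arnold}. I will then outercollar and pass to $\outer\uds{\bf dOrb}^{\rm NC}_{\rm rig}$ using the involutive functor from the lemma above. The remaining work is to construct normal complex structures and smoothings compatibly with corners, and to show that the involution commutes with these choices. I plan to do this by induction on energy and depth, making choices on a fundamental domain of the free $\dagger$-action on labels and transporting them by $\dagger$. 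On $\dagger$-fixed labels, I will average the choices via the fiberwise-geodesic trick from the remark on almost complex structures, applied to the space of smoothings and NC structures.

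\emph{Main obstacle.} I expect the hard step to be the equivariant smoothing and NC structure on $\dagger$-fixed pairs $v=v^\dagger$, where the involution is an antiholomorphic self-map of a single chart. The space of smoothings is contractible but not convex. I would first try to argue as in Section \ref{sec:nc-smoothing}: the relevant choices form a torsor over a contractible group on which $\dagger$ acts, so fixed points exist. Note that $\dagger$ on labels is free whenever $\mu_v\neq 0$, since $\mu_v\mapsto -\mu_v$ on $\wh S^\infty$; this is automatic because $\mu^{(k)}_+\neq\mu^{(k)}_-$. This reduces the fixed-chart issue to the compatibility of the isomorphisms $\tau_{v;w}$ with their $\dagger$-conjugates, which is the involutivity condition $\tau^\dagger\circ\tau={\rm Id}$, and this I would check directly.
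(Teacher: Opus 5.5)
Your proposal is correct and follows essentially the paper's route. That route extends the non-involutive construction of \cite[Section 21.2]{Bai_Xu_foundation} by letting the conjugation act on domains, target, framings ($g\mapsto\bar g$) and Morse--Tate data, and makes every inductive choice (thickening data, NC structures, stable smoothings) on one representative of each free $\Pi\times{\mb Z}\times{\mb Z}/2$-orbit of label pairs, transporting it to the conjugate pair by $\dagger$. The ``main obstacle'' you anticipate at $\dagger$-fixed labels never arises, as you yourself observe, because $\mu^{(k)}_\pm\mapsto\mu^{(k)}_\mp$ has no fixed points; this matters, since your fallback for fixed labels would not work as stated (smoothings admit no geodesic averaging, and a $\dagger$-action on a contractible space of choices need not have a fixed point, e.g.\ the antipodal map on $S^\infty$).
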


\begin{thm}\label{thm_homotopy_lift}
Given lifts as provided by Theorem \ref{thm_flow_category_lift}, Theorem \ref{thm_PSS_lift}, Theorem \ref{thm_pearly_lift}, there exists a $\Pi\times {\mb Z}$-equivariant involutive concatenation $\wt{\mb B}^{\rm PSS} \circ \wt{\mb B}^{\rm SSP}$ as a lift of the canonical concatenation $\outer {\mb B}^{\rm PSS} \circ \outer {\mb B}^{\rm SSP}$. Moreover, there exists a $\Pi\times {\mb Z}$-equivariant involutive lift of the outercollaring of the homotopy ${\mb H}$ as a homotopy from $\wt{\mb B}^{\rm PSS} \circ \wt{\mb B}^{\rm SSP}$ to $\tilde {\mb B}^{\rm pearly}$.
\end{thm}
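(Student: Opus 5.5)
Theorem \ref{thm_homotopy_lift} will be proved along the lines of the corresponding step of \cite{Bai_Xu_Arnold} (cf. \cite[Section 11]{Bai_Xu_foundation}). The new ingredient is keeping track of the involution $\dagger$ and the free $\Pi\times\mb Z$-action at every stage of the induction.

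\emph{Step 1: the concatenation.} For each pair $(v;w)$, the stratum of $\outer M^{\rm hmtp}_{v;w}$ at $\zeta=\infty$ is indexed by the normal poset $A^{\wt{\mb B}^{\rm PSS}\circ\wt{\mb B}^{\rm SSP}}_{v;w}$. I will build the concatenation chart by gluing the product charts
\beqn
\wt M^{\rm PSS}_{v;p}\times \wt M^{\rm SSP}_{p;w},
\eeqn
one for each $p$, along the images of $\wt M^{\rm PSS}_{v;p}\times\wt M^{\mb F}_{pq}\times\wt M^{\rm SSP}_{q;w}$. These images are identified on both sides through the structural maps of the lifted bimodules. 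The identifications are rigidified embeddings whose stabilizations are by rigid complex bundles, so the glued object should again be an object of $\outer\uds{\bf dOrb}^{\rm NC}_{\rm rig}$. Because the structural maps of $\wt{\mb B}^{\rm PSS}$, $\wt{\mb B}^{\rm SSP}$, $\tilde{\mb F}$ commute with $\tau$ and $T_\lambda$, the gluing relation is preserved by $(v,p,w)\mapsto(v^\dagger,p^\dagger,w^\dagger)$ and by $\lambda$. This produces the isomorphisms $\tau^{\circ}_{v;w}$ and $T^{\circ}_\lambda$, and the diagrams in Definition \ref{defn_general_concatenation} commute by construction. One local finiteness input is needed: Gromov compactness, together with the filtration condition, shows that only finitely many $p$ contribute for fixed $(v;w)$.

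\emph{Step 2: the homotopy.} I induct over pairs $(v;w)$ ordered by energy $f_{\rm Tate}+\mc A$ and by depth, working modulo the free $\Pi\times\mb Z\times\mb Z/2$ symmetry. The key observation is that $\dagger$ acts freely on ${\rm Ob}\,\mb M\times\Pi$, since $\mu\mapsto-\mu$ has no fixed points on ${\rm crit}f_{\rm Tate}$. So for each orbit $\{(v;w),(v^\dagger;w^\dagger)\}$ I will construct a chart only for one representative $(v;w)$, and define the chart for $(v^\dagger;w^\dagger)$ as its conjugate. The same applies to $\lambda$-translates.

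For the representative, the boundary data are already given: the strata $\zeta=0$ (pearly), $\zeta=\infty$ (Step 1), and the Morse breakings (earlier steps of the induction). I then build a global Kuranishi chart for $M^{\rm hmtp}_{v;w}$ extending this boundary chart, in the manner of Section \ref{section6}. Concretely, I add the parameter $\zeta$, use a framing of $u$ by a line bundle, take the obstruction spaces from a $G$-representation, and stabilize until the boundary charts embed rigidly. Outercollaring lets me make all boundary identifications collar-constant, which is what the compatibility conditions H1–H3 require.

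One point has to be checked here. The pairs $(v;w)$ and $(v^\dagger;w^\dagger)$ are distinct, but the boundary chart of $(v;w)$ can contain strata involving $z$ and $z^\dagger$, or $p$ and $p^\dagger$, simultaneously. So I must verify that the boundary data assigned to $(v;w)$ through these strata agree with the conjugates of the data assigned to $(v^\dagger;w^\dagger)$. This follows from the involutivity already established for $\outer\mb M$, $\tilde{\mb F}$, $\wt{\mb B}^{\rm PSS}$, $\wt{\mb B}^{\rm SSP}$, $\tilde{\mb B}^{\rm pearly}$, and the concatenation.

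\emph{Main obstacle.} I expect the hardest step to be extending the NC structure, and the smoothing, from the boundary into the interior while keeping it compatible with the rigidified boundary embeddings. This will use the normal-complex-structure extension and smoothing results of Section \ref{sec:nc-smoothing}. The conjugation itself causes no trouble there, because freeness means we never need a $\dagger$-fixed chart. The remaining concern is that the geometric involution on the moduli spaces is anti-complex; this is handled because the target category uses $(\cdot)^\dagger$, which reverses the NC structures.
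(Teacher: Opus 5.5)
\textbf{There is a gap in Step~1, and it carries over into Step~2.}

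\textbf{Why the gluing in Step~1 fails.} Consider the face $\wt M^{\rm PSS}_{v;p}\times \wt M^{\mb F}_{pq}\times\wt M^{\rm SSP}_{q;w}$.
\begin{itemize}
\item It sits in $\wt M^{\rm PSS}_{v;q}\times\wt M^{\rm SSP}_{q;w}$ through the rigidified embedding coming from the PSS structural map. This is a stabilization, and upstairs a group enlargement $G_{vp}\times G_{pq}\to G_{vq}$.
\item It sits in $\wt M^{\rm PSS}_{v;p}\times\wt M^{\rm SSP}_{p;w}$ through a different embedding, coming from the SSP structural map.
\end{itemize}
The product charts for different intermediate $p$ have unrelated obstruction spaces and dimensions. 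So these two normal stabilizations are unrelated and in general have different ranks. A neighbourhood of the wall on one side is therefore not isomorphic to a neighbourhood on the other side. The space you glue is not an orbifold, since its dimension jumps across the wall.

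To get codimension-zero morphisms $\wt M^{\rm PSS}_{v;p}\times\wt M^{\rm SSP}_{p;w}\to M^{\circ}_{v;w}$ in $\outer\uds{\bf dOrb}^{\rm NC}_{\rm rig}$, you would have to stabilize all products to a common size. The flat bundles doing this would need to extend over each whole product, not just over the wall. They would also need to be coherent at the corners where three or more pieces meet. Nothing in your sketch supplies this, and it is exactly the chart-gluing problem the global-chart method is designed to avoid.

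\textbf{Consequence for Step~2.} A homotopy chart built in the manner of Section~\ref{section6} has a $\zeta=\infty$ face that is itself a single AMS chart. Nothing forces that face to equal a hand-glued chart.

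\textbf{The missing idea.} The concatenation should be produced by the AMS construction itself, at the level of relatively smooth Kuranishi spaces. This is Theorem~\ref{thm_module_topological_lift}(2), which comes before item~(4) for the homotopy.
\begin{itemize}
\item The degree of a $\zeta=\infty$ broken configuration does not depend on the intermediate orbit. So the configurations through \emph{every} $p$ are framed inside one domain family with a single group $G_d$.
\item The codimension-zero morphisms from $\wt M^{\rm PSS}_{v;p}\times\wt M^{\rm SSP}_{p;w}$ are then group enlargements along the block embeddings $G_{d_1}\times G_{d_2}\to G_{d_1+d_2}$, followed by stabilizations. This is exactly how the flow-category structural maps work.
\item The homotopy chart is obtained by extending the thickening data from the $\zeta=0$, $\zeta=\infty$ and Morse-breaking faces into the interior.
\item The index-bundle interpolators and stable smoothings already chosen for $\wt{\mb B}^{\rm PSS}$, $\wt{\mb B}^{\rm SSP}$ and $\tilde{\mb B}^{\rm pearly}$ are then extended involutively over the concatenation and the homotopy, as in Section~\ref{sec:nc-smoothing}.
\end{itemize}

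Your handling of the involution is correct and matches the paper. The action on pairs is free because $\mu\mapsto-\mu$ has no fixed points, so you may work with one representative per $\Pi\times\mb Z\times\mb Z/2$-orbit. The operation $\dagger$ absorbs the anti-complex nature of the geometric involution by reversing NC structures.
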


Their proofs occupy Sections \ref{section6}--\ref{section7}. 

\subsection{FOP perturbations and involutions}

To prove the Arnold--Givental conjecture, a key step is to apply FOP perturbations (cf. \cite{Bai_Xu_2022}) in an involution-symmetric way. The notion of FOP transversality was originally proposed by Fukaya--Ono \cite{Fukaya_Ono_integer} and further studied by Parker \cite{BParker_integer}. In \cite{Bai_Xu_2022} the rigorous foundation was established. We recall the ``blackbox'' theorem regarding such perturbations.

\begin{thm}\label{thm:FOP}\cite[Main Theorem]{Bai_Xu_2022} 
Let ${\mc U}$ be an NC orbifold and ${\mc E} \to {\mc U}$ be an NC orbifold vector bundle. Then there exists a $C^0$-dense subset
\beqn
\Gamma^{\rm FOP}({\mc U}, {\mc E}) \subset \Gamma({\mc U}, {\mc E})
\eeqn
whose elements are called {\bf FOP transverse sections} satisfying the following conditions.
\begin{enumerate}
    \item {\bf (Locality)} If ${\mc U}' \subset {\mc U}$ is an open subset, then the restriction of an FOP transverse section over ${\mc U}$ to ${\mc U}'$ is also FOP transverse.

    \item {\bf (Extension)} If $C \subset {\mc U}$ is a closed subset and ${\mc S}_0$ is an FOP transverse section defined near $C$, then there exists an FOP transverse section ${\mc S}$ over ${\mc U}$ which agrees with ${\mc S}_0$ near $C$.

    \item {\bf (Stabilization)} If ${\mc S}$ is an FOP transverse section of ${\mc E}$ and $\pi_{\mc F}: {\mc F} \to {\mc U}$ is a flat complex orbifold vector bundle, then the stabilization 
    \beqn
    \pi_{\mc F}^* {\mc S} \oplus \tau_{\mc F}: {\mc F} \to \pi_{\mc F}^* {\mc E} \oplus \pi_{\mc F}^* {\mc F}
    \eeqn
    is an FOP transverse section.

    \item {\bf (Product)} If ${\mc S}_i: {\mc U}_i \to {\mc E}_i$, $i = 1, 2$ are FOP transverse sections, then the product ${\mc S}_1 \times {\mc S}_2$ is also an FOP transverse section.

    \item {\bf (Classical Transversality)} If ${\mc U}$ is a manifold, then ${\mc S}$ is FOP transverse if and only if it is transverse in the classical sense. 

    \item {\bf (Pseudocycle)} For any FOP transverse section ${\mc S}: {\mc U} \to {\mc E}$, the closure 
    \beqn
    \ov{ {\mc S}^{-1}(0) \cap {\mc U}_{\rm mfd}} \subset {\mc U}
    \eeqn
    is a pseudomanifold of dimension ${\rm dim} {\mc U} - {\rm rank} {\mc E}$.
\end{enumerate}
\end{thm}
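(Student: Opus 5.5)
The plan is to follow the construction of \cite{Bai_Xu_2022}. First we make FOP transversality a local condition on orbifold charts. Then we check that the local condition glues and has properties (1)--(6). Locally, ${\mc U}$ is $U/\Gamma$ with $\Gamma$ finite, and $E$ is a $\Gamma$-bundle. For each $U$-essential subgroup $H \subset_U \Gamma$ we use the basic decompositions $TU|_{U_H} = (TU)_H \oplus \check{(TU)}_H$ and $E|_{U_H} = E_H \oplus \check E_H$. The NC structure makes $\check{(TU)}_H$ and $\check E_H$ into complex $H$-representations.

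Near a point of $U_H$ we trivialize in normal complex coordinates $w \in \check{(TU)}_H$. The equivariant section then splits into two parts:
\begin{enumerate}
\item an $H$-invariant part, which is a section of $E_H$ over $U_H$;
\item a normal part, modeled on $H$-equivariant complex polynomial maps $\check{(TU)}_H \to \check E_H$ of bounded degree, with coefficients depending on the point of $U_H$.
\end{enumerate}
The space of such polynomial maps is a finite-dimensional complex vector space. Inside it, the locus of maps with prescribed degeneracy is a complex algebraic (hence semialgebraic) set. This locus carries a canonical Whitney stratification, which is invariant under all diffeomorphisms preserving the structure. We call a section \emph{FOP transverse} at a point of $U_H$ if it is locally of this normal polynomial form and its coefficient map (invariant part together with polynomial coefficients) is transverse to every stratum of the canonical Whitney stratification. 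Compatibility with chart embeddings holds because of the coherence conditions in the definition of an NC structure, namely restriction of $I^{\check E_H}$ to $\check E_K$ for $K \subset H$.

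With the definition in place, the properties are checked as follows.
\begin{itemize}
\item \textbf{Locality} (1) is immediate, since the condition is pointwise and open.
\item \textbf{Classical transversality} (5) holds because for $\Gamma$ trivial there are no normal parts and the stratification is trivial.
\item \textbf{Product} (4) reduces to the fact that products of canonical Whitney stratifications refine to the canonical stratification of the product, and transversality is preserved under products.
\item \textbf{Stabilization} (3) holds because the tautological section of a flat complex bundle contributes an isomorphism onto a complex factor. Flatness ensures that the polynomial normal form is preserved, so the coefficient map is a product of the old coefficient map with a submersion.
\end{itemize}

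For \textbf{Extension} (2) and $C^0$-density, we induct over the isotropy strata in decreasing order of the subgroup $H$. On the deepest stratum we use parametric transversality (Thom's theorem) for the finitely many coefficient maps to perturb into transversality. We keep the given section near $C$ using a cutoff, which is allowed because transversality is open. We then extend equivariantly to a tubular neighborhood via the normal polynomial model. Next we move to the next stratum, relative to the neighborhood already fixed. The $C^0$ smallness of each perturbation gives density.

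For the \textbf{Pseudocycle} property (6), the zero set on the free part ${\mc U}_{\rm mfd}$ is a manifold of the expected dimension by classical transversality. Its closure adds points in lower isotropy strata and points where the normal polynomial degenerates. Because these strata are cut out by complex conditions in the complex normal directions, transversality to the Whitney strata forces each such piece to have real codimension at least $2$ relative to ${\rm dim}\,{\mc U} - {\rm rank}\,{\mc E}$.

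The main obstacle is this last codimension count, together with showing that the frontier is a union of manifolds. This needs the frontier condition of the canonical Whitney stratification and a careful dimension count of the stratum of degenerate equivariant polynomial maps. My first approach is to reduce, via the $H$-isotypic decomposition, to complex-linear algebra over each irreducible summand. There the degeneracy loci are complex subvarieties, which gives real codimension at least $2$ automatically.
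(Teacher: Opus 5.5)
The paper imports this theorem from \cite{Bai_Xu_2022} and only recalls its local model. Measured against that model, your outline has the right ingredients, but there is a genuine gap: the local condition you make transverse is not the right one, and the step you defer is the heart of the theorem.

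\emph{First gap: your normal form is not a consistent local notion.} You require the invariant part to be a section over $U_H$ and the polynomial coefficients to depend only on the point of $U_H$. This form is not preserved when you pass to a nearby point of smaller isotropy, nor under polynomial equivariant changes of normal coordinates. For example, let $\mathbb{Z}/4$ act on $\mathbb{C}^2$ by $(a,b)\mapsto(ia,-b)$ and on $\mathbb{C}$ by $-1$. The section $b+a^2$ is an equivariant polynomial, hence in your normal form at $0$. At $(0,b_0)$ with $b_0\neq 0$, the stabilizer $K=\mathbb{Z}/2$ acts trivially on the target. So the whole section is the ``$K$-invariant part'', yet it depends on the $K$-normal coordinate $a$.

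The same example breaks chart invariance. The map $(a,b)\mapsto(a,b+a^2)$ is an equivariant biholomorphism whose derivative is the identity along both fixed loci. It carries $b$, which is in your normal form at every point near $0$, to $b+a^2$, which is not. So your claim that chart compatibility follows from the coherence conditions of the NC structure is false. For the same reason, your extension induction (fix a polynomial model near $U_H$, then proceed on $U_K$ relative to it) does not go through as stated. This is why, in the model the paper recalls, NC maps form the $C^\infty_G(V,\mathbb{R})$-module generated by $\wh{\rm Poly}{}^d_G(V,W)$. The $G$-invariant lift $\mf p\colon V\to\wh{\rm Poly}{}^d_G(V,W)$ is defined on all of $V$, not just on the fixed locus.

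\emph{Second gap: the stratification lives on the wrong space.} It must live on the universal zero set
\[
Z^d(G,V,W)=\{(v,P)\,:\,P(v)=0\}\subset V\times\wh{\rm Poly}{}^d_G(V,W),
\]
with strata inside the isotropy loci $\{G_v=H\}$. The condition is that the graph of $\mf p$ meets every stratum transversely. You never define your ``locus of maps with prescribed degeneracy'', and a locus in coefficient space alone does not control zeros at nearby points of intermediate or trivial isotropy. At minimum you would need a Thom--Mather stratification of the projection $Z^d\to\wh{\rm Poly}{}^d_G(V,W)$, which you do not mention.

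With the correct object, the pseudocycle property you list as the main obstacle rests on two facts absent from your plan.
\begin{enumerate}
\item For $d$ large, $P\mapsto P(v)$ is onto $W$ at every free $v$ (interpolate on the orbit and average). Hence the free part of $Z^d$ is smooth of codimension ${\rm rank}\,W$.
\item After splitting off $V_G$ and the constant $W_G$-component, $Z^d$ is a complex algebraic subset of $\check V_G\times{\rm Poly}^d_G(\check V_G,\check W_G)$, jointly in $(v,P)$, and the isotropy loci are complex linear subspaces. So the frontier of the free part is complex algebraic of smaller dimension, hence of real codimension at least $2$.
\end{enumerate}
Transversality of the graph then transfers these codimensions to $\ov{{\mc S}^{-1}(0)\cap{\mc U}_{\rm mfd}}$.

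Linear algebra on irreducible summands cannot substitute for this. For $\mathbb{Z}/3$ acting on $\mathbb{C}$ by $\zeta$ and on the target by $\zeta^2$, there is no nonzero equivariant linear map, while $a\mapsto a^2$ is equivariant. So the relevant geometry is genuinely polynomial and lives in the joint $(v,P)$ incidence variety.
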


Then there is a regular stratification category $\uds{\bf dOrb}^{\rm FOP}$ introduced in \cite[Section 8]{Bai_Xu_foundation} whose objects are quadruples $({\mc U}, {\mc E}, {\mc S}_0, {\mc S})$ where $({\mc U}, {\mc E}, {\mc S}_0)$ is an NC derived orbifold and ${\mc S}: {\mc U} \to {\mc E}$ is an FOP transverse perturbation of ${\mc S}_0$. There are also collared and rigidified versions. Moreover, there are two natural functors
\beq\label{FOP_diagram}
\vcenter{ \xymatrix{  &   \uds{\bf dOrb}^{\rm FOP} \ar[ldd]_-{({\mc U}, {\mc E}, {\mc S}_0, {\mc S}) \mapsto ({\mc U}, {\mc E}, {\mc S}_0)} \ar[rdd]^-{({\mc U}, {\mc E}, {\mc S}_0, {\mc S}) \mapsto \ov{ {\mc S}^{-1}(0) \cap {\mc U}_{\rm free}} }  & \\
& & \\
\uds{\bf dOrb}^{\rm NC}  & &   \pman} }
\eeq
between regular stratification categories. Both of them can be upgraded to the collared and rigidified versions. 

Recall that both $\uds{\bf dOrb}^{\rm NC}$ and $\pman$ are involutive, where the former is equipped with the conjugation action on the normal complex structure, while the latter is equipped with the trivial involution. We need an extension of the FOP transversality arguments in the involutive-symmetric setting. The following important result will be proven momentarily.

\begin{thm}\label{thm_FOP_involution}
Let ${\mc U}^\dagger$ resp. ${\mc E}^\dagger$ be equipped with the opposite NC structure. Then as subspaces of $\Gamma({\mc U}, {\mc E})$, one has 
\beqn
\Gamma^{\rm FOP}({\mc U}, {\mc E}) = \Gamma^{\rm FOP}({\mc U}^\dagger, {\mc E}^\dagger).
\eeqn
Hence the functor $({\mc U}, {\mc E}, {\mc S}_0, {\mc S}) \mapsto ({\mc U}^\dagger, {\mc E}^\dagger, {\mc S}_0, {\mc S})$ is an involution on $\uds{\bf dOrb}^{\rm FOP}$ and the functors in \eqref{FOP_diagram} are both involutive.
\end{thm}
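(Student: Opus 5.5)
The plan is to go back to how \cite{Bai_Xu_2022} defines FOP transversality and check that each ingredient depends on the NC structure only through data invariant under $I \mapsto -I$. Recall the definition. Locally on a chart $(G, U, E)$ with $G$ finite, for each $U$-essential subgroup $H$ the normal complex structure makes $\check E_H$ and the normal bundle $\check{(TU)}_H$ into complex $H$-representations. A section $S$ is FOP transverse if, near each stratum $U_H$, it decomposes as $S = S_H + \check S_H$ with the following properties. First, $S_H$ restricted to $U_H$ is transverse in the classical sense, and transverse to the canonical Whitney stratification of the relevant spaces of equivariant maps. Second, the normal part $\check S_H$ is, to leading order, a $H$-equivariant polynomial map $\check{(TU)}_H \to \check E_H$ of bounded degree. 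Here "polynomial" means complex polynomial with respect to the NC structures, modelled on the space ${\rm Poly}_d^H(\check V, \check W)$. The first step is to write this definition out in exactly this form, so that every place the complex structures enter is visible.

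The key observation concerns the space of complex polynomial maps $\check V \to \check W$ with $\check V, \check W$ complex $H$-representations. This space, as a subspace of real-analytic maps, is the same as the space of complex polynomial maps $\check V^\dagger \to \check W^\dagger$. Indeed, a map $p$ is holomorphic for $(i,i)$ if and only if $dp \circ i = i \circ dp$. That holds if and only if $dp\circ(-i) = (-i)\circ dp$. So homogeneous components of complex degree $k$ are preserved, with the same degree. $H$-equivariance is unchanged, since the real representation is unchanged. Hence ${\rm Poly}_d^H(\check V,\check W) = {\rm Poly}_d^H(\check V^\dagger,\check W^\dagger)$ as real vector spaces. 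The real algebraic varieties inside them are likewise literally the same subsets: the loci of polynomials whose zero sets have given structure, and the evaluation maps used to define the FOP condition. In particular their canonical (minimal) Whitney stratifications coincide.

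I would justify that last point by the diffeomorphism-invariance of canonical Whitney stratifications. The identity map of the underlying real vector space is a diffeomorphism, indeed a real-linear isomorphism, carrying one real semialgebraic set to the same set. Canonical stratifications are intrinsic to the set, so they agree. This is the step I expect to be the main obstacle. One must check carefully that every auxiliary object in the construction of \cite{Bai_Xu_2022} — the decompositions $\check E_H \cong \check E_K \oplus (\cdots)$, normal coordinates, the degree bounds, and the splitting into trivial and nontrivial isotypic pieces — is real and independent of the sign of $I$. The one thing that does change is complex orientation, and the FOP condition is phrased through stratified transversality, not orientations; I would verify this explicitly. The bookkeeping needed is the compatibility conditions for $K \subset H$, and these are symmetric under global conjugation.

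Given this, transversality of $S$ to each stratum is the same condition in both cases, so $\Gamma^{\rm FOP}({\mc U},{\mc E}) = \Gamma^{\rm FOP}({\mc U}^\dagger,{\mc E}^\dagger)$, since chart embeddings respecting NC structures also respect the conjugate ones. The functorial statements then follow formally. The assignment $({\mc U},{\mc E},{\mc S}_0,{\mc S})\mapsto({\mc U}^\dagger,{\mc E}^\dagger,{\mc S}_0,{\mc S})$ preserves objects, and it preserves morphisms, since rigidified embeddings by complex flat bundles stay so after conjugating. It squares to the identity. The left functor in \eqref{FOP_diagram} commutes with $\dagger$ by definition. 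The right functor does too, because $\ov{{\mc S}^{-1}(0)\cap{\mc U}_{\rm free}}$ depends only on ${\mc S}$ and the underlying orbifold, and $\pman$ carries the trivial involution.
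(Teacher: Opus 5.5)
Your proposal is correct and follows essentially the paper's argument: $\wh{\rm Poly}{}^d_G(V,W)=\wh{\rm Poly}{}^d_G(V^\dagger,W^\dagger)$ because holomorphy is unchanged when both complex structures are reversed, so the NC maps, the graph spaces $M^d$ and the zero loci $Z^d$ coincide. The canonical Whitney stratification is then the same on both sides, being the minimal one subject to the orbit-type condition for the unchanged real $G$-action; the only adjustment is that the paper's local definition of FOP transversality asks for a lift ${\mf p}: V\to \wh{\rm Poly}{}^d_G(V,W)$ whose graph is transverse to the canonical stratification of $Z^d(G,V,W)$, rather than the decomposition $S=S_H+\check S_H$ you recall, and your invariance argument applies to that definition verbatim.
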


\subsubsection{Proof of Theorem \ref{thm_FOP_involution}}

In fact one only needs to prove this involution property for a local model. Recall that an {\bf NC triple} is a triple $(G, V, W)$ where $G$ is a finite group and $V$, $W$ are two faithful and normally complex representations of $G$. We have their basic decompositions
\begin{align*}
    &\ V = V_G \oplus \check V_G,\ &\ W = W_G \oplus \check W_G.
\end{align*}
One has a special space of polynomial maps
\beqn
\wh{\rm Poly}{}_G^d(V, W) = {\rm Poly}_G^d(\check V_G, \check W_G) \oplus W_G.
\eeqn
Its elements can be viewed as $G$-equivariant maps $P: V \to W$ which are independent of the $V_G$-direction and whose $W_G$-components are constants. Hence we regard
\beqn
\wh{\rm Poly}{}_G^d(V, W) \subset C_G^\infty(V, W).
\eeqn
Notice that the ambient space $C_G^\infty(V, W)$ does not depend on the NC structure. The space of {\bf NC maps} is the $C_G^\infty(V, {\mb R})$-submodule
\beqn
C_G^{\rm NC}(V, W):= \varinjlim_{d} C_G^\infty(V, {\mb R}) \Big( \wh{\rm Poly}{}_G^d(V, W) \Big) \subset C_G^\infty(V, W).
\eeqn

We need a particular kind of presentation of an NC map. Fix a sufficiently large $d$. The {\bf graph space} associated to $(G, V, W)$ and $d$ is 
\beqn
M^d(G, V, W):= V \times \wh{\rm Poly}{}_G^d(V, W) \subset V \times C_G^\infty(V, W)=: M^{\rm smooth}(G, V, W).
\eeqn
There is a natural evaluation map 
\beqn
\ev: M^d (G, V, W) \to W,\ (v, P) \mapsto P (v)
\eeqn
whose zero locus is denoted by
\beqn
Z^d(G, V, W).
\eeqn
By a main technical result of \cite[Section 3.2]{Bai_Xu_2022}, the set $Z^d(G, V, W)$ admits a canonical Whitney stratification ${\mf Z}^d(G, V, W)$, which is determined by the following criterion. Notice that $G$ acts nontrivially on the $V$ factor of $M^d(G, V, W)$.

\begin{defn}
The {\bf canonical Whitney stratification} on $Z^d(G, V, W)$ is the minimal (which is also unique) one among all Whitney stratifications subject to the following condition. For each $V$-essential subgroup $H\subset_V G$, let
\beqn
M^d(G, V, W)_H^* = \Big\{ (v, P) \ |\ G_v = H \Big\}.
\eeqn
Then each stratum of $Z^d(G, V, W)$ is contained in $M^d(G, V, W)_H^*$ for some $H \subset_V G$. 
\end{defn}

In \cite{Bai_Xu_2022} the first and the fourth authors proved the existence of canonical Whitney stratifications (following methods of Parker \cite{BParker_integer}) along with their properties.

\begin{defn}
Let $f \in C_G^{\rm NC}(V, W)$ be an NC map.
\begin{enumerate}

\item A {\bf lift} of $f$ is a $G$-invariant smooth map
\beqn
{\mf p}: V \to \wh{\rm Poly}{}_G^d(V, W)
\eeqn
such that 
\beqn
f(v) = {\mf p}(v)(v) = \ev(v, {\mf p}(v))
\eeqn

\item $f$ is said to be {\bf FOP transverse} if it admits a lift ${\mf p}$ for a sufficiently large $d$, such that the graph of ${\mf p}$, which is 
\beqn
{\rm graph}({\mf p}):= \Big\{(v, {\mf p}(v))\ |\ v \in V \Big\} \subset M^d(G, V, W)
\eeqn
is transverse to each stratum of the canonical Whitney stratification of $Z^d(G, V, W)$.
\end{enumerate}
\end{defn}

Now we start to prove that after flipping the NC structure, the notion of FOP transversality is preserved. The first observation is that being complex is preserved by flipping twice.

\begin{lemma}\label{lemma532}
As subspaces of $C_G^\infty(V, W)$, one has
\beqn
\wh{\rm Poly}{}_G^d(V, W) = \wh{\rm Poly}{}_G^d(V^\dagger, W^\dagger).
\eeqn
\end{lemma}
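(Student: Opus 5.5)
The plan is to unwind the definition of $\wh{\rm Poly}{}_G^d(V,W) = {\rm Poly}_G^d(\check V_G, \check W_G) \oplus W_G$ and observe that each ingredient depends on the NC structures only through data that are insensitive to replacing a complex structure $I$ by $-I$. First, the basic decompositions $V = V_G \oplus \check V_G$ and $W = W_G\oplus \check W_G$ are defined purely in terms of the real representations: trivial versus nontrivial isotypic summands. Since $V^\dagger$ and $W^\dagger$ are the same real representations, they have the same basic decompositions. Thus the summand $W_G$ is literally the same subspace in both cases, and so is the condition that a map be independent of the $V_G$-direction.

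It remains to compare ${\rm Poly}_G^d(\check V_G, \check W_G)$ formed with the structures $(I_V, I_W)$ and with $(-I_V, -I_W)$. I would take ${\rm Poly}^d$ to mean $G$-equivariant complex polynomial maps of degree at most $d$ in the sense used in \cite{Bai_Xu_2022}, that is, polynomials in the complex coordinates of $\check V_G$ (holomorphic polynomials) with values in $\check W_G$. The key observation is then a real-linear identification. If $P: \check V_G\to \check W_G$ is a complex polynomial of degree $\le d$ for $(I_V,I_W)$, write $P = \sum_k P_k$ with $P_k$ homogeneous of degree $k$, so that $P_k(v) = A_k(v,\dots,v)$ for a $\C$-multilinear symmetric map $A_k$. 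A map that is $\C$-multilinear with respect to $(I_V,I_W)$ is also $\C$-multilinear with respect to $(-I_V,-I_W)$: for each slot,
\[
A_k(\dots,(-I_V)v,\dots) = -A_k(\dots,I_V v,\dots) = -I_W A_k(\dots,v,\dots) = (-I_W)A_k(\dots,v,\dots).
\]
Hence $P$ is also a complex polynomial of degree $\le d$ for the conjugate structures. $G$-equivariance is a real condition and does not change. The reverse inclusion follows by applying the same argument to $V^\dagger, W^\dagger$, using $\dagger^2 = {\rm Id}$.

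The only point needing care is to check that the precise definition of ${\rm Poly}^d_G$ in \cite{Bai_Xu_2022} is indeed holomorphic polynomials with the target treated complex-linearly, rather than, for instance, polynomials in $z$ and $\bar z$ with some asymmetric normalization. In the latter reading the equality is even more immediate, since conjugating both structures only permutes $z$ and $\bar z$ as spanning sets. Either way, the space of maps is unchanged, which proves the lemma.
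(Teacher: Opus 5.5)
Your proposal is correct and follows essentially the paper's argument. The paper also reduces to the case $V_G = W_G = 0$, using that the basic decomposition depends only on the real representation, and then observes that a real equivariant polynomial $P$ of degree $\le d$ is holomorphic $V \to W$ if and only if it is holomorphic $V^\dagger \to W^\dagger$. Your slot-by-slot multilinearity computation is simply an explicit verification of that equivalence.
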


\begin{proof}
It suffices to consider the case when $V_G = W_G = 0$. Hence $V$ and $W$ are complex representations. Then $P \in {\rm Poly}{}_G^d(V, W)$ if and only if $P$ is a real equivariant polynomial map of degree $\leq d$ and is holomorphic. Since $P: V \to W$ is holomorphic if and only if $P: V^\dagger \to W^\dagger$ is holomorphic, the two spaces agree. 
\end{proof}

As a consequence, one has 
\beq\label{NC_map_invariance}
C_G^{\rm NC}(V, W) = C_G^{\rm NC}(V^\dagger, W^\dagger)
\eeq
and 
\begin{align*}
&\ M^d(G, V, W) = M^d(G, V^\dagger, W^\dagger),\ &\ Z^d(G, V, W) = Z^d(G, V^\dagger, W^\dagger).
\end{align*}

The next result follows from the properties of the canonical Whitney stratification and justifies the name ``canonical.''

\begin{lemma}\label{lemma_stratification_invariance}
As stratifications on the same set $Z^d(G, V, W)$, one has
\beqn
{\mf Z}^d(G, V, W) = {\mf Z}^d(G, V^\dagger, W^\dagger).
\eeqn
\end{lemma}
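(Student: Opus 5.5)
The plan is to show that the defining criterion of the canonical Whitney stratification makes no reference to the NC structure. Once that is established, uniqueness forces the two stratifications to coincide.

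Fix $d$. By Lemma \ref{lemma532} and its consequences, the ambient spaces agree, $M^d(G,V,W)=M^d(G,V^\dagger,W^\dagger)$, and so do the zero loci $Z^d(G,V,W)=Z^d(G,V^\dagger,W^\dagger)$. We regard them as the same subsets of $V\times C^\infty_G(V,W)$.

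The key point is that the $G$-action on $M^d(G,V,W)$ depends only on the underlying real representations. This holds both for the action on the $V$ factor and for the (trivial) action on the polynomial factor, since $G$-equivariant maps are $G$-invariant points of $\wh{\rm Poly}{}^d_G$. Flipping the complex structure on $\check V_G$ does not change the real representation $V$, because $G$ acts by the same real linear maps. Hence the following objects coincide for $(V,W)$ and $(V^\dagger,W^\dagger)$:
\begin{itemize}
\item the isotropy groups $G_v$, so the sets of $V$-essential subgroups $H\subset_V G$;
\item the orbit-type pieces $M^d(G,V,W)^*_H=\{(v,P)\mid G_v=H\}=M^d(G,V^\dagger,W^\dagger)^*_H$.
\end{itemize}

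Consequently the class of admissible Whitney stratifications of $Z^d$ is literally the same class in both cases. An admissible stratification is one each of whose strata lies in some $M^d_H{}^*$. Being Whitney is a condition on pairs of real submanifolds of the same underlying real vector space, so it is independent of complex structures. By \cite{Bai_Xu_2022}, each class contains a unique minimal element, and minimality is defined by comparing stratifications in this common class. The minimal elements therefore coincide: ${\mf Z}^d(G,V,W)={\mf Z}^d(G,V^\dagger,W^\dagger)$.

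The one place needing care is to confirm that the construction in \cite{Bai_Xu_2022} really characterizes ${\mf Z}^d$ purely by this minimality criterion. In particular, it must not use auxiliary data depending on the complex structure, such as complex-analytic stratifications of the polynomial space. If it did, I would fall back on a different argument: any complex-algebraic input is replaced by real-algebraic or semialgebraic data, which is invariant under conjugation. The canonical minimal refinement of a real semialgebraic set is invariant under the real-algebraic automorphism "identity", and the identity intertwines the two structures. Since the paper states that the stratification is minimal and unique subject to the orbit-type condition, the direct argument suffices.
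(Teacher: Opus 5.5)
Your proposal is correct and follows essentially the same argument as the paper: the canonical Whitney stratification is the unique minimal Whitney stratification whose strata lie in the orbit-type pieces $M^d(G,V,W)^*_H$, and both the Whitney condition and these orbit-type pieces depend only on the underlying real $G$-representations, not on the NC structures, so the two minimal elements coincide. The paper phrases the same point as ``pulling back along the identity preserves minimality and $G$-invariance,'' so your fallback paragraph is not needed.
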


\begin{proof}
The canonical Whitney stratification is the minimal Whitney stratification on $Z^d(G, V, W)$ which respects the $G$-action on $M^d(G, V, W)$. The minimality is a concept in $C^\infty$ category, which does not depend on the complex analytic structure (cf. \cite[Appendix B]{Bai_Xu_foundation}). In particular, since the pullback of the canonical Whitney stratification preserves the minimality and G-invariance, it actually agrees with the canonical Whitney stratification. 
\end{proof}

\begin{cor}
Given $f\in C_G^\infty(V, W)$, it is FOP transverse as an NC map from $V$ to $W$ if and only if it is FOP transverse as an NC map from $V^\dagger$ to $W^\dagger$.
\end{cor}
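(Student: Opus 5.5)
The plan is to unwind the definition of FOP transversality and check that each ingredient is literally the same object on both sides. FOP transversality of $f$ involves three things: the ambient space of admissible maps, the notion of a lift ${\mf p}$, and the stratification that the graph of ${\mf p}$ must be transverse to. The point is that all three are insensitive to flipping the NC structures.

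\emph{Step 1: the class of NC maps is the same.} By \eqref{NC_map_invariance}, $f$ lies in $C_G^{\rm NC}(V,W)$ if and only if it lies in $C_G^{\rm NC}(V^\dagger,W^\dagger)$. So the hypothesis ``$f$ is an NC map'' has the same meaning for both structures.

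\emph{Step 2: lifts and graphs are the same.} By Lemma \ref{lemma532}, for each $d$ we have the equality of finite-dimensional real vector spaces $\wh{\rm Poly}{}_G^d(V,W)=\wh{\rm Poly}{}_G^d(V^\dagger,W^\dagger)$. Both spaces carry the same $G$-action and the same evaluation map $\ev(v,P)=P(v)$. Hence a $G$-invariant smooth map ${\mf p}:V\to\wh{\rm Poly}{}_G^d(V,W)$ with ${\mf p}(v)(v)=f(v)$ is exactly a lift of $f$ regarded as an NC map $V^\dagger\to W^\dagger$. The graph ${\rm graph}({\mf p})\subset M^d(G,V,W)=M^d(G,V^\dagger,W^\dagger)$ is then the same smooth submanifold. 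The phrase ``for sufficiently large $d$'' means the same thing on both sides, since the identification holds for every $d$.

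\emph{Step 3: the stratification is the same.} By Lemma \ref{lemma_stratification_invariance}, the zero sets $Z^d(G,V,W)=Z^d(G,V^\dagger,W^\dagger)$ agree and carry identical canonical Whitney stratifications. Transversality of a fixed smooth submanifold to each stratum of a fixed stratification inside a fixed smooth manifold is a purely differential-topological condition. So ${\rm graph}({\mf p})$ is transverse to ${\mf Z}^d(G,V,W)$ if and only if it is transverse to ${\mf Z}^d(G,V^\dagger,W^\dagger)$.

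Combining these three steps, $f$ admits an FOP-transverse lift for one NC structure if and only if it admits one for the other, which proves the statement. Since the argument is symmetric and $\dagger$ is an involution, it suffices to prove a single implication.

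The only real content is Lemma \ref{lemma_stratification_invariance}, which is already established. The one point needing care is in Step 2: the identification of the polynomial spaces must be an equality of $G$-spaces compatible with $\ev$, not merely an abstract isomorphism, so that graphs are literally the same subsets. This holds because both spaces are defined as the same subset of $C_G^\infty(V,W)$, and $\ev$ is just evaluation of maps.
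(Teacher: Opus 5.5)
Your proposal is correct and follows the paper's own proof: you use Lemma \ref{lemma532} and \eqref{NC_map_invariance} to identify the NC maps and their lifts, observe that the graphs are literally the same submanifold of $M^d(G,V,W)$, and conclude with Lemma \ref{lemma_stratification_invariance}. Your remark that the two polynomial spaces are equal as subsets of $C_G^\infty(V,W)$, with the same $G$-action and the same $\ev$, is precisely what justifies the paper's brief assertion that ``their graphs are the same submanifold.''
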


\begin{proof}
By Lemma \ref{lemma532} and \eqref{NC_map_invariance}, $f\in C_G^{\rm NC}(V, W)$ if and only if $f \in C_G^{\rm NC}(V^\dagger, W^\dagger)$. Moreover, if ${\mf p}: V \to \wh{\rm Poly}{}_G^d(V, W)$ is a lift of $f$, ${\mf p}$ can also be regarded as a map ${\mf p}: V \to \wh{\rm Poly}{}_G^d(V^\dagger, W^\dagger)$. Their graphs are the same submanifold of $M^d(G, V, W)$. By Lemma \ref{lemma_stratification_invariance}, its transversality against ${\mf Z}^d(G, V, W)$ is equivalent to its transversality against ${\mf Z}^d(G, V^\dagger, W^\dagger)$. 
\end{proof}

This finishes the proof of Theorem \ref{thm_FOP_involution}.

\subsection{FOP perturbations on flow categories, bimodules, and homotopies}

Now we state and prove the involutive version of \cite[Theorem N]{Bai_Xu_foundation} on the basis of Theorem \ref{thm_flow_category_lift}, Theorem \ref{thm_PSS_lift}, Theorem \ref{thm_pearly_lift}, and Theorem \ref{thm_homotopy_lift}.

\begin{thm}\label{thm_FOP_lift}
\begin{enumerate}

\item Let $\tilde {\mb F}$ be an involutive AMS lift of the outercollaring of ${\mb F}$ provided in Theorem \ref{thm_flow_category_lift}. Then there is a $\Pi \times {\mb Z}$-equivariant involutive lift of $\tilde {\mb F}$ to $\outer \uds{\bf dOrb}_{\rm rig}^{\rm FOP}$, denoted by $\mathring {\mb F}$. 

\item Notice that $\outer {\mb M}$ is a lift of itself to $\outer \uds{\bf dOrb}_{\rm rig}^{\rm FOP}$. Given $\mathring {\mb F}$ as above and an AMS lift $\tilde {\mb B}^{\rm PSS}$,  there exists a $\Pi \times {\mb Z}$-equivariant involutive lift   $\mathring{\mb B}^{\rm PSS}$ to $\outer \uds{\bf dOrb}_{\rm rig}^{\rm FOP}$ as a bimodule over $(\outer {\mb M}; \mathring {\mb F})$. The same remains true for the SSP bimodule.

\item There is a $\Pi \times {\mb Z}$-equivariant involutive lift of $\tilde{\mb B}^{\rm pearly}$ to $\outer \uds{\bf dOrb}_{\rm rig}^{\rm NC}$, denoted by $\mathring {\mb B}^{\rm pearly}$.

\item The involutive concatenation $\tilde {\mb B}^{\rm PSS} \circ \tilde {\mb B}^{\rm SSP}$ provided by Theorem \ref{thm_homotopy_lift} admits a $\Pi \times {\mb Z}$-equivariant involutive lift to $\outer \uds{\bf dOrb}_{\rm rig}^{\rm FOP}$ which is also a concatenation $ \mathring {\mb B}^{\rm PSS} \circ \mathring {\mb B}^{\rm SSP}$. 

\item There exists a $\Pi \times {\mb Z}$-equivariant involutive lift of $\tilde {\mb H}$ to $\outer \uds{\bf dOrb}_{\rm rig}^{\rm NC}$, written as $\mathring{\mb H}$, as a homotopy from $\mathring {\mb B}^{\rm PSS} \circ \mathring {\mb B}^{\rm SSP}$ to $\mathring {\mb B}^{\rm pearly}$. 
\end{enumerate}
\end{thm}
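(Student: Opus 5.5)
The construction will follow the inductive scheme of \cite[Theorem N]{Bai_Xu_foundation}, modified at each step to be compatible with the involution and the $\Pi\times{\mb Z}$-action. The key point is to choose perturbations on orbits of objects rather than on individual objects. First I would show that $\Pi\times{\mb Z}\times{\mb Z}/2$ acts freely on the relevant object pairs $(p,q)$ with $M_{pq}\neq\emptyset$, $p\neq q$. If $\lambda\neq 0$, the filtration ${\mc A}$ is shifted. The involution $p\mapsto p^\dagger$ sends $\mu_p$ to $-\mu_p$, so it has no fixed objects. Hence the pair $(p,q)$ is never fixed by any nontrivial group element.

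I would then proceed by induction on the energy ${\mc A}(p)-{\mc A}(q)\in{\mb R}^2$. Discreteness comes from Gromov compactness and the finiteness of critical points below any level. Within each level, I would pick one representative pair $(p,q)$ from each orbit. For that pair, the FOP perturbation is already prescribed near the boundary $\partial\,\outer M_{pq}$ by the products of lower-energy perturbations; this uses the \textbf{Product} and \textbf{Stabilization} properties of Theorem \ref{thm:FOP} together with the rigidified embeddings. Compatibility of these boundary data on overlapping corners follows from associativity of the structural maps, exactly as in \cite{Bai_Xu_foundation}. I would apply the \textbf{Extension} property to get an FOP transverse section ${\mc S}_{pq}$ on the whole chart. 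Then I would \emph{define} the section on the other members of the orbit by transport: ${\mc S}_{\lambda p\,\lambda q}=(T_\lambda)_*{\mc S}_{pq}$ and ${\mc S}_{p^\dagger q^\dagger}=(\tau_{pq})_*{\mc S}_{pq}$.

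The essential new input is that the transported section on $M_{p^\dagger q^\dagger}$ is still FOP transverse. The identification $\tau_{pq}:(\tilde M_{pq})^\dagger\cong\tilde M_{p^\dagger q^\dagger}$ is an isomorphism of NC derived orbifolds after conjugating the NC structures. Theorem \ref{thm_FOP_involution} says that FOP transversality is unchanged by this conjugation, so the transported section is FOP transverse. I also need the boundary values to be consistent, i.e.\ the inductively prescribed boundary data on $M_{p^\dagger q^\dagger}$ agree with the transport of those on $M_{pq}$. This holds by induction, since lower strata were already built equivariantly, and by the compatibility diagrams of Definition \ref{defn_flow_category} (involution and $\Gamma$-action commute with structural maps). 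Because the orbit action is free, these definitions never conflict. The identity $\tau_{p^\dagger q^\dagger}\circ\tau_{pq}^\dagger={\rm Id}$ and $(\lambda p)^\dagger=\lambda p^\dagger$ give well-definedness on the full orbit ${\mb Z}/2\times\Pi\times{\mb Z}$.

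Parts (2)--(5) would be handled identically. For bimodules, homotopies, and concatenations the generators are pairs $(v;p)$, and the involution again acts freely on them, because $\mu\mapsto-\mu$ has no fixed points. The boundary data now also include the previously constructed $\mathring{\mb F}$ and $\outer{\mb M}$. For ${\mb M}$ the sections are already transverse and the involution is trivial on $\pman$. For the concatenation in (4), I would define the perturbation on the image of $\mathring{\mb B}^{\rm PSS}\times\mathring{\mb B}^{\rm SSP}$ by products, and extend over the remaining codimension-zero gluing regions with the extension property, again orbitwise. For the homotopy in (5), the boundary data come from (3) and (4).

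The main obstacle I expect is checking that the inductively defined boundary sections are well defined and smooth in the collared charts. This requires the rigidifications used for products to be compatible with $\tau$ and $T_\lambda$, since the stabilization step needs flat complex bundles and conjugation changes the complex structure. I would first check that conjugating a flat complex stabilization bundle gives again a flat complex bundle in the conjugate structure, so that rigidified embeddings are carried by $\dagger$ to rigidified embeddings. Granting this, the rest is a formal orbitwise repetition of the argument in \cite{Bai_Xu_foundation}.
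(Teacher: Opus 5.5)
Your proposal is correct and follows essentially the same route as the paper. You run the induction of \cite[Theorem N]{Bai_Xu_foundation} on the action and Tate differences, handling a whole $\Pi\times{\mb Z}\times{\mb Z}/2$-orbit of pairs at each step: the perturbation is chosen freely on one representative and transported by $T_\lambda$ and $\tau$, freeness of the involution rules out conflicting definitions, and Theorem \ref{thm_FOP_involution} guarantees that the transported sections remain FOP transverse. Your extra checks (boundary consistency via the compatibility diagrams, and that $\dagger$ sends flat complex stabilizations to flat complex stabilizations) are details the paper leaves implicit.
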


\begin{proof}
The content of Theorem \ref{thm_FOP_lift} is the existence of coherent FOP perturbations. The proof follows the induction of the proof of \cite[Theorem N]{Bai_Xu_foundation}. We do induction on both the energy ${\mc A}_H(\gamma_p) - {\mc A}_H(\gamma_q)$ and the difference $f_{\rm Tate}(\mu_p) - f_{\rm Tate}(\mu_q)$. Each step of the induction, we take care of a whole $\Pi \times {\mb Z} \times {\mb Z}/2$-orbit of pairs of objects $(p, q)$. As the involution is free, the construction on half of pairs in the same orbit (which is a $\Pi \times {\mb Z}$-orbit) can be done independently (while maintaining the $\Pi \times {\mb Z}$-symmetry). For the other half, one uses the involution to translate the perturbations directly. The key result, Theorem \ref{thm_FOP_involution}, guarantees that the translations via involution remain FOP transverse. 
\end{proof}

\subsection{Proof of the Arnold--Givental conjecture}\label{subsec:official-proof}

Now we can prove Theorem \ref{thm:main}. We explain how to rigorously define the objects introduced in Section \ref{sec:proof} using the statements from above. The strategy is to apply the functor 
\beqn
\outer \uds{\bf dOrb}_{\rm rig}^{\rm FOP} \to \pman
\eeqn
(defined by taking the closure of the isotropy-free part of the zero locus of the perturbation) to the flow categories etc. provided by Theorem \ref{thm_FOP_lift}. One then obtains flow categories, bimodules, homotopies enriched in $\pman$. Then, using the general abstract formulation of \cite[Section 7]{Bai_Xu_foundation}, we obtain the following objects.

\begin{itemize}
\item We can construct the chain complex 
\beqn
\wt{CF}(H)
\eeqn
of $\Lambda^{\Pi \times {\mb Z}}$-modules from $\mathring {\mb F}$ using the results in \cite[Section 7.4.1]{Bai_Xu_foundation};
\item Using the bimodules $\mathring{\mb B}^{\rm PSS}$ and $\mathring{\mb B}^{\rm SSP}$ and the construction in \cite[Section 7.4.1]{Bai_Xu_foundation}, we obtain chain maps
\begin{align*}
&\ \wt\Phi^{\rm PSS}: \wt{CM}(f_X) \to \wt{CF}(H),\ &\ \wt\Phi^{\rm SSP}: \wt{CF}(H) \to \wt{CM}(f_X).
\end{align*}
\item Using the bimodule $\mathring {\mb B}^{\rm pearly}$ and \cite[Section 7.4.1]{Bai_Xu_foundation}, we can define a chain map
$$\wt\Phi^{\rm pearly}: \wt{CM}(f_X) \to \wt{CM}(f_X).$$
\item Using the bimodule homotopy $\mathring{\mb H}$ and the results in \cite[Section 7.4.2]{Bai_Xu_foundation}, we obtain a chain homotopy between $\wt\Phi^{\rm SSP} \circ \wt\Phi^{\rm PSS}$ and $\wt\Phi^{\rm pearly}$ as chain maps of $\wt{CM}(f_X)$. 
\end{itemize}
As the constructions are invariant under the ${\mb Z}/2$-action, we can take the ${\mb Z}/2$-invariant parts. These chain-level objects satisfy the same properties as the case discussed in Section \ref{sec:proof} where transversality was assumed. In particular, the involution symmetry induces a ${\mb Z}/2$-action on $\wt{CF}(H)$. Define
\beqn
\wh{CF}(H):= \wt{CF}(H)_{{\mb Z}/2} \otimes \Lambda_{\mc K}
\eeqn
and call the resulting cohomology the {\bf Tate-Floer cohomology} of $H$. The chain maps $\wt\Phi^{\rm PSS}$, $\wt\Phi^{\rm SSP}$, and $\wt\Phi^{\rm pearly}$ all induce $\Lambda_{\mc K}$-linear maps between Tate cohomology groups
\beqn
\xymatrix{  \wh{HM}(f_X) \ar[r]^{\wh\Phi^{\rm PSS}}   \ar@/_2.0pc/[rr]_{\wh\Phi^{\rm pearly}} &     \wh{HF}(H) \ar[r]^{\wh\Phi^{\rm SSP}} &   \wh{HM}(f_X)}
\eeqn
after taking the ${\mb Z}/2$-invariant component. They are called the {\bf Tate-PSS}, {\bf Tate-SSP}, {\bf Tate-pearly} maps. Using the chain homotopy induced by $\mathring{\mb H}$, we see that the above diagram commutes.

Moreover, Lemma \ref{lemma:pearl} holds following the reasoning in the proof of \cite[Theorem 25.12]{Bai_Xu_foundation}, which is a consequence of the fact that, for moduli spaces that are already transversely cut out, the count induced by FOP perturbations agrees with the classical count (cf. Theorem \ref{thm:FOP} (5)). So the algebraic fact Proposition \ref{prop:lower_bound}, which asserts that ${\rm dim}_{\Lambda_{\mc K}} \Big( \wh{HF}(H) \Big)  \geq {\rm dim}_{\Lambda_{\mc K}} \Big( \wh{HM}(f_X) \Big),$ holds.

Finally, the homological perturbation argument in Section \ref{subsec:conclude} carries over because we only used energy considerations in Section \ref{sec:loc-equiv} and in the proof of Proposition \ref{prop_perturbation}. This allows us to deduce
\[\dim_{\Lambda_{\cl K}} \wh{HF}(H) \leq \dim_{\Lambda_{\cl K}} \wh{CF}^{\rm red}(H) = \#(\cl O(H)^{\Z/2}) \leq \#(\phi(L) \cap L).\]
So the proof concludes using the chain of inequalities \eqref{eqn:inequality}. \qed

\section{Construction of Global Charts}\label{section6}

The goal of this section is to work toward proving Theorem \ref{thm_flow_category_lift}, Theorem \ref{thm_PSS_lift}, Theorem \ref{thm_pearly_lift}, and Theorem \ref{thm_homotopy_lift} by constructing suitable global Kuranishi charts of the moduli spaces. Due to the evident resemblance with the discussions in \cite[Section]{Bai_Xu_foundation}, we will only highlight the new ingredients in the involutive-symmetric setting. We will lift the outercollaring of the Tate-Floer flow category ${\mb F}$ to an intermediate category $\outer \uds{\bf S^{\rm rel} Kur}_{\rm rig}$, called the category of relatively smooth Kuranishi spaces (collared and rigidified). This notion was already defined in \cite{Bai_Xu_foundation}. We will recall that in due course.

\subsection{The category of prestable cylinders} 
 
\begin{defn}\label{defn_curve_1}
An {\bf equivariant family of prestable cylinders} is a triple ${\mc C} = ({\mc G}, B, C)$ where ${\mc G}$ is a (not necessarily reductive) complex Lie group containing a maximal compact subgroup $G\subset {\mc G}$, $B$ is a stratified smooth ${\mc G}$-manifold, $C$ is a stratified ${\mc G}$-space, with a ${\mc G}$-equivariant map $C \to B$, and the structure of a prestable cylinder on each fiber $C_\phi \subset C$ (see Definition \ref{defn_cylinder}). This triple needs to satisfy the following conditions.
\begin{enumerate}
    \item For any $g \in {\mc G}$ and $\phi \in B$, the map $g: C_\phi \to C_{g\phi}$ is an isomorphism of prestable cylinders.

    \item Let $\mathring C \subset C$ be the complement of fiberwise nodes. Then $\mathring C$ is a stratified ${\mc G}$-manifold and the map $\mathring C \to B$ is a submersion.
\end{enumerate}
The {\bf conjugate} of ${\mc C} = ({\mc G}, B, C)$ is the triple ${\mc C}^\dagger = ({\mc G}^\dagger, B^\dagger, C^\dagger)$ with ${\mc G}^\dagger$ being the same Lie group as ${\mc G}$ with the reversed complex structure (with the same maximal compact subgroup $G$), $B^\dagger = B$, and $C^\dagger$ being the same space as $C$ with fibrewise structure of prestable cylinders on $C_\phi$ being conjugated to $C_\phi^\dagger$ (see Definition \ref{defn_cylinder_conjugate}).
\end{defn}

We can also add additional structures to an equivariant family of prestable cylinders, such as a collection of marked points, which are precisely ${\mc G}$-equivariant sections of $C \to B$ avoiding nodes and each other. We recall the definition of morphisms of the category of equivariant families of prestable cylinders. 

\begin{defn}\label{defn_curve_2}(\cite[Definition 14.7]{Bai_Xu_foundation}) Let ${\mc C} = ({\mc G}, B, C)$ be an equivariant family of prestable cylinders.
\begin{enumerate}

\item A {\bf group enlargement} of ${\mc C} = ({\mc G}, B, C)$ is an equivariant family of prestable cylinders ${\mc C}' = ({\mc G}', B', C')$ where ${\mc G} \to {\mc G}'$ is a complex Lie group embedding which sends the compact group $G$ into $G'$ with $B' = {\mc G}' \times_{\mc G} B$ and $C' = {\mc G}' \times_{\mc G} C$. We denote the group enlargement as such by 
\beqn
{\mc G}'\times_{\mc G} {\mc C}.
\eeqn

\item A {\bf strict morphism} from ${\mc C}_1 = ({\mc G}_1, B_1, C_1)$ to ${\mc C}_2 = ({\mc G}_2, B_2, C_2)$, denoted by $\zeta_{21}: {\mc C}_1 \to {\mc C}_2$, consists of a complex Lie group embedding $\zeta_{21}^{{\mc G}}: {\mc G}_1 \to {\mc G}_2$ which restricts to a group homomorphism $\zeta_{21}^G: G_1 \to G_2$ and an equivariant commutative diagram
\beqn
\xymatrix{ C_1 \ar[r]^{\zeta_{21}^C}  \ar[d] & C_2 \ar[d] \\
 B_1 \ar[r]_{\zeta_{21}^B} & B_2 }
\eeqn
satisfying 1) the map $\zeta_{21}^B: B_1 \to B_2$ is a stratified smooth map; 2) the map $\zeta_{21}^C: C_1 \to C_2$ is continuous such that the restricted map $\mathring C_1 \to \mathring C_2$ is smooth and such that for each $\phi_1 \in B_1$ sent to $\phi_2 \in B_2$, the induced map $\mathring C_{\phi_1} \to \mathring C_{\phi_2}$ is an isomorphism of prestable cylinders. Notice that a strict morphism ${\mc C}_1 \to {\mc C}_2$ induces a strict morphism from ${\mc G}_2 \times_{{\mc G}_1} {\mc C}_1$ into ${\mc C}_2$. 

\item A strict morphism is called a {\bf strict embedding} if the induced map ${\mc G}_2\times_{{\mc G}_1} {\mc C}_1 \to {\mc C}_2$ is an isomorphism.



\item The category of equivariant families of cylinders, denoted by $\uds{\bf Cylinder}$, is the category whose objects are equivariant families of cylinders and whose morphisms are strict embeddings.

\end{enumerate}
\end{defn}

The category $\uds{\bf Cylinder}$ is monoidal: given two objects ${\mc C}_i = ({\mc G}_i, B_i, C_i)$, the product ${\mc C}_1 \times {\mc C}_2$ is $({\mc G}_1 \times {\mc G}_2, B_1 \times B_2, C_1 \sqcup C_2)$ where the fiber of $C_1 \sqcup C_2$ over $(\phi_1, \phi_2) \in B_1\times B_2$ is the disjoint union $C_{\phi_1}\sqcup C_{\phi_2}$. The initial object, which is $\emptyset$-stratified (see Definition \ref{defn_regular_stratification_category}), is the object with ${\mc G}$ being the trivial group, $B$ being the singleton, and $C$ being empty. 

Notice that the category $\uds{\bf Cylinder}$ also allows outercollaring. Moreover, the conjugation action ${\mc C} \mapsto {\mc C}^\dagger$ specified in Definition \ref{defn_curve_1} defines an involution on $\uds{\bf Cylinder}$, making it an involutive regular stratification category.

\subsubsection{Rigidifications}

Similar to the cases of $\uds{\bf dOrb}$ or $\uds{\bf Kur}$, there are also rigidified versions of the category $\uds{\bf Cylinder}$. We recall its definition from \cite{Bai_Xu_foundation}.

\begin{defn}(\cite[Definition 14.9]{Bai_Xu_foundation}) \label{defn_curve_rigidification}
Let $\zeta_{21}: ({\mc G}_1, B_1, C_1) \to ({\mc G}_2, B_2, C_2)$ be a strict embedding in $\uds{\bf Cylinder}$. A {\bf rigidification} of $\zeta_{21}$ consists of an orthogonal representation $Q_{21}$ of $G_1$ and
a germ of $\zeta_{21}^G$-equivariant maps
\beqn
\theta_{21}: B_1 \times Q_{21}^\epsilon \to B_2
\eeqn
(where $G_1$ acts diagonally on the domain) whose restriction to $B_1 \times \{0\}$ coincides with $\zeta_{21}^B$, such that the map
\beqn
\begin{split}
G_2 \times_{G_1} ( B_1 \times Q_{21}^\epsilon) \to &\ B_2,\\
[g_2, (b_1, \eta_{21})] \mapsto &\ g_2 \theta_{21} (b_1, \eta_{21})
\end{split}
\eeqn
is a diffeomorphism onto an open neighborhood of the image of $\zeta_{21}^B$. A {\bf strict rigidified embedding} is a strict embedding together with a rigidification.
\end{defn}

\begin{lemma}(\cite[Lemma 14.10]{Bai_Xu_foundation})
Strict rigidified embeddings can be composed.
\end{lemma}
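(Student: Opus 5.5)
We verify the composition directly from Definition \ref{defn_curve_rigidification}, modelled on the proof of composition of rigidified embeddings of Kuranishi charts. Let
\[
\zeta_{21}: {\mc C}_1 \to {\mc C}_2,\qquad \zeta_{32}: {\mc C}_2 \to {\mc C}_3
\]
be strict rigidified embeddings, with rigidifications $(Q_{21}, \theta_{21})$ and $(Q_{32}, \theta_{32})$.

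The composite $\zeta_{31} = \zeta_{32} \circ \zeta_{21}$ is a strict embedding. This holds because group enlargement is associative:
\[
{\mc G}_3 \times_{{\mc G}_2} ({\mc G}_2 \times_{{\mc G}_1} {\mc C}_1) \cong {\mc G}_3 \times_{{\mc G}_1} {\mc C}_1 .
\]
The two isomorphisms onto ${\mc C}_2$ and ${\mc C}_3$ then compose.

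Next we define the candidate rigidification. Take the $G_1$-representation
\[
Q_{31} := Q_{21} \oplus \zeta_{21}^* Q_{32},
\]
where $Q_{32}$ is restricted along $\zeta_{21}^G$. Define a germ near $B_1 \times \{0\}$ by
\[
\theta_{31}(b_1, \eta_{21}, \eta_{32}) := \theta_{32}\big(\theta_{21}(b_1, \eta_{21}), \eta_{32}\big).
\]

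We check its required properties in turn.
\begin{itemize}
\item \emph{Equivariance.} $\theta_{31}$ is $\zeta_{31}^G$-equivariant, since $\theta_{21}$ is $\zeta_{21}^G$-equivariant and $\theta_{32}$ is $\zeta_{32}^G$-equivariant.
\item \emph{Restriction to the zero section.} On $B_1 \times \{0\}$ it restricts to $\zeta_{32}^B \circ \zeta_{21}^B = \zeta_{31}^B$.
\item \emph{Smoothness.} It is smooth as a composition of smooth germs.
\end{itemize}

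The main step is to show that
\[
\Psi_{31}: G_3 \times_{G_1} (B_1 \times Q_{31}^\epsilon) \to B_3
\]
is a diffeomorphism onto an open neighborhood of the image of $\zeta_{31}^B$. We factor $\Psi_{31}$ through $G_3 \times_{G_2}\big(G_2 \times_{G_1}(B_1\times Q_{21}^\epsilon) \times Q_{32}^\epsilon\big)$. On this space we apply $\Psi_{21}$ in the first factor, which is a $G_2$-equivariant diffeomorphism onto an open neighborhood $U_2 \subset B_2$ of ${\rm Im}\,\zeta_{21}^B$. We then apply $\Psi_{32}$ restricted to $G_3\times_{G_2}(U_2 \times Q_{32}^\epsilon)$, which is a diffeomorphism onto an open subset of $B_3$ containing ${\rm Im}\,\zeta_{31}^B$.

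It remains to identify the source with $G_3 \times_{G_1}(B_1 \times Q_{31}^\epsilon)$. The key point is the isomorphism of $G_2$-spaces
\[
G_2 \times_{G_1}(B_1 \times Q_{21}\times Q_{32}) \cong \big(G_2\times_{G_1}(B_1\times Q_{21})\big)\times Q_{32},
\]
given by $[g_2,(b,\eta,\xi)] \mapsto ([g_2,(b,\eta)], g_2\xi)$. This works because $Q_{32}$ is a genuine $G_2$-representation, so the pulled-back bundle is trivial, as in the Kuranishi case \cite[Lemma 3.13]{Bai_Xu_foundation}.

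I expect the main obstacle to be bookkeeping of the radii. The disks $Q^\epsilon$ are germs, so I shrink $\epsilon$ and use compact-open neighbourhoods as needed, using that $G_i$ is compact. Under these identifications, the composite map is exactly $\Psi_{31}$.

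Finally, I would check two consistency conditions:
\begin{itemize}
\item the composition is well defined on equivalence classes of rigidifications;
\item the composition is associative up to the canonical isomorphism $(Q\oplus Q')\oplus Q'' \cong Q\oplus(Q'\oplus Q'')$.
\end{itemize}
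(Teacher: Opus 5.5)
Your proposal is correct. It follows the standard argument behind the cited \cite[Lemma 14.10]{Bai_Xu_foundation}, which this paper only quotes: set $Q_{31}=Q_{21}\oplus Q_{32}|_{G_1}$ and $\theta_{31}=\theta_{32}\circ(\theta_{21}\times{\rm id})$, then get the slice property from the identification $G_3\times_{G_2}\big((G_2\times_{G_1}(B_1\times Q_{21}^\epsilon))\times Q_{32}^\epsilon\big)\cong G_3\times_{G_1}(B_1\times Q_{21}^\epsilon\times Q_{32}^\epsilon)$, exactly as in the Kuranishi-chart analogue \cite[Lemma 3.13]{Bai_Xu_foundation}. The only points needing care are the shrinking you already flag (to a ball inside $Q_{21}^\epsilon\times Q_{32}^\epsilon$), and the observation that when the embeddings land in boundary strata, $\Psi_{32}$ is a stratified diffeomorphism and therefore restricts correctly to the relevant stratum.
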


\begin{defn}
$\uds{\bf Cylinder}_{\rm rig}$ has morphisms being strict rigidified embeddings.
\end{defn}

Notice that we can naturally define the collared version of $\uds{\bf Cylinder}_{\rm rig}$, denoted by 
\beqn
\outer \uds{\bf Cylinder}_{\rm rig}.
\eeqn
The conjugation defines an involution on $\outer \uds{\bf Cylinder}_{\rm rig}$.

\subsubsection{Stable complex structure}

We recall the definition of $\uds{\bf Cylinder}^{\mb C}$ of equivariant families of prestable cylinders whose bases have equivariant stable complex structures given in \cite{Bai_Xu_foundation}. 

\begin{defn}(\cite[Definition 14.12]{Bai_Xu_foundation}) 
A {\bf stable complex structure} on an equivariant family of prestable cylinders ${\mc C} = ({\mc G}, B, C)$ is a $G$-equivariant stable complex structure of the tangent bundle $TB$ (see Definition \ref{defn_stable_complex_structure}). Recall that it consists of a $G$-equivariant complex vector bundle $I \to B$ and a $G$-equivariant stable isomorphism
    \beqn
    \tau: TB \overset{s}{\to} I.
    \eeqn
\end{defn}

To define morphisms of stably complex objects, note the following fact. Let $\zeta_{21}: ({\mc G}_1, B_1, C_1) \to ({\mc G}_2, B_2, C_2)$ be a strict embedding of equivariant families of curves. Notice that there are ${\mc G}_2$-equivariant vector bundles
    \begin{align*}
    &\ {\mc G}_2\times_{{\mc G}_1} TB_1,\ &\ \pi^* T({\mc G}_2/{\mc G}_1)
    \end{align*}
    where $\pi: {\mc G}_2\times_{{\mc G}_1} B_1 \to {\mc G}_2/{\mc G}_1$ temporarily denotes the natural projection. 
    Via $\zeta_{21}$, one can see that there is a  ${\mc G}_2$-equivariant exact sequence
    \beqn
    \xymatrix{  0 \ar[r] & {\mc G}_2 \times_{{\mc G}_1} TB_1 \ar[r] &  TB_2 \ar[r] &  \pi^* T({\mc G}_2/{\mc G}_1) \ar[r] & 0     }.
    \eeqn
    It naturally splits along the embedding image of $\zeta_{21}^B$ via the ${\mc G}_2$-action. Hence one can make the following definition.

\begin{defn}
A strict embedding of equivariant families of prestable cylinders with stably complex structures from $({\mc G}_1, B_1, C_1, I_1, \tau_1)$ to $({\mc G}_2, B_2, C_2, I_2, \tau_2)$ is a strict embedding $\zeta_{21}$ from $({\mc G}_1, B_1, C_1)$ to $({\mc G}_2, B_2, C_2)$ such that the natural bundle isomorphism 
\beqn
TB_2|_{\zeta_{21}^B(B_1)} \cong \zeta_{21}^B (TB_1) \oplus \pi^* T({\mc G}_2/{\mc G}_1)
\eeqn
respects the stable complex structures. The category $\uds{\bf Cylinder}^{\mb C}$ consists of objects being equivariant families of curves with stable complex structures whose morphisms are unitary conjugacy classes of strict embeddings. 
\end{defn}

One can also define rigidifications for strict embeddings in $\uds{\bf Cylinder}^{\mb C}$. The details are provided in \cite[Section 14.2]{Bai_Xu_foundation} and hence omitted here. This provides a regular stratification category $\uds{\bf Cylinder}_{\rm rig}^{\mb C}$. 

Notice that the category $\outer\uds{\bf Cylinder}^{\mb C}$ admits a natural involution. For any object ${\mc C} = ({\mc G}, B, C)$, we simply take ${\mc C}^\dagger$ to be $({\mc G}^\dagger, B^\dagger, C^\dagger)$ where the underlying real objects are the same, while ${\mc G}^\dagger$ has the opposite complex structure, $B^\dagger$ has the opposite stable complex structure, and $C^\dagger$ has the opposite fibrewise complex structure (with opposite parametrization of the cylindrical components).

\subsection{Construction of the domain flow categories}

We will construct a flow category denoted by $\mb{Dom}$ enriched in $\outer\uds{\bf Cylinder}_{\rm rig}^{\mb C}$ with objects being the same as the Tate-Floer flow category ${\mb F}$. 

Recall that using the first and fourth authors' approach to the integral Arnold conjecture, specifically \cite[Proof of Theorem A]{Bai_Xu_Arnold} and the discussions after \cite[Hypothesis 4.1]{Bai_Xu_Arnold}, we can reduce the proof to the case when the symplectic form $\omega$ represents an integral cohomology class and the Hamiltonian function $H_t$ satisfies that all critical values of its action functional are integers. Of course, one can also appeal to \cite[Proposition 15.4]{Bai_Xu_foundation} or \cite[Lemma 16]{Rezchikov_Arnold} to integralize the action so that the proof works in general. 

Henceforth, without loss of generality, we assume that for each capped 1-periodic orbit $\gamma$ of $H$, the symplectic action ${\mc A}_H(\gamma)$ is an integer. Then for each pair of objects $p = (\gamma_p, \mu_p)$, $q = (\gamma_q, \mu_q)$, denote
\beqn
d_{pq} =  {\mc A}_H(\gamma_p) - {\mc A}_H(\gamma_q) \in {\mb Z}.
\eeqn

Also, note that the constructions just need to be carried out for regularizing the ``curve" part of the moduli spaces in the Tate--Floer flow category, since  the ``Morse" part of the moduli spaces are already regular. Therefore, in the following, we will be focused on explaining how to construct the global Kuranishi charts for the stable Floer trajectories, even though it should be thought of as performing the construction in the coupled setting.

\subsubsection{Some Lie groups and homogeneous spaces}

We will consider variants of $U(d)$ and $GL(d)$. Inside $PGL(d+1)$, there is the subgroup
\beqn
{\mc G}_d: = \left\{ \left[ \begin{array}{cc} * & 0 \\ *  & 1 \end{array}\right] \in PGL(d+1) \right\}
\eeqn
The unitary group $G_d:= U(d)$ embeds into ${\mc G}_d$ by 
\beqn
U(d) \ni g \mapsto \left[ \begin{array}{cc} g & 0 \\ 0 & 1 \end{array} \right] \in {\mc G}_d.
\eeqn
One needs to consider certain specific group embeddings for different $d$'s. The embedding
\beqn
G_{d_1} \times G_{d_2} = U(d_1) \times U(d_2) \to G_{d_1 + d_2} = U(d_1 + d_2)
\eeqn
is obvious by putting the nontrivial blocks together. For the corresponding complex Lie group ${\mc G}_d$, there are corresponding embeddings. For example, for $d_1 = d_2 = 2$, the embedding ${\mc G}_2 \times {\mc G}_2 \to {\mc G}_4$ reads 
\beqn
\left( \left[ \begin{array}{ccc} a_{11} & a_{12} & 0 \\ a_{21} & a_{22} & 0 \\ a_{31} & a_{32} & 1 \end{array}\right], \left[ \begin{array}{ccc} b_{11} & b_{12} & 0 \\ b_{21} & b_{22} & 0 \\  b_{31} & b_{32} & 1 \end{array}\right] \right) \mapsto \left[ \begin{array}{ccccc} a_{11} & a_{12} & 0 & 0 & 0 \\ a_{21} & a_{22} & 0 & 0 & 0 \\
0 & 0 & b_{11} & b_{12} & 0 \\ 0 & 0 & b_{21} & b_{22} & 0 \\ a_{31} & a_{32} & b_{31} & b_{32} & 1 \end{array}\right].
\eeqn

If we denote by ${\mc G}_d^\dagger$ the same Lie group as ${\mc G}_d$ with the opposite complex structure, then the standard complex conjugation provides a holomorphic isomorphism
\beqn
{\mc G}_d^\dagger \cong {\mc G}_d
\eeqn
which respects the map ${\mc G}_{d_1}\times {\mc G}_{d_2} \to {\mc G}_{d_1 + d_2}$. 

\subsubsection{Moduli spaces of cylinders in projective spaces}

We describe a system of moduli spaces of curves in projective spaces coupled with Tate flow lines. This is similar to the situation considered in equivariant Floer theory in \cite[Section 25]{Bai_Xu_foundation}. 

Fix a pair of objects $p, q \in {\rm Ob}{\mb F}$. Recall that $p = (\gamma_p, \mu_p)$ where $\gamma_p$ is a capped 1-periodic orbit  of $H$ and $\mu_p$ is a critical point of $f_{\rm Tate}$; similar for $q$. We describe an object ${\mc C}_{pq} = ({\mc G}_{pq}, B_{pq}, C_{pq})$ in $\uds{\bf Cylinder}$. Denote
\beqn
{\mc G}_{pq}:= {\mc G}_{d_{pq}}.
\eeqn
Consider the moduli space
\beqn
{\mc M}_{pq}(\mb{CP}^{d_{pq}})
\eeqn
of pairs $(y, u)$ where $y: {\mb R} \to \wh S^\infty$ is a negative gradient flow line from $\mu_p$ to $\mu_q$ and $u: {\mb R}\times S^1 \to \mb{CP}^{d_{pq}}$ is a holomorphic map of degree $d_{pq}$, modulo simultaneous translations. This space has a natural compactification by allowing (synchronized) breakings and sphere bubbling. Denote the compactification by 
\beqn
\ov{\mc M}_{pq}(\mb{CP}^{d_{pq}}).
\eeqn
As ${\mc G}_{pq}$ acts on $\mb{CP}^{d_{pq}}$, this compactified moduli space has an induced action. Let 
\beqn
\ov{\mc M}_{pq}^+ ({\mb P}^{d_{pq}}) \subset 
\ov{\mc M}_{pq}(\mb{CP}^{d_{pq}})
\eeqn
be the subset satisfying the constraint:
\beq\label{positive_constraint}
u (z_+) = [0, \ldots, 0, 1]
\eeq
which is invariant under ${\mc G}_{d_{pq}}$, where $z_+ = \{\infty\} \in \mathbb{P}^1 \supseteq {\mb R}\times S^1$. Let 
\beqn
B_{pq} \subset \ov{\mc M}_{pq}^+ (\mb{CP}^{d_{pq}})
\eeqn
be the open subset corresponding to pairs $(y, u)$ where the image of $u$ is not contained in any hyperplane of $\mb{CP}^{d_{pq}}$. This is a ${\mc G}_{pq}$-invariant subset. It is easy to calculate the dimension of $B_{pq}$, which is 
\beqn
{\rm dim}_{\mb R} B_{pq} = 2d_{pq} (d_{pq} +1) + {\rm index}(\mu_p) - {\rm index}(\mu_q) - 1.
\eeqn

Further, let $C_{pq} \to B_{pq}$ be the universal curve; notice that fibers also carry a canonical structure of prestable cylinders. The ${\mc G}_{pq}$-action relates different fibres on the same orbit in $B_{pq}$ via isomorphism of prestable cylinders. Moreover, the moduli space $B_{pq}$ is stratified by the poset $A_{pq}^{\mb F}$. Therefore, one obtains an object
\beqn
{\mc C}_{pq} = ({\mc G}_{pq}, B_{pq}, C_{pq}) \in {\rm Ob} \uds{\bf Cylinder}.
\eeqn

\subsubsection{Involution} 

Recall that $\uds{\bf Cylinder}$ is equipped with a canonical involution. For each pair $(p, q)$, we can define a canonical isomorphism in $\uds{\bf Cylinder}$ 
\beqn
\tau_{pq}^{\mc C}: ({\mc C}_{pq})^\dagger \to {\mc C}_{p^\dagger q^\dagger}.
\eeqn
Of course we define ${\mc G}_{pq}^\dagger \to {\mc G}_{p^\dagger q^\dagger}$ to be the complex conjugation. On the other hand, we can treat $B_{pq}$ as a subset of a Tate-Floer moduli space with target being $\mb{CP}^{d_{pq}}$ and Hamiltonian being zero. The standard complex structure is anti-symmetric with respect to the standard complex conjugation on $\mb{CP}^{d_{pq}}$. Hence using the homeomorphism \eqref{Tate-Floer_involution}, one obtains a homeomorphism
\beq\label{base_involution}
B_{pq} \cong B_{p^\dagger q^\dagger}.
\eeq
One can then check that this is equivariant over ${\mc G}_{pq}^\dagger \cong {\mc G}_{p^\dagger q^\dagger}$. Moreover, the corresponding fibres in the universal curves are inverted as well.

\subsubsection{Concatenation of cylinders}

In \cite{Bai_Xu_Arnold} \cite{Bai_Xu_foundation} details have been provided for constructing ``concatenations,'' namely, the structural maps required for a flow category enriched in $\uds{\bf Cylinder}$. We will not repeat the details but only state the result.

\begin{prop}
There exist codimension 1 morphisms
\beqn
\zeta_{prq}: \outer {\mc C}_{pr}\times \outer {\mc C}_{rq} \to \outer {\mc C}_{pq}
\eeqn
of $\outer \uds{\bf Cylinder}_{\rm rig}$ such that together with the objects ${\mc C}_{pq}$, they form a $\Pi \times {\mb Z}$-equivariant involutive Novikov flow category $\mb{Dom}$ enriched in $\outer \uds{\bf Cylinder}_{\rm rig}$. 
\end{prop}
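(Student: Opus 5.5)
The plan is to import the non-involutive construction of the concatenation maps from \cite{Bai_Xu_Arnold, Bai_Xu_foundation} essentially verbatim, and then to check the two additional layers of symmetry. Concretely, for $p \leq r \leq q$ we have $d_{pq} = d_{pr} + d_{rq}$ because the action is integral. The group morphism ${\mc G}_{pr} \times {\mc G}_{rq} \to {\mc G}_{pq}$ is the block embedding described above. On bases, a pair of cylinders $u_1 \subset \mb{CP}^{d_{pr}}$ and $u_2 \subset \mb{CP}^{d_{rq}}$, both normalized by \eqref{positive_constraint} and each meeting no hyperplane, is sent to the broken configuration in $\mb{CP}^{d_{pq}}$. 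This configuration is obtained by placing $u_1$ and $u_2$ in the complementary coordinate subspaces spanned by the first $d_{pr}$, respectively the next $d_{rq}$, coordinates together with the common last coordinate. It is then glued at the breaking orbit. Here the constraint $u(z_+) = [0,\ldots,0,1]$ guarantees that the endpoint of $u_1$ and the starting point of $u_2$ are matched after a unique element of ${\mc G}_{pq}$ is applied. The Tate flow parts are simply concatenated.

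The first step is to verify that the resulting map lands in the stratum $A_{prq}^{\mb F}$ and is a strict embedding in $\uds{\bf Cylinder}$. This means checking that ${\mc G}_{pq} \times_{{\mc G}_{pr}\times{\mc G}_{rq}} (B_{pr}\times B_{rq})$ maps isomorphically onto that stratum. We do this by the same transversality of the ${\mc G}_{pq}$-action to the stratum as in \cite[Section 14]{Bai_Xu_foundation}. The second step is rigidification, after outercollaring. We take $Q_{prq}$ to be the orthogonal complement of $\mf u(d_{pr}) \oplus \mf u(d_{rq})$ in $\mf u(d_{pq})$, together with the collar parameter. We then define $\theta$ by exponentiating in ${\mc G}_{pq}$ transverse to the image, using the corresponding complementary directions of ${\mc G}_{pq}/({\mc G}_{pr}\times{\mc G}_{rq})$. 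Associativity for $p\le r\le s\le q$ reduces to the associativity of the block embeddings. The $\Pi\times{\mb Z}$-equivariance is automatic, since ${\mc C}_{pq}$ depends only on $d_{pq}$ and on index differences, both of which are translation invariant.

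The new step is involutivity. Under $\tau^{\mc C}$, ${\mc G}_d^\dagger \cong {\mc G}_d$ is entrywise complex conjugation, which commutes with the block embeddings as noted above. On bases, \eqref{base_involution} sends $u$ to $\bar u(s,-t)$ and $y$ to $-y$. Because the base point $[0,\ldots,0,1]$ and the coordinate subspaces are real, conjugation commutes with placing the curves in complementary subspaces, and it commutes with gluing (flipping $t$ on each cylindrical component). This yields the commutative square required in Definition \ref{defn_flow_category}(4). We must also choose the rigidification data compatibly: $Q_{prq}$ is stable under the conjugation of $\mf u$, and $\theta^\dagger$ should agree with $\theta$ composed with the conjugations.

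I expect the main obstacle to be making the outercollared, rigidified choices simultaneously associative and $\dagger$-compatible, since the rigidifications are only germs chosen up to equivalence. My first approach is to make every choice canonical from real data: the standard unitary metric, the real structures on ${\mc G}_d$, and collar coordinates in $s$, which are untouched by $t\mapsto -t$. This should make conjugation-compatibility hold on the nose. If canonicity fails somewhere, the fallback is to proceed by induction over free $\Pi\times{\mb Z}\times{\mb Z}/2$-orbits of triples. We would choose the data on one representative of each orbit and transport it by $\tau^{\mc C}$, exactly as in the proof of Theorem \ref{thm_FOP_lift}. This is possible because the involution on objects is free.
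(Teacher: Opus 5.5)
Your strategy matches the paper's. The paper gives no independent argument here: it imports the non-involutive concatenation maps from \cite{Bai_Xu_foundation} and relies on the involution being imposable symmetrically, which is what your canonicity check and your orbit-wise fallback do. The problem is your explicit model of $\zeta^B_{prq}$, on which your primary $\dagger$-check rests, because it does not produce a nodal map.

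\textbf{Why the model fails.} The two coordinate planes meet only at $P_0=[0:\cdots:0:1]$. Both factors are normalized by \eqref{positive_constraint} at their \emph{positive} ends, so $u_1(z_+)=P_0$. The node, however, requires $u_1(z_+)=u_2(z_-)$, and $u_2(z_-)\neq u_2(z_+)=P_0$ for a nondegenerate curve of positive degree. No element of $\mathcal{G}_{pq}$ can repair this, because $\mathcal{G}_{pq}$ is precisely the stabilizer of $P_0$. Applied to the whole configuration it changes nothing. Applied to the $pr$-piece it keeps that piece's endpoint at $P_0$. Applied to the $rq$-piece it cannot send both $u_2(z_\pm)$ to $P_0$. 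So the plane carrying the $pr$-piece must pass through $u_2(z_-)$ instead. Moreover, with the $rq$-piece kept in its coordinate plane, a plane depending only on $u_2$ would have to be invariant under the block image of $\mathcal{G}_{pr}\times\{1\}$. No such plane exists, because of the last row $b$. Its position must therefore depend on $u_1$ as well. Consequently, the observation that ``conjugation commutes with placing curves in real coordinate subspaces'' checks compatibility for a map that is not the concatenation.

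\textbf{How to close the gap.} Drop the explicit model and use one of two routes.
\begin{enumerate}
\item Check $\dagger$-compatibility on the construction you actually import. For example, if it is given by formulas with real coefficients in the homogeneous coordinates and in the values at $z_\pm$, then complex conjugation together with $(s,t)\mapsto(s,-t)$ commutes with it.
\item Use your fallback, which does work. The freeness you need holds: $\dagger$ flips the sign of $\mu^{(k)}_\pm$ while the $\mathbb{Z}$-action preserves it, so $\mathbb{Z}/2$ acts freely on $\Pi\times\mathbb{Z}$-orbits of objects.
\end{enumerate}
If you use the fallback, organize it as the paper does elsewhere: induct over orbits of \emph{pairs} $(p,q)$, ordered by energy and Tate index. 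For one representative, choose all $\zeta_{prq}$ with $p<r<q$, together with their rigidifications, compatibly with the lower data already built. Then define $\zeta_{p^\dagger r^\dagger q^\dagger}$ by transport along $\tau^{\mathcal C}$. This works because the block embeddings commute with conjugation and $\tau^{\mathcal C}$ preserves strata. Associativity for $(p^\dagger,r^\dagger,s^\dagger,q^\dagger)$ then follows from associativity for $(p,r,s,q)$ and the inductive hypothesis. Choosing representatives triple by triple would instead risk mixing transported and untransported maps in a single associativity square.
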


\subsubsection{Stable complex structure}

As the first step towards constructing the stable complex structure for the AMS construction, we have the following ``natural'' stable complex structure on the monotone flow category of curves. 

\begin{prop}\label{prop_domain_stable_complex}
The flow category $\mb{Dom}$ (enriched in $\outer \uds{\bf Cylinder}_{\rm rig}$) admits a natural involutive lift to $\outer \uds{\bf Cylinder}_{\rm rig}^{\mb{C}}$.
\end{prop}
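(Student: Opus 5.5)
We would follow the construction of the natural stable complex structure on the domain moduli spaces from \cite[Section 14--15]{Bai_Xu_foundation}, and then check by hand that every ingredient is intertwined with the conjugation maps $\tau_{pq}^{\mc C}$. The first step is to write down the stable complex structure on each $B_{pq}$. The part of $B_{pq}$ coming from curves in $\mb{CP}^{d_{pq}}$ is cut out transversely: the linearized $\bar\partial$-operator is surjective because $T\mb{CP}^d$ is positive on rational curves. So its tangent bundle fits into an exact sequence
\[
0 \to T(\text{Tate flow part}) \oplus \text{(domain/gluing directions)} \to TB_{pq} \to \ker D_u \to 0 .
\]
The kernel $\ker D_u$ is stably isomorphic to the index bundle of a complex-linear Cauchy--Riemann operator on $u^*T\mb{CP}^{d}$, after deforming $D_u$ to its complex-linear part. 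The gluing parameters at nodes are complex lines, given by tensor products of tangent lines. The remaining real directions are trivial summands: the Tate flow line parameters, the breaking/collar coordinates, and the $\R$-translation quotient. This defines a $G_{pq}$-equivariant stable isomorphism $\tau_{pq}: TB_{pq} \overset{s}{\to} I_{pq}$, where $I_{pq}$ is the complex index bundle plus the gluing lines. We would also check compatibility with the constraint \eqref{positive_constraint}, which cuts out a complex codimension $d_{pq}$ condition.

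Second, we would verify that this is a lift of $\mb{Dom}$. The concatenation morphisms $\zeta_{prq}$ must respect the splitting
\[
TB_{pq}|_{\rm image} \cong \zeta^B(TB_{pr}\times TB_{rq}) \oplus \pi^*T({\mc G}_{pq}/{\mc G}_{pr}\times{\mc G}_{rq}),
\]
together with the extra collar direction. Here the homogeneous quotient carries its complex structure from the holomorphic embedding ${\mc G}_{d_1}\times{\mc G}_{d_2}\to{\mc G}_{d_1+d_2}$. This check is essentially \cite[Proposition 14.x]{Bai_Xu_foundation}. We would also need the rigidifications $\theta_{prq}$ to be chosen with complex representations $Q$, that is, the complex gluing parameters.

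Third, and this is the new part, comes the involution. Under $\tau_{pq}^{\mc C}$, which is induced by \eqref{base_involution}, the map $u\mapsto c\circ u\circ \tau_{\rm cyl}$ intertwines $D_u$ with $D_{u^\dagger}$ conjugate-linearly. Here $c$ is complex conjugation on $\mb{CP}^d$ and $\tau_{\rm cyl}(s,t)=(s,-t)$. It therefore identifies $\ker D_u$ with $\ker D_{u^\dagger}$ as complex vector spaces with opposite structures. Similarly, the node gluing parameters transform via complex conjugation, and the group ${\mc G}_{pq}^\dagger\cong{\mc G}_{p^\dagger q^\dagger}$ is identified by conjugation. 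The real trivial summands are untouched, since the Tate flow part is sent to $h\mapsto -h$, which is a diffeomorphism. Hence $d\tau^{\mc C}_{pq}$ carries the stable complex structure $(I_{pq},\tau_{pq})$ to the reverse of $(I_{p^\dagger q^\dagger},\tau_{p^\dagger q^\dagger})$. In other words, it is a morphism $({\mc C}_{pq})^\dagger\to{\mc C}_{p^\dagger q^\dagger}$ in $\uds{\bf Cylinder}^{\mb C}$, and it is an involution because $c$ and $\tau_{\rm cyl}$ square to the identity.

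The main obstacle is the deformation from $D_u$ to its complex-linear part, together with the choice of homotopy classes of stable isomorphisms. These choices must be made naturally, so that they commute with both concatenation and conjugation. We would resolve this by using the canonical homotopy $D_u^{1,0}+tD_u^{0,1}$, $t\in[0,1]$. Its conjugate is the same construction for $u^\dagger$, because the anti-linear part remains anti-linear after conjugation. The gluing identifications are likewise defined by canonical cutoff constructions that are symmetric under $t\mapsto -t$ once the cutoff functions are chosen $t$-independent. With these choices, all required diagrams commute on the nose up to canonical homotopy, which gives the involutive lift.
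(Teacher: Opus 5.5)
Your handling of the involution is the same as the paper's: conjugation on $\mb{CP}^{d_{pq}}$ together with $(s,t)\mapsto(s,-t)$ acts conjugate-linearly on all the complex data, so it reverses the stable complex structure.

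The difference is in how that structure on $TB_{pq}$ is produced. The paper uses no linearized operators. Forgetting the Tate flow line and the cylindrical coordinates maps $B_{pq}$ into the locus of $\ov{\mc M}{}_{0,2}(\mb{CP}^{d},d)$ where the image spans $\mb{CP}^{d}$. That locus is automorphism-free, hence a genuine complex manifold, and \eqref{positive_constraint} is a fibre of the holomorphic evaluation map. The fibre directions (Tate parameters, angular directions, collars) are trivial real summands, and one simply pulls back the holomorphic tangent bundle. Several properties then come for free: continuity across nodal and broken strata, compatibility with the holomorphic ${\mc G}_{pq}$-action and with the concatenations, and reversal under the anti-holomorphic involution of the stable map space. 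Nothing analytic has to be chosen.

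Your deformation-theoretic description gives the same bundle. For an integrable target, $\ker D_u$ modulo infinitesimal automorphisms, plus the node-smoothing lines, is the tangent space of that moduli space. It is also the template that survives for non-integrable targets. However, turning it into a global continuous bundle means redoing the linear gluing argument that shows the stable map space is a complex manifold. That is the real content hidden behind your ``canonical cutoff constructions''.

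Conversely, the step you call the main obstacle is empty. The maps in $B_{pq}$ are genuinely holomorphic into $\mb{CP}^{d_{pq}}$ with its integrable structure and no Hamiltonian term. So $D_u$ is already the complex-linear $\bar\partial$-operator of $u^*T\mb{CP}^{d_{pq}}$, and your homotopy $D_u^{1,0}+tD_u^{0,1}$ is constant. Interpolations of that kind are needed only later, for the Floer thickenings in $X$ (Section \ref{sec:nc-smoothing}).

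A smaller slip: the representations $Q_{21}$ in rigidifications of $\uds{\bf Cylinder}$ (Definition \ref{defn_curve_rigidification}) are orthogonal representations recording group-enlargement directions, not gluing parameters. The normal direction of a codimension-one morphism is a real collar coordinate.
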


\begin{proof}
For the same reason as in \cite{Bai_Xu_foundation}, the stable complex structure is pulled back from the complex structure of the smooth part of the stable map moduli $\ov{\mc M}{}_{0,2}(\mb{CP}^d, d)$. One can easily see that such complex structures are invariant under involutions as the complex conjugation on $\mb{CP}^d$ and sphere involution also induce an involution on $\ov{\mc M}{}_{0,2}(\mb{CP}^d, d)$ which reverses the complex structure. 
\end{proof}

\subsection{The AMS thickening}

\subsubsection{Framed maps}

We briefly recall the definitions about framed maps and group reductions defined in \cite[Section 15.2]{Bai_Xu_foundation}. We do not intend to consider the general case but will work directly over the flow category $\mb{Dom}$ whose objects are the same as those of the Tate-Floer flow category. For each pair of objects $p, q \in {\rm Ob}{\mb F}$, one can consider the topological space
\beqn
{\rm Map}_{pq}:= \Big\{ (\phi, u)\ |\ \phi \in B_{pq}, u: C_\phi \to X \ {\rm satisfies}\ (\ast) \Big\};
\eeqn
here $(\ast)$ means the map $u$ is componentwise smooth, converges exponentially fast (with all derivatives) to the correct 1-periodic orbits at cylindrical nodes and satisfies the correct topological condition. Then for each $(\phi, u)\in {\rm Map}_{pq}$, the integral symplectic form $\omega$ is pulled back to a 2-form $\Omega_{\phi, u} \in \Omega^2(C_\phi)$ whose componentwise integrals are integers and which decays exponentially near cylindrical ends. Hence there is a unique Hermitian holomorphic line bundle $L_{\phi, u} \to C_\phi$ (up to isomorphism) whose curvature form is $-2\pi {\bf i} \Omega_{\phi, u}$. As $u$ satisfies the ``correct'' topological condition, the line bundle is positive exactly on components of $C_\phi$ with positive degrees in $\mb{CP}^{d_{pq}}$. Hence $H^0(L_{\phi, u})$ has a fixed dimension $d_{pq} + 1$. A {\bf framing} on $(\phi, u)$ is a framing $F = (f_0, \ldots, f_{d_{pq}})$ of $H^0(L_{\phi, u})$ satisfying
\beqn
f_0(z_+) = \cdots = f_{d_{pq}-1}(z_+) = 0.
\eeqn
Define $Y_{pq}$ to be the set of triples $(\phi, u, F)$ such that $(\phi, u) \in {\rm Map}_{pq}$ and $F$ is a framing on $(\phi, u)$. 

Notice that each holomorphic framing $F$ also induces a holomorphic map $\phi_F: C_\phi \to \mb{CP}^{d_{pq}}$. Indeed this induces a map
\beqn
Y_{pq} \to B_{pq}.
\eeqn

Notice that ${\mc G}_{pq}$ acts on $Y_{pq}$ in two different ways: the first is to reparametrize the map via the ${\mc G}_{pq}$-action on $B_{pq}$; the second is to transform the framing linearly. Denote the first by ${\mc G}_{pq}^L$ and the second by ${\mc G}_{pq}^R$. Notice that the second action is free, hence $Y_{pq} \to {\rm Map}_{pq}$ is a ${\mc G}_{pq}^L$-equivariant principal ${\mc G}_{pq}^R$-bundle.

The spaces $Y_{pq}$ admits an involution symmetry. We define a bijection 
\beqn
Y_{pq} \cong Y_{p^\dagger q^\dagger}
\eeqn
as follows. Recall by \eqref{base_involution}, one has a correspondence 
\beqn
B_{pq} \ni \phi \mapsto \phi^\dagger \in B_{p^\dagger q^\dagger}
\eeqn
as well as an anti-holomorphic identification $C_\phi \cong C_{\phi^\dagger}$. Moreover, the involution $\tau_X$ induces a map
\beqn
u^\dagger: C_{\phi^\dagger} \to X.
\eeqn
One hence has a conjugate-linear isomorphism
\beqn
\xymatrix{   L_{\phi, u} \ar[r] \ar[d]  &  L_{\phi^\dagger, u^\dagger} \ar[d]\\
             C_\phi \ar[r] &  C_{\phi^\dagger} }
\eeqn
inducing a conjugate linear isomorphism $H^0(L_{\phi, u}) \cong H^0(L_{\phi^\dagger, u^\dagger})$. Therefore, the framing $F$ over $(\phi, u)$ is also transferred to a framing $F^\dagger$. Together with the group-level involution, such a correspondence $Y_{pq} \cong Y_{p^\dagger q^\dagger}$ is also equivariant with respect to the group isomorphism ${\mc G}_{pq}^L \times {\mc G}_{pq}^R \cong {\mc G}_{p^\dagger q^\dagger}^L \times {\mc G}_{p^\dagger q^\dagger}^R$. 

A group reduction over $Y_{pq}$ is a continuous equivariant map 
\beqn
Y_{pq} \to {\mc G}_{pq}^R/ G_{pq}^R
\eeqn
which is ${\mc G}_{pq}^L$-invariant and ${\mc G}_{pq}^R$-equivariant. We would like to make the group reduction as part of the Kuranish section, hence one needs to ``linearize'' the target by identifying the quotient ${\mc G}_{pq}^R/G_{pq}^R$ with an orthogonal representation of $G_{pq}$ denoted by $Q_{pq}$. The choices we need to make include the group reductions as well as the identifications ${\mc G}_{pq}^R/G_{pq}^R \cong Q_{pq}$. Notice that there are natural maps
\begin{align*}
&\ {\mc G}_{pr}^R/ G_{pr}^R \times {\mc G}_{rq}^R/ G_{rq}^R \to {\mc G}_{pq}^R/ G_{pq}^R,\ &\ Q_{pr} \oplus Q_{rq} \to Q_{pq}
\end{align*}
and linearizations ${\mc G}_{pq}^R/G_{pq}^R \cong Q_{pq}$ compatible with these natural maps.

\begin{prop}\label{prop_group_reduction}
There exists a collection of linearized group reductions
\beqn
\lambda_{pq}: Y_{pq} \to Q_{pq}
\eeqn
satisfying the following conditions.
\begin{enumerate}
\item $\lambda_{pq}$ is collared.

\item $\lambda_{pq}$ respects rigidified embeddings (see precise meaning of this property in \cite[Definition 15.7]{Bai_Xu_foundation}).

\item $\lambda_{pq}$ respects structural maps among $Y_{pq}$. 

\item $\lambda_{pq}$ respects the involution. Namely there are isomorphisms
\beqn
Q_{pq} \cong Q_{p^\dagger q^\dagger}
\eeqn
equivariant with respect to the group isomorphism (the complex conjugation) $G_{pq} \cong G_{p^\dagger q^\dagger}$ such that the diagram commutes.
\beqn
\xymatrix{ Y_{pq} \ar[r]^{\lambda_{pq}} \ar[d]  &  Q_{pq}  \ar[d] \\
           Y_{p^\dagger q^\dagger} \ar[r]_{\lambda_{p^\dagger q^\dagger}}    &  Q_{p^\dagger q^\dagger}   }
\eeqn
\end{enumerate}
\end{prop}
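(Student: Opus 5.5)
We will construct the $\lambda_{pq}$ by induction on the energy $d_{pq}$ and on $f_{\rm Tate}(\mu_p)-f_{\rm Tate}(\mu_q)$, in the same order as the proof of Theorem \ref{thm_FOP_lift}. At each stage we treat a whole $\Pi\times{\mb Z}\times{\mb Z}/2$-orbit of pairs $(p,q)$ at once. The non-involutive version of items (1)--(3) is \cite[Section 15.2]{Bai_Xu_foundation}, so we follow that construction step by step and only add the involution.

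\textbf{Linear data.} First fix $Q_{pq}:=\mathfrak{u}(d_{pq})$-type models. Concretely, use the identification ${\mc G}_{pq}^R/G_{pq}^R\cong Q_{pq}$ from \cite{Bai_Xu_foundation}: polar decomposition $g=k\exp(\xi)$, with $\xi$ Hermitian plus the translational column part. Complex conjugation on ${\mc G}_{d}$ preserves $U(d)$ and sends Hermitian matrices to Hermitian matrices. So entrywise conjugation $Q_{pq}\to Q_{p^\dagger q^\dagger}$ is equivariant for $G_{pq}\cong G_{p^\dagger q^\dagger}$. It also commutes with the block-sum maps $Q_{pr}\oplus Q_{rq}\to Q_{pq}$, because the block embeddings ${\mc G}_{d_1}\times{\mc G}_{d_2}\to{\mc G}_{d_1+d_2}$ are defined over ${\mb R}$.

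\textbf{Inductive step.} Suppose the $\lambda$ are already built for all smaller pairs, compatibly with (1)--(4). The structural maps $Y_{pr}\times Y_{rq}\to Y_{pq}$ then determine $\lambda_{pq}$ on the boundary strata and the collar. This is consistent on overlaps by associativity, and (4) holds there by induction together with the compatibility of the involution with structural maps.

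Since $p\mapsto p^\dagger$ is free on objects, the pairs $(p,q)$ and $(p^\dagger,q^\dagger)$ are distinct. Choose one representative in each $\Pi\times{\mb Z}$-orbit within the ${\mb Z}/2$-orbit, and extend $\lambda_{pq}$ from the collar to all of $Y_{pq}$ exactly as in \cite{Bai_Xu_foundation}. This uses a ${\mc G}^L$-invariant, ${\mc G}^R$-equivariant center-of-mass or partition-of-unity extension on ${\rm Map}_{pq}$. It works because ${\mc G}^R/G^R$ is contractible and nonpositively curved, and it respects rigidified embeddings. We then define
\[
\lambda_{p^\dagger q^\dagger}:=\overline{\,\cdot\,}\circ\lambda_{pq}\circ(\dagger)^{-1},
\]
and transport to the rest of the orbit by the $\Pi\times{\mb Z}$-action, which commutes with $\dagger$.

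\textbf{Checks.} We must check that the transported map is ${\mc G}^L$-invariant and ${\mc G}^R$-equivariant. This holds because the bijection $Y_{pq}\cong Y_{p^\dagger q^\dagger}$ intertwines the actions via conjugation, and the identification ${\mc G}^R/G^R\cong Q$ intertwines conjugation too. Finally, the transported map agrees on the collar with the inductively prescribed boundary values, by the induction hypothesis (4).

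\textbf{Main obstacle.} The delicate point is that $(p,q)$ and $(p^\dagger,q^\dagger)$ might share boundary strata, through a common intermediate pair. Their boundary data must then already be $\dagger$-compatible, and it is, by the inductive hypothesis. One also has to make sure the rigidification compatibility (2) is preserved under conjugation. This requires that the rigidifying representations $Q_{21}$ of Definition \ref{defn_curve_rigidification} be conjugated compatibly, which we arrange by choosing them involutively along the same induction.
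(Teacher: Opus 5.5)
Your proposal is correct and follows the same approach as the paper. The paper runs the inductive construction of \cite[Proposition 15.9]{Bai_Xu_foundation}, handling each ${\mb Z}/2$-orbit of pairs $(p,q)\neq(p^\dagger,q^\dagger)$ at once, which is your scheme of building $\lambda_{pq}$ on one half of the orbit and defining $\lambda_{p^\dagger q^\dagger}$ by conjugate transport; your extra checks (the conjugation-compatible linearization ${\mc G}^R_{pq}/G^R_{pq}\cong Q_{pq}$, and the inductive boundary values already being $\dagger$-compatible) are details the paper leaves implicit.
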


\begin{proof}
\cite[Proposition 15.9]{Bai_Xu_foundation} treated the non-involutive case, based on an induction. The involutive case is an easy extension: at each inductive step, we treat a ${\mb Z}/2$-orbit of pairs of objects $(p, q) \neq (p^\dagger, q^\dagger)$ at the same time. 
\end{proof}

\subsubsection{Involution-symmetric thickening data}

Fix an equivariant family ${\mc C} = ({\mc G}, B, C)$ and the almost complex manifold $(X, J)$. Recall $\mathring C \subset C$ is the complement of nodes and markings. Then one has a complex vector bundle
\beqn
\Lambda^{0,1}_{\mathring C/ B} \otimes TX \to \mathring C \times X.
\eeqn 
Notice that we can regard this complex vector bundle as a real subbundle 
\beqn
\Lambda^{0,1}_{\mathring C/ B} \otimes TX \subset \Lambda^1_{\mathring C /B} \otimes TX = {\rm Hom}_{\mb R}( T\mathring C/B, TX)
\eeqn
over $\mathring C \times X$ where the latter does not depend on either the domain complex structure or the target almost complex structure. Then one can see that $d\tau_X$ together with the domain conjugation induces an isomorphism
\beqn
\xymatrix{ \Lambda_{\mathring C/B}^{0,1}\otimes TX \ar[d] \ar[r]^{d\tau_X}  &       \Lambda_{\mathring C^\dagger/B^\dagger}^{0,1}\otimes TX \ar[d]\\
\mathring C \times X  \ar[r]   &   \mathring C^\dagger \times X }.
\eeqn
Notice that this bundle isomorphism induces a complex-linear isomorphism between spaces of sections with compact supports
\beqn
\Gamma_c( \mathring C \times X, \Lambda_{\mathring C/B}^{0,1} \otimes TX) \cong \Gamma_c( \mathring C^\dagger \times X, \Lambda_{\mathring C^\dagger/B^\dagger}^{0,1} \otimes TX).
\eeqn

\begin{defn}\label{defn_thickening_datum}
A {\bf thickening datum} on an equivariant family of curves ${\mc C} = ({\mc G}, B, C)$ consists of a pair $(W, \nu)$ where $W$ is a finite-dimensional complex unitary representation\footnote{We restrict to complex representations in order to simplify the discussion of normal complex structures.} of the maximal compact subgroup $G \subset {\mc G}$ and 
\beqn
\nu: W \to \Gamma_c( \mathring C \times X, \Lambda^{0,1}_{\mathring C/B} \otimes TX)
\eeqn
is a $G$-equivariant complex linear map. Here $\Gamma_c$ stands for smooth sections with compact support.
\end{defn}

Notice that a thickening datum $(W, \nu)$ over ${\mc C} = ({\mc G}, B, C)$ induces a {\bf conjugate} $(W^\dagger, \nu^\dagger)$ described as follows. $W^\dagger$ has the same underlying space as $W$ with reversed complex structure, and 
\begin{equation}\label{eqn:thicken-conjugate}
\begin{split}
\nu^\dagger: W^\dagger \to &\ \Gamma_c( \mathring C^\dagger \times X, \Lambda_{\mathring C^\dagger/ B^\dagger}^{0,1}\otimes TX),\\
\nu^\dagger(e)(z, x) = &\ d\tau_X(\nu(e)(z, \tau_X(x))).
\end{split}
\end{equation}

Notice that given a thickening datum $(W, \nu)$ on ${\mc C}$, for each $\phi \in B$ and a smooth map $u: C_\phi \to X$, there is an induced complex linear map
\beqn
\nu_{\phi, u}: W \to \Omega^{0,1}_c( \mathring C_\phi, u^* TX)
\eeqn
given by restricting $\nu (e)$ to the graph of $u$ in $\mathring C_\phi \times X$.

\begin{defn}
The {\bf category of curves-with-thickening data}, denoted by $\uds{\bf Thick}$ is defined as follows. The objects are tuples
\beqn
({\mc G}, B, C, W, \nu )
\eeqn
where $({\mc G}, B, C)$ is an equivariant family of curves and $(W, \nu )$ is a thickening datum on ${\mc C}$. The morphisms are {\bf strict embeddings}, which are defined as follows. A strict embedding consists of a strict embedding 
\beqn
\zeta_{21} = (\zeta_{21}^{\mc G}, \zeta_{21}^B, \zeta_{21}^C): ({\mc G}_1, B_1, C_1) \to ({\mc G}_2, B_2, C_2)
\eeqn
(see Definition \ref{defn_curve_2}) and a  $\zeta_{21}^G$-equivariant isometric linear embedding $\zeta_{21}^W: W_1 \to W_2$ such that 1) the following diagram commutes;
\beqn
\xymatrix{    W_1 \ar[r]^-{\nu_1} \ar[d]_{\zeta_{21}^W}  &  \Gamma_c( \mathring C_1 \times X, \Lambda_{\mathring C_1/B_1}^{0,1} \otimes TX)  \ar[d]\\
  W_2 \ar[r]_-{\nu_2}  &   \Gamma_c( \mathring C_2 \times X, \Lambda_{\mathring C_2/B_2}^{0,1} \otimes TX) }
\eeqn
(here the right vertical arrow is induced from the morphism $\zeta_{21}$) and 2) the restriction of $\nu_2$ to the orthogonal complement $W_1^\bot \subset W_2$ vanishes. 
\end{defn}

The category $\uds{\bf Thick}$ is a regular stratification category with a forgetful functor $\uds{\bf Thick} \to \uds{\bf Cylinder}$. Moreover, there is an involution defined as follows. Given an object $({\mc G}, B, C, W, \nu) \in {\rm Ob}\uds{\bf Thick}$, define
\beqn
({\mc G}, B, C, W, \nu)^\dagger = ({\mc G}^\dagger, B^\dagger, C^\dagger, W^\dagger, \nu^\dagger)
\eeqn
where ${\mc G}^\dagger$, $B^\dagger$, and $C^\dagger$ are defined previously. In addition, here $W^\dagger$ has the same underlying space as $W$ with the reversed complex structure (and the same real inner product) and $\nu^\dagger$ is defined as in \eqref{eqn:thicken-conjugate}. 

One can also define the collared and rigidified versions of the category $\uds{\bf Thick}$, denoted by $\outer \uds{\bf Thick}_{\rm rig}$. We do not repeat the details which can be found in \cite[Section 15.3]{Bai_Xu_foundation}. Notice that $\outer \uds{\bf Thick}_{\rm rig}$ is still an involutive regular stratification category.

\begin{defn}
A {\bf thickening datum} on a flow category/bimodule/homotopy enriched in $\uds{\bf Cylinder}$ is a lift in $\uds{\bf Thick}$. If the flow category/bimodule/homotopy is enriched in $\outer \uds{\bf Cylinder}_{\rm rig}$, then a perturbation is called {\bf collared and rigidified} if it is a lift in $\outer\uds{\bf Thick}_{\rm rig}$.
\end{defn}

For the domain flow category $\mb{Dom}$ enriched in $\outer \uds{\bf Cylinder}$, one can also consider an involutive thickening datum.

\begin{defn}
A thickening datum on $\mb{Dom}$ is called {\bf involutive} if for each pair of objects $p, q \in {\rm Ob}{\mb F}$, one has the isomorphism of $\uds{\bf Thick}$
\beqn
({\mc G}_{pq}^\dagger, B_{pq}^\dagger, C_{pq}^\dagger, W_{pq}^\dagger, \nu_{pq}^\dagger) \cong ({\mc G}_{p^\dagger q^\dagger}, B_{p^\dagger q^\dagger}, C_{p^\dagger q^\dagger}, W_{p^\dagger q^\dagger}, \nu_{p^\dagger q^\dagger} )
\eeqn
where the underlying isomorphism ${\mc C}_{pq}^\dagger \cong {\mc C}_{p^\dagger q^\dagger}$ is the one constructed before. These isomorphisms need to respect structural maps and $\Pi \times {\mb Z}$-actions.
\end{defn}

Now we can describe the AMS lift of the Floer flow categories, bimodules, and homotopies to the category of topological Kuranishi spaces.

\subsubsection{The thickenings}

We describe the thickened moduli spaces as candidates of the Kuranishi regularizations of the moduli spaces. We spell out the detail in the flow category case. Return to the domain flow category $\outer \mb{Dom} $ enriched in $\outer\uds{\bf Cylinder}_{\rm rig}$ whose morphism spaces are equivariant families of curves ${\mc C}_{pq} $. An involutive thickening datum of $\outer \mb{Dom} $ is denoted by $\mb{W} $ which consists of thickening data
\beqn
\nu_{pq} : W_{pq} \to \Gamma_c ( \mathring C_{pq}  \times X, \Lambda_{\mathring C_{pq} / B_{pq} }^{0,1}\otimes TX).
\eeqn

\begin{defn}\label{defn_thickened_moduli}
Given any collared, rigidified, and involutive thickening datum $\mb{W}$ on the flow category $\mb{Dom}$ and group reductions, for each pair $p, q$ of ${\mb F}$, define the following items.

\begin{enumerate}

\item The {\bf thickened moduli space} is
\beqn
V_{pq}: = \left\{ (\phi, u, F, e)\ \left|\ \begin{array}{c} (\phi, u, F) \in Y_{pq},\ e \in W_{pq},\\
\ov\partial_H u + \nu_{pq} (e) = 0,\ \phi = \phi_F \in \outer B_{pq}. \end{array} \right. \right\}.
\eeqn

\item The $G_{pq}$-action on $V_{pq}$ is the product of the diagonal $G_{pq}$-action on $Y_{pq}^{\epsilon}$ space of framed maps and the action on $W_{pq}$.

\item The {\bf obstruction bundle} is the trivial bundle
\beqn
E_{pq}:= \uds Q_{pq} \oplus 
\uds W_{pq} \to V_{pq}.\footnote{The piece $Q_{pq}$ is not yet complex. However, we will further stabilize by another copy of it.}
\eeqn

\item The {\bf Kuranishi section} is (given by) the equivariant map
\beqn
S_{pq} (\phi, u,  F, e) = (\lambda_{pq} (\phi, u, F), e)
\eeqn
where $\lambda_{pq}$ is the linearized group reduction provided by Proposition \ref{prop_group_reduction}.

\end{enumerate}
\end{defn}

The following statement is \cite[Theorem 15.15]{Bai_Xu_foundation}.

\begin{thm}\label{thm_footprint}
The natural ``footprint'' map
\beqn
 (S_{pq})^{-1}(0)/ G_{pq} \to \outer M_{pq}^{\mb F}
\eeqn
which sends the orbit of $(\phi, u, F, 0)$ to the equivalence class of the stable Tate-Floer trajectory is an isomorphism of orbispaces.
\end{thm}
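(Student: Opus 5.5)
The plan is to build an explicit inverse to the footprint map and then check that both maps are continuous and respect the orbispace structure, reducing everything to the non-involutive argument of \cite[Theorem 15.15]{Bai_Xu_foundation}. Only the $X$-part needs attention: the Tate flow line $y$ is carried along unchanged in both $V_{pq}$ and $M_{pq}^{\mb F}$, and the stratifications by $A_{pq}^{\mb F}$ match by construction.

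First, the description of the zero locus. A point of $S_{pq}^{-1}(0)$ has $e=0$, so $\ov\partial_H u=0$ and $u$ is a genuine stable Floer trajectory on $C_\phi$ with $\phi=\phi_F$. It also has $\lambda_{pq}(\phi,u,F)=0$, which means the framing $F$ is unitary with respect to the $L^2$-type inner product on $H^0(L_{\phi,u})$ that defines the group reduction.

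Second, surjectivity onto $\outer M_{pq}^{\mb F}$. Given a stable Tate--Floer trajectory $(\Sigma,u_i,u_\alpha,y_i)$, I take the line bundle $L_u$ with curvature $-2\pi{\bf i}\,u^*\omega$; integrality of $\omega$ and of the action gives $\dim H^0=d_{pq}+1$. Stability plus positivity on the nonconstant sphere components shows that a framing vanishing to the prescribed order at $z_+$ embeds $\Sigma$ in $\mb{CP}^{d_{pq}}$ with image in no hyperplane. This produces $\phi_F\in B_{pq}$ and a point of $Y_{pq}$. Since the group reduction is ${\mc G}^R$-equivariant onto ${\mc G}^R/G^R$, we can right-translate $F$ uniquely up to $G^R_{pq}$ so that $\lambda_{pq}=0$.

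Third, injectivity and isotropy. Two zeros with the same footprint differ by a framing change and a domain reparametrization, that is, by ${\mc G}^L\times{\mc G}^R$ acting through the diagonal. Imposing $\lambda=0$ cuts this down to $G_{pq}$. The stabilizer of $(\phi,u,F,0)$ in $G_{pq}$ is then identified with the automorphism group of the stable trajectory, which gives the orbispace isomorphism on the level of points and isotropy.

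Fourth, topology. Continuity of the inverse is where Gromov compactness and gluing enter. Near broken or bubbled configurations, framings vary continuously because $H^0(L_{\phi,u})$ has constant rank and forms a continuous bundle over ${\rm Map}_{pq}$ in the Gromov topology. In the collared region I use the outercollar coordinates so that the structural maps match.

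I expect this fourth step to be the main obstacle, namely the continuity of $H^0(L_{\phi,u})$ across nodal degenerations; I would quote the non-involutive treatment for it directly. The involution plays no role in this statement, since the footprint map is intertwined with the conjugations by Proposition \ref{prop_group_reduction}(4) and \eqref{Tate-Floer_involution}, so any extra compatibility check is immediate.
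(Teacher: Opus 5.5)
Your proposal is correct and follows essentially the same route as the paper, which proves this statement simply by citing the non-involutive footprint theorem \cite[Theorem 15.15]{Bai_Xu_foundation}. Your outline is the standard argument behind that citation: the zero locus has $e=0$ and $\phi=\phi_F$; ${\mc G}^R_{pq}$-equivariance gives existence and $G_{pq}$-uniqueness of framings with $\lambda_{pq}=0$; isotropy groups match automorphism groups; continuity is deferred to that reference; and the involution indeed plays no role. There are only cosmetic slips: on Floer cylinders the curvature form must be the corrected $\Omega_{\phi,u}$ with integral componentwise periods rather than literally $u^*\omega$, $\phi_F$ contracts degree-zero components rather than embedding $\Sigma$, and $\lambda_{pq}=0$ means hitting the identity coset of ${\mc G}^R_{pq}/G^R_{pq}$ rather than unitarity for a specific $L^2$ inner product.
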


In our setting, the above abstract quadruples $(G_{pq}, V_{pq}, E_{pq}, S_{pq})$ also admit involutions. 

\begin{lemma}
There are equivariant isomorphisms 
\beqn
\xymatrix{        E_{pq}  \ar[r] \ar[d]   &  E_{p^\dagger q^\dagger} \ar[d] \\
V_{pq}  \ar[r]  \ar@/^1.0pc/[u]^{S_{pq}} &  V_{p^\dagger q^\dagger} \ar@/_1.0pc/[u]_{S_{p^\dagger q^\dagger}} }
\eeqn
\end{lemma}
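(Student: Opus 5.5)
We will assemble the maps from the involutions already constructed on each ingredient of $(G_{pq}, V_{pq}, E_{pq}, S_{pq})$. On the group level we use the complex conjugation $G_{pq} = U(d_{pq}) \cong U(d_{p^\dagger q^\dagger}) = G_{p^\dagger q^\dagger}$; note that $d_{pq} = d_{p^\dagger q^\dagger}$ because the action is $\dagger$-invariant. On the base we define
\beqn
V_{pq} \to V_{p^\dagger q^\dagger},\quad (\phi, u, F, e) \mapsto (\phi^\dagger, u^\dagger, F^\dagger, e),
\eeqn
where $(\phi, u, F) \mapsto (\phi^\dagger, u^\dagger, F^\dagger)$ is the bijection $Y_{pq} \cong Y_{p^\dagger q^\dagger}$ constructed above. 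Here $e$ is regarded as an element of $W_{pq}^\dagger$, which the involutive thickening datum identifies with $W_{p^\dagger q^\dagger}$. On the obstruction bundle we take the product of the isomorphism $Q_{pq} \cong Q_{p^\dagger q^\dagger}$ from Proposition \ref{prop_group_reduction} with the identification $W_{pq}^\dagger \cong W_{p^\dagger q^\dagger}$. Both factors are equivariant with respect to the conjugation on groups, so the bundle map is equivariant.

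The main check is that the map on bases preserves the defining equations of $V_{pq}$. The condition $\phi = \phi_F$ is preserved because conjugating the framing $F$ by the conjugate-linear isomorphism $H^0(L_{\phi,u}) \cong H^0(L_{\phi^\dagger,u^\dagger})$ produces the map $\tau_{\mb{CP}} \circ \phi_F \circ \tau_{C}$. This map is exactly $\phi^\dagger$ under the identification \eqref{base_involution}, and the positivity constraint \eqref{positive_constraint} is preserved because $[0,\ldots,0,1]$ is real. For the Floer equation, the assumptions $d\tau_X J = -J d\tau_X$ and $H_{-t} = H_t \circ \tau_X$ give
\beqn
\ov\partial_H(u^\dagger) = d\tau_X \circ (\ov\partial_H u) \circ d\tau_C
\eeqn
under the anti-holomorphic domain identification $C_\phi \cong C_{\phi^\dagger}$; this is the lemma that conjugates of Floer cylinders are Floer cylinders, applied componentwise to each cylindrical and spherical component. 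Formula \eqref{eqn:thicken-conjugate} gives $\nu^\dagger(e)$ restricted to the graph of $u^\dagger$ as the same transform of $\nu(e)$ restricted to the graph of $u$. Hence $\ov\partial_H u + \nu_{pq}(e) = 0$ holds if and only if $\ov\partial_H u^\dagger + \nu_{p^\dagger q^\dagger}(e) = 0$.

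The section is intertwined because the group reductions satisfy the commuting square in Proposition \ref{prop_group_reduction}(4), and the $W$-component of the section is the identity. The map is involutive, squaring to the identity, because each constituent squares to the identity. It is continuous, and smooth wherever the relevant structure is defined, because it is built from composition with the fixed smooth maps $\tau_X$ and domain conjugation.

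We expect the main obstacle to be the bookkeeping that the identifications are compatible with the extra structure rather than merely bijective: collars, the stratification by $A_{pq}^{\mb F}$ versus $A_{p^\dagger q^\dagger}^{\mb F}$, and especially the equivariance under $G^L$ and $G^R$ at once. We would handle this by tracing the conjugation through the definition of $Y_{pq}$ as a principal ${\mc G}^R$-bundle. Under the framing involution, $g \cdot F$ goes to $\bar g \cdot F^\dagger$, so the equivariance is twisted by complex conjugation on the group, which is exactly what is required.
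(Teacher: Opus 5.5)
Your proposal is correct and follows the paper's proof. The map $(\phi,u,F,e)\mapsto(\phi^\dagger,u^\dagger,F^\dagger,e^\dagger)$ is built from the involution on $Y_{pq}$ and the involutive thickening datum, one checks $\phi_{F^\dagger}=\phi^\dagger$, and $d\tau_X$ carries $\ov\partial_H u+\nu_{pq}(e)$ to $\ov\partial_H u^\dagger+\nu_{p^\dagger q^\dagger}(e^\dagger)$. You also spell out the bundle map via $Q_{pq}\cong Q_{p^\dagger q^\dagger}$ and the intertwining of the sections through Proposition \ref{prop_group_reduction}(4), which the paper leaves under ``one can verify the equivariance easily.''
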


\begin{proof}
Given a point $(\phi, u, F, e) \in V_{pq}$ where $\phi \in B_{pq}$, one has a biholomorphism $C_\phi^\dagger \cong C_{\phi^\dagger}$ where $\phi^\dagger \in B_{p^\dagger q^\dagger}$. The involution on $Y_{pq}$ induces $u^\dagger$ which fits into the diagram
\beqn
\xymatrix{  C_\phi \ar[r]^{u} \ar[d] &  X \ar[d]^{\tau_X} \\
            C_{\phi^\dagger} \ar[r]_{u^\dagger}  &  X }.
\eeqn
The framing $F$ also has a natural conjugate $F^\dagger$ over $(\phi^\dagger, u^\dagger)$. Notice that if $\phi_F = \phi$, then it also holds
\beqn
\phi_{F^\dagger} = \phi^\dagger \in B_{p^\dagger q^\dagger}.
\eeqn
Lastly, $e\in W_{pq}$ corresponds canonically to $e^\dagger \in W_{p^\dagger q^\dagger}$ via the involution $W_{pq}^\dagger \cong W_{p^\dagger q^\dagger}$ contained in the data. 

We can check that $(\phi^\dagger, u^\dagger, F^\dagger, e^\dagger)\in V_{p^\dagger q^\dagger}$. Indeed, at any $z \in C_\phi$ which is sent to $z^\dagger \in C_{\phi^\dagger}$, one has 
\beqn
\Big( \ov\partial_{H} u^\dagger + \nu_{p^\dagger q^\dagger}(e^\dagger) \Big)(z^\dagger)  = d\tau_X (\ov\partial_H u (z)) + d\tau_X (\nu_{pq}(e)(z) ).
\eeqn
Hence such an involution is well-defined. One can verify the equivariance easily. 
\end{proof}

\subsubsection{Transversality}

Next we deal with transversality. For this we need to set up a certain Fredholm problem. For each pair of objects $p, q$, define
\beqn
V_{pq}':= \Big\{ (\phi, u, e) |\ \phi \in \outer B_{pq}, e \in W_{pq}, u: C_\phi \to X \text{ satisfies }\ov\partial_H u + \nu_{pq}^{\mb F} (e) = 0 \Big\}.
\eeqn

\begin{lemma}\cite[Lemma 15.17]{Bai_Xu_foundation}
There is a canonical $G_{pq}$-equivariant isomorphism $V_{pq}' \cong V_{pq}$.
\end{lemma}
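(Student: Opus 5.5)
To prove $V_{pq}' \cong V_{pq}$, I would build mutually inverse $G_{pq}$-equivariant maps in both directions. The forgetful map $V_{pq} \to V_{pq}'$, $(\phi,u,F,e)\mapsto(\phi,u,e)$, is the natural candidate in one direction. The key input is that on $V_{pq}$ the constraint $\phi = \phi_F$ pins the framing down almost completely. Given $(\phi,u,e)\in V'_{pq}$, I would form the Hermitian holomorphic line bundle $L_{\phi,u}\to C_\phi$ with curvature $-2\pi{\bf i}\,\Omega_{\phi,u}$. On each component, its degree equals the $\mb{CP}^{d_{pq}}$-degree of the component in $\phi$, because the topological condition $(\ast)$ fixes the homotopy type, and hence the areas match the degrees $d_{pq}$.

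Second, I would compare $L_{\phi,u}$ with $\phi^*\mathcal O(1)$. These are line bundles on a nodal genus-zero curve with the same multidegree, so they are isomorphic. Such an isomorphism is unique up to $\mathbb C^*$, since the automorphism group is $H^0(\mathcal O^*)$ on a connected tree of rational curves. The inclusion of $\phi$ in $B_{pq}$, i.e.\ the image is not contained in any hyperplane, together with the constraint \eqref{positive_constraint} at $z_+$, shows that the pullbacks of the coordinate sections $z_0,\dots,z_{d_{pq}}$ give a framing $F$ of $H^0(L_{\phi,u})$. This framing has $f_i(z_+)=0$ for $i<d_{pq}$ and satisfies $\phi_F=\phi$. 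The scalar ambiguity is killed by working in projective coordinates: the last row normalization of ${\mc G}_{d}\subset PGL(d+1)$ quotients exactly by $\mathbb C^*$. Hence $F$ is unique, and I obtain a well-defined section $V'_{pq}\to V_{pq}$, $(\phi,u,e)\mapsto(\phi,u,F(\phi,u),e)$.

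Third, I would check that the two maps are inverse. One composite is visibly the identity. For the other, any framing $F'$ with $\phi_{F'}=\phi$ differs from $F$ by an element of ${\mc G}^R_{pq}$ acting trivially on $\mb{CP}^{d}$, which forces it to be the identity. Equivariance follows because the $G_{pq}$-action on $V_{pq}$ is diagonal: reparametrizing $\phi$ by $g$ transports the canonical framing to $g\cdot F$. I would also check that the maps preserve the collar and stratification structure, which is immediate since $\phi$ is unchanged.

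The main obstacle is continuity of $(\phi,u)\mapsto F(\phi,u)$ across strata where the domain degenerates, with nodes and breakings. Here I would invoke the family version: the line bundles $L_{\phi,u}$ vary continuously over the universal curve, and $\phi^*\mathcal O(1)$ does too. The space of isomorphisms is a $\mathbb C^*$-torsor varying continuously, by the argument of \cite[Lemma 15.17]{Bai_Xu_foundation}. The $\mathbb C^*$-ambiguity then cancels projectively, which yields a homeomorphism, and even a relatively smooth one on the open strata.
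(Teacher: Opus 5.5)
Your bijection argument is the right one, and it is presumably what the cited \cite[Lemma 15.17]{Bai_Xu_foundation} establishes; the paper gives nothing beyond the citation. The chain of steps is correct: on the genus-zero domain $C_\phi$, componentwise degree matching gives $L_{\phi,u}\cong\phi^*\mathcal{O}(1)$. Non-degeneracy of $\phi\in B_{pq}$ together with $h^0(L_{\phi,u})=d_{pq}+1$ makes the pulled-back coordinate sections a framing with $\phi_F=\phi$ and the required vanishing at $z_+$. The $\mathbb{C}^*$-ambiguity is harmless because framings are only meaningful up to automorphisms of $L_{\phi,u}$, consistent with $\mathcal{G}^R_{pq}\subset PGL(d_{pq}+1)$ acting simply transitively on the fibres of $Y_{pq}\to\mathrm{Map}_{pq}$. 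Equivariance for the diagonal $G_{pq}$-action is immediate.

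Two steps need tightening.
\begin{itemize}
\item Equality of degrees on \emph{each} component does not follow from the total homotopy class. It has to be part of the condition $(\ast)$; without it, $V'_{pq}$ would contain points admitting no framing with $\phi_F=\phi$.
\item In the uniqueness step, $g\circ\phi=\phi$ only says that $g$ fixes the image of $\phi$ pointwise, not all of $\mathbb{CP}^{d_{pq}}$. You need non-degeneracy again: a connected curve fixed pointwise by $g$ lies in one projectivized eigenspace, which must then be everything. More directly, $F'$ induces an isomorphism $L_{\phi,u}\cong\phi_{F'}^*\mathcal{O}(1)=\phi^*\mathcal{O}(1)$ carrying $F'$ to the coordinate sections, so uniqueness of that isomorphism up to scalars gives $F'=F$.
\end{itemize}

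The genuine gap is the step you yourself flag as the main obstacle: continuity of $(\phi,u,e)\mapsto F(\phi,u)$ across nodal and broken strata. You settle it ``by the argument of \cite[Lemma 15.17]{Bai_Xu_foundation}'', which is the very statement being proved. So the argument is circular exactly where its analytic content lies.

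Concretely, take a local trivialization $F_0$ of the principal bundle $Y_{pq}\to\mathrm{Map}_{pq}$. The inverse map is then $(\phi,u,e)\mapsto(\phi,u,g(\phi,u)\cdot F_0(\phi,u),e)$, where $g(\phi,u)\in\mathcal{G}^R_{pq}$ is the unique element with $g\circ\phi_{F_0(\phi,u)}=\phi$. What has to be shown is that $g$ is continuous.

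One way to do this:
\begin{itemize}
\item The isomorphisms $L_{\phi,u}\to\phi^*\mathcal{O}(1)$ are the nonzero holomorphic sections of $L_{\phi,u}^{\vee}\otimes\phi^*\mathcal{O}(1)$. This bundle has degree zero on every component of the tree $C_\phi$, so its Cauchy--Riemann operator has one-dimensional kernel and zero cokernel on every fibre.
\item $L_{\phi,u}$ and $\phi^*\mathcal{O}(1)$ vary continuously over the universal curve; continuity of $L_{\phi,u}$ is already implicit in $Y_{pq}\to\mathrm{Map}_{pq}$ being a principal bundle.
\item Hence the same linear gluing as in Proposition~\ref{prop1711} shows these kernels form a continuous complex line bundle over $V'_{pq}$.
\item A local nonvanishing section of this line bundle transports the coordinate sections to a continuous local family of framings. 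This gives continuity of $g$, and hence of the inverse.
\end{itemize}
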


For a fixed $\phi$, let $V_{pq}'(\phi) \subset V_{pq}'$ be the preimage of $\phi$. Then $V_{pq}'(\phi)$ is the zero locus of a smooth Fredholm section defined on a certain Banach manifold of maps from $C_\phi$ to $X$; the concrete choice of the Banach manifold depends on certain Sobolev parameters. 

\begin{defn}
We say that $(\phi, u, e) \in V_{pq}'(\phi)$ is {\bf transverse} if the linearization of the corresponding Fredholm section is surjective (which is independent of the Sobolev parameters). We say that the perturbation $\mb{W}$ is {\bf transverse} if all points of $V_{pq}'(\phi)$ for all $p,q$ are transverse. 
\end{defn}

We can choose the thickening data appropriately so that transversality holds.

\begin{prop}\label{prop_perturbation} There exists an involutive and  transverse thickening datum $\mb{W}$ on the flow category $ \mb{Dom}$, i.e., a transverse lift to $\outer \uds{\bf Thick}_{\rm rig}$.
\end{prop}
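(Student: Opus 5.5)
We will follow the inductive construction of \cite[Proposition 15.19]{Bai_Xu_foundation} (and \cite{Bai_Xu_Arnold}) and make each step involution-symmetric. Induct on the pair of filtration quantities $d_{pq}=\mathcal{A}_H(\gamma_p)-\mathcal{A}_H(\gamma_q)$ and $f_{\rm Tate}(\mu_p)-f_{\rm Tate}(\mu_q)$, ordered by the poset structure of $A^{\mathbb F}_{pq}$. At each step we treat a whole $\Pi\times\mathbb Z\times\mathbb Z/2$-orbit of pairs $(p,q)$. Since the involution $p\mapsto p^\dagger$ on objects is free (it flips the sign of $\mu_p$), the pairs $(p,q)$ and $(p^\dagger,q^\dagger)$ are always distinct. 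We therefore choose data on one representative of each $\Pi\times\mathbb Z$-orbit, transfer it by the $\Pi\times\mathbb Z$-action, and define the data on the conjugate orbit by $W_{p^\dagger q^\dagger}:=W_{pq}^\dagger$ and $\nu_{p^\dagger q^\dagger}:=\nu_{pq}^\dagger$ as in \eqref{eqn:thicken-conjugate}, transported along the isomorphism $\tau^{\mathcal C}_{pq}:\mathcal C_{pq}^\dagger\cong\mathcal C_{p^\dagger q^\dagger}$.

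\textbf{Inductive step.} Assume the data is constructed for all pairs of smaller filtration difference. On $\outer B_{pq}$ the boundary strata are images of the structural maps $\zeta_{prq}$. The already-chosen data $W_{pr}\oplus W_{rq}$, $\nu_{pr}\oplus\nu_{rq}$ are extended over the collar using the collared and rigidified structure, exactly as in \cite{Bai_Xu_foundation}, giving a $G_{pq}$-representation $W^\partial_{pq}$ with a map $\nu^\partial_{pq}$ supported near the boundary. Consistency on overlaps (codimension-two corners) follows from associativity. The induction hypothesis that the data are involutive, together with the commutative diagrams in Definition \ref{defn_flow_category}, ensures that the boundary data for $(p^\dagger,q^\dagger)$ is the conjugate of that for $(p,q)$.

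Next we add an interior summand $W^{\rm int}_{pq}$ to achieve transversality on the compact set of points of $V'_{pq}$ not already transverse. This set has compact image in $\outer M^{\mathbb F}_{pq}$ by Gromov compactness and Theorem \ref{thm_footprint}. Following the standard argument, we pick finitely many local sections of $\Lambda^{0,1}\otimes TX$ spanning cokernels near each such point, cut them off to be supported away from nodes and away from the collar region, and average over the compact group $G_{pq}=U(d_{pq})$. This produces a finite-dimensional complex representation (take the complex span, e.g.\ $W\otimes\mathbb C$ with the $J$-complex structure, so that $\nu$ is complex linear). Complex linearity of $\nu^\dagger$ for $W^\dagger$ holds because $d\tau_X$ is $J$-antilinear and the domain conjugation reverses $j$, so $\Lambda^{0,1}\otimes TX$ maps complex-linearly as noted before Definition \ref{defn_thickening_datum}. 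Transversality is an open condition, and the conjugation intertwines the linearized operators, $D_{u^\dagger}=d\tau_X\circ D_u\circ(\text{conj})$. Hence $(\phi,u,e)$ is transverse for $(p,q)$ if and only if $(\phi^\dagger,u^\dagger,e^\dagger)$ is transverse for $(p^\dagger,q^\dagger)$, and the conjugate data are automatically transverse. Finally, orthogonality of $W^\partial$ and $W^{\rm int}$ and the vanishing of $\nu$ on complements give the required strict embeddings in $\outer\uds{\bf Thick}_{\rm rig}$.

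\textbf{Main obstacle.} The delicate point is the collared extension near the boundary. The added interior perturbation must not destroy transversality already achieved in the collar, and the rigidifications (the $Q_{21}$ representations and maps $\theta_{21}$) must be compatible with conjugation. We would first verify that the rigidifications of $\zeta_{prq}$ can themselves be chosen involution-compatibly, again by constructing them on one half of each $\mathbb Z/2$-orbit and conjugating. Everything else is then the non-involutive argument of \cite{Bai_Xu_foundation} applied verbatim on one half of each orbit.
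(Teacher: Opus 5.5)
Your proposal is correct and takes essentially the paper's approach. You run the inductive construction of \cite[Proposition 15.19]{Bai_Xu_foundation} one $\mathbb{Z}/2$-orbit of pairs at a time. You choose the data freely on one $\Pi\times\mathbb{Z}$-orbit, which is possible because $p\mapsto p^\dagger$ is free on objects, and transport it to the other half by conjugation. The key point, as in the paper, is that conjugation intertwines the linearized operators, so transversality at $(\phi,u,e)$ is equivalent to transversality at $(\phi^\dagger,u^\dagger,e^\dagger)$.

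One small slip: the map on $\Lambda^{0,1}\otimes TX$ induced by $d\tau_X$ and the domain conjugation is $J$-antilinear, not complex-linear. It is this antilinearity, together with the reversed complex structure on $W^\dagger$, that makes $\nu^\dagger$ complex-linear.
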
 

\begin{proof}
For the non-involutive case, the construction is provided in the proof of \cite[Proposition 15.19]{Bai_Xu_foundation}. In the involutive case, this can also be done in the same inductive procedure, however each time for a ${\mb Z}/2$-orbit of pairs of objects. The point is that transversality at any point in the ${\mb Z}/2$-orbit implies transversality at every other point in the same orbit.
\end{proof}

Transversality implies the regularity of the thickened moduli spaces. 

\begin{prop}\label{prop_topological_lift}
Suppose we choose transverse thickening data as claimed in Proposition \ref{prop_perturbation}. Then for each $p, q \in {\rm Ob}{\mb F}$, the thickened moduli space $V_{pq}$ is an $A_{pq}^{\mb F}$-stratified topological $G_{pq}$-manifold near $S_{pq}^{-1}(0)$. As a consequence, one obtains a $\Pi \times {\mb Z}$-equivariant involutive lift of the outercollaring of the Tate-Floer flow category to $\outer \uds{\bf Kur}_{\rm rig}$, which is denoted by $\hat{\mb F}$.
\end{prop}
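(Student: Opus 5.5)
The plan is to split the statement into a local claim about each thickened moduli space and a formal claim that these local objects, together with the structure maps already built on $\mb{Dom}$, assemble into an involutive lift. For the local claim, I will work through the description $V_{pq}\cong V'_{pq}$ from \cite[Lemma 15.17]{Bai_Xu_foundation}. Near $S_{pq}^{-1}(0)$, the space $V'_{pq}$ fibres over $\outer B_{pq}$, and the fibre over $\phi$ is the zero locus $V'_{pq}(\phi)$ of a Fredholm section. By Proposition \ref{prop_perturbation}, this section has surjective linearization at every point of the zero locus, hence on a neighbourhood of it, since surjectivity is an open condition. The implicit function theorem, applied in a family over $B_{pq}$, then gives a smooth structure on each fibre and topological charts $V_{pq}\supset U\cong (\text{open in } B_{pq})\times \R^{\rm ind}$. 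These charts are built from right inverses that depend continuously on $\phi$.

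The stratification is handled through the outercollaring. The strata of $\outer B_{pq}$ indexed by $A_{pq}^{\mb F}$ carry collars, and near a broken configuration I will use the gluing analysis of \cite{Bai_Xu_foundation}. There, the cylindrical-end gluing parameters are identified with the collar coordinates of $\outer B_{pq}$. Combined with the fibrewise implicit function theorem, this produces corner charts $U_{x,\alpha}\times[0,\epsilon)^{\mb{Face}_\alpha^\gamma}$. This makes $V_{pq}$ an $A_{pq}^{\mb F}$-stratified topological manifold near $S_{pq}^{-1}(0)$. The $G_{pq}$-action is continuous, and its stabilizers are finite because the framings have finite automorphism groups on the relevant locus, as in the footprint theorem (Theorem \ref{thm_footprint}). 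I will borrow this gluing step essentially verbatim from \cite[Section 15]{Bai_Xu_foundation}. I expect it to be the main technical obstacle, but no new input is needed for it, since the Tate factor only adds a regular Morse flow in $\wh S^\infty$.

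Next, the Kuranishi space is taken to be $K_{pq}=(G_{pq},V_{pq},E_{pq},S_{pq})$, shrunk to the manifold neighbourhood. The structure maps $\outer M_{pr}\times\outer M_{rq}\to \outer M_{pq}$ lift to strict rigidified embeddings $K_{pr}\times K_{rq}\to K_{pq}$. To build them, I combine three ingredients:
\begin{enumerate}
\item the rigidified concatenation morphisms $\zeta_{prq}$ of $\mb{Dom}$;
\item the compatibility of thickening data under strict embeddings in $\outer\uds{\bf Thick}_{\rm rig}$, namely that $\nu_{pq}$ vanishes on the orthogonal complement of $W_{pr}\oplus W_{rq}$;
\item Proposition \ref{prop_group_reduction}(2)(3), which provides the extra stabilizing factor $Q_{pq}\ominus(Q_{pr}\oplus Q_{rq})$.
\end{enumerate}
Associativity and the collar conditions follow from the corresponding properties of these inputs. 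The footprint isomorphism of Theorem \ref{thm_footprint} then shows that applying the forgetful functor to $\uds{\bf Top}$ recovers $\outer M^{\mb F}_{pq}$ together with its structure maps, so the construction is a lift in the sense of Definition \ref{defn:lifting}.

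Finally, I will check the two symmetries. For the $\Pi\times\Z$-equivariance, recapping and the $\Z$-shift on $\wh S^\infty$ act on all the data compatibly, because the thickening data and group reductions were chosen equivariantly. For the involution, the preceding lemma gives equivariant isomorphisms $(V_{pq},E_{pq},S_{pq})\cong(V_{p^\dagger q^\dagger},E_{p^\dagger q^\dagger},S_{p^\dagger q^\dagger})$ covering complex conjugation $G_{pq}\cong G_{p^\dagger q^\dagger}$. Since $\dagger$ acts as the identity on $\uds{\bf Kur}$, these isomorphisms serve as the maps $\tau_{pq}$. The relation $\tau_{p^\dagger q^\dagger}\circ\tau_{pq}^\dagger={\rm id}$ holds because every ingredient ($\tau_X$, conjugation of domains, framings and $W$) squares to the identity. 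Compatibility with the structure maps and with the $\Pi\times\Z$-action follows from the involutivity built into $\mb{Dom}$, into the thickening datum, and into Proposition \ref{prop_group_reduction}(4).
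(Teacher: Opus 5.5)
Your proposal is correct and follows essentially the paper's route. The paper gives no separate argument here: it relies on the non-involutive construction of \cite[Section 15]{Bai_Xu_foundation}, meaning fibrewise Fredholm regularity over $\outer B_{pq}$ plus gluing, structural maps assembled from $\mb{Dom}$, the thickening data and the group reductions, and the footprint theorem. It combines this with the preceding lemma, which supplies the involution on $(V_{pq},E_{pq},S_{pq})$. One point to tighten is that a family implicit function theorem over $B_{pq}$ does not by itself produce product charts near points whose domains have sphere-bubble nodes, so the gluing analysis you import from \cite[Section 15]{Bai_Xu_foundation} is needed there as well, not only at broken configurations.
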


The construction also gives automatically an involutive lift to a finer category 
\beqn
\outer \uds{\bf S^{\rm rel} Kur}_{\rm rig},
\eeqn
the collared and rigidified version of the category of relatively smooth Kuranishi spaces. Indeed, the map 
\beqn
\pi: V_{pq} \to B_{pq},\ (\phi, u, F, e) \mapsto \phi
\eeqn
has smooth fibres whose smooth structures vary continuously over the base (see relevant notions defined in detail in \cite{Swaminathan_relative}, see also \cite{Hirschi_Swaminathan}). This category is involutive with respect to the identity functor. See also detailed discussions in \cite[Section 15.4]{Bai_Xu_foundation}. We also denote this lift by $\hat{\mb F}$.

\subsection{The cases of PSS, SSP bimodules etc.}

We will not go through all the details about the global chart construction of the PSS/SSP bimodule, pearly bimodule, and the homotopy moduli spaces, which are essential for proving Theorem \ref{thm_PSS_lift}, Theorem \ref{thm_pearly_lift}, and Theorem \ref{thm_homotopy_lift}. In general, the details just resemble those in \cite[Section 12, Section 25]{Bai_Xu_foundation} where the involved free action changes from one which preserves all structures to an involution which reverses many structures. The key feature that allows the extension is that involution symmetry can be imposed everywhere. We only state our results below.

\begin{thm}\label{thm_module_topological_lift}
Let $\hat{\mb F}$ be the lift of the outercollaring of the Tate-Floer flow category to $\outer\uds{\bf S^{\rm rel} Kur}_{\rm rig}$. Notice that the outercollaring of the Tate-Morse flow category $\outer {\mb M}$ is also its own lift to $\outer \uds{\bf S^{\rm rel} Kur}_{\rm rig}$. 
\begin{enumerate}

    \item There exists a  $\Pi \times {\mb Z}$-equivariant involutive lift $\hat {\mb B}^{\rm PSS}$ resp. $\hat {\mb B}^{\rm SSP}$ of the outercollaring of the Tate-PSS bimodule resp. Tate-SSP bimodule to $\outer \uds{\bf S^{\rm rel} Kur}_{\rm rig}$ as a bimodule over $(\outer {\mb M}; \hat{\mb F})$ resp. over $(\hat {\mb F}; \outer {\mb M})$. 

    \item Given $\hat {\mb B}^{\rm PSS}$ and $\hat{\mb B}^{\rm SSP}$ as above, there exists a $\Pi \times {\mb Z}$-equivariant involutive concatenation, denoted by $\hat{\mb B}^{\rm PSS} \circ \hat{\mb B}^{\rm SSP}$. 

    \item There exists a $\Pi \times {\mb Z}$-equivariant involutive lift $\hat {\mb B}^{\rm pearly}$ of the outercollaring of the Tate-pearly bimodule to $\outer \uds{\bf S^{\rm rel} Kur}_{\rm rig}$ as a bimodule over $(\outer {\mb M}; \outer {\mb M})$. 

    \item Given the constructions above, there exists a $\Pi \times {\mb Z}$ involutive lift $\hat{\mb H}$ of the outercollaring of the homotopy ${\mb H}$ to $\outer \uds{\bf S^{\rm rel} Kur}_{\rm rig}$ as a homotopy from $\hat {\mb B}^{\rm pearly}$ to $\hat{\mb B}^{\rm PSS} \circ \hat {\mb B}^{\rm SSP}$. 

\end{enumerate}
\end{thm}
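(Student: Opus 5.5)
The plan mirrors the flow-category case (Definition \ref{defn_thickened_moduli}, Propositions \ref{prop_group_reduction}, \ref{prop_perturbation}, \ref{prop_topological_lift}), applied in turn to the PSS/SSP bimodules, their concatenation, the pearly bimodule, and the homotopy. Each step follows the non-involutive constructions of \cite[Sections 12, 15, 25]{Bai_Xu_foundation}, with one extra requirement: every choice is made $\mathbb Z/2$-symmetrically. For the Tate--PSS bimodule I first build a domain bimodule enriched in $\outer\uds{\bf Cylinder}_{\rm rig}$ over $(\outer{\mb M};\mb{Dom})$. For objects $(v;p)$ its morphism space is the family of capped cylinders in $\mb{CP}^{d_{vp}}$, with $d_{vp}$ the integral energy, coupled with Tate flow lines and with a half-infinite Morse segment in $X$ attached at $z_-$. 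The positivity constraint \eqref{positive_constraint} is imposed at $z_+$. The conjugation on $\mb{CP}^{d}$, on the domain, and on ${\mc G}_d$ gives isomorphisms $\mathcal C_{v;p}^\dagger\cong \mathcal C_{v^\dagger;p^\dagger}$. These are compatible with the structural maps to $\outer{\mb M}$ and to $\mb{Dom}$, because both sides are induced by the same homeomorphisms \eqref{Morse_involution} and \eqref{Tate-Floer_involution}. The stable complex structure again comes from $\ov{\mc M}_{0,2}(\mb{CP}^d,d)$, as in Proposition \ref{prop_domain_stable_complex}. SSP is handled identically, and the pearly bimodule uses the $\zeta=0$ families. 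The pearly bimodule is where sphere bubbles attached to Morse segments enter.

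Next I extend the framings, the group reductions $\lambda$, and the thickening data $(W,\nu)$ to these bimodules. I proceed by induction on the energy $d$ together with the Tate index difference, as in the proofs of Propositions \ref{prop_group_reduction} and \ref{prop_perturbation}. The boundary data are already fixed by $\hat{\mb F}$, $\outer{\mb M}$ and the lower-energy bimodule pieces, using the rigidified structural maps and collars. At each step I treat one $\Pi\times\mathbb Z\times\mathbb Z/2$-orbit of pairs $(v;p)$. Because the involution on $\mathrm{Ob}\,{\mb F}$ (flipping $\mu$) is free, the orbit splits into two $\Pi\times\mathbb Z$-orbits. I choose data on one of them, extending the boundary data and using \cite[Proposition 15.19]{Bai_Xu_foundation} for transversality. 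On the other I define the data by conjugation, as in \eqref{eqn:thicken-conjugate}. Transversality is preserved because $d\tau_X$ intertwines the linearized operators. The conjugated data are still consistent with the boundary data, because those were themselves chosen involutively and the structural maps commute with $\dagger$. The thickened spaces $V_{v;p}$ then carry relatively smooth structures over the base, and the footprint statement of Theorem \ref{thm_footprint} carries over verbatim. Together these give the lifts $\hat{\mb B}^{\rm PSS}$, $\hat{\mb B}^{\rm SSP}$ and $\hat{\mb B}^{\rm pearly}$ in parts (1) and (3).

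For part (2), I build the concatenation $\hat{\mb B}^{\rm PSS}\circ\hat{\mb B}^{\rm SSP}$ by gluing global charts along the common $\hat{\mb F}$-ends, following \cite[Section 11.3, Section 25]{Bai_Xu_foundation}. This uses a group enlargement ${\mc G}_{d_1}\times{\mc G}_{d_2}\to{\mc G}_{d_1+d_2}$. The block embedding commutes with complex conjugation, so the enlargement is involutive, and the rigidifications $Q$ and $W$ are sums that conjugate compatibly. For part (4), the homotopy $\hat{\mb H}$ is built over the parameter $\zeta\in[0,\infty]$ by the same inductive scheme. Its boundary conditions are prescribed at $\zeta=0$ by $\hat{\mb B}^{\rm pearly}$ and at $\zeta=\infty$ by the concatenation. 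Condition (3) on $H_{\zeta,s,t}$ makes the family of Floer data symmetric, so the conjugation is defined on the parametrized moduli spaces.

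I expect the main obstacle to be the codimension-zero gluing in the concatenation and at $\zeta=\infty$ in the homotopy. The charts $\hat{\mb B}^{\rm PSS}\times_{\hat{\mb F}}\hat{\mb B}^{\rm SSP}$, taken over different intermediate $p'$, must be identified via the equivalence relation defining $A^{{\mb B}\circ{\mb B}'}$. These identifications must be chosen involutively and still match the rigidified embeddings. My first attempt will be to define all identifications canonically from the structural maps of $\hat{\mb F}$, so that they commute with $\dagger$ automatically. The remaining choices, namely collar widths and the reductions on the glued group, I will make only on representatives of free $\mathbb Z/2$-orbits and then transport by conjugation.
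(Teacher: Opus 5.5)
Your proposal is correct and follows the same route as the paper. The paper likewise transplants the non-involutive constructions of \cite[Sections 12, 25]{Bai_Xu_foundation}, together with the concatenation and homotopy arguments there, and imposes the involution at each inductive step: one treats a free $\mathbb{Z}/2$-orbit of object pairs at once, chooses data on one $\Pi\times\mathbb{Z}$-orbit, transports it to the other by conjugation, and keeps transversality because $d\tau_X$ intertwines the linearized operators. Your write-up is more explicit than the paper's, for instance on the compatibility of the block embedding ${\mc G}_{d_1}\times{\mc G}_{d_2}\to{\mc G}_{d_1+d_2}$ with complex conjugation; the one small imprecision is that the half-infinite Morse segment in $X$ belongs to the thickened space $V_{v;p}$, not to the domain family in $\uds{\bf Cylinder}$.
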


\section{Smoothing and NC structures}\label{sec:nc-smoothing}\label{section7}
In this final section, we complete the proof of Theorem \ref{thm_PSS_lift}, Theorem \ref{thm_pearly_lift}, and Theorem \ref{thm_homotopy_lift} by discussing how to construct normal complex structures and smoothings of the relevant moduli spaces.

To this end, we introduce yet another new category 
\beqn
\uds{\bf S^{\rm rel} Kur}^{\rm NC}
\eeqn
whose objects are relative smooth Kuranishi spaces (cf. \cite[Section 15.4.1]{Bai_Xu_foundation}) $K = (G, V/B, E, S)$ with vertical NC structures and stable complex structures on $TB$. This category admits a natural involution, which takes the real Lie group $G$ to itself, $V/B$ to itself with the opposite vertical NC structure and opposite stable complex structure on $TB$, denoted by $V^\dagger/B^\dagger$, and which takes $E$ to $E^\dagger$ equipped with the opposite complex structure. Notice that there is an involutive functor
\beqn
(\uds{\bf S^{\rm rel}Kur}^{\rm NC},\dagger) \to (\uds{\bf S^{\rm rel} Kur}^{\rm NC}, \dagger).
\eeqn
There are also the collared and rigidified versions of the new category and the above functor, which we do not write explicitly here. 

Recall that we have obtained a lift of the outercollaring of the involutive Floer category to $\outer \uds{\bf S^{\rm rel} Kur}_{\rm rig}$. Below is the main technical result of this section generalizing \cite[Theorem 16.1]{Bai_Xu_foundation}. 

\begin{thm}\label{thm71}
There exists a $\Pi\times {\mb Z}$-equivariant involutive lift of $\hat {\mb F}$ to $\outer \uds{\bf S^{\rm rel} Kur}_{\rm rig}^{\rm NC}$.
\end{thm}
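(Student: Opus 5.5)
The plan is to follow the proof of \cite[Theorem 16.1]{Bai_Xu_foundation} essentially verbatim and to add the involution at each inductive step, exactly as in Propositions \ref{prop_group_reduction} and \ref{prop_perturbation}. For each pair of objects $(p,q)$ we must equip the relatively smooth Kuranishi chart $\hat K_{pq}=(G_{pq}, V_{pq}/B_{pq}, E_{pq}, S_{pq})$ with three pieces of data: a stable complex structure on $TB_{pq}$, a $G_{pq}$-invariant vertical NC structure on $V_{pq}/B_{pq}$, and a complex structure on the obstruction bundle. These must be compatible with the structural maps, the rigidified embeddings and the $\Pi\times{\mb Z}$-action. The involution $\tau_{pq}$ constructed in Section \ref{section6} must also become an isomorphism $\hat K_{pq}^\dagger\cong\hat K_{p^\dagger q^\dagger}$ in $\outer\uds{\bf S^{\rm rel}Kur}^{\rm NC}_{\rm rig}$.

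The base direction is already handled. Proposition \ref{prop_domain_stable_complex} gives an involutive stable complex structure on $TB_{pq}$, pulled back from $\ov{\mc M}_{0,2}(\mb{CP}^d,d)$, on which conjugation reverses the complex structure.

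For the obstruction bundle $E_{pq}=\uds Q_{pq}\oplus\uds W_{pq}$, the factor $W_{pq}$ is complex and its conjugate is $W_{pq}^\dagger$ by construction. For $Q_{pq}$ I would stabilize by a second copy and use $Q_{pq}\otimes{\mb C}=Q_{pq}\oplus{\bf i}Q_{pq}$. The involution then acts by complex conjugation on the second factor, which is exactly the reversed complex structure. Because Proposition \ref{prop_group_reduction} makes $Q_{pq}\cong Q_{p^\dagger q^\dagger}$ equivariant under $G_{pq}\cong G_{p^\dagger q^\dagger}$, this is compatible.

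The vertical direction is the analytic input. At a point $(\phi,u,F,e)$ the vertical tangent space is the kernel of the linearized operator $D_u+\nu_{pq}$. This operator is a compact perturbation of a complex-linear Cauchy--Riemann operator, and $d\tau_X$ intertwines the complex-linear part of $D_u$ with that of $D_{u^\dagger}$ anti-linearly. I would apply the homotopy to the complex-linear part to obtain a stable isomorphism with a complex index bundle, as in \cite[Section 16]{Bai_Xu_foundation}. On the $G_x$-nontrivial isotypic summands this produces the NC structure. The same formula applied at $u^\dagger$ yields the conjugated structure, because every ingredient (the complex-linear part, the homotopy parameter, the orthogonal projections) is natural under $d\tau_X$ together with domain conjugation.

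The smoothing over $B$ and the coherence of the NC structures along strata and collars are constructed inductively on energy and on the $f_{\rm Tate}$-difference. At each step we treat one ${\mb Z}/2$-orbit of pairs $\{(p,q),(p^\dagger,q^\dagger)\}$. Since the involution on objects is free, we build the structure on one pair extending the data already fixed near the boundary strata, then transport it to the other pair by $\tau$. Involutivity on the boundary is inherited from the inductive hypothesis together with compatibility of $\tau$ with the structural maps.

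The main obstacle I expect is that the NC structure in \cite{Bai_Xu_foundation} is only defined up to normal equivalence, and at step (2) the extension from boundary strata is made by choices: partitions of unity and homotopies of stable isomorphisms. I must check that transporting by $\tau$ does not conflict with data already fixed near the boundary of the partner chart. This holds when $(p,q)\ne(p^\dagger,q^\dagger)$ as pairs, because the boundary data on the partner was itself obtained by transport. That case always occurs, since the involution acts freely on ${\rm crit}f_{\rm Tate}$, so no pair is fixed and no genuinely equivariant (real-structure) NC choice is ever required.
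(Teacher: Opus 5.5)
Your overall architecture matches the paper: import \cite[Section 16]{Bai_Xu_foundation}, induct on energy and $f_{\rm Tate}$-difference one ${\mb Z}/2$-orbit of pairs at a time, and transport along $\tau$. Your check that the transported data agrees with what is already fixed on the boundary of the partner chart is correct, and your treatment of $TB_{pq}$ and $Q_{pq}$ is fine. The gap is the analytic step that is supposed to produce the vertical NC structure.

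For a Floer cylinder between nondegenerate orbits, $D_u$ is \emph{not} a compact perturbation of a complex-linear Cauchy--Riemann operator. Its asymptotic operators $J(\nabla_t-\nabla X_{H_t})$ along $\uds\gamma_p,\uds\gamma_q$ are not complex-linear, and a zeroth-order difference that does not decay at the ends is not compact. The complex-linear operator $D_u^{\mb C}$ (built from a $\tau_X$-invariant $J$-linear connection with trivial holonomy along the orbits) is Fredholm only on the spaces $\wt W^{1,\tau}$ of asymptotically parallel sections. Its index differs from that of $D_u$ by orbit-dependent terms: already for the constant cylinder $u=u_p$, $D_{u_p}$ is an isomorphism while ${\rm ind}\,D^{\mb C}_{u_p}=\dim_{\mb R}X$. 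So there is no Fredholm homotopy from $D_u$ to ``its complex-linear part'', and the stable isomorphism from $T^{\rm vt}V_{pq}$ to a complex bundle cannot be obtained the way you describe.

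The missing idea is capping. For each object the paper chooses the operator $D_p$ of \eqref{operator_dp}, which interpolates between $(\nabla\xi)^{0,1}$ and $D_{u_p}$. It also chooses $\nu_p\colon R_p^+\to\Omega^{0,1}_c$ and sets $R_p^-=\ker(\nu_p\oplus D_p)$. These depend only on orbit data, so they are trivial $G_{pq}$-representations.

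The extended family $\wt V_{pq}$ adds a marked point on $t=0$, a line preserved by $t\mapsto -t$. Together with the cut-offs $\chi_{pq}$ of Lemma \ref{lemma_cut_off_function}, it gives operators $D_{\wt x}$ that interpolate between $D_p\oplus D_u$ over $\partial^-\wt V_{pq}$ and $D^{\mb C}_u\wt\oplus D_q$ over $\partial^+\wt V_{pq}$. The index bundle interpolator of Definition \ref{defn_bundle_interpolator} turns the kernels into a bundle, giving
\[ \uds R_p^-\oplus T^{\rm vt}V_{pq}\oplus\uds W_{pq}\oplus\uds R_q^+\cong\uds R_p^+\oplus I^{\mb C}_{pq}\oplus\uds R_q^-. \]
This is a stable complex structure exactly because the $R$'s are $G_{pq}$-trivial.

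Coherence along breakings is where the real work lies. Over $\wt V_{prq}$ the intermediate orbit contributes $R_r^\pm$, which must be absorbed into $W_{pq}$ through $\wt\iota_{prq}$. The identifications across the interpolation interval must then be chosen inductively and compatibly, as in Lemma \ref{lemma1713}. Your ``coherence constructed inductively'' has nothing to glue at $r$.

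Your freeness argument does extend to this extra data: conjugate-linear isomorphisms $R_p^+\cong R_{p^\dagger}^+$ intertwining $\nu_p$ and $\nu_{p^\dagger}$, and involution-invariant $\chi_{pq}$. However, this works only if you index the capping data by $\Pi\times{\mb Z}$-orbits of objects, i.e.\ by $\uds\gamma_p$ together with the sign in $\mu_p=\mu^{(k)}_\pm$; the involution is free on these. If the data depended only on $\uds\gamma_p$, the $T$-fixed orbits (exactly those counted in $\phi(L)\cap L$) would force a real structure on $R_p^+$ compatible with $\nu_p$. That contradicts your closing claim that no such choice ever arises.
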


\begin{proof}
See Subsection \ref{subsection173}.
\end{proof}

We do not state corresponding theorems about the cases of PSS/SSP bimodules, the pearly bimodule, and the homotopy. The method provided below for the flow category case together with the detailed treatment of \cite[Section 16]{Bai_Xu_foundation} are sufficient for proving the corresponding results in the involutive case.

\subsection{The situation for a single moduli space}
We work towards the proof of Theorem \ref{thm71}.

We first adapt the construction of \cite[Section 11]{Abouzaid_Blumberg} in the case of a single moduli space. Consider a pair of objects $p < q$ such that there is no intermediate object $r$ with $p < r < q$ (this is then also true for $p^\dagger$ and $q^\dagger$).  Let $K_{pq} = (G_{pq}, V_{pq}, E_{pq}, S_{pq} )$ be an AMS global chart of $M_{pq}^{\mb F}$, viewed as an object of $\outer \uds{\bf S^{\rm rel} Kur}_{\rm rig}$. We can ignore $E_{pq}$ and $S_{pq}$ for the discussion but should remember the relative smooth structure on $V_{pq}/B_{pq}$ as well as the isomorphism 
\beqn
V_{pq}/B_{pq} \cong V_{p^\dagger q^\dagger}/ B_{ p^\dagger q^\dagger}.
\eeqn
For each $(\phi, u, F, e) \in V_{pq}$, denote by $(\phi^\dagger, u^\dagger, F^\dagger, e^\dagger)\in V_{p^\dagger q^\dagger}$ its conjugate. We will also drop the outercollaring from notations temporarily. 

We first specify a family of complex linear Cauchy--Riemann operators for points in $V_{pq}$. 

Fix a $J$-linear connection $\nabla^{TX}$ such that
\begin{enumerate}

\item $\tau_X$ preserves the connection $\nabla^{TX}$.

\item The holonomy of $\nabla^{TX}$ along each 1-periodic orbit of $H_t$ is trivial. 
\end{enumerate}

Choose $\tau > 2$.

We first look at an individual smooth map $u: {\mb R} \times S^1 \to X$ which converges exponentially to 1-periodic orbits of $H$. Define the space
\beqn
\wt W^{1,\tau}(u^* TX) \subset W^{1,\tau}_{\rm loc}(u^* TX)
\eeqn
to be the space of sections which differ from $\nabla^{TX}$-parallel sections along the limiting periodic orbits by $W^{1,\tau}$-small terms near infinity. Then the $J$-linear connection $\nabla^{TX}$ induces a complex-linear Cauchy--Riemann operator
\beqn
D_u^{\mb C}: \wt W^{1,\tau}(u^*TX) \to L^\tau (\Lambda^{0,1} \otimes u^* TX).
\eeqn
As $u$ converges exponentially to periodic orbits, it is easy to see that $D_u^{\mb C}$ is Fredholm. Moreover, regarding the involution, one has the following commutative diagram
\beqn
\xymatrix{  \wt W^{1,\tau}(u^* TX) \ar[r] \ar[d]_{D_u^{\mb C}}  &      \wt W^{1,\tau}((u^\dagger)^* TX)  \ar[d]^{D_{u^\dagger}^{\mb C}} \\
             L^\tau( \Lambda^{0,1} \otimes u^* TX)  \ar[r]   &  L^\tau( \Lambda^{0,1} \otimes (u^\dagger)^* TX)   }
\eeqn
where the horizontal arrows (which are conjugate linear) are induced from $d\tau_X$ on the target and the domain conjugation.

The operator $D_u^{\mb C}$ and the linearization of the Floer equation can be connected by a particular 1-parameter family of Fredholm operators modulo certain finite-dimensional corrections. Choose a cut-off function 
\beqn
\chi: {\rm Dom}(u) \cong {\mb R} \times S^1 \to [0, 1]
\eeqn
which only depends on the ${\mb R}$-variable such that $\chi(s) = 0$ for $s \ll 0$ and $\chi(s) = 1$ for $s \gg 0$. For each object $p = (\gamma_p, \mu_p)$, let $u_p: {\mb R} \times S^1 \to X$ be the corresponding constant solution to the Floer equation (which does not depend on the capping) at $\gamma_p$. Then there is an associated linearized operator
\beqn
D_{u_p}(\xi) = \partial_s \xi + J \left( \nabla_t \xi - \nabla_\xi X_{H_t}(\uds\gamma_p) \right).
\eeqn
Let 
\beqn
\wt W^{1,\tau}({\mb R}\times S^1, u_p^* TX) \subset W^{1,\tau}_{\rm loc}({\mb R}\times S^1, u_p^* TX)
\eeqn
be the subspace of $W^{1,\tau}_{\rm loc}$-sections which satisfies the same asymptotic condition near $-\infty$ as $\wt W^{1,\tau}(u^* TX)$ and which is $W^{1,\tau}$-small near $+\infty$. Then consider the interpolation
\beq\label{operator_dp}
\begin{split}
D_p: \wt W^{1,\tau} ({\mb R}\times S^1, u_p^* TX) \to &\  L^\tau ({\mb R}\times S^1, \Lambda^{0,1}\otimes u_p^* TX)\\
\xi\mapsto &\  ( 1- \chi) (\nabla^{TX} \xi)^{0,1} + \chi D_{u_p} ( \xi).
\end{split}
\eeq
Then $D_p$ is a real-linear Fredholm operator. Notice that from the definition, $D_p$ only depends on the underlying 1-periodic orbit but not the capping. Then for $p$ and $p^\dagger$, one has the following commutative diagram
\beqn
\xymatrix{  \wt W^{1,\tau} ({\mb R}\times S^1, u_p^* TX) \ar[r] \ar[d]_{D_p}  & \wt W^{1,\tau} ({\mb R}\times S^1, u_{p^\dagger}^* TX)  \ar[d]^{D_{p^\dagger}} \\  L^\tau ({\mb R}\times S^1, \Lambda^{0,1}\otimes u_p^* TX)  \ar[r]  &   L^\tau ({\mb R}\times S^1, \Lambda^{0,1}\otimes u_{p^\dagger}^* TX)    }
\eeqn

We now choose, for each object $p= (\gamma_p, \mu_p)$, a finite-dimensional complex vector space $R_p^+$, a complex linear map
\beq\label{eqn_nup}
\nu_p: R_p^+ \to C_0^\infty( {\mb R}\times S^1, \Lambda^{0,1} \otimes u_p^* TX)
\eeq
such that the map
\beqn
\nu_p \oplus D_p: R_p^+ \oplus \wt W^{1,\tau} ({\mb R}\times S^1, u^* TX) \to L^\tau ({\mb R}\times S^1, \Lambda^{0,1}\otimes u_p^* TX)
\eeqn
is surjective. Denote
\beqn
R_p^-:= {\rm Ker}( \nu_p \oplus D_p) \subset R_p^+ \oplus \wt W^{1,\tau}({\mb R}\times S^1, u^* TX).
\eeqn
Notice that the pair $(R_p^-, R_p^+)$ does not depend on the Sobolev exponent $\tau$. We could choose $R_p^+$, $\nu_p$ together with $R_{p^\dagger}$, $\nu_{p^\dagger}$ with a conjugate linear isomorphism $R_p^+ \cong R_{p^\dagger}^+$ such that $\nu_p$ and $\nu_{p^\dagger}$ are compatible in the natural way. 

Now we would like to construct interpolations between $D_u$ and $D_u^{\mb C}$, up to factors coming from $D_p$ and $D_q$, in a 1-parameter family and verify that the involution can be incorporated into the argument.

More precisely, we would like to construct an interpolation  between 
\beqn
D_p \oplus D_u: \wt W^{1,\tau }(u_p^* TX) \oplus W^{1,\tau }(u^* TX) \to L^\tau (\Lambda^{0,1} \otimes u_p^* TX) \oplus L^\tau ( \Lambda^{0,1} \otimes u^* TX)
\eeqn
and 
\beqn
D_u^{\mb C} \wt \oplus D_q: \wt W^{1, \tau}(u^* TX) \wt \oplus \wt W^{1,\tau }(u_q^* TX) \to L^\tau(\Lambda^{0,1}\otimes u^* TX) \oplus  L^\tau (\Lambda^{0,1}\otimes u_q^* TX).
\eeqn
Here $\wt \oplus$ means the subset of the direct sum satisfying the matching condition. 

To build the interpolation, consider the family of cut-off functions
\beqn
\chi_\rho: {\mb R} \to [0, 1],\ \chi_\rho (s) = \chi( s- \rho).
\eeqn
Then for all $\rho \in {\mb R}$, define
\beq\label{family_CR_operator}
\begin{split}
D_{u,\rho}: \wt W^{1,\tau} (u^* TX) \to &\ L^\tau (\Lambda^{0,1}\otimes u^* TX),\\
\xi \mapsto &\ (1 - \chi_\rho) (\nabla^{TX} \xi)_{J_0}^{0,1} + \chi_\rho D_u (\xi).
\end{split}
\eeq
Notice that even if $u$ has bubble components, these operators are still defined. Then intuitively, we have
\begin{align*}
&\ \lim_{\rho \to -\infty} D_{u, \rho} \approx D_p \oplus D_u,\ &\ \lim_{\rho \to +\infty} D_{u, \rho } \approx D_u^{\mb C} \wt \oplus D_q.
\end{align*}
This is the underlying idea of interpolating the linearized operator $D_u$ and a complex linear operator $D_u^{\mb C}$, up to the two operators $D_p$ and $D_q$ which are independent of $u$.

To better formulate this construction over the whole Kuranishi chart $V_{pq}$, we need to introduce an auxiliary space. Denote
\beqn
\wt B_{pq}:= B_{pq} \times [-\infty, +\infty],
\eeqn
viewed as a product of stratified spaces where $[-\infty, +\infty]$ has one open stratum and two boundary strata. We define an auxiliary universal curve $\wt C_{pq} \to \wt B_{pq}$ whose fiber $\wt C_{\phi, \rho}$ at $(\phi, \rho)$ with $\phi \in B_{pq}$ and $\rho \in (-\infty, +\infty)$ is still $C_\phi$, while for $\rho = -\infty$ resp. $\rho = +\infty$, $\wt C_{\phi, \rho}$ has the additional component of the domain of the map $u_p$ resp. the domain of $u_q$. One can then define the additional space $\wt V_{pq}$ which is identical to $V_{pq} \times [-\infty, +\infty]$. However, for each point $(\wt\phi, \wt u, F, e) \in \wt V_{pq}$, we regard $u$ as a map $\wt u: \wt C_{\phi, \rho} \to X$ where if $\rho = \pm\infty$, $\wt u$ maps the extra cylindrical component to the constant loop at the negative or the positive end. Then $\wt V_{pq} \to \wt B_{pq}$ is still relatively smooth. The operator $D_{u,  \rho}$ can be viewed as a Cauchy--Riemann operator $D_{\wt u}$ acting on the bundle $\wt u^* TX$ (and extended to suitable Sobolev completions). We remark that this setup facilitates the gluing construction: when $\rho \in \pm\infty$, if the operator $D_{u, \rho}$ is surjective, then each vector in the kernel can be glued to an element in the kernel at a nearby point.  

With the above viewpoint understood, using basic gluing construction, one obtains the following technical lemma. Let 
\beqn
\mathring {\mc E}_{pq}^0 \to \wt V_{pq}
\eeqn
be the infinite-dimensional vector bundle whose fiber over $\wt x = (\wt \phi, \wt u, F, e)$ is the space
\beqn
\mathring {\mc E}_{\wt x}^0 = \Omega_c^{0,1}( \wt C_{\phi, \rho}, \wt u^* TX).
\eeqn
Notice that when $\rho = \pm \infty$, one has the splitting
\beq\label{eqn172}
\mathring {\mc E}_{\wt x}^0 = \Omega_c^{0,1}( {\mb R}\times S^1, u_p^* TX) \oplus \Omega_c^{0,1}( C_\phi, u^* TX)
\eeq
or
\beq\label{eqn173}
\mathring {\mc E}_{\wt x}^0 = \Omega_c^{0,1}( C_\phi, u^* TX) \oplus \Omega_c^{0,1}({\mb R}\times S^1, u_q^* TX).
\eeq

\begin{lemma}\label{lemma175}
There exist a unitary representation $W_{pq}$ of $G_{pq}$ and a continuous $G_{pq}$-equivariant bundle map over $\wt V_{pq}  $
\beqn
\wt\nu_{pq}: \uds R_p^+ \oplus \uds W_{pq} \oplus \uds R_q^+ 
\to \mathring {\mc E}_{pq}^0
\eeqn
satisfying the following conditions. Let $\wt x = (\wt \phi, \wt u, F, e) \in \wt V_{pq}$ where $\wt \phi = (\phi, \rho ) \in \wt B_{pq}$.
\begin{enumerate}

    \item When $\rho = -\infty$, with respect to the splitting \eqref{eqn172}, $\wt\nu_{pq}$ has the block form
    \beqn
    \wt \nu_{pq} = \left[ \begin{array}{ccc} \nu_p &  0 & 0 \\
                     0 & 0 & 0 \end{array}\right]
    \eeqn
    where $\nu_p$ is specified in \eqref{eqn_nup}.
    
    \item When $\rho = +\infty$, with respect to the splitting \eqref{eqn173}, $\wt\nu_{pq}$ has the block form
    \beqn
    \wt\nu_{pq} = \left[ \begin{array}{ccc} 0 &  \nu_{\wt x} & 0 \\
     0 & 0 &  \nu_q \end{array}\right]
     \eeqn
     where
    \beqn
    \nu_{\wt x}: W_{pq} \to \Omega_c^{0,1}(C_\phi, u^* TX)
    \eeqn
    is a complex-linear map.
    
    \item For each $\wt x \in \wt V_{pq}$, the image of $\wt\nu_{pq}$ at $\wt x$ is transverse to the image of the operator $D_{\wt u}$. 
\end{enumerate}
\end{lemma}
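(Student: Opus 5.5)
We build $W_{pq}$ and $\wt\nu_{pq}$ by the standard finite-dimensional stabilization for a compact family of Fredholm operators, in the form used in \cite[Section 16]{Bai_Xu_foundation}, which adapts \cite[Section 11]{Abouzaid_Blumberg}. Involution symmetry is not required in the statement. Still, we carry out the construction so that it is compatible with the conjugate-linear identifications $\wt V_{pq}\cong \wt V_{p^\dagger q^\dagger}$, since the later steps toward Theorem \ref{thm71} need this. The boundary behaviour at $\rho=\pm\infty$ is built in from the start. At $\rho=-\infty$ the operator $D_{\wt u}$ splits, by \eqref{eqn172}, as $D_p\oplus D_u$. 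Since $\nu_p\oplus D_p$ is surjective by the choice of $R_p^+$, and since surjectivity of the $D_u$-part modulo the thickening is already provided by the transverse thickening datum of Proposition \ref{prop_perturbation} on $V_{pq}$, the block form in (1) is transverse there. At $\rho=+\infty$ we need $\nu_q$ on the $u_q$-factor together with a complex-linear $\nu_{\wt x}$ on $C_\phi$ making $D^{\mb C}_u$ surjective modulo its image.

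\textbf{Step 1: local constructions.} We work only near the compact set $S_{pq}^{-1}(0)\times[-\infty,+\infty]$, which is compact by Theorem \ref{thm_footprint}, and shrink $\wt V_{pq}$ accordingly. Fix a point $\wt x$.
\begin{itemize}
\item For finite $\rho$ (and near $+\infty$), we choose finitely many compactly supported smooth sections of $\Lambda^{0,1}\otimes TX$ over $\mathring C\times X$ whose restrictions to the graph of $\wt u$ span a complement of ${\rm im}\,D_{\wt u}$. Unique continuation and density of compactly supported sections away from nodes make this possible.
\item Near $\rho=+\infty$ we ask these sections to be supported on $C_\phi$ and to span a complement of ${\rm im}\,D^{\mb C}_u$. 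We take complex spans of them, so that the resulting map is complex-linear, and add $\nu_q$ on the $u_q$-factor.
\item Near $\rho=-\infty$ we add nothing beyond $\nu_p$. Here transversality holds because of $R_p^+$ together with the $W$-thickening already present in the Kuranishi chart. If transversality for $D_u$ alone fails, we first enlarge by sections supported on the $C_\phi$-part, multiplied by a cutoff in $\rho$ that vanishes for $\rho$ near $-\infty$. Openness of surjectivity lets us reach $\rho=-\infty$ once the $\rho\le-R$ region is handled by $\nu_p$ and the chart's own obstruction space. Uniform gluing estimates then propagate surjectivity from $\rho=-\infty$ to $\rho\le -R$.
\end{itemize}
Surjectivity is an open condition. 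For the operator families $D_{\wt u}$ this holds via the relatively smooth structure and the gluing setup described before the statement, including near $\rho=\pm\infty$. So each local choice works on a neighbourhood.

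\textbf{Step 2: globalization.} We cover the compact set by finitely many such neighbourhoods and transport each local map to nearby fibres by parallel transport with respect to $\nabla^{TX}$ and a cutoff. We then average over $G_{pq}$ by inducing up to the representation $L^2(G_{pq})\otimes(\text{local space})$ and truncating to a finite-dimensional $G_{pq}$-invariant subspace by Peter--Weyl. Summing gives a unitary representation $W_{pq}$ and a continuous equivariant bundle map. Adding more obstruction sections preserves transversality, so (3) holds. The block forms (1) and (2) survive because every added section is cut off to vanish on the extra $u_p$-component and for $\rho$ near $-\infty$, and is supported on $C_\phi$ near $+\infty$.

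\textbf{Step 3: involution compatibility.} We pair each local construction at $\wt x$ with its conjugate at $\wt x^\dagger$ via \eqref{eqn:thicken-conjugate}. We replace $W_{pq}$ by $W\oplus W^\dagger$ in a symmetric way, which is possible since $(p,q)\ne(p^\dagger,q^\dagger)$ is treated as a pair.

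\textbf{Main obstacle.} The hardest point is uniform surjectivity near $\rho=\pm\infty$, namely the gluing estimate showing that transversality at the broken limit implies transversality for $|\rho|\gg 0$, uniformly over the compact part of $V_{pq}$ including nodal curves. We would follow the gluing analysis of \cite[Section 16]{Bai_Xu_foundation} essentially verbatim, using the exponential convergence of $u$ to $\gamma_p,\gamma_q$ and the trivial holonomy of $\nabla^{TX}$ along the orbits.
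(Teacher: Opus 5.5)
Your proposal is correct and follows the same route as the paper. The paper's proof is only the remark that the lemma ``follows from basic Fredholm theory and part of the gluing argument,'' and your local stabilization at each $\wt x$, openness of surjectivity (with gluing giving uniformity near $\rho=\pm\infty$ and near broken or nodal configurations), compactness of $S_{pq}^{-1}(0)\times[-\infty,+\infty]$, and Peter--Weyl averaging over $G_{pq}$ are what that remark abbreviates. You also correctly identify the point the paper leaves implicit. Block form (1) allows no new obstruction on $C_\phi$ at $\rho=-\infty$, so transversality there must come from $\nu_p$ together with the chart's own thickening directions. In other words, $D_u$ must be read as the linearization of the thickened equation, whose kernel is $T^{\rm vt}V_{pq}$; this matches the paper's later identification of $\wt I_{pq}|_{\partial^-\wt V_{pq}}$.
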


\begin{proof}
This follows from basic Fredholm theory and part of the gluing argument.
\end{proof}

\subsubsection{An extended index bundle}

Now assume $W_{pq}$ and $\wt\nu_{pq}$ are given as in Lemma \ref{lemma175}. Consider the family of linear maps parametrized by $\wt x = (\wt \phi, \wt u, F, e) \in \wt V_{pq}$:
\beqn
\wt \nu_{pq}|_{{\wt x}} \oplus D_{\wt u}: \big( R_p^+ \oplus W_{pq} \oplus R_q^+ \big) \oplus \wt W^{1,\tau}(\wt u^* TX) \to L^ \tau (\Lambda^{0,1}\otimes \wt u^* TX).
\eeqn

\begin{prop}\label{prop1711}
The union of kernels of $\wt \nu_{pq}|_{{\wt x}} \oplus D_{\wt u}$ forms a continuous \footnote{In fact it is a relative smooth vector bundle.} vector bundle 
\beqn
\wt I_{pq}  \to \wt V_{pq}.
\eeqn
\end{prop}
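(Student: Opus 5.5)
The plan is to reduce the statement to the standard fact that the kernels of a continuous family of surjective Fredholm operators form a continuous vector bundle, and to handle the boundary values $\rho = \pm\infty$ by gluing.

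First, I will establish the two ingredients needed on the open part $\rho\in(-\infty,+\infty)$. Over $\wt V_{pq}\cap\{\rho\in\mathbb R\}$, and over each smooth stratum of $B_{pq}$, the operators $D_{\wt u}$ in \eqref{family_CR_operator} act on the fixed domain $C_\phi$. After trivializing $\wt u^*TX$ by parallel transport with $\nabla^{TX}$, they become a family of Fredholm operators between fixed Banach spaces. This family depends continuously (in operator norm) on $\wt x$, because $\wt u$ varies in $C^1_{\rm loc}$ with uniform exponential decay at the ends, and both $\chi_\rho$ and $\wt\nu_{pq}$ are continuous. By Lemma \ref{lemma175}(3), $\wt\nu_{pq}|_{\wt x}\oplus D_{\wt u}$ is surjective at every point. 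With these two facts, I will build local trivializations in the standard way. Near $\wt x_0$, pick a complement $K_0^\perp$ of the kernel $K_0$. The restriction of the operator to $K_0^\perp$ is then an isomorphism onto the target for all nearby $\wt x$. Projecting kernels onto $K_0$ along $K_0^\perp$ gives the local trivialization, and its continuity follows from continuity of the inverse. The fibre dimension is constant because the index is locally constant and the cokernel vanishes.

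Second, I will treat the nodal and broken strata (sphere bubbles, Floer breakings, and $\rho=\pm\infty$), where the domain $\wt C_{\phi,\rho}$ changes topology. Here I follow the same pattern as the non-involutive case in \cite[Section 16]{Bai_Xu_foundation}.
\begin{itemize}
\item At a point $\wt x_0$ on such a stratum, use pregluing with gluing parameters to produce approximate right inverses for the nearby glued operators, uniformly in the gluing parameter.
\item Since $\wt\nu_{pq}$ has compact support away from necks, and by the block forms in Lemma \ref{lemma175}(1)(2), the surjectivity at $\wt x_0$ transfers to nearby points.
\item The kernel at $\wt x_0$ is then mapped isomorphically onto nearby kernels by the glue-then-project-with-right-inverse map. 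This furnishes continuous local trivializations across strata.
\end{itemize}
At $\rho=-\infty$ the kernel splits as $R_p^-\oplus\ker(D_u$ part$)$, and at $\rho=+\infty$ similarly with $R_q^-$; this is exactly the remark preceding the lemma. I expect this gluing continuity to be the main obstacle, particularly at $\rho=\pm\infty$. The difficulty is that the neck there is created by the cut-off $\chi_\rho$ sliding off to infinity rather than by a node. I would handle it by the weighted/tilde Sobolev spaces $\wt W^{1,\tau}$, which make the asymptotic operators at $\gamma_p,\gamma_q$ uniformly invertible modulo the constant parallel sections.

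Finally, I will verify the involution and equivariance, which requires no further analysis. The conjugate-linear identifications of domains and targets (the commutative diagrams above for $D_p$ and $D_u^{\mathbb C}$) intertwine $D_{\wt u}$ with $D_{\wt u^\dagger}$. The choices of $\wt\nu_{pq}$ and $\wt\nu_{p^\dagger q^\dagger}$ will be made compatibly under $R_p^+\cong R_{p^\dagger}^+$, $W_{pq}^\dagger\cong W_{p^\dagger q^\dagger}$, by choosing on one element of each $\mathbb Z/2$-orbit and transporting. So the kernel bundles correspond under $\wt V_{pq}\cong\wt V_{p^\dagger q^\dagger}$, and $G_{pq}$-equivariance is inherited from that of $\wt\nu_{pq}$. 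Relative smoothness follows by running the same argument with fibrewise smooth dependence on $\wt V_{pq}\to\wt B_{pq}$.
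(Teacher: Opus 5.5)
Your proposal is correct and follows the same route as the paper, only in more detail: kernels at broken or nodal configurations (including $\rho=\pm\infty$) are preglued linearly via parallel transport and cut-and-paste, then corrected by a uniformly bounded right inverse, which gives local trivializations of the kernel family across all strata. Two small imprecisions: at $\rho=-\infty$ the kernel also contains the summands $W_{pq}\oplus R_q^+$ (on which $\wt\nu_{pq}$ vanishes there), and only the neck at $\rho=+\infty$ has degenerate asymptotics and needs the matching condition built into $\wt W^{1,\tau}$, whereas the neck at $\rho=-\infty$ is governed by the nondegenerate operator $D_{u_p}$.
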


\begin{proof}
The proof is based on the typical gluing construction. However, one needs to maintain the linear structure in the gluing procedure. The pregluing is constructed using parallel transport on the target manifold $X$ and cut-paste on the domain, which are linear in the input. The use of the implicit function theorem can also respect the linear feature.
\end{proof}

Hence the vector bundles on the two boundary pieces, $\partial^\pm \wt V_{pq}:= V_{pq}\times \{\pm\infty\} \cong V_{pq}$ are (non-canonically) isomorphic. The family of kernels selects a homotopy class of isomorphisms. By the specific characterization of $\wt\nu_{pq}$ given in Lemma \ref{lemma175}, one can see that 
\beqn
\wt I_{pq} |_{\partial^- \wt V_{pq}} \cong \uds R_p^- \oplus T^{\rm vt} V_{pq} \oplus \uds W_{pq} \oplus \uds R_q^+;
\eeqn
and
\beqn
\wt I_{pq} |_{\partial^+ \wt V_{pq}} \cong \uds R_p^+ \oplus I_{pq}^{\mb C} \oplus \uds R_q^-.
\eeqn
Here $I_{pq}^{\mb C}$ is formed by the kernels of the complex-linear  operator 
\beqn
\wt\nu_{pq}|_{{\wt x}} \oplus D_u^{\mb C}: W_{pq} \oplus \wt W^{1,\tau}(C_\phi, u^* TX) \to L^\tau( C_\phi, \Lambda^{0, 1} \otimes u^* TX)
\eeqn
hence is a complex vector bundle.

We can then choose a continuous bundle isomorphism
\beqn
\uds R_p^- \oplus T^{\rm vt} V_{pq} \oplus \uds W_{pq} \oplus \uds R_q^+ \cong \uds R_p^+ \oplus I_{pq}^{\mb C} \oplus \uds R_q^-.
\eeqn
Notice that $G_{pq}$ acts trivially on the vector spaces $R_p^\pm$ and $R_q^\pm$. As $W_{pq}$ is complex, one can see that this is a stable isomorphism from $T^{\rm vt} V_{pq}$ to $I_{pq}^{\mb C}$ and hence induces an NC structure on $T^{\rm vt} V_{pq}$.

Again, one can treat $(p, q)$ and $(p^\dagger, q^\dagger)$ simultaneously to respect the involution. We omit the details. However, they will be covered by Lemma \ref{lemma_index_interpolator} proved below.

\subsection{Proof of Theorem \ref{thm71}}\label{subsection173}

\subsubsection{An extended Kuranishi flow category}

We still follow the approach of \cite{Bai_Xu_foundation} to extend the previous construction to all moduli spaces involved in  ${\mb F}$, while maintaining the involution symmetry. Recall that for each pair of objects $p\leq q$, in the AMS construction there is a $G_{pq}$-manifold $B_{pq}$ parametrizing certain curves in $\mb{CP}^{d_{pq}}$. Consider the moduli space
\beqn
\ov{\mc M}{}_{pq, +} (\mb{CP}^{d_{pq}})
\eeqn
as the analogue of $\ov{\mc M}_{pq}(\mb{CP}^d)$ which contains configurations with an additional marked point lying on the line $t = 0$ of cylindrical domains. There is the forgetful map with generic fiber ${\mb R}$
\beqn
\ov{\mc M}{}_{pq, +}(\mb{CP}^{d_{pq}}) \to \ov{\mc M}{}_{pq}( \mb{CP}^{d_{pq}}).
\eeqn
Let $\wt B_{pq}$ be the preimage of $B_{pq}$; let $\wt \phi$ denote a typical element of $\wt B_{pq}$ with the underlying point $\phi \in B_{pq}$. We use the convention that this interior marked point does not go to sphere bubbles but only stays in cylindrical components; namely, it can coincide with a node. Let 
\beqn
\wt C_{pq} \to \wt B_{pq}
\eeqn
be the pullback of $C_{pq} \to B_{pq}$,  which still gives an equivariant family of curves
\beqn
\wt {\mc C}_{pq}:= ({\mc G}_{pq}, \wt B_{pq}, \wt C_{pq}).
\eeqn
Let $C_{\wt \phi} \subset \wt C_{pq}$ be the fiber over $\wt \phi \in \wt B_{pq}$. Then there is a natural map 
\beqn
C_{\wt \phi} \to C_\phi
\eeqn
which collapses at most one constant cylindrical component. Notice that for each $\wt \phi \in \wt B_{pq}$, there is a special cylindrical component
\beqn
C_{\wt \phi}^\star \subset C_{\wt \phi}
\eeqn
which contains the marked point, which we call the {\bf cylinder of interpolation}.

Moreover, notice that there are two disjoint closed subsets 
\begin{align*}
&\ \partial^- \wt B_{pq},\ &\ \partial^+ \wt B_{pq}
\end{align*}
corresponding to configurations where the marked point is at the very left resp. very right of the curve. Notice that one has canonical identifications
\beqn
\partial^- \wt B_{pq} \cong B_{pq} \cong \partial^+ \wt B_{pq}.
\eeqn

Notice that $\wt B_{pq}$ and $\wt C_{pq}$ can also be organized as morphism spaces of a flow category. However, because they are objects in a special category that does not have an obvious symmetric monoidal structure, we will describe the flow category structure in a more {\it ad hoc} way, without specifying an underlying category of stratified objects. Define a ``product'' $\wt B_{pr} \boxtimes \wt B_{rq}$ via the diagram
\beqn
\vcenter{ \xymatrix{ \wt B_{pr} \boxtimes \wt B_{rq} \ar[r] \ar[d]  &   \wt B_{pq} \ar[d] \\
          B_{pr}\times B_{rq} \ar[r]_-{\zeta_{prq}^B}   &  B_{pq} } },
\eeqn
which we require to be a pullback. The ``product" universal curves can also be defined as a pullback, which admits a domain map
\beqn
\wt C_{pr} \boxtimes \wt C_{rq} \to \wt B_{pr} \boxtimes \wt B_{rq}.
\eeqn

Now suppose we have obtained a relatively smooth AMS lift  of ${\mb F}$ enriched in $\outer \uds{\bf S^{\rm rel} Kur}_{\rm rig}$. For each pair of $p \leq q$, let 
\beqn
\wt V_{pq} \to \wt B_{pq}
\eeqn
be the pullback of $V_{pq} \to B_{pq}$. Then one can define similarly $\wt V_{prq}:= \wt V_{pr} \boxtimes \wt V_{rq}$ and corresponding maps
\beqn
\wt V_{prq} \to \partial^{prq} \wt V_{pq}.
\eeqn

We remark that all these constructions are canonical and do not involve additional choices. Hence the $\Pi \times {\mb Z}$-symmetry and involution are canonically induced.

\subsubsection{The construction of the normal complex structure}\hfill

---{\bf The family of Cauchy--Riemann operators}---Then we would like to build a family of Fredholm operators indexed by points $\wt x = (\wt \phi, \wt u, F, e) \in \wt V_{pq}$ while maintaining the involution symmetry. Fix $\tau > 2$. The construction appears to depend on $\tau$ but the resulting normal complex structure does not. Notice that for each $\wt x \in \wt V_{pq}$, there is a pair of Banach spaces
\begin{align*}
&\ {\mc E}_{\wt x}^1:= \wt W^{1,\tau}(C_{\wt \phi}, \wt u^* TX),\ &\ {\mc E}_{\wt x}^0:= L^{\tau}(C_{\wt \phi}, \Lambda^{0,1} \otimes \wt u^* TX).
\end{align*}
We regard them as ``vector bundles'' over $\wt V_{pq}$ only in a set-theoretic sense, denoted by 
\begin{align*}
&\ {\mc E}_{pq}^1\to \wt V_{pq},\ &\ {\mc E}_{pq}^0 \to \wt V_{pq}.
\end{align*} 

We give an {\it ad hoc} construction of a family of Cauchy--Riemann operators between these spaces. First we construct a family of cut-off functions.

\begin{lemma}\label{lemma_cut_off_function}
There exists a collection of $G_{pq}$-invariant cut-off functions
\beqn
\chi_{pq}: \wt C_{pq} \to [0, 1]
\eeqn
for all pairs of objects $p \leq q$ of ${\mb F}$ satisfying the following conditions.
\begin{enumerate}

    \item $\chi_{pq} \equiv 1$ on components to the right of $C_{\wt \phi}^\star$ and $ \chi_{pq} \equiv 0$ on components to the left of $C_{\wt \phi}^\star$.

    \item On $C_{\wt \phi}^\star$, $\chi_{pq} = 0$ for $s \ll 0$ and $ \chi_{pq} = 1$ for $s \gg 0$ where $s$ is a cylindrical coordinate.

    \item $\chi_{pq}$ is collared.

    \item $\chi_{pq}$ respects the flow category structure. Namely, whenever $p< r < q$ and $\wt \phi \in \wt B_{pr} \boxtimes \wt B_{rq}$, if the marked point is on the $pr$-section of $C_{\wt\phi}$  then on this part $\chi_{pq}$ coincides with $\chi_{pr}$; if the marked point is on the $rq$-section then on this section $\chi_{pq}$ coincides with $\chi_{rq}$.

    \item $\chi_{pq}$ respects the rigidification. $\chi_{pq}$ does not vary with the normal direction near $\wt C_{pr}\boxtimes \wt C_{rq}$ inside $\partial^{prq} \wt C_{pq}$. 

    \item These functions respect the involution. Namely, under the identification $\wt C_{pq} \cong \wt C_{p^\dagger q^\dagger}$, one has $\chi_{pq} = \chi_{p^\dagger q^\dagger}$.

\end{enumerate}
\end{lemma}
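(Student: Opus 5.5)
We build the functions $\chi_{pq}$ by induction over pairs of objects, ordered first by the energy $d_{pq}={\mc A}_H(\gamma_p)-{\mc A}_H(\gamma_q)$ and then by $f_{\rm Tate}(\mu_p)-f_{\rm Tate}(\mu_q)$, as in the proofs of Proposition \ref{prop_group_reduction} and Proposition \ref{prop_perturbation}. At each step we treat a whole $\Pi\times{\mb Z}\times{\mb Z}/2$-orbit of pairs at once. Since the involution $p\mapsto p^\dagger$ on objects is free and commutes with the $\Pi\times{\mb Z}$-action, we choose $\chi_{pq}$ for one representative pair $(p,q)$ in a $\Pi\times{\mb Z}$-orbit. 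We then transport it to $\lambda p,\lambda q$ via $T_\lambda$, and to $(p^\dagger,q^\dagger)$ via the canonical identification $\wt C_{pq}\cong\wt C_{p^\dagger q^\dagger}$. This identification is induced by \eqref{base_involution} and the fibrewise conjugation of prestable cylinders, $(s,t)\mapsto(s,-t)$. Condition (6) then holds by construction, and so does the $\Pi\times{\mb Z}$-symmetry.

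The key local model is a single cylinder of interpolation. Fix a once-and-for-all cut-off $\chi_0:{\mb R}\to[0,1]$ with $\chi_0=0$ for $s\ll0$ and $\chi_0=1$ for $s\gg0$. On $C^\star_{\wt\phi}$, with the marked point at $(s_0,0)$, set $\chi=\chi_0(s-s_0)$; set $\chi\equiv 0$ to the left and $\chi\equiv1$ to the right. Conditions (1)–(2) then hold.

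This function depends on $s$ only through the marked point, and translations in $s$ are exactly the allowed isomorphisms of prestable cylinders. It is therefore well defined on the universal curve and invariant under reparametrizations, hence ${\mc G}_{pq}$-invariant, in particular $G_{pq}$-invariant. It is also invariant under $(s,t)\mapsto(s,-t)$, because it is $t$-independent and the marked point lies on $t=0$. So this canonical choice is already involution-compatible, which is why the orbit-wise induction needs no genuine extra choices for (6).

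Moreover, when the domain breaks as $\wt\phi\in\wt B_{pr}\boxtimes\wt B_{rq}$, the cylinder of interpolation lies in exactly one of the two pieces. The formula restricts there to $\chi_{pr}$ or $\chi_{rq}$, and is $0$ or $1$ on the other piece. This gives (4) and associativity automatically.

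The remaining issue, and the one we expect to be the main obstacle, is (3) and (5): collaring and compatibility with the rigidification. After outercollaring, points of $\wt B_{pq}$ in the collar $[0,\epsilon)^{\mb{Face}}$ near $\partial^{prq}$ correspond to glued domains with large neck parameters. The rigidification directions $Q_{pr}\oplus Q_{rq}$ in Definition \ref{defn_curve_rigidification} move the curve in $B_{pq}$ and change its cylindrical coordinates only by the gluing. We therefore define $\chi_{pq}$ near the boundary by pulling back, via the collar and rigidification maps $\theta_{prq}$, the already constructed $\chi_{pr}\boxtimes\chi_{rq}$, declaring it constant in the collar and normal coordinates. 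Away from the boundary we keep the canonical formula, and we interpolate using $G_{pq}$-invariant partitions of unity on $\wt B_{pq}$ subordinate to the collar neighborhoods. A convex combination of cut-off functions satisfying (1)–(2) still satisfies (1)–(2).

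Where collars for different $r$ overlap (corners), the pullbacks agree by the inductive hypothesis (4)–(5) at lower energy together with the associativity of rigidified structural maps. The partition of unity should be chosen pulled back from the involution-invariant base $B_{pq}\cong B_{p^\dagger q^\dagger}$, and we average it over $G_{pq}$, which is compact. With that choice every step commutes with $\dagger$ and $T_\lambda$. The invariance of the interpolated functions is then checked orbit by orbit, exactly as in Proposition \ref{prop_perturbation}.
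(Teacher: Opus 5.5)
Your argument is correct. Its skeleton is the paper's own: induction on $(d_{pq},\,f_{\rm Tate}(\mu_p)-f_{\rm Tate}(\mu_q))$, one $\Pi\times{\mb Z}\times{\mb Z}/2$-orbit of pairs at a time, choosing on a representative and transporting by $T_\lambda$ and $\dagger$. The paper just cites the non-involutive construction of \cite[Lemma 16.7]{Bai_Xu_foundation} and uses freeness of the involution to impose (6) by transport. One small correction: for that transport you need ${\mb Z}/2$ to act freely on $\Pi\times{\mb Z}$-orbits of pairs, not merely on objects. This does hold, because $\mu\mapsto-\mu$ flips the sign in $\mu^{(k)}_\pm$, and $\Pi\times{\mb Z}$ never changes that sign.

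What you add is the intrinsic model $\chi=\chi_0(s-s_0)$ on $C^\star_{\wt\phi}$, and this is a genuine simplification, not a repackaging. It is built only from the prestable-cylinder structure and the marked point on $t=0$. It is therefore preserved by every fibrewise isomorphism of prestable cylinders fixing the marked point, in particular by ${\mc G}_{pq}$ (which acts on the target and leaves the domain alone), by $T_\lambda$, and by $(s,t)\mapsto(s,-t)$. It also restricts to $\chi_{pr}$ or $\chi_{rq}$ on broken domains.

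For the same reason it already satisfies (3) and (5), contrary to your assessment that these are the main obstacle. Over the outer collars the family of curves is pulled back from the boundary stratum, so the formula is constant there. Inside $\partial^{prq}B_{pq}$, which the strict structural embedding identifies with ${\mc G}_{pq}\times_{{\mc G}_{pr}\times{\mc G}_{rq}}(B_{pr}\times B_{rq})$, the directions transverse to the slice only move the map to $\mb{CP}^{d_{pq}}$ by the group action and do not change the domain or its marked point. Consequently your partition-of-unity step is vacuous, since the pulled-back boundary function coincides with the canonical one. In fact neither the induction nor the freeness of the involution is needed: your route gives an explicit, choice-free family satisfying all six conditions at once. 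The paper's citation route is shorter but relies on freeness and on the inductive machinery of \cite{Bai_Xu_foundation}.

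One detail to add: condition (1) says nothing about sphere bubbles attached to $C^\star_{\wt\phi}$ itself. On such a bubble attached at a point with $s$-coordinate $s_1$, you must set $\chi\equiv\chi_0(s_1-s_0)$; continuity of $\chi$ on $\wt C_{pq}$ under bubble gluing forces exactly this value.
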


\begin{proof}
This is a minor extension of \cite[Lemma 16.7]{Bai_Xu_foundation} to the case involving involutions. As the involution action is free, there is no trouble to maintain such a symmetry. 
\end{proof}

Now for each $\wt x = (\wt \phi, \wt u, F, e) \in \wt V_{pq}$, consider the Cauchy--Riemann operator
\beqn
D_{\wt x}: {\mc E}_{\wt x}^1 \to {\mc E}_{\wt x}^0
\eeqn
given by the same formula as \eqref{family_CR_operator}. We would like to consider the index bundles. Let
\beqn
\mathring {\mc E}_{\wt x}^0 \subset {\mc E}_{\wt x}^0
\eeqn
be the subspace of compactly supported smooth sections and let 
\beqn
\mathring {\mc E}_{pq}^0 \to \wt V_{pq}
\eeqn
denote the corresponding ``subbundle.''

\begin{defn}(cf. \cite[Definition 16.8]{Bai_Xu_foundation}) \label{defn_bundle_interpolator}
An {\bf involutive index bundle interpolator} over the AMS lift consists of the following items.
\begin{enumerate}

\item For each object $p \in {\rm Ob}{\mb F}$, a finite-dimensional complex vector space $R_p^+$ and a complex-linear map
\beqn
\nu_p: R_p^+ \to \Omega_c^{0,1} ({\mb R}\times S^1, u_p^* TX)
\eeqn
which is transverse to the image of $D_p$ (see \eqref{operator_dp}). We require that $R_p^+$ and $\nu_p$ only depend on the $\Pi\times {\mb Z}$-orbit, i.e., the underlying uncapped orbit $\uds\gamma_p$. Denote
\beqn
R_p^- = {\rm ker} (\nu_p \oplus D_p) \subset R_p^+ \oplus  \wt W^{1,\tau}({\mb R}\times S^1, u_p^* TX),
\eeqn
which is only a real vector space.

\item For each $p \leq q$, a finite-dimensional Hermitian representation $W_{pq}$ of $G_{pq}$ and a $G_{pq}$-equivariant complex-linear bundle map over $\wt V_{pq}$
\beqn
\wt\nu_{pq}: \uds R_p^+ \oplus \uds W_{pq} \oplus \uds R_q^+ \to \mathring {\mc E}_{pq}^0.
\eeqn

\item For each triple $p < r < q$, a complex-linear map
\beqn
\wt\iota_{prq}: W_{pr} \oplus R_r^+ \oplus W_{rq} \to W_{pq}
\eeqn
which is equivariant with respect to the group homomorphism $G_{pr}\times G_{rq} \to G_{pq}$.

\item Two collections of complex-linear isomorphisms
\begin{align}
&\ (R_p^+)^\dagger \cong R_{p^\dagger}^+,\ &\  (W_{pq})^\dagger \cong W_{p^\dagger q^\dagger}.
\end{align}
\end{enumerate}
They are required to satisfy the following conditions.
\begin{enumerate}
    \item {\bf (The original index bundle)} For each $\wt x  = (\wt \phi, \wt u, F, e) \in \partial^- \wt V_{pq}$, with respect to the natural decomposition
    \beqn
    \mathring {\mc E}_{\wt x}^0 \cong \Omega^{0,1}_c({\mb R}\times S^1, u_p^* TX) \oplus \Omega^{0,1}_c(C_\phi, u^* TX)
    \eeqn
    we have
    \beqn
    \wt\nu_{pq} = \left[ \begin{array}{ccc}  \nu_p & 0 & 0  \\  0 &  0  & 0 \end{array}\right]
    \eeqn

    \item {\bf (The complex index bundle)} For each $\wt x  = (\wt \phi, \wt u, F, e) \in \partial^+ \wt V_{pq}$, with respect to the natural decomposition
    \beqn
    \mathring {\mc E}_{\wt x}^0 \cong 
    \Omega_c^{0,1}( C_\phi, u^* TX) \oplus \Omega_c^{0,1}({\mb R}\times S^1, u_q^* TX)
    \eeqn
    we have
    \beqn
    \wt\nu_{pq} = \left[ \begin{array}{ccc} 0  &  \nu_{\wt x} &  0 \\
          0 & 0 & \nu_q \end{array}\right].
    \eeqn
    Here $\nu_{\wt x}: W_{pq} \to \Omega_c^{0,1}(C_\phi, u^* TX)$ is a complex-linear map.

    \item {\bf (Flow category structure I)} The above conditions imply that $\wt\nu_{pr}$ and $\wt\nu_{rq}$ determine a bundle map 
    \beqn
    \wt\nu_{prq}: (\uds R_p^+ \oplus \uds W_{pr} \oplus \uds R_r^+ \oplus \uds W_{rq} \oplus \uds R_q^+ ) |_{\wt V_{prq}} \to  \mathring {\mc E}_{pr}^0 \boxplus \mathring {\mc E}_{rq}^0.
    \eeqn
    We require that the following diagram commutes.
    \beqn
    \xymatrix{ \big( \uds R_p^+ \oplus \uds W_{pr} \oplus \uds R_r^+ \oplus \uds W_{rq} \oplus \uds R_q^+ \big)|_{\wt V_{prq}} \ar[rr]^-{\wt\nu_{prq}} \ar[dd]_-{{\rm Id}_{R_p^+} \oplus \wt\iota_{prq} \oplus {\rm Id}_{R_q^+}}   &  &  \mathring {\mc E}_{pr}^0 \boxplus \mathring {\mc E}_{rq}^0 \ar[dd] 
    \\ & &  \\ \uds R_p^+ \oplus \uds W_{pq} \oplus \uds R_q^+|_{ \partial^{prq} \wt V_{pq}}  \ar[rr]_-{\wt\nu_{pq}} & &  \mathring {\mc E}_{pq}^0|_{\partial^{prq} \wt V_{pq} }}
    \eeqn

    \item {\bf (Flow category structure II)} Whenever $p<r < s < q$, the following diagram commutes. 
    \beqn
    \xymatrix{   W_{pr} \oplus R_r^+ \oplus W_{rs} \oplus R_s^+ \oplus W_{sq} \ar[rrr]^-{\wt\iota_{prs} \oplus {\rm Id}_{R_s^+} \oplus {\rm Id}_{W_{sq}}} \ar[d]_{{\rm Id}_{W_{pr} \oplus {\rm Id}_{R_r^+} \oplus \wt\iota_{rsq}}}   & &   & W_{ps} \oplus R_s^+ \oplus W_{sq} \ar[d]^{\wt\iota_{psq}} \\
           W_{pr} \oplus R_r^+ \oplus W_{rq} \ar[rrr]_-{\wt\iota_{prq}} &  & & W_{pq} }
    \eeqn

    \item {\bf (Transversality)} For each $\wt x \in \wt V_{pq}$, the image of $\wt \nu_{pq}$, contained in $\mathring {\mc E}_{\wt x}^0 \subset {\mc E}_{\wt x}^0$, is transverse to the image of $D_{\wt x}$ (which is a condition true for all $\tau>2$). 

    \item {\bf (Collar)} The maps $\wt\nu_{pq}$ satisfy the natural collaring condition. Namely, near each stratum $\partial^\alpha \wt V_{pq}$, $\wt\nu_{pq}$ is independent of the collar coordinates.

    \item {\bf (Rigidification)} $\wt \nu_{pq}$ does not vary in the normal direction to $\wt V_{prq}$  specified by the rigidification.

    \item {\bf (Novikov equivariance)} These objects are equivariant with respect to the natural $\Pi \times {\mb Z}$-action.

    \item {\bf (Involution invariance)} The diagrams commute.
    \beqn
    \xymatrix{ R_p^+ \ar[rr]^-{\nu_p} \ar[d]   &    &   \Omega_c^{0,1}({\mb R}\times S^1, u_p^*TX)  \ar[d]\\
               R_{p^\dagger}^+ \ar[rr]_-{\nu_{p^\dagger}} &  & \Omega_c^{0,1}( {\mb R}\times S^1, u_{p^\dagger}^* TX)         }
    \eeqn
    \beqn
    \xymatrix{    (\uds R_p^+)^\dagger \oplus (\uds W_{pq})^\dagger \oplus (\uds R_q^+)^\dagger \ar[rr]^-{\wt\nu_{pq}}  \ar[d] &  &    \mathring {\mc E}_{pq}^0 \ar[d]\\
                \uds R_{p^\dagger}^+ \oplus \uds W_{p^\dagger q^\dagger} \oplus \uds R_{q^\dagger}^+ \ar[rr]_-{\wt \nu_{p^\dagger q^\dagger}} &    &   \mathring {\mc E}_{p^\dagger q^\dagger}^0 }.
    \eeqn
    \beqn
    \xymatrix{    W_{pr} \oplus R_r^+ \oplus W_{rq}  \ar[rr]^-{\wt\iota_{prq}} \ar[d]   &  &   W_{pq} \ar[d]\\
                 W_{p^\dagger r^\dagger} \oplus R_{r^\dagger}^+ \oplus W_{r^\dagger q^\dagger}  \ar[rr]_-{\wt\iota_{p^\dagger r^\dagger q^\dagger}}   &   &   W_{p^\dagger q^\dagger} }
    \eeqn
\end{enumerate}
\end{defn}

\begin{lemma}\label{lemma_index_interpolator}
There exists an involutive index bundle interpolator.
\end{lemma}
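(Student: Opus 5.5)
We will construct the interpolator by the same double induction as in the non-involutive case \cite[Lemma 16.9]{Bai_Xu_foundation}, on the energy $d_{pq}$ and on $f_{\rm Tate}(\mu_p)-f_{\rm Tate}(\mu_q)$. At each step we treat a whole $\Pi\times{\mb Z}\times{\mb Z}/2$-orbit of pairs $(p,q)$. Since the involution $p\mapsto p^\dagger$ on ${\rm Ob}{\mb F}$ is free, each such orbit splits into two $\Pi\times{\mb Z}$-orbits. On one representative $\Pi\times{\mb Z}$-orbit we make the choices freely, keeping $\Pi\times{\mb Z}$-equivariance. The data on the conjugate orbit are then defined by transport. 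That is, we set $R_{p^\dagger}^+:=(R_p^+)^\dagger$, $W_{p^\dagger q^\dagger}:=(W_{pq})^\dagger$ with the conjugated $G_{pq}\cong G_{p^\dagger q^\dagger}$-action, and
\[
\wt\nu_{p^\dagger q^\dagger}(\cdot)=d\tau_X\circ\wt\nu_{pq}(\cdot)\circ(\text{domain conjugation}),
\]
exactly as in \eqref{eqn:thicken-conjugate}. This choice makes the involution-invariance diagrams commute by construction.

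The first step handles the objects. For each underlying orbit $\uds\gamma_p$ with $\uds\gamma_p\ne\uds\gamma_p^\dagger$, we choose $R_p^+$ and $\nu_p$ by standard Fredholm theory: the operator $D_p$ of \eqref{operator_dp} has finite-dimensional cokernel, and compactly supported smooth sections are dense. We then transport these choices to $p^\dagger$. For a fixed orbit $\uds\gamma_p=\uds\gamma_p^\dagger$, we instead choose any $(R,\nu)$ and replace it by the doubling $R\oplus R^\dagger$ with $\nu\oplus\nu^\dagger$. The doubled map is still transverse to $D_p$, and it is conjugation-equivariant because the commutative diagram for $D_p$ and $D_{p^\dagger}$ intertwines $D_p$ with itself. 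Dependence only on $\uds\gamma_p$ is imposed by making these choices once per $\Pi\times{\mb Z}$-orbit.

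The second step is the inductive step for a pair $(p,q)$. All $\wt\nu_{pr},\wt\nu_{rq}$ with $p<r<q$ are already constructed, together with their conjugates. We set $W_{pq}^{\partial}=\bigoplus_r W_{pr}\oplus R_r^+\oplus W_{rq}$ (over the relevant chains), which gives $\wt\iota_{prq}$ and hence $\wt\nu_{pq}$ on the boundary strata $\partial^{prq}\wt V_{pq}$. Using the collar and the rigidification, we extend this constantly in collar and normal directions to a neighborhood. Next we add a further Hermitian $G_{pq}$-representation $W'_{pq}$. On $\partial^\pm\wt V_{pq}$ it is prescribed by the block forms (1) and (2), the $+$ side being complex-linear in $W$. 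In the interior it is obtained by the AMS trick: take a finite sum of $G_{pq}$-translates of local compactly supported sections, pushed through the universal curve, cut off away from the collars, and made transverse on the compact zero-locus neighborhood. This yields transversality (5) over the full $\wt V_{pq}$ after shrinking. Associativity (4) follows by taking the $\wt\iota$ as the canonical inclusions of summands.

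The main obstacle is that $(p,q)$ and $(p^\dagger,q^\dagger)$ could lie in the same $\Pi\times{\mb Z}$-orbit, or coincide up to that action. In that case transport produces a constraint on the data rather than a free choice. The first thing to check is whether this can happen at all; it cannot be ruled out, since fixed orbits exist. In those cases we symmetrize by doubling: we set $W_{pq}\oplus W_{pq}^\dagger$ with $\wt\nu\oplus\wt\nu^\dagger$, and on the boundary use the already-involutive induced data. Doubling preserves transversality, which is an open condition, as well as the block forms, complex-linearity, collaring and rigidification. The remaining subtlety is compatibility of the doubled boundary data with $\wt\iota_{prq}$. We handle this by letting $\wt\iota_{prq}$ land in the $\dagger$-fixed diagonal embedding of the doubled space, which is still complex-linear and equivariant.
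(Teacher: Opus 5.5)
Your proposal is essentially the paper's argument. You run the induction of \cite[Lemma 16.9]{Bai_Xu_foundation} one $\Pi\times{\mb Z}\times{\mb Z}/2$-orbit of pairs at a time, choose freely on one $\Pi\times{\mb Z}$-orbit, and transport to the other via $d\tau_X$ and domain conjugation. Transversality carries over because the $\tau_X$-invariant connection and the involution-invariant cut-offs of Lemma \ref{lemma_cut_off_function} intertwine $D_{\wt x}$ with $D_{\wt x^\dagger}$.

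However, the ``main obstacle'' you raise cannot occur, and your claim that it cannot be ruled out is wrong. Objects are pairs with $p^\dagger=(\gamma_p^\dagger,-\mu_p)$, and $-\mu_\pm^{(k)}=\mu_\mp^{(k)}$, while the $\Pi\times{\mb Z}$-action preserves the sign of $\mu$. So $(p^\dagger,q^\dagger)$ never lies in the $\Pi\times{\mb Z}$-orbit of $(p,q)$, even when $\uds\gamma_p=\uds\gamma_p^\dagger$. This freeness is exactly what coupling with $\wh S^\infty$ provides, and the paper relies on it.

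Hence your pair-level doubling is superfluous. As written it would also need repair, since $x\mapsto(x,x)$ is not complex-linear into $W\oplus W^\dagger$; you would use $x\mapsto(x,\sigma x)$ for the existing conjugate-linear $\sigma$. Your object-level doubling is likewise unnecessary if $R_p^+$ only has to depend on the $\Pi\times{\mb Z}$-orbit of $p$, which remembers the sign of $\mu_p$. It is a harmless fix only under the stricter reading that $R_p^+$ depends on $\uds\gamma_p$ alone.
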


\begin{proof}
The inductive proof in the non-involutive case was given in \cite[Lemma 16.9]{Bai_Xu_foundation}. For the involutive case, it is the same argument because all the auxiliary data respect the symmetry.  
\end{proof}

Once the involutive index bundle interpolator is chosen, one automatically obtains a system of vector bundles over the spaces $\wt V_{pq}$. Indeed, for each $\wt x\in \wt V_{pq}$, denote
\beqn
\wt I_{pq}|_{\wt x}:= {\rm ker} ( \wt \nu_{pq}|_{\wt x} \oplus D_{\wt x}) \subset R_p^+ \oplus W_{pq} \oplus R_q^+ \oplus  {\mc E}_{pq}^1|_{\wt x}.
\eeqn
They form a topological vector bundle $\wt I_{pq} \to \wt V_{pq}$. We consider its restriction to $\partial^\pm \wt V_{pq}$. By the first two conditions of Definition \ref{defn_bundle_interpolator}, one has $G_{pq}$-equivariant identifications
\beqn
\wt I_{pq}|_{\partial^- \wt V_{pq}} \cong \uds R_p^- \oplus T^{\rm vt} V_{pq} \oplus \uds W_{pq} \oplus \uds R_q^+
\eeqn
and 
\beqn
\wt I_{pq}|_{\partial^+ \wt V_{pq}} \cong \uds R_p^+ \oplus I_{pq}^{\mb C} \oplus \uds R_q^-.
\eeqn
Here $I_{pq}^{\mb C}$ has fiber over $\wt x$ being
\beqn
{\rm ker} ( \nu_{\wt x} \oplus D_u^{\mb C})
\eeqn
hence a $G_{pq}$-equivariant complex vector bundle. Notice that $\partial^- \wt V_{pq} \cong V_{pq} \cong \partial^+ \wt V_{pq}$ canonically and the inclusions of $V_{pq}$ into $\wt V_{pq}$ from both sides are $G_{pq}$-equivariant homotopy equivalences. Hence the two restriction bundles are equivariantly isomorphic (without a canonical isomorphism).

Notice that the involution symmetry induces the equivariant identification (over $G_{pq} \cong G_{p^\dagger q^\dagger}$) 
\beqn
\wt I_{pq} \cong \wt I_{p^\dagger q^\dagger}
\eeqn
such that the restriction $I_{pq}^{\mb C} \cong I_{p^\dagger q^\dagger}^{\mb C}$ is conjugate linear. 

To construct a normal complex structure on the flow category, one must choose such bundle isomorphisms in a coherent way. Notice that the product $\wt V_{prq} \cong \wt V_{pr}\boxtimes \wt V_{rq}$ is the union
\beqn
\wt V_{pr}\times \partial^- \wt V_{rq} \underset{\partial^+ \wt V_{pr}\times \partial^- \wt V_{rq}}{\cup} \partial^+ \wt V_{pr} \times \wt V_{rq}.
\eeqn
Hence $\wt I_{pr}$ and $\wt I_{rq}$ determine a vector bundle
\beqn
\wt I_{prq} \to \wt V_{prq}
\eeqn
which is $\wt I_{pr} \oplus \uds R_r^- \oplus T^{\rm vt} V_{rq} \oplus \uds W_{rq} \oplus \uds R_q^+$ on the first piece and which is $\uds R_p^+ \oplus I_{pr}^{\mb C} \oplus \wt I_{rq}$ on the second piece.

\begin{lemma}\label{lemma1713}
There exists a system of $G_{pq}$-equivariant real vector bundle isomorphisms
\beqn
\wt I_{pq}|_{\partial^- \wt V_{pq}} \cong \wt I_{pq}|_{\partial^+ \wt V_{pq}}
\eeqn
satisfying the following conditions.

\begin{enumerate}

\item Whenever $p<r < q$, the following diagram commutes.
\beqn
\xymatrix{  \wt I_{prq}|_{\partial^- \wt V_{pr}\times \partial^- \wt V_{rq}}      \ar[r] \ar[d]  &  \wt I_{prq}|_{\partial^+ \wt V_{pr} \times \partial^- \wt V_{rq}}  \ar[r] & \wt I_{prq}|_{\partial^+ \wt V_{pr} \times \partial^+ \wt V_{rq}} \ar[d]\\
\wt I_{pq}|_{\partial^- \wt V_{prq}} \ar[rr] & & \wt I_{pq}|_{\partial^+ \wt V_{prq}}} 
\eeqn

\item The isomorphisms are collared and respect the rigidification.

\item The isomorphisms respect the involution in the following way. Namely, the diagram commutes.
\beqn
\xymatrix{   \wt I_{pq}|_{\partial^- \wt V_{pq}}  \ar[r] \ar[d]  &   \wt I_{pq}|_{\partial^+ \wt V_{pq}} \ar[d]\\
           \wt I_{p^\dagger q^\dagger}|_{\partial^- \wt V_{p^\dagger q^\dagger}} \ar[r]    &     \wt I_{p^\dagger q^\dagger} |_{\partial^+ \wt V_{p^\dagger q^\dagger}}     }
\eeqn

\end{enumerate}
\end{lemma}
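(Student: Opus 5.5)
We construct the isomorphisms by induction on the pair $(p,q)$, ordered by the energy $d_{pq}$ and the Tate index difference $f_{\rm Tate}(\mu_p)-f_{\rm Tate}(\mu_q)$, exactly as in the non-involutive case \cite[Lemma 16.10]{Bai_Xu_foundation}. At each inductive step we handle a whole $\Pi\times{\mb Z}\times{\mb Z}/2$-orbit of pairs simultaneously. Since the involution $p\mapsto p^\dagger$ on objects is free (it flips the sign of $\mu_p$), the pairs $(p,q)$ and $(p^\dagger,q^\dagger)$ are distinct. We therefore pick one representative of each ${\mb Z}/2$-orbit inside the $\Pi\times{\mb Z}$-orbit, construct the isomorphism there, and define it at the conjugate pair by transport through $\wt I_{pq}\cong \wt I_{p^\dagger q^\dagger}$. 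Condition (3) then holds by construction. Condition (3) is consistent with (1) on lower strata because the inductive hypothesis already gives involution-compatible isomorphisms on $\wt V_{pr}$ and $\wt V_{rq}$. The structural maps, the interpolator data $\wt\iota_{prq}$, and the identifications $\wt V_{prq}\cong \wt V_{pr}\boxtimes\wt V_{rq}$ all intertwine the involution (Definition \ref{defn_bundle_interpolator}(9) and the canonical involution on $\wt B_{pq}$, $\wt C_{pq}$). $\Pi\times{\mb Z}$-equivariance is arranged by transporting along the free action.

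For a fixed representative $(p,q)$, the isomorphism is already prescribed by (1) on the union $\partial V_{pq}$ of boundary strata $\partial^{prq}V_{pq}$ (identified via $\partial^\pm\wt V_{pq}\cong V_{pq}$). On each such stratum it is the composite in the top row of the diagram in (1), built from the inductively chosen isomorphisms for $(p,r)$ and $(r,q)$. Associativity of these prescriptions on overlaps $\partial^{prsq}$ follows from (Flow category structure II) and the inductive hypothesis, so they glue to a $G_{pq}$-equivariant isomorphism near $\partial V_{pq}$. Collaring and rigidification compatibility then let us extend this isomorphism to a collar neighborhood constantly in the collar and normal directions, so that (2) holds.

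The remaining step, which I expect to be the main obstacle, is to extend this equivariant bundle isomorphism from a neighborhood of $\partial V_{pq}$ over all of $V_{pq}$ (more precisely, over a neighborhood of $S_{pq}^{-1}(0)$, shrinking as usual). The two bundles $\wt I_{pq}|_{\partial^\pm}$ are $G_{pq}$-equivariantly isomorphic via the family $\wt I_{pq}$ over $\wt V_{pq}\simeq V_{pq}\times[-\infty,+\infty]$. So the isomorphisms form a nonempty equivariant fiber bundle with fiber $GL(n,{\mb R})$. I would choose the extension in the canonical homotopy class determined by parallel transport along the interval direction of $\wt V_{pq}$, and require the boundary prescription to lie in this class. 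Since the inductively chosen isomorphisms on $\partial^{prq}$ were themselves in the canonical class, and $\wt I_{prq}$ restricts compatibly, this requirement holds. The extension then reduces to extending an equivariant section of a bundle with contractible fiber (the space of isomorphisms homotopic to the canonical one, up to homotopy), and this can be done by an equivariant Tietze/partition of unity argument over the compact group $G_{pq}$. If the homotopy-class bookkeeping is too delicate, a fallback is to stabilize by enlarging $W_{pq}$ through $\wt\iota_{prq}$, which kills the relevant obstruction, as in \cite{Bai_Xu_foundation}.
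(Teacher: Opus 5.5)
Your architecture matches the paper's: induction on $d_{pq}$ and the Tate index difference, one ${\mb Z}/2$-orbit at a time using freeness of $p\mapsto p^\dagger$, the boundary prescription taken from the top row of (1), then extension. The final extension step, however, fails as written.

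At a point $x$, the ($G_x$-equivariant) isomorphisms $\wt I_{pq}|_{\partial^-\wt V_{pq}}\to\wt I_{pq}|_{\partial^+\wt V_{pq}}$ in a fixed homotopy class form a torsor for a component of a general linear group. That space is not contractible, so neither Tietze nor a partition of unity extends a section of this bundle from $\partial V_{pq}$.

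Nor does ``lying in the canonical class on each stratum $\partial^{prq}V_{pq}$'' suffice. Extending requires a homotopy to the canonical section over all of $\partial V_{pq}$, and stratumwise homotopies need not agree on the overlaps $\partial^{prsq}V_{pq}$. The extension chosen over a lower $V_{pr}$ is only determined up to relative homotopy. Such choices can assemble into boundary data on a later $\partial V_{pq}$ that is canonical on each stratum but does not extend.

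Your fallback does not repair this. Stabilizing does not kill these obstructions, since $\pi_k(O)$ is nonzero for infinitely many $k$.

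The missing idea is to keep the transport data along the interval itself, not just its homotopy class, as part of the inductive structure. This is what the paper's ``homotopy triviality of the interpolating interval'' refers to.

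\begin{itemize}
\item The fibres of $\wt V_{pq}\to V_{pq}$ are intervals. Over $\wt V_{prq}=\wt V_{pr}\boxtimes\wt V_{rq}$, the fibre is the concatenation of the $(p,r)$- and $(r,q)$-intervals.
\item Choose inductively $G_{pq}$-equivariant trivializations of $\wt I_{pq}$ along these fibres. That is, choose isomorphisms with the pullback of $\wt I_{pq}|_{\partial^-\wt V_{pq}}$ that restrict to the identity over $\partial^-\wt V_{pq}$. Require them to be compatible with concatenation, collars, rigidification and the involution.
\item Take the value over $\partial^+\wt V_{pq}$ as the required isomorphism.
\end{itemize}

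Transport along a concatenated interval is exactly the composite in the top row of (1), so (1) holds automatically. Over each point, the space of such trivializations is a space of based paths in a group of equivariant automorphisms, hence contractible. The extension from $\partial V_{pq}$ is therefore unobstructed, and the involution is handled just as you propose.
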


\begin{proof}
Induction and homotopy triviality of the interpolating interval $[0,1]$. The involution symmetry can be preserved through the induction procedure. 
\end{proof}

Now as $I_{pq}^{\mb C}$ is complex and $R_p^\pm$, $R_q^\pm$ are trivial representations of $G_{pq}$, the isomorphism induces a $G_{pq}$-equivariant NC structure on $T^{\rm vt} V_{pq}$ (after taking stabilization by $W_{pq}$). The compatibility condition given in Lemma \ref{lemma1713} implies that the structural maps of the flow category respect the vertical NC structures. Therefore, one obtains a lift to $\outer \uds {\bf S^{\rm rel} Kur}_{\rm rig}^{\rm NC}$. The involution symmetry preserved in each step of the construction implies that the lift is also involutive. This finishes the proof of Theorem \ref{thm71}.

\subsection{Stable smoothing respecting the involution}

We will finish proving Theorem \ref{thm_flow_category_lift}, Theorem \ref{thm_PSS_lift}, Theorem \ref{thm_pearly_lift}, and Theorem \ref{thm_homotopy_lift}. At this moment, the flow categories etc., after outercollaring, have been lifted to the category $\outer \uds{\bf S^{\rm rel} Kur}_{\rm rig}^{\rm NC}$ respecting both the $\Pi \times {\mb Z}$-symmetry and the involution. Recall we also have the category $\outer \uds{\bf SKur}_{\rm rig}^{\rm NC}$, the category of smooth normally complex Kuranishi spaces. There is no direct functor relating to $\outer \uds{\bf S^{\rm rel} Kur}_{\rm rig}^{\rm NC}$. However, the relative structure allows us to apply the so-called stable smoothing. 

To systematically apply stable smoothing, one needs to use another intermediate category $\uds{\bf Kur}^+$ whose objects are Kuranishi spaces $K = (G, V, E, S)$ together with a finite-dimensional unitary representation $R$ of $G$ (see details in \cite[Section 17.1]{Bai_Xu_foundation}). It is again an involutive regular stratification category where the involution reverses the complex structure on $R$. There is also a corresponding category $\outer \uds{\bf S^{\rm rel} Kur}_{\rm rig}^+$. Notice that besides the forgetful functor, there is another functor
\beqn
\uds{\bf Kur}_{\rm rig}^+ \to \uds{\bf Kur}_{\rm rig}
\eeqn
by taking the stabilization of $(G, V, E, S)$ by the trivial bundle $R$.

\begin{defn}\cite[Definition 17.4]{Bai_Xu_foundation} A {\bf smoothing} of a flow category/bimodule/homotopy enriched in $\uds{\bf Kur}_{\rm rig}$ is a lift to $\uds{\bf SKur}_{\rm rig}$. A {\bf stable smoothing} of a flow category/bimodule/homotopy enriched in $\uds{\bf Kur}_{\rm rig}$ consists of a lift to $\uds{\bf Kur}_{\rm rig}^+$ and a smoothing of the stabilization.     
\end{defn}

We generalize the stable smoothing theorem of \cite{Bai_Xu_foundation} to the involutive case as follows.

\begin{thm}(cf. \cite[Theorem 17.5]{Bai_Xu_foundation}) Let $\hat {\mb F}$ be a relatively smooth AMS lift of the outercollaring of ${\mb F}$. Then there exists a stable smoothing which is a $\Pi \times {\mb Z}$-equivariant involutive lift of $\hat{\mb F}$.   
\end{thm}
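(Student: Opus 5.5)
The plan is to run the inductive construction of \cite[Theorem 17.5]{Bai_Xu_foundation} and, as in Proposition \ref{prop_perturbation}, Lemma \ref{lemma_index_interpolator} and Theorem \ref{thm_FOP_lift}, process one free ${\mb Z}/2$-orbit of object pairs at a time. The induction is on the energy $d_{pq}$ together with $f_{\rm Tate}(\mu_p) - f_{\rm Tate}(\mu_q)$, and at each stage one whole $\Pi\times{\mb Z}\times{\mb Z}/2$-orbit of pairs $(p,q)$ is handled. The involution $(p,q)\mapsto(p^\dagger,q^\dagger)$ is free on pairs with $p\neq q$ (since $\mu_p^\dagger=-\mu_p\neq\mu_p$). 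So I would first fix a representative $\Pi\times{\mb Z}$-orbit in each ${\mb Z}/2$-orbit. Along it I would carry out the non-involutive steps of \cite{Bai_Xu_foundation} verbatim, and then define the data on the conjugate orbit by transport through the isomorphisms $\tau_{pq}\colon (K_{pq})^\dagger\cong K_{p^\dagger q^\dagger}$ coming from the involutive relatively smooth lift $\hat{\mb F}$.

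The steps at a given stage are as follows.

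First, choose the stabilizing unitary representation $R_{pq}$ of $G_{pq}$ and lift to $\uds{\bf Kur}^+_{\rm rig}$. On the conjugate pair take $R_{p^\dagger q^\dagger}:=R_{pq}^\dagger$. The involution on $\uds{\bf Kur}^+$ only reverses the complex structure on $R$, so this is consistent. Compatibility with the structural maps $R_{pr}\oplus R_{rq}\to R_{pq}$ holds because those maps are complex linear, and complex conjugation turns complex linear maps into complex linear maps between the conjugate spaces.

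Second, construct a $G_{pq}$-equivariant smooth structure on $V_{pq}\times R_{pq}$, together with a smooth structure on the obstruction bundle. This uses the relative smooth structure over $B_{pq}$ (fibrewise smoothing plus the vector bundle trick, as in \cite[Section 17]{Bai_Xu_foundation}). The smoothing is required to extend the one already fixed near the lower strata $\partial^{prq}V_{pq}$, which is a product of previously constructed smoothings via the collared, rigidified structural maps. On the conjugate pair, define the smoothing by pulling back through $\tau_{pq}$. Smooth structures involve no complex data, so the involution functor is the identity on the underlying smooth objects, and the transported structure is automatically compatible.

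Third, check the coherence conditions: collaring, rigidification, $\Pi\times{\mb Z}$-equivariance, and compatibility of $\tau$ with the structural maps. Each of these is an identity between already-fixed structures and transports under $\tau$.

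The main obstacle I expect is compatibility on the boundary of the conjugate pair. The inductively fixed smoothing near $\partial V_{p^\dagger q^\dagger}$ is assembled from products $V_{p^\dagger r'}\times V_{r' q^\dagger}$, and these factors may have been constructed directly (as representatives) rather than by transport. I need the transported smoothing to agree with this boundary smoothing. This should follow from the compatibility of $\tau$ with the structural maps in Definition \ref{defn_flow_category}. The inductive hypothesis is that $\tau_{pr}$ is a diffeomorphism for every lower pair, whichever member of its orbit was treated as the representative. So $\tau_{pq}$ restricted to the boundary is a product of diffeomorphisms, and the transported smoothing matches there.

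There is one more point to check. The involution on $V_{pq}$ itself is never required to be equivariantly smoothed, because $(p,q)\neq(p^\dagger,q^\dagger)$ always. Hence no fixed-point smoothing problem arises, and the equivariant relative smoothing lemma is only ever applied to a single chart.
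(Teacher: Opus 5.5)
Your proposal is correct and follows the paper's route. The paper runs the inductive stable-smoothing construction of \cite[Section 17]{Bai_Xu_foundation}, uses freeness of the involution on pairs of objects to build the data on one member of each ${\mb Z}/2$-orbit and transport it to the other, and takes $R_{p^\dagger q^\dagger}\cong (R_{pq})^\dagger$ as in \eqref{rpq_involution}; your argument is more explicit, notably in checking boundary compatibility for the conjugate pair via compatibility of $\tau$ with the structural maps (the involution is in fact free on all objects and on $\Pi\times{\mb Z}$-orbits, since ${\mb Z}$ only shifts $k$ in $\mu_\pm^{(k)}$ while $\dagger$ flips the sign).
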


\begin{proof}
The details are routine extensions of corresponding parts of \cite[Section 17]{Bai_Xu_foundation}. The involution does not introduce any additional difficulty because the action is free, which allows us to carry out the same induction procedure. 
\end{proof}

Moreover, the stable smoothing involves, for each Kuranishi space $K_{pq}$, a stabilization by a unitary representation $R_{pq}$. The complex structure is not needed for the smoothing. However, to obtain an NC structure on the tangent bundle after stabilization, we choose to use unitary representations. Then we choose to have isomorphisms
\beq\label{rpq_involution}
(R_{pq})^\dagger \cong R_{p^\dagger q^\dagger}
\eeq
of unitary representations.

Hence we have obtained an involutive lift of the outercollaring of ${\mb F}$ to $\outer \uds{\bf SKur}_{\rm rig}$. In the same way as proving \cite[Theorem 17.16]{Bai_Xu_foundation}, we can lift further to $\outer \uds{\bf SKur}_{\rm rig}^{\rm NC}$. Recall that the obstruction bundle $E_{pq}$ originally contains a copy of $Q_{pq}$ which is only a real representation. The proof of \cite[Theorem 17.16]{Bai_Xu_foundation} includes the step of doing an additional stabilization by another copy of $Q_{pq}$. So the obstruction bundles, after necessary stabilizations, become complex. By taking group quotient, this provides a flow category enriched in $\outer \uds{\bf dOrb}_{\rm rig}^{\rm NC}$. This finishes the proof of Theorem \ref{thm_flow_category_lift}.

The stable smoothing for the case of PSS/SSP bimodules can be carried out without further difficulties. In fact, the involution-equivariant situation is very similar, though not included, in \cite[Section 25.4]{Bai_Xu_foundation}. There is no difference in stable smoothing; however, to have an NC structure, one needs to choose the unitary representations to satisfy an analogue of \eqref{rpq_involution}. Then one can obtain a proof of Theorem \ref{thm_PSS_lift}. 

On the other hand, Theorem \ref{thm_pearly_lift} has no direct counterpart in \cite{Bai_Xu_2022}, which only treated the non-involutive case, which was called the ``deformed Morse-to-Morse bimodule'' (also in \cite{Bai_Xu_Arnold}, which was called the pearly bimodule). However, the construction given in \cite[Section 21.2]{Bai_Xu_foundation} can be generalized to the current involutive case. 

Lastly, even though the homotopy case stated in Theorem \ref{thm_homotopy_lift} has no equivariant counterpart proved in \cite{Bai_Xu_foundation}, the non-involutive case was covered by \cite[Theorem T]{Bai_Xu_foundation}. The involutive extension is step-by-step but straightforward repetition of the same argument to respect the involutions.

\bibliographystyle{amsalpha}
\bibliography{reference}

\end{document}